\title{\vspace{-2em}Multiperiod Martingale Transport}
\author{Marcel Nutz\thanks{
  Departments of Statistics and Mathematics, Columbia University, mnutz@columbia.edu. Research supported by an Alfred P.\ Sloan Fellowship and NSF Grants DMS-1512900 and DMS-1812661.
  } 
  \and 
  Florian Stebegg\thanks{Department of Statistics, Columbia University, florian.stebegg@columbia.edu.
  } 
  \and 
  Xiaowei Tan\thanks{Department of Mathematics, Columbia University, xt2161@columbia.edu.
  }}
\theoremstyle{plain}
\newtheorem{proposition}{Proposition}[section]
\newtheorem{lemma}[proposition]{Lemma}
\newtheorem{theorem}[proposition]{Theorem}
\theoremstyle{definition}
\newtheorem{definition}[proposition]{Definition}
\newtheorem{remark}[proposition]{Remark}
\newtheorem{example}[proposition]{Example}
\newcommand{\R}{\mathbb{R}}
\newcommand{\B}{\mathfrak{B}}
\newcommand{\RE}{\bar{\mathbb{R}}}
\newcommand{\supp}{\mathrm{supp}}
\newcommand{\M}{\mathcal{M}}
\newcommand{\1}{\mathbf{1}}
\newcommand{\EE}{\mathbb{E}}
\newcommand{\D}{\mathcal{D}}
\newcommand{\U}{\mathcal{U}}
\newcommand{\F}{\mathfrak{F}}
\newcommand{\FF}{\mathbb{F}}
\newcommand{\V}{\mathcal{V}}
\newcommand{\conv}{\mathrm{conv}}
\newcommand{\bary}{\mathrm{bary}}
\newcommand{\conc}{\mathrm{conc}}
\renewcommand{\S}{\mathbf{S}}
\newcommand{\I}{\mathbf{I}}
\newcommand{\bC}{\mathbf{C}}
\newcommand{\bmu}{{\bm{\mu}}}
\newcommand{\bphi}{{\bm{\phi}}}
\newcommand{\bchi}{{\bm{\chi}}}
\newcommand{\bx}{{\bm{x}}}
\newcommand{\bX}{X}
\newcommand{\bk}{\bm{k}}
\newcommand{\bbk}{\mathbbm{k}}
\newcommand{\shadow}[2]{\mathcal{S}^{#2}(#1)}
\newcommand{\casts}[2]{\left\llbracket #1;#2 \right\rrbracket}
\numberwithin{equation}{section}
\newcommand{\MN}[1]{#1}
\newcommand{\MNN}[1]{#1}
\newcommand{\FS}[1]{#1}
\begin{document}

\maketitle \vspace{-1.2em}

\begin{abstract}
Consider a multiperiod optimal transport problem where distributions $\mu_{0},\dots,\mu_{n}$ are prescribed and a transport corresponds to a scalar martingale $X$ with marginals $X_{t}\sim\mu_{t}$. We introduce particular couplings called left-monotone transports; they are characterized equivalently by a no-crossing property of their support, as simultaneous optimizers for a class of bivariate transport cost functions with a Spence--Mirrlees property, and by an order-theoretic minimality property. Left-monotone transports are unique if $\mu_{0}$ is atomless, but not in general. In the one-period case $n=1$, these transports reduce to the Left-Curtain coupling of Beiglb\"ock and Juillet. In the multiperiod case, the bivariate marginals for dates $(0,t)$ are of Left-Curtain type, if and only if $\mu_{0},\dots,\mu_{n}$ have a specific order property.
The general analysis of the transport problem also gives rise to a strong duality result and a description of its polar sets. Finally, we study a variant where the intermediate marginals $\mu_{1},\dots,\mu_{n-1}$ are not prescribed.
\end{abstract}

\vspace{0.9em}

{\small
\noindent \emph{Keywords:} Optimal Transport; Martingale Coupling; Duality

\noindent \emph{AMS 2010 Subject Classification:}
60G42; 
49N05 
}

\section{Introduction}

Let $\bmu = (\mu_0,\dots,\mu_n)$ be a vector of probability measures $\mu_{t}$ on the real line.
A measure $P$ on $\R^{n+1}$ whose marginals are given by $\bmu$ is called a coupling (or transport)
of $\bmu$, and the set of all such measures is denoted by $\Pi(\bmu)$. We shall be interested
in couplings $P$ that are martingales; that is, the identity $\bX = (X_0,\dots,X_n)$ on $\R^{n+1}$ is a martingale under $P$. Hence,
we will assume that all marginals have a finite first moment and denote by
$\M(\bmu)$ the set of martingale couplings. A classical result of Strassen \cite{Strassen.65}
shows that $\M(\bmu)$ is nonempty if and only if the marginals are in convex order,
denoted by $\mu_{t-1} \leq_c \mu_t$ and defined by the requirement that $\mu_{t-1}(\phi) \leq \mu_t(\phi)$
for any convex function $\phi$, where $\mu(\phi) := \int \phi \, d\mu$.

The first goal of this paper is to introduce and study a family of ``canonical'' couplings $P\in\M(\mu)$ that we call left-monotone. These couplings specialize to the Left-Curtain coupling of~\cite{BeiglbockJuillet.12} in the one-step case $n=1$ and share, broadly speaking, several properties reminiscent of the Hoeffding--Fr\'echet coupling of classical optimal transport. Indeed, left-monotone couplings will be characterized by order-theoretic minimality properties, as simultaneous optimal
transports for certain classes of reward (or cost) functions, and through
no-crossing conditions on their supports.

The second goal is to develop a strong duality theory for multiperiod martingale optimal transport, along the lines of~\cite{BeiglbockNutzTouzi.15} for the one-period martingale case and~\cite{Kellerer.84} for the classical optimal transport problem. That is, we introduce a suitable dual optimization problem and show the absence of a duality gap as well as the existence of dual optimizers for general transport reward (or cost) functions. 
The duality result is a crucial tool for the study of the left-monotone couplings.

We also develop similar results for a variant of our problem where the intermediate marginals $\mu_{1},\dots,\mu_{n-1}$ are not prescribed (Section~\ref{se:freeIntermediate}), but we shall focus on the full marginal case for the purpose of the Introduction.

\subsection{Left-Monotone Transports}

For the sake of orientation, let us first state the main result and then explain the terminology contained therein. The following is a streamlined version---the results in the body of the paper are stronger in some technical aspects.

\begin{theorem}\label{th:leftMonotoneIntro}
Let $\bmu = (\mu_0,\dots,\mu_n)$ be in convex order and
$P \in \M(\bmu)$ a martingale transport between these marginals. The following
are equivalent:
\begin{enumerate}
	\item $P$ is a simultaneous optimal transport for $f(X_0,X_t)$, $1\leq t \leq n$ whenever $f:\R^{2}\to\R$ is a smooth second-order Spence--Mirrlees function.
	\item $P$ is concentrated on a left-monotone set $\Gamma\subseteq\R^{n+1}$.
	\item $P$ transports $\mu_0|_{(-\infty,a]}$ to the obstructed shadow
	$\shadow{\mu_0|_{(-\infty,a]}}{\mu_1,\dots,\mu_t}$ in step $t$, for all $1 \leq t \leq n$
	and $a \in \R$.
\end{enumerate}
There exists $P \in \M(\bmu)$ satisfying (i)--(iii), and any such $P$ is called a left-monotone transport. If $\mu_{0}$ is atomless, then $P$ is unique.
\end{theorem}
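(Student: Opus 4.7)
The plan is to establish the equivalences by the cycle (i)$\Rightarrow$(ii)$\Rightarrow$(iii)$\Rightarrow$(i) and to address existence and uniqueness afterwards. The backbone of the argument will be the strong duality theory for multi-period martingale optimal transport that is developed in parallel in this paper; duality is the mechanism through which one passes between the analytic optimality in (i) and the pointwise/support description in (ii), and it also gives access to the shadow description in (iii).

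For (i)$\Rightarrow$(ii), I fix a smooth second-order Spence--Mirrlees function $f$ and apply strong duality to the bivariate reward $f(X_0,X_t)$ within $\M(\bmu)$. Optimality forces $P$ to be concentrated on the set where the dual inequality becomes an equality; the strict second-order Spence--Mirrlees structure then converts this tightness into a no-crossing condition on the projection of $\supp P$ onto the $(0,t)$-plane. Intersecting over $t=1,\dots,n$ yields the left-monotone set $\Gamma$. For (ii)$\Rightarrow$(iii), I argue contrapositively: if $P$ fails to send $\mu_0|_{(-\infty,a]}$ to its obstructed shadow $\shadow{\mu_0|_{(-\infty,a]}}{\mu_1,\dots,\mu_t}$, one extracts a pair of points of $\Gamma$ straddling $\{X_0 = a\}$ whose $(0,t)$-projections violate the no-crossing property; here I lean on the minimality-in-convex-order characterization of the obstructed shadow. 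For (iii)$\Rightarrow$(i), I note that the shadow prescription at date $t$ identifies the bivariate marginal $P \circ (X_0, X_t)^{-1}$ as the analogue of the Left-Curtain coupling compatible with the intermediate constraints; adapting the one-period argument of \cite{BeiglbockJuillet.12} (with the obstructed shadow replacing the plain shadow) shows that this coupling is the simultaneous optimizer for every Spence--Mirrlees reward.

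For existence, the family in (iii) defines, for each $t$ and each $a\in\R$, a candidate joint law on $(X_0,X_t)$; differencing in $a$ yields a kernel from $X_0$ to $X_t$, and the convex-order properties of the obstructed shadow---established as lemmas elsewhere in the paper---guarantee that these kernels can be composed consistently into a martingale belonging to $\M(\bmu)$. Alternatively, discretizing $\mu_0$ by finitely supported measures renders the greedy construction transparent, and weak compactness of $\M(\bmu)$ together with continuity of the obstructed shadow in its arguments produces a limit satisfying (iii). Uniqueness under atomlessness of $\mu_0$ is then immediate from (iii): the family $\{\mu_0|_{(-\infty,a]}\}_{a\in\R}$ determines the disintegration of $P$ along $X_0$, so the conditional law of $(X_1,\dots,X_n)$ given $X_0$ is fixed $\mu_0$-a.e.

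I expect the main obstacle to be the implication (ii)$\Rightarrow$(iii), where the purely geometric no-crossing condition on a subset of $\R^{n+1}$ must be translated into the recursive shadow description that properly accounts for the constraints imposed by all intermediate marginals $\mu_1,\dots,\mu_{t-1}$. The obstructed shadow can differ from its one-period counterpart in nontrivial ways, so the reduction requires a careful inductive argument exploiting the convex-order tower; the same subtlety is the reason why the existence proof requires the dedicated analysis of the obstructed shadow rather than a direct iteration of the Left-Curtain construction.
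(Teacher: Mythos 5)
Your overall architecture (the cycle (i)$\Rightarrow$(ii)$\Rightarrow$(iii)$\Rightarrow$(i) driven by strong duality and a monotonicity principle) matches the paper, but two steps contain genuine gaps.

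First, the uniqueness claim is wrong as stated. Condition (iii) only determines the bivariate projections $P_{0t}=P\circ(X_0,X_t)^{-1}$ for each $t$; it does \emph{not} determine ``the conditional law of $(X_1,\dots,X_n)$ given $X_0$,'' since a collection of two-dimensional marginals does not pin down a joint law. Indeed, the paper exhibits marginals (with $\mu_0$ a Dirac mass) admitting infinitely many transports all satisfying (iii), so (iii) alone cannot imply uniqueness; atomlessness of $\mu_0$ must enter in an essential way. The paper's argument goes through a binomial-structure result: when $\mu_0$ is atomless, the left-monotone geometry forces every one-step kernel of a left-monotone transport to charge at most two points (the exceptional set where a section of $\Gamma$ has three or more points is countable, hence $\mu_0$-null), and then one averages two candidate transports --- the average is again left-monotone, hence binomial, and a binomial martingale kernel that is a mixture of two martingale kernels forces them to coincide. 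None of this is ``immediate from (iii).''

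Second, your route for (ii)$\Rightarrow$(iii) rests on a misreading of the no-crossing condition: left-monotonicity of $\Gamma$ constrains triples in the projection onto coordinates $(0,1,\dots,t)$ in which the first two paths \emph{coincide up to time $t-1$}; it is not a condition on the bivariate $(X_0,X_t)$-projection. The paper stresses (and Example~\ref{ex:NotLeftCurtain} shows) that $P_{0t}$ is generally \emph{not} of Left-Curtain type, so ``extracting a pair of points whose $(0,t)$-projections violate no-crossing'' cannot work. The actual argument is an induction on $n$: assuming the shadow property up to date $n-1$, one forms the signed measure $\sigma=\nu^n_x-\mu^n_x$, picks witnesses $a<b$ with $\int(b-y)^+\1_{[a,\infty)}\,d\sigma>0$, uses nondegeneracy and left-monotonicity to show that mass starting left of $x$ and sitting left (resp.\ right) of $a$ at time $n-1$ must stay left (resp.\ right) of $a$ at time $n$, and then derives a convex-order contradiction via the additivity and minimality of the shadow. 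Your (iii)$\Rightarrow$(i) and existence sketches are closer to the paper (the building blocks $\1_{(-\infty,a]}(X_0)\varphi(X_t)$ with $\varphi$ concave, plus an integration-by-parts approximation; and the iterated shadow-coupling construction parametrized by a quantile coupling), though both require the quantitative lemmas you defer to.
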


Let us now discuss the items in the theorem.

\paragraph{(i) Optimal Transport.} This property characterizes $P$ as a simultaneous optimal transport. Given a function $f:\R^{n+1}\to\R$, we may consider the martingale optimal transport problem with reward $f$ (or cost $-f$),
\begin{equation}\label{eq:motIntro}
  \S_{\bmu}(f) = \sup_{P \in \M(\bmu)} P(f);
\end{equation}
recall that $P(f)=\EE^{P}[f(X_{0},\dots,X_{n})]$. 
A Lipschitz function $f\in C^{1,2}(\R^{2};\R)$ is called a \emph{smooth second-order Spence--Mirrlees} function if it satisfies the cross-derivative condition $f_{xyy}>0$; this has also been called the martingale Spence--Mirrlees condition in analogy to the classical Spence--Mirrlees condition $f_{xy}>0$. Given such a function of two variables and $1\leq t\leq n$, we may consider the $n$-step martingale optimal transport problem with reward $f(X_0,X_t)$. Characterization~(i) states that a left-monotone transport $P\in\M(\bmu)$ is an optimizer  simultaneously for the $n$ transport problems $f(X_0,X_t)$, $1\leq t\leq n$, for some (and then all) smooth second-order Spence--Mirrlees functions $f$.

In the one-step case, a corresponding result holds for the Left-Curtain coupling~\cite{BeiglbockJuillet.12}; here the simultaneous optimization becomes a single one. In view of the characterization in~(i), an immediate consequence is that if there exists $P\in\M(\bmu)$ such that all bivariate projections $P_{0t}=P\circ (X_{0},X_{t})^{-1}\in\M(\mu_{0},\mu_{t})$ are of Left-Curtain type, then $P$ is left-monotone. However, such a transport does not exist unless the marginals satisfy a very specific condition (see Proposition~\ref{pr:multiLeftCurtain}), and in general the bivariate projections of a left-monotone transport are \emph{not} of Left-Curtain type.

\paragraph{(ii) Geometry.} The second item characterizes $P$ through a geometric property of its support. A set $\Gamma \subseteq \R^{n+1}$ will be called \emph{left-monotone} if it has the following no-crossing property for all $1 \leq t \leq n$: Let $\bx=(x_{0},\dots,x_{t-1})$, $\bx'=(x'_{0},\dots,x'_{t-1}) \in \R^t$ and 
\begin{center}
$y^-,y^+,y' \in \R$~~with~~$y^- < y^+$
\end{center}
\MN{be such} that $(\bx,y^+), (\bx,y^-), (\bx',y')$ are in the projection of $\Gamma$ to the first $t+1$ coordinates. Then,
\begin{center}
  $y' \notin (y^-,y^+)$~~whenever~~$x_0 < x_0'$.
\end{center}
That is, if we consider two paths in $\Gamma$ starting at $x_0$ and coinciding up to $t-1$, and a third path starting at $x_0'$ to the right of $x_{0}$, then at time $t$ the third path cannot step in-between the first two---this is illustrated in Figure~\ref{fig:monotone}.
Item~(ii) states that a left-monotone transport $P\in\M(\bmu)$ can be characterized by the fact that it is concentrated on a left-monotone set $\Gamma$. (In Theorem~\ref{th:leftMonotoneMain} we shall state a stronger result: we can find a left-monotone set that carries all left-monotone transports at once.)

In the one-step case $n=1$, left-monotonicity coincides with the Left-Curtain property of~\cite{BeiglbockJuillet.12}. However, we emphasize that for $t>1$, our no-crossing condition differs from the Left-Curtain property of the bivariate projection $(X_{0},X_{t})(\Gamma)$ as the latter would not contain the restriction that the first two paths have to coincide up to $t-1$ (see also Example~\ref{ex:NotLeftCurtain}). This corresponds to the mentioned fact that the bivariate marginal $P_{0t}$ need not be of Left-Curtain type. On the other hand, the geometry of the projection $(X_{t-1},X_{t})(\Gamma)$ is also quite different from the Left-Curtain one, as our condition may rule out third paths crossing from the right \emph{and} left at $t-1$, depending on the starting point $x_{0}'$ rather than the location of $x_{t-1}'$.

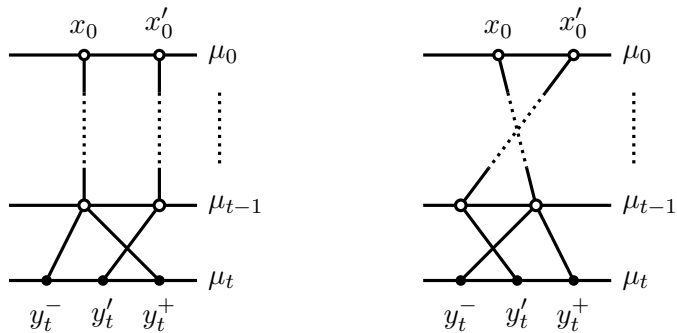
\begin{figure}
\begin{center}
\begin{tabular}{ccc}
\scalebox{1}{
\begin{tikzpicture}
\draw[very thick] (-1.5,3) -- (1,3) node[right] {$\mu_0$};
\draw[very thick] (-1.5,1) -- (1,1) node[right] {$\mu_{t-1}$};
\draw[very thick] (-1.5,0) -- (1,0) node[right] {$\mu_t$};

\draw[very thick,dotted] (1.3,2.5) -- (1.3,1.5) {};

\draw[very thick] (-0.5,3) node[minimum width=3.5pt,inner sep=0,draw,fill=white,circle,label=$x_0$] {} -- (-0.5,2.5) {};
\draw[very thick,dotted] (-0.5,2.5) -- (-0.5,1.5) {};
\draw[very thick] (-0.5,1.5) -- (-0.5,1) {};
\draw[very thick] (-0.5,1) -- (-1,0) node[minimum width=4pt,inner sep=0,fill,circle,label=below:$y_t^-$] {};
\draw[very thick] (-0.5,1) node[minimum width=4pt,inner sep=0,draw,fill=white,circle] {} -- (0.5,0) node[minimum width=4pt,inner sep=0,fill,circle,label=below:$y_t^+$] {};

\draw[very thick] (0.5,3) node[minimum width=3.5pt,inner sep=0,draw,fill=white,circle,label=$x_0'$] {} -- (0.5,2.5) {};
\draw[very thick,dotted] (0.5,2.5) -- (0.5,1.5) {};
\draw[very thick] (0.5,1.5) -- (0.5,1) {};
\draw[very thick] (0.5,1) node[minimum width=4pt,inner sep=0,draw,fill=white,circle] {} -- (-0.25,0) node[minimum width=4pt,inner sep=0,fill,circle,label=below:$y_t'$] {};
\end{tikzpicture}}
& \hspace{1cm} &
\scalebox{1}{
\begin{tikzpicture}
\draw[very thick] (-2,3) -- (0.5,3) node[right] {$\mu_0$};
\draw[very thick] (-2,1) -- (0.5,1) node[right] {$\mu_{t-1}$};
\draw[very thick] (-2,0) -- (0.5,0) node[right] {$\mu_t$};

\draw[very thick,dotted] (0.8,2.5) -- (0.8,1.5) {};

\draw[very thick] (-1,3) node[minimum width=3.5pt,inner sep=0,draw,fill=white,circle,label=$x_0$] {} -- (-0.875,2.5) {};
\draw[very thick,dotted] (-0.875,2.5) -- (-0.625,1.5) {};
\draw[very thick] (-0.625,1.5) -- (-0.5,1) {};
\draw[very thick] (-0.5,1) -- (-1.5,0) node[minimum width=4pt,inner sep=0,fill,circle,label=below:$y_t^-$] {};
\draw[very thick] (-0.5,1) node[minimum width=4pt,inner sep=0,draw,fill=white,circle] {} -- (0,0) node[minimum width=4pt,inner sep=0,fill,circle,label=below:$y_t^+$] {};

\draw[very thick] (0,3) node[minimum width=3.5pt,inner sep=0,draw,fill=white,circle,label=$x_0'$] {} -- (-0.375,2.5) {};
\draw[very thick,dotted] (-0.375,2.5) -- (-1.125,1.5) {};
\draw[very thick] (-1.125,1.5) -- (-1.5,1) {};
\draw[very thick] (-1.5,1) node[minimum width=4pt,inner sep=0,draw,fill=white,circle] {} -- (-0.75,0) node[minimum width=4pt,inner sep=0,fill,circle,label=below:$y_t'$] {};
\end{tikzpicture}}
\end{tabular}
\end{center}
\caption{Two examples of forbidden configurations in left-monotone sets.}
\label{fig:monotone}
\end{figure}

\paragraph{(iii) Convex Ordering.} This property characterizes left-monotone transports in an order-theoretic way and will be used in the existence proof. To explain the idea, suppose that~$\mu_{0}$ consists of finitely many atoms at $x_{1},\dots,x_{N}\in\R$. Then, for any fixed $t$, a coupling of $\mu_{0}$ and $\mu_{t}$ can be defined by specifying a ``destination'' measure for each atom. We consider all chains\footnote{\MN{Here $\mu_0|_{x_{i}}$ denotes a Dirac measure of mass $\mu_0(\{x_{i}\})$ at $x_{i}$.}} $\mu_0|_{x_{i}} \leq_c \theta_1 \leq_c \dots \leq_c \theta_t$ of measures $\theta_{s}$ in convex order that satisfy the marginals constraints $\theta_{s}\leq \mu_{s}$ for $s\leq t$. Of these chains, keep only the terminal measures $\theta_{t}$ and \FS{compare them}
according to the convex order. The \emph{obstructed shadow of $\mu_{0}|_{x_{1}}$ in $\mu_t$ through $\mu_1, \dots,\mu_{t-1}$}, denoted $\shadow{\mu_0|_{x_{i}}}{\mu_1,\dots,\mu_t}$, is defined as the unique least element\footnote{\FS{See Definition~\ref{def:obstructedShadow} and Lemma~\ref{le:shadowLeastElement} for details on this construction.}} among the $\theta_{t}$. A particular coupling of $\mu_{0}$ and $\mu_{t}$ is the one that successively maps the atoms $\mu_0|_{x_{i}}$ to their obstructed shadows in the remainder of $\mu_{t}$, starting with the left-most atom $x_{i}$ and continuing from left to right. In the case of general measures, we  consider the restrictions $\mu_{0}|_{(-\infty,a]}$ instead of successively mapping the atoms.
Characterization~(iii) then states that a left-monotone transport $P\in\M(\bmu)$ maps $\mu_{0}|_{(-\infty,a]}$ to its obstructed shadow at date $t$ for all $1\leq t\leq n$ and $a\in\R$. This shows in particular that the bivariate projections $P_{0t}=P\circ (X_{0},X_{t})^{-1}$ of a left-monotone coupling are uniquely determined. In the body of the text, we shall also give an alternative definition of the obstructed shadow by iterating unobstructed shadows through the marginals up to date~$t$; see Section~\ref{se:shadows}.

The above specializes to the construction of~\cite{BeiglbockJuillet.12} for the one-step case, which corresponds to the situation of $t=1$ where there are no intermediate marginals obstructing the shadow. When $t>1$, the obstruction by the intermediate marginals once again entails that $P_{0t}$ need not be of Left-Curtain type. More precisely, Characterization~(iii) gives rise to a sharp criterion (Proposition~\ref{pr:multiLeftCurtain}) on the marginals $\bmu$, describing exactly when this coincidence arises.

\paragraph{(Non-)Uniqueness.} We have seen above that for a left-monotone transport $P\in\M(\bmu)$ the bivariate projections $P_{0t}$, $1\leq t\leq n$ are uniquely determined. In particular, for $n=1$, we recover the result of~\cite{BeiglbockJuillet.12} that the \MNN{left-monotone} coupling is unique. For $n>1$, the situation turns out to be quite different depending on the nature of the first marginal. On the one extreme, we shall see that when $\mu_{0}$ is atomless, there is a unique left-monotone transport $P\in\M(\bmu)$. Moreover, $P$ has a degenerate structure reminiscent of Brenier's theorem: it can be disintegrated as $P=\mu_{0}\otimes\kappa_{1}\otimes \cdots\otimes\kappa_{n}$ where each one-step transport kernel $\kappa_{t}$ is concentrated on the graphs of two functions. On the other extreme, if $\mu_{0}$ is a Dirac mass, the typical case is that there are infinitely many left-monotone couplings---see Section~\ref{se:uniqueness} for a detailed discussion. We shall also show that left-monotone transports are not Markovian in general, even if uniqueness holds (Example~\ref{ex:NotMarkovian}).

\subsection{Duality}

The analysis of left-monotone transports is based on a duality result that we develop for general reward functions $f:\R^{n+1}\to(-\infty,\infty]$ with an integrable lower bound. Formally, the dual problem (in the sense of linear programming) for the transport problem  $\S_{\bmu}(f) = \sup_{P \in \M(\bmu)} P(f)$ is the minimization
$$
 \I_\bmu(f) := \inf_{(\bphi,H)} \sum_{t=0}^{n}\mu_{t}(\phi_{t})
$$
where the infimum is taken over vectors $\bphi=(\phi_{0},\dots,\phi_{n})$ of  real functions and predictable processes $H=(H_{1},\dots,H_{n})$ such that 
\begin{equation}\label{eq:dualIneqIntro}
  \sum_{t=0}^{n} \phi_{t}(X_{t}) + (H\cdot X)_{n}\geq f;
\end{equation}
here $(H \cdot X)_n := \sum_{t=1}^n H_t\, (X_t - X_{t-1})$ is the discrete-time integral. The desired result (Theorem~\ref{thm:duality}) states that there is no duality gap, i.e.\ $\I_\bmu(f)=\S_{\bmu}(f)$, and that the dual problem is attained whenever it is finite. From the analysis for the one-step case in~\cite{BeiglbockNutzTouzi.15} we know that this assertion fails for the above naive formulation of the dual, and requires several relaxations regarding the integrability of the functions $\phi_{t}$ and the domain $\V\subseteq \R^{n+1}$ where the inequality~\eqref{eq:dualIneqIntro} is required. Specifically, the inequality needs to be relaxed on sets that are $\M(\bmu)$-polar; i.e.\ not charged by any transport $P\in\M(\bmu)$. These sets are characterized in Theorem~\ref{thm:polarstruct} where we show that the $\M(\bmu)$-polar sets are precisely the (unions of) sets which project to a two-dimensional polar set of $\M(\mu_{t-1},\mu_{t})$ for some $1\leq t\leq n$.

The duality theorem gives rise to a monotonicity principle (Theorem~\ref{thm:monotonicityPrinciple}) that underpins the analysis \MN{of the} left-monotone couplings. Similarly to the cyclical monotonicity condition in classical transport, it allows one to study the geometry of the support of optimal transports for a given function~$f$.

\subsection{Background and Related Literature}

The martingale optimal transport problem~\eqref{eq:motIntro} was introduced in~\cite{BeiglbockHenryLaborderePenkner.11} with the dual problem as a motivation. Indeed, in financial mathematics the function $f$ is understood as the payoff of a derivative written on the underlying~$X$ and~\eqref{eq:dualIneqIntro} corresponds to superhedging $f$ by statically trading in European options $\phi_{t}(X_{t})$ and dynamically trading in the underlying according to the strategy $H$. The value $\I_\bmu(f)$ then corresponds to the lowest price of $f$ for which the seller can enter a model-free hedge $(\bphi,H)$ if the marginals $X_{t}\sim \mu_{t}$ are known from option market data.
In~\cite{BeiglbockHenryLaborderePenkner.11}, it was shown (with the above, ``naive'' formulation of the dual problem) that there is no duality gap if $f$ is sufficiently regular, whereas dual existence was shown to fail even in regular cases. The idea of model-free hedging as well as the connection to Skorokhod embeddings goes back to~\cite{Hobson.98}; we refer to \cite{BiaginiBouchardKardarasNutz.14, BouchardNutz.13, CoxObloj.11, Hobson.11,Obloj.04,Touzi.14} for further references. A specific multiperiod martingale optimal transport problem also arises in the study of the maximum maximum of a martingale given $n$ marginals~\cite{HenryLabordereOblojSpoidaTouzi.12}.

The one-step case $n=1$ has been studied in great detail. In particular, \cite{BeiglbockJuillet.12} introduced the Left-Curtain coupling and pioneered numerous ideas underlying Theorem~\ref{th:leftMonotoneIntro}, \cite{HenryLabordereTouzi.13} provided an explicit construction of that coupling, and \cite{Juillet.14} established the stability with respect to the marginals. Our duality results specialize to the ones of \cite{BeiglbockNutzTouzi.15} when $n=1$. Unsurprisingly, we shall exploit many arguments and results from these papers wherever possible. As indicated above, and as will be seen in the proofs below, the multistep case allows for a richer structure and necessitates novel ideas; for instance, the analysis of the polar sets (Theorem~\ref{thm:polarstruct}) is surprisingly involved. 
Other works in the one-step martingale case have studied reward functions $f$ such as forward start straddles
\cite{HobsonKlimmek.15,HobsonNeuberger.12} or Asian payoffs \cite{Stebegg.14}. We also refer to \cite{GhoussoubKimLim.15, DeMarchTouzi.17} for recent developments with multidimensional marginals.

One-step martingale optimal transport problems can alternately be studied as optimal Skorokhod embedding problems with marginal constraints; cf.\ \cite{BeiglbockCoxHuesman.14, BeiglbockCoxHuesmannPerkowskiPromel.15, BeiglbockHenryLabordereTouzi.15, BeiglbockHuesmannStebegg.16}. A multi-marginal extension \cite{BeiglbockCoxHuesman.17} of~\cite{BeiglbockCoxHuesman.14} is in preparation at the time of writing and the authors have brought to our attention that it will offer a version of Theorem~\ref{th:leftMonotoneIntro} in the Skorokhod picture, at least in the case where $\mu_{0}$ is atomless and some further conditions are satisfied. The Skorokhod embedding problem with multi-marginal constraint was also studied in~\cite{GuoTanTouzi.15a}.

A multi-step coupling quite different from ours can be obtained by composing in a Markovian fashion the Left-Curtain transport kernels from $\mu_{t-1}$ to $\mu_{t}$, $1\leq t\leq n$, as discussed in~\cite{HenryLabordereTouzi.13}. In~\cite{Juillet.15} the continuous-time limits of such couplings for $n\to\infty$ are studied to find solutions of the so-called Peacock problem~\cite{HirschProfetaRoynetteYor.11} where the marginals for a continuous-time martingale are prescribed; see also \cite{HenryLabordereTanTouzi.14} and \cite{KallbladTanTouzi.15} for other continuous-time results with full marginal constraint. Early contributions related to the continuous-time martingale transport problem include \cite{DolinskySoner.12,DolinskySoner.14, GalichonHenryLabordereTouzi.11, NeufeldNutz.12, SonerTouziZhang.2010dual, TanTouzi.11}.

The remainder of the paper is organized as follows. Section~\ref{se:preliminaries} fixes basic terminology and recalls the necessary results from the one-step case. In Section~\ref{se:polarStructure}, we characterize the polar structure of $\M(\bmu)$. Section~\ref{se:dualSpace} introduces and analyzes the space that is the domain of the dual problem in Section~\ref{se:duality}, where we state the duality theorem and the monotonicity principle.  Section~\ref{se:shadows} introduces left-monotone transports by the shadow construction and Section~\ref{se:geometry} develops the equivalent characterizations in terms of support and optimality properties. The (non-)uniqueness of left-monotone transports is discussed in Section~\ref{se:uniqueness}. We conclude with the analysis of the problem with unconstrained intermediate marginals in Section~\ref{se:freeIntermediate}.

\section{Preliminaries}\label{se:preliminaries}

Throughout this paper, $\mu_{t}, \mu, \nu$ denote finite measures on $\R$ with finite
first moment, the total mass not necessarily being normalized. Generalizing the notation from the Introduction to a vector $\bmu = (\mu_0,\dots,\mu_n)$ of such measures, we will write $\Pi(\bmu)$ for the set of couplings; that is, measures $P$ on $\R^{n+1}$ such that $P \circ X_t^{-1} = \mu_t$ for $0 \leq t \leq n$ where
$\bX = (X_0,\dots,X_n) : \R^{n+1} \to \R^{n+1}$ is the identity. Moreover, $\M(\bmu)$ is the
subset of all $P \in \Pi(\bmu)$ that are martingales, meaning that
\[\int X_s\1_A(X_0,\dots,X_s) dP = \int X_t\1_A(X_0,\dots,X_s) dP\]
for all $s \leq t$ and Borel sets $A \in \B(\R^{s+1})$. 

We denote by $\FF = \{\F_t\}_{0 \leq t \leq n}$ the canonical filtration $\F_t := \sigma(X_0,\dots,X_t)$. As usual, an $\FF$-predictable process $H = \{H_t\}_{1 \leq t \leq n}$
is a sequence of real functions on $\R^{n+1}$ such that $H_t$ is $\F_{t-1}$-measurable; i.e.\
$H_t = h_t(X_0,\dots,X_{t-1})$ for some Borel-measurable $h_t: \R^t \to \R$.
Given an $\FF$-predictable process $H$, the discrete stochastic integral $\{(H \cdot X)_t\}_{0 \leq t \leq n}$ is defined by 
\[(H \cdot X)_t := \sum_{s=1}^t H_s \cdot (X_s - X_{s-1}).\]
If $\bX$ is a martingale under some measure $P$, then $H \cdot X$
 is a generalized (not necessarily integrable) martingale in the sense 
of generalized conditional expectations; cf.\ \cite[Proposition~1.64]{JacodShiryaev.03}.

We say that $\bmu = (\mu_0,\dots,\mu_n)$ is in convex order if $\mu_{t-1} \leq_c \mu_{t}$ for all $1\leq t\leq n$; that is, $\mu_{t-1}(\phi) \leq \mu_{t}(\phi)$ for any convex function $\phi: \R \to \R$. This implies that
$\mu_{t-1}$ and $\mu_{t}$ have the same total mass. The order can also be characterized by the
potential functions
\[u_{\mu_t}:\R \to \R, \quad u_{\mu_t}(x):= \int |x-y| \, {\mu_t}(dy). \]
The following properties are elementary:
\begin{enumerate}[(i)]
\item $u_{\mu_t}$ is nonnegative and convex,
\item $\partial^+ u_{\mu_t}(x) - \partial^- u_{\mu_t}(x) = 2{\mu_t}(\{x\})$,
\item $\lim_{|x| \to \infty} u_{\mu_t}(x) = \infty \1_{{\mu_t} \neq 0}$,
\item $\lim_{|x| \to \infty} u_{\mu_t}(x) - {\mu_t}(\R)|x - \bary({\mu_t})| = 0$,
\end{enumerate}
where $\partial^+$ and  $\partial^-$ denote the right and left derivatives, respectively, and 
\MN{$\bary({\mu_t})=(\int x d \mu_t) / \mu_t(\R)$} is the barycenter.
We can therefore extend $u_{\mu_t}$ continuously to $\bar{\R} = [-\infty,\infty]$. \MN{The following
result of Strassen is classical (cf.~\cite{Strassen.65}; the last statement is obtained as e.g.\ in~\cite[Corollary~2.95]{FollmerSchied.11}).}

\begin{proposition}
\label{prop:StrassenResult}
Let $\bmu = (\mu_0,\dots,\mu_n)$ be finite measures on 
$\R$ with finite first moments and equal total mass.
The following are equivalent:
\begin{enumerate}[(i)]
\item $\mu_0 \leq_c \dots \leq_c \mu_n$,
\item $u_{\mu_0} \leq \dots \leq u_{\mu_n}$,
\item $\M(\bmu) \neq \emptyset$,
\item there exist stochastic kernels $\kappa_t(x_0,\dots,x_{t-1},dx_t)$ such that 
\[\int |x_t| \, \kappa_t(x_0,\dots,x_{t-1},dx_t) < \infty \text{ and } 
\int x_t \, \kappa_t(x_0,\dots,x_{t-1},dx_t) = x_{t-1}\] for all 
$(x_0,\dots,x_t) \in \R^t$ and $1\leq t \leq n$, and  
\[\mu_t = (\mu_0 \otimes \kappa_1 \otimes \dots \otimes \kappa_n) \circ (X_t)^{-1}\quad
\text{for all } 0 \leq t \leq n.\]
\end{enumerate}
\end{proposition}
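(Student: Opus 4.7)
The plan is to prove (i) $\Leftrightarrow$ (ii) directly from the definition of the potential functions, and then to close the cycle (i) $\Rightarrow$ (iv) $\Rightarrow$ (iii) $\Rightarrow$ (i), reducing the only substantive step, namely (i) $\Rightarrow$ (iv), to the classical two-marginal Strassen theorem. For (i) $\Rightarrow$ (ii) the forward implication is immediate because $y \mapsto |x - y|$ is convex for every fixed $x$. For the converse I would use the classical representation of any convex $\phi:\R \to \R$ of linear growth as $\phi(x) = \alpha + \beta x + \int |x - y| \, d\lambda(y)$, where $\lambda$ is the positive Radon measure encoding the distributional second derivative of $\phi$. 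Integrating against $\mu_t - \mu_{t-1}$ and using equal total masses yields
\[\mu_t(\phi) - \mu_{t-1}(\phi) = \beta \, [\bary(\mu_t) - \bary(\mu_{t-1})] + \int [u_{\mu_t}(y) - u_{\mu_{t-1}}(y)] \, d\lambda(y).\]
The asymptotic behavior of $u_{\mu_t}$ at $\pm\infty$ recalled above, combined with $u_{\mu_{t-1}} \leq u_{\mu_t}$ and equal masses, pinches $\bary(\mu_{t-1}) = \bary(\mu_t)$ (examining $x \to +\infty$ and $x \to -\infty$ separately), so the right-hand side is nonnegative.

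For (i) $\Rightarrow$ (iv), I would apply the classical two-marginal Strassen theorem to each consecutive pair $\mu_{t-1} \leq_c \mu_t$, obtaining a stochastic kernel $\kappa_t(x, dy)$ with $\int y \, \kappa_t(x, dy) = x$, $\int |y| \, \kappa_t(x, dy) < \infty$, and $\mu_{t-1} \kappa_t = \mu_t$. Viewing $\kappa_t$ as a kernel on $\R^t \times \B(\R)$ that ignores the first $t-1$ coordinates embeds it into the form demanded by (iv). A straightforward induction on $t$ then shows that the time-$t$ marginal of $\mu_0 \otimes \kappa_1 \otimes \cdots \otimes \kappa_t$ equals $\mu_{t-1}\kappa_t = \mu_t$.

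Finally, (iv) $\Rightarrow$ (iii) is immediate by setting $P := \mu_0 \otimes \kappa_1 \otimes \cdots \otimes \kappa_n$: the preceding induction gives $P \in \Pi(\bmu)$, and the martingale property follows from $\EE^P[X_t \mid \F_{t-1}] = \int x_t \, \kappa_t(X_0, \dots, X_{t-1}, dx_t) = X_{t-1}$. The remaining direction (iii) $\Rightarrow$ (i) is Jensen's inequality: for any convex $\phi$ and $P \in \M(\bmu)$, the martingale property gives $\EE^P[\phi(X_t) \mid \F_{t-1}] \geq \phi(X_{t-1})$, and taking expectations yields $\mu_{t-1}(\phi) \leq \mu_t(\phi)$. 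The genuine difficulty of the proposition sits entirely in the two-marginal Strassen theorem itself; the multiperiod statement then follows by the gluing argument above, using only the Markov structure of the constructed kernels and Jensen's inequality.
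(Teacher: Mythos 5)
The paper does not actually prove this proposition: it is stated as a classical result with a citation to Strassen, so there is no in-text argument to compare against. Your reduction is the standard one and is essentially correct: (i) $\Leftrightarrow$ (ii) via the potential functions, (i) $\Rightarrow$ (iv) by applying the two-marginal Strassen theorem to each consecutive pair and composing the resulting one-step kernels, (iv) $\Rightarrow$ (iii) by taking $P = \mu_0 \otimes \kappa_1 \otimes \cdots \otimes \kappa_n$, and (iii) $\Rightarrow$ (i) by conditional Jensen. The pinching of the barycenters from the asymptotics of $u_{\mu_t}$ is also correct.

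Two small points you should tighten. First, in (ii) $\Rightarrow$ (i) the representation $\phi(x) = \alpha + \beta x + \int |x-y|\,d\lambda(y)$ with $\lambda$ a finite measure and the integral finite requires more than linear growth of $\phi$ (you need $\int |y|\,d\lambda(y) < \infty$); moreover convex order must be verified for \emph{all} convex $\phi$, including those of superlinear growth for which $\mu_t(\phi)$ may be $+\infty$. The clean fix is to establish the inequality first for convex functions that are affine outside a compact set (equivalently, piecewise affine with finitely many kinks, where your representation is exact with compactly supported $\lambda$), and then pass to general convex $\phi$ by monotone approximation from below, using that the finite first moments make $\mu_t(\phi)$ well defined in $(-\infty,\infty]$. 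Second, in (i) $\Rightarrow$ (iv) the disintegration coming from the two-marginal theorem gives $\int y\,\kappa_t(x,dy) = x$ only for $\mu_{t-1}$-a.e.\ $x$; to meet the ``for all $x$'' requirement in (iv) you should redefine $\kappa_t(x,\cdot) := \delta_x$ on the exceptional nullset, which does not affect the marginals. Neither issue is a genuine gap, only a matter of making the standard argument airtight.
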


\MN{All kernels will be stochastic (i.e.\ normalized) in what follows.} A kernel~$\kappa_{t}$ with the first property in~(iv) is called \emph{martingale kernel}.

\subsection{The One-Step Case}
\label{sse:onestep}

For the convenience of the reader, we summarize some results from \cite{BeiglbockJuillet.12} and~\cite{BeiglbockNutzTouzi.15} for the one-step problem ($n=1$) which will be used later on. In this section we write $(\mu,\nu)$ instead of $(\mu_{0},\mu_{1})$ for the given marginals in convex order.
 
\begin{definition}\label{de:componentSinglestep}
The pair $\mu \leq_c \nu$ is $\emph{irreducible}$ if the set $I = \{u_\mu < u_\nu\}$ is connected
and $\mu(I) = \mu(\R)$. In this situation, let $J$ be the union of $I$ and any endpoints of $I$ that
are atoms of $\nu$; then $(I,J)$ is the \emph{domain} of $\M(\mu,\nu)$.
\end{definition}

The first result is a decomposition of the transport problem into irreducible parts; cf.\ \cite[Theorem~8.4]{BeiglbockJuillet.12}.

\begin{proposition}
\label{prop:oneStepDecomposition}
Let $\mu \leq_c \nu$ and let $(I_k)_{1 \leq k \leq N}$ be the (open) components of
$\{u_\mu < u_\nu\}$, where $N \in \{0,1,\dots,\infty\}$. Set $I_0 = \R \backslash \cup_{k \geq 1} I_k$
and $\mu_k = \mu|_{I_k}$ for $k \geq 0$, so that $\mu = \sum_{k \geq 0} \mu_k$. Then, there
exists a unique decomposition $\nu = \sum_{k \geq 0} \nu_k$ such that
\[ \mu_0 = \nu_0 \quad \text{and} \quad \mu_k \leq_c \nu_k \quad \text{for all} \quad  k \geq 1,\]
and this decomposition satisfies $I_k = \{u_{\mu_k} < u_{\nu_k}\}$ for all $k \geq 1$. Moreover,
any $P \in \M(\mu,\nu)$ admits a unique decomposition $P = \sum_{k \geq 0} P_k$ such that
$P_k \in \M(\mu_k,\nu_k)$ for all $k \geq 0$.
\end{proposition}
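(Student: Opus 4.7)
The plan is to derive the decomposition from a single ``no-crossing'' principle: a martingale transport cannot move mass across a point where the two potential functions coincide. Once this is in hand, the decomposition of any $P \in \M(\mu,\nu)$ falls out by restricting $P$ according to the starting component.

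First I would prove the following lemma: for $P \in \M(\mu,\nu)$ disintegrated as $\mu \otimes K$ and any $y \in \R$ with $u_\mu(y) = u_\nu(y)$, the kernel $K(x,\cdot)$ is $\mu$-a.s.\ supported in $(-\infty,y]$ or $[y,\infty)$. This comes from Jensen's inequality applied pointwise:
\[
0 = u_\nu(y) - u_\mu(y) = \int \!\!\left(\int |z-y|\,K(x,dz) - |x-y|\right) d\mu(x),
\]
where the integrand is nonnegative because $z \mapsto |z-y|$ is convex and $K(x,\cdot)$ has barycenter~$x$; equality on $\mu$-a.e.\ $x$ forces $K(x,\cdot)$ to concentrate on one side of $y$.

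Second, I would apply the lemma with $y$ running through $\partial I_k \subset \{u_\mu = u_\nu\}$ for each $k \geq 1$ to conclude that mass starting in the connected open set $I_k$ must terminate in $\bar I_k$; and with $y=x$ (available for $x \in I_0 \subseteq \{u_\mu = u_\nu\}$) to conclude, together with the martingale property, that $K(x,\cdot) = \delta_x$ for $\mu$-a.e.\ $x \in I_0$. This lets me set $P_k := P|_{I_k \times \R}$ for $k\geq 1$ and $P_0 := P|_{I_0 \times \R}$ (the latter concentrated on the diagonal), define $\nu_k := P_k \circ X_1^{-1}$, and conclude $P_k \in \M(\mu_k,\nu_k)$ (so in particular $\mu_k \leq_c \nu_k$) and $\mu_0 = \nu_0$. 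Summing potentials gives $u_\nu - u_\mu = \sum_{k \geq 1}(u_{\nu_k} - u_{\mu_k})$, with each summand a nonnegative function vanishing outside $\bar I_k$; matching this against the decomposition $\{u_\mu < u_\nu\} = \bigsqcup_{k\geq 1} I_k$ yields $\{u_{\mu_k} < u_{\nu_k}\} = I_k$. Uniqueness of the $\nu$-decomposition is then automatic, since $u_{\nu_k} = u_{\mu_k} + (u_\nu - u_\mu)\1_{\bar I_k}$ determines $\nu_k$, and uniqueness of the $P$-decomposition is automatic since $P_k = P|_{I_k \times \R}$.

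The main obstacle I anticipate is the careful handling of $\nu$-atoms at the boundary points of the $I_k$: such an atom at $y \in \partial I_k$ could a priori receive mass from several starting components, which would spoil the clean splitting $\nu = \sum \nu_k$. The no-crossing lemma applied at $y$ is precisely what resolves this, by confining the $\mu$-mass whose kernel charges $y$ to a single $\bar I_k$ (the one whose open part abuts $y$ on the appropriate side); this is the reason for including boundary atoms in the domain $J$ in Definition~\ref{de:componentSinglestep}. I also expect some delicate measurability verification when assembling the $P_k$ from a general $P$, which should reduce to using that each $I_k$ is Borel and that the restrictions $P|_{I_k \times \R}$ inherit martingale kernels from $P$.
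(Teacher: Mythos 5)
The paper does not actually prove this proposition; it is recalled verbatim from \cite[Theorem~8.4]{BeiglbockJuillet.12}, so there is no in-paper argument to compare against. Your reconstruction follows the standard route of that reference and is correct in its main line: the Jensen/no-crossing lemma at points of $\{u_\mu=u_\nu\}$ is exactly the right tool, and it does give $K(x,\cdot)=\delta_x$ for $\mu$-a.e.\ $x\in I_0$, $P_k:=P|_{I_k\times\R}\in\M(\mu_k,\nu_k)$ with $\nu_k$ concentrated on $\bar I_k$, the identity $\{u_{\mu_k}<u_{\nu_k}\}=I_k$, and the uniqueness of the $P$-decomposition.

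Two points need repair. The substantive one is the uniqueness of the $\nu$-decomposition: your identity $u_{\nu_k}=u_{\mu_k}+(u_\nu-u_\mu)\1_{I_k}$ is derived from the fact that $\nu_k$ is concentrated on $\bar I_k$, which you obtained from the no-crossing lemma applied to the $P_k$ you \emph{constructed}. An arbitrary competing decomposition $\nu=\sum_k\nu_k'$ with $\mu_0=\nu_0'$ and $\mu_k\leq_c\nu_k'$ is not a priori of this form, so calling uniqueness ``automatic'' leaves a gap. It is closed in one step: pick $P_k'\in\M(\mu_k,\nu_k')$ by Strassen, apply your lemma to $P'=\sum_k P_k'\in\M(\mu,\nu)$ to see that each $\nu_k'$ is also concentrated on $\bar I_k$, and then run your potential argument. (Alternatively, argue directly that $u_{\nu_k'}-u_{\mu_k}\geq 0$ vanishes on $I_0$ and is convex on each $\bar I_j$, $j\neq k$, since $\mu_k$ does not charge $I_j$, hence vanishes there.) The second point is harmless but false as stated: a $\nu$-atom at a shared boundary point of two abutting components \emph{can} receive mass from both of them (e.g.\ $\mu=\tfrac12\delta_{1/2}+\tfrac12\delta_{3/2}$, $\nu=\tfrac14\delta_0+\tfrac12\delta_1+\tfrac14\delta_2$, where the atom at $1$ is fed from both sides). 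This does not spoil anything, because the $\nu_k$ need not be mutually singular and each is still pinned down by its potential function; but the justification you offer for the atom issue should be dropped rather than relied upon.
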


We observe that the measure $P_0$ in Proposition \ref{prop:oneStepDecomposition} transports $\mu_0$ to itself and is concentrated on
$\Delta_0 := \Delta \cap I_0^2$ where $\Delta = \{(x,x) : x \in \R\}$ is the diagonal. Thus, the transport problem with index $k=0$ is not actually an irreducible one, but we shall nevertheless refer to $(I_0,I_0)$ as the domain of this problem. When we want to emphasize the distinction, we call $(I_0,I_0)$ the \emph{diagonal domain} and $(I_k,J_k)_{k\geq 1}$ the \emph{irreducible domains} of $\M(\mu,\nu)$.
Similarly, the sets $V_k := I_k \times J_k$, $k\geq1$ will be called the
\emph{irreducible components} and $V_0 := \Delta_0$ will be called
\MN{the} \emph{diagonal component} of $\M(\mu,\nu)$. This terminology refers to the following result of~\cite[Theorem~3.2]{BeiglbockNutzTouzi.15} which essentially states that the components are the only sets that can be charged by a martingale transport. We call a set $B\subseteq\R^{2}$ $\M(\mu,\nu)$-polar if it is $P$-null for all $P\in\M(\mu,\nu)$, \MN{where a nullset is, as usual, any set contained in a Borel set of zero measure.}

\begin{proposition}\label{prop:polarOneStep}
Let $\mu \leq_c \nu$ and let
 $B \subseteq \R^2$ be a Borel set. Then $B$ is $\M(\mu,\nu)$-polar if and only if there exist
a $\mu$-nullset $N_\mu$ and a $\nu$-nullset $N_\nu$ such that
\[B \subseteq (N_\mu \times \R) \cup (\R \times N_\nu) \cup \left(\bigcup_{k \geq 0} V_k \right)^c.\]
\end{proposition}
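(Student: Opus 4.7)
My plan is to prove the two directions separately, with the bulk of the work going into necessity, which I would handle by reducing to the irreducible case via Proposition~\ref{prop:oneStepDecomposition} and then applying the one-step duality within each component.

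For sufficiency, suppose $B$ lies in the stated union. Given $P\in\M(\mu,\nu)$, the marginal conditions kill $N_\mu\times\R$ and $\R\times N_\nu$, so it suffices to check that $P$ is concentrated on $\bigcup_k V_k$. Decomposing $P=\sum_{k\geq 0}P_k$ with $P_k\in\M(\mu_k,\nu_k)$, the first marginal $\mu_k$ lies in $I_k$ by construction. Since $u_{\mu_k}=u_{\nu_k}$ off $\bar{I}_k$ and the distribution functions of $\mu_k,\nu_k$ therefore agree there, $\nu_k$ also vanishes on $\R\setminus\bar{I}_k$, while any mass of $\nu_k$ on $\partial I_k$ must be concentrated on atoms (detected by jumps of $\partial^\pm u_{\nu_k}$), which are precisely the points adjoined to form $J_k$. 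Hence $P_k$ is concentrated on $V_k$ for $k\geq 1$. For $k=0$, the equality $\mu_0=\nu_0$ together with the martingale property forces $P_0$ to be the identity coupling, hence concentrated on $\Delta_0=V_0$.

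For necessity, let $B$ be $\M(\mu,\nu)$-polar. Glueing any $P_k\in\M(\mu_k,\nu_k)$ with fixed choices in the other components produces a member of $\M(\mu,\nu)$, and polarity of $B$ forces $P_k(B\cap V_k)=0$; hence each $B\cap V_k$ is $\M(\mu_k,\nu_k)$-polar, while $B\setminus\bigcup_k V_k$ is automatically absorbed by the complement term. For $k=0$, the identity coupling $P_0$ being null on $B\cap\Delta_0$ just means the diagonal projection of $B\cap\Delta_0$ is $\mu_0$-null, giving the required form directly. For $k\geq 1$, I would invoke the one-step duality of \cite{BeiglbockNutzTouzi.15}: polarity of $B_k:=B\cap V_k$ is equivalent to $\sup_{P_k\in\M(\mu_k,\nu_k)}P_k(\1_{B_k})=0$, and the dual attainment result on an irreducible component yields functions $\phi_k,\psi_k$ (in a suitably relaxed integrability class) and a measurable $h_k$ with
\[ \phi_k(x)+\psi_k(y)+h_k(x)(y-x)\geq \1_{B_k}(x,y)\quad\text{on }V_k,\qquad \mu_k(\phi_k)+\nu_k(\psi_k)=0. \]

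The main obstacle is then to translate this analytic superhedging statement into the geometric rectangle structure asserted by the proposition. The idea is to exploit irreducibility—in particular the strict inequality $u_{\mu_k}<u_{\nu_k}$ on $I_k$, which supplies a rich family of martingale perturbations concentrated near any point of $I_k$—to force the integrand $\phi_k(x)+\psi_k(y)+h_k(x)(y-x)$ to vanish $P_k$-a.s.\ for \emph{some} $P_k$ of full support on $V_k$, and thereby to be $\leq 0$ off a set whose coordinate projections are $\mu_k$- and $\nu_k$-null. Defining $N_\mu^{(k)}$ and $N_\nu^{(k)}$ as those projections absorbs $B_k$ into $(N_\mu^{(k)}\times\R)\cup(\R\times N_\nu^{(k)})$, and countable unions over $k$ produce the sought $N_\mu,N_\nu$. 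The careful book-keeping at boundary points in $J_k\setminus I_k$—where $\nu_k$ has atoms and the martingale constraint pins down the conditional law of $X_0$ given $X_1=y$—is the step I expect to consume the most technical effort, since the marginal-nullset description must remain compatible with the rigid structure imposed at those atoms.
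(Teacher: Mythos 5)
Your sufficiency argument and the reduction of necessity to the irreducible components via Proposition~\ref{prop:oneStepDecomposition} are fine. The gap is in the core of necessity: showing that an $\M(\mu_k,\nu_k)$-polar subset $B_k$ of an irreducible component $V_k$ is contained in $(N_{\mu_k}\times\R)\cup(\R\times N_{\nu_k})$. Your duality route does not achieve this and, as sketched, cannot. If $(\phi_k,\psi_k,h_k)$ is a dual optimizer for $\1_{B_k}$ with value $0$, then the set $G_k\subseteq V_k$ where $\phi_k(x)+\psi_k(y)+h_k(x)(y-x)>0$ is \emph{itself} $\M(\mu_k,\nu_k)$-polar (the integrand is nonnegative on $V_k$ and has expectation $0$ under every martingale coupling, hence vanishes a.s.\ under each of them). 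So you have merely replaced the polar set $B_k$ by a possibly larger polar set $G_k$ and face the identical question of why \emph{that} set has the rectangle form; no progress has been made. The heuristic of producing ``some $P_k$ of full support on $V_k$'' cannot close the loop either: topological full support says nothing about a Borel set with empty interior, and what is needed is a martingale coupling dominating, in the sense of absolute continuity, an \emph{arbitrary} coupling that charges $B_k$. There is also an organizational concern: the quasi-sure relaxation of the dual inequality in \cite{BeiglbockNutzTouzi.15} is tied to the polar structure, so deriving the polar structure from dual attainment is at best delicate.

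The intended argument---the paper explicitly flags Lemma~\ref{le:absContTransportOneStep} as ``the main ingredient,'' and the same pattern is carried out in the proof of Theorem~\ref{thm:polarstruct}---runs by contraposition and uses two inputs absent from your proposal. If $B_k$ is not covered by any $(N_{\mu_k}\times\R)\cup(\R\times N_{\nu_k})$, then by the classical result \cite[Proposition~2.1]{BeiglbockGoldsternMareschSchachermayer.09} it is not $\Pi(\mu_k,\nu_k)$-polar, so some (non-martingale) coupling $\rho$ charges it; Lemma~\ref{le:absContTransportOneStep} then yields $P\in\M(\mu_k,\nu_k)$ with $P\gg\rho$, hence $P(B_k)>0$. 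Irreducibility enters only through that lemma (the strict separation $u_{\mu_k}<u_{\nu_k}$ on compacts of $I_k$ leaves room for the martingale perturbation), not through a dual optimizer. Replace your duality step by this two-step argument.
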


The following result of~\cite[Lemma~3.3]{BeiglbockNutzTouzi.15} will also be useful; it is the main ingredient in the proof of the preceding proposition.

\begin{lemma}\label{le:absContTransportOneStep}
Let $\mu \leq_c \nu$ be irreducible and let $\pi$ be a finite measure on~$\R^2$ whose marginals
$\pi_1,\pi_2$ satisfy\footnote{By $\pi_1 \leq \mu$ we mean that $\pi_1(A) \leq \mu(A)$  for every Borel set $A\subseteq \R$.} $\pi_1 \leq \mu$ and $\pi_2 \leq \nu$. Then, there exists
$P \in \M(\mu,\nu)$ such that $P$ dominates $\pi$ in the sense of absolute continuity.
\end{lemma}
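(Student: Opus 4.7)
The plan is to realize $P$ as $P = \mu \otimes \tilde\kappa$ for a martingale kernel $\tilde\kappa$ (meaning $\int y\, \tilde\kappa(x,dy) = x$ for $\mu$-a.e.\ $x$ and $\int \tilde\kappa(x,\cdot)\, \mu(dx) = \nu$) whose disintegration dominates that of $\pi$. First I would disintegrate $\pi = \pi_1 \otimes \kappa$ with $\kappa(x,\cdot)$ a probability kernel supported in $\bar{J}$, and set $f := d\pi_1/d\mu \in [0,1]$, so that $\pi = \mu \otimes (f(x)\,\kappa(x,dy))$. Since $\mu$ is concentrated on $I$ by irreducibility, $\pi_1$ is also concentrated on $I$ and $\pi$ lives on $I \times \bar{J}$. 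With this rewriting, $\pi \ll \mu \otimes \tilde\kappa$ reduces to the pointwise condition $\kappa(x,\cdot) \ll \tilde\kappa(x,\cdot)$ for $\pi_1$-a.e.\ $x$.

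The construction starts from a reference martingale kernel $\kappa_0$ coming from some $P_0 \in \M(\mu,\nu)$, which exists by Strassen's theorem (Proposition~\ref{prop:StrassenResult}), and perturbs it as
\[
  \tilde\kappa(x,dy) = (1-\varepsilon)\,\kappa_0(x,dy) + \varepsilon\,\eta(x,dy),
\]
for a small $\varepsilon > 0$, where $\eta(x,\cdot)$ is chosen as a probability kernel satisfying (a)~$\bary(\eta(x,\cdot)) = x$, (b)~$\kappa(x,\cdot) \ll \eta(x,\cdot)$, and (c)~$\mu \otimes \eta \in \M(\mu,\nu)$. Under these assumptions, $\tilde\kappa$ is automatically a martingale kernel with second marginal $\nu$, and the domination $\tilde\kappa(x,\cdot) \geq \varepsilon\, \eta(x,\cdot)$ together with (b) gives the desired absolute continuity. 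Pointwise, $\eta(x,\cdot)$ can be obtained by decomposing it as $\eta(x,dy) = \delta\,\kappa(x,dy) + (1-\delta)\,\rho(x,dy)$ for small $\delta > 0$, where $\rho(x,\cdot)$ is a probability measure supported on two points $a(x) < x < b(x)$ in $I$ with weights tuned to correct the barycenter to~$x$. The inclusion $\supp(\mu) \subseteq I$ and the connectedness of $I$ guarantee the existence of such a straddling pair, and standard measurable selection produces a Borel kernel.

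The main obstacle is enforcing~(c): the global constraint $\int \eta(x,\cdot)\, \mu(dx) = \nu$ couples the choice of $\eta(x,\cdot)$ across different~$x$. My plan is to decompose $I = \bigsqcup_{n \geq 1} I_n$ into Borel pieces on which the strict potential gap $u_\nu - u_\mu$ is uniformly bounded below by $1/n$, and to perform the local construction on each $I_n$ so that the correction measures $\rho(x,\cdot)$ for $x \in I_n$ draw their two-point support from an auxiliary base measure $\sigma_n \leq \nu - \pi_2$ whose total mass and barycenter match what is needed by~$\kappa$ restricted to $I_n$. Existence of such $\sigma_n$ for each $n$ is the crucial step, and this is exactly where the irreducibility is used essentially: the strict gap $u_\nu - u_\mu > 0$ on $I$, combined with $\pi_2 \leq \nu$, provides the positive slack needed to select~$\sigma_n$. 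Patching the pieces together with geometrically decreasing weights and combining with~$P_0$ then produces the required $P \in \M(\mu,\nu)$ dominating~$\pi$.
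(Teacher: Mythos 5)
The paper does not prove this lemma itself---it is quoted from \cite[Lemma~3.3]{BeiglbockNutzTouzi.15}---but the closest in-text analogue of the intended argument is the proof of Lemma~\ref{lem:specialpolar}. Your overall strategy (augment the disintegration of $\pi$ with two-sided correction mass so as to restore the martingale property, using irreducibility to guarantee room for the corrections) is the right one, but the outline has a genuine gap at its center: you insist that the correction kernel $\eta$ satisfy $\mu\otimes\eta\in\M(\mu,\nu)$ \emph{exactly}, and the mechanism you propose---auxiliary measures $\sigma_n\leq\nu-\pi_2$ whose ``total mass and barycenter match''---cannot deliver this. Hitting the second marginal exactly means the aggregated corrections $\int(1-\delta)\rho(x,\cdot)\,\mu(dx)$ must equal $\nu-\int\delta\,\kappa(x,\cdot)\,\mu(dx)$ \emph{as a measure}, not merely in mass and barycenter; nothing in the outline produces such an identity, and note that a kernel $\eta$ satisfying your (b) and (c) already \emph{is} the assertion of the lemma (the convex combination with $\kappa_0$ adds nothing), so the reduction is circular until $\eta$ is actually built. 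The standard escape is to \emph{not} require exact marginals in the kernel construction: cut $\pi$ into countably many pieces whose first marginals are compactly supported in $I$, turn each (suitably scaled) piece into a martingale measure $\theta$ with $\theta_1\le\mu$, $\theta_2\le\nu$ by the two-sided correction, use the fact that $u_\nu-u_\mu$ is uniformly bounded away from zero on compact subsets of $I$ to find $\epsilon>0$ with $\mu-\epsilon\theta_1\leq_c\nu-\epsilon\theta_2$, and complete via Strassen (Proposition~\ref{prop:StrassenResult}) by adding some $R\in\M(\mu-\epsilon\theta_1,\nu-\epsilon\theta_2)$. This completion step is where irreducibility actually does its work, and it is missing from your proposal.

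Two further problems with the correction step as written. First, $\sigma_n\leq\nu-\pi_2$ may be the zero measure (take $\pi\in\Pi(\mu,\nu)$ a full coupling), leaving no mass to draw corrections from; you must first replace $\pi$ by $\epsilon\pi$ (harmless, since $P\gg\pi$ iff $P\gg\epsilon\pi$), and the corrections should be taken from $\nu$ itself. Second, two-point measures $\rho(x,\cdot)$ at points $a(x)<x<b(x)$ ``in $I$'' cannot work: aggregated over a continuum of $x$ they cannot reproduce an atomless target, and their support must lie where $\nu$ actually has mass (i.e.\ in $J$), not merely in $I$. The corrections should instead be $x$-dependent multiples of $\nu|_{B^-}$ and $\nu|_{B^+}$ for fixed intervals $B^{\pm}$ of positive $\nu$-mass strictly to the left and right of the compact support of the current piece of $\pi_1$---exactly as in Step~2 of the proof of Lemma~\ref{lem:specialpolar}.
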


\section{The Polar Structure}\label{se:polarStructure}

The goal of this section is to identify all obstructions to martingale transports imposed by the marginals $\bmu=(\mu_0,\dots,\mu_n)$, and thus, conversely, the sets that can indeed be charged. We recall that a subset $B$ of $\R^{n+1}$ is called $\M(\bmu)$-polar if it is a $P$-nullset for all $P\in\M(\bmu)$. The result for the one-step case in Proposition~\ref{prop:polarOneStep} already exhibits an obvious type of polar set $B \subseteq \R^{n+1}$:  if for some $t$ there is an $\M(\mu_{t-1},\mu_t)$-polar 
set $B' \subseteq \R^2$ such that $B \subseteq \R^{t-1} \times B' \times \R^{n-t}$, then $B$ must be $\M(\bmu)$-polar. The following shows that unions of such sets are in fact the only polar sets of $\M(\bmu)$.

\begin{theorem}[Polar Structure]
\label{thm:polarstruct}
Let $\bmu=(\mu_0,\dots,\mu_n)$ be in convex order. Then a Borel set $B \subseteq \R^{n+1}$ is
$\M(\bmu)$-polar if and only if there exist $\mu_t$-nullsets $N_{t}$ such that
\begin{equation} 
\label{eqn:polarsets}
B \;\;\subseteq \;\; \bigcup_{t=0}^n (X_t)^{-1}(N_{t}) \;\cup\; \bigcup_{t=1}^n (X_{t-1},X_t)^{-1}\left(\bigcup_{k \geq 0} V^t_k\right)^c
\end{equation}
where $(V_k^t)_{k\geq1}$ are the irreducible components of $\M(\mu_{t-1},\mu_t)$
and $V_0^t$ is the corresponding diagonal component.
\end{theorem}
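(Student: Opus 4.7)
The forward (``if'') implication is immediate from the marginal constraints and the one-step result. If $N_t$ is a $\mu_t$-nullset, then any $P\in\M(\bmu)$ gives $P\bigl(X_t^{-1}(N_t)\bigr)=\mu_t(N_t)=0$. Likewise, the bivariate projection of $P$ onto coordinates $(t-1,t)$ lies in $\M(\mu_{t-1},\mu_t)$, so by Proposition~\ref{prop:polarOneStep} it is concentrated on $\bigcup_{k\geq 0}V^t_k$, and $P$ annihilates the preimage of its complement.

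For the converse I would argue by contrapositive. Define the ``geometric'' set
\[G \;:=\; \bigcap_{t=1}^{n}(X_{t-1},X_t)^{-1}\Bigl(\bigcup_{k\geq 0} V_k^t\Bigr),\]
so that every $P\in\M(\bmu)$ is concentrated on $G$ by the previous observation. The portion $B\cap G^c$ is already absorbed by the second union in~\eqref{eqn:polarsets}, so it suffices to show: any $\M(\bmu)$-polar Borel set $B\subseteq G$ is covered by a union of marginal nullsets $\bigcup_{t}X_t^{-1}(N_t)$.

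The crucial technical ingredient is a multi-step analogue of Lemma~\ref{le:absContTransportOneStep}:

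\smallskip
\noindent\emph{Key Lemma.} For every finite nonnegative measure $\pi$ on $\R^{n+1}$ concentrated on $G$ with marginals $\pi_t\leq\mu_t$ for $0\leq t\leq n$, there exists $P\in\M(\bmu)$ with $\pi\ll P$.

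\smallskip
I would prove the Key Lemma by induction on $n$. The base $n=1$ follows by applying Lemma~\ref{le:absContTransportOneStep} on each irreducible component of $(\mu_0,\mu_1)$ separately (with the identity transport on the diagonal component) and summing. For the inductive step, decompose $\pi$ and the first pair $(\mu_0,\mu_1)$ along the components $V^1_k$ of $\M(\mu_0,\mu_1)$ via Proposition~\ref{prop:oneStepDecomposition}, obtaining compatible restrictions $\mu_0^k\leq_c\nu^k$ with $\nu^k\leq\mu_1$ and a corresponding piece $\pi^k$. On each piece apply Lemma~\ref{le:absContTransportOneStep} to obtain a one-step kernel whose joint law dominates the bivariate projection of $\pi^k$. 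The remaining $n-1$ marginals $\nu^k,\mu_2,\dots,\mu_n$ still form a convex-order chain (since $\nu^k\leq\mu_1\leq_c\mu_2$), so the inductive hypothesis applied to each piece provides martingale kernels dominating the downstream marginals of $\pi^k$; concatenating these kernels yields $P\in\M(\bmu)$ with $\pi\ll P$.

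Granted the Key Lemma, the theorem follows. If $B\subseteq G$ is Borel and \emph{not} contained in any union $\bigcup_{t}X_t^{-1}(N_t)$ of marginal nullsets, an iterated disintegration/Radon--Nikodym argument produces a nonzero finite measure $\pi$ supported on $B$ with $\pi_t\leq\mu_t$ for every $t$. The Key Lemma then gives some $P\in\M(\bmu)$ with $\pi\ll P$, whence $P(B)\geq\pi(B)>0$, contradicting polarity.

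The principal obstacle is the inductive construction in the Key Lemma: one must reconcile the irreducible decomposition of $(\mu_{t-1},\mu_t)$ (which partitions mass according to where it lies at time $t{-}1$) with the requirement of maintaining the martingale property \emph{and} the prescribed marginals at \emph{all} later times. A careless concatenation of one-step transports can easily destroy the convex-order chain on the downstream problem, so the delicate point is choosing the component-wise decomposition of $\pi$ and the bridging kernels in a way that the residual $(n-1)$-step problem on each piece remains in convex order, so that the induction hypothesis applies. Once that bookkeeping is in place, the gluing is straightforward.
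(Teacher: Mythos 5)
Your overall architecture matches the paper's: the easy direction via Proposition~\ref{prop:polarOneStep}, reduction of the converse to a multi-step analogue of Lemma~\ref{le:absContTransportOneStep} (your Key Lemma is essentially the paper's Lemma~\ref{lem:polarstruct}, modulo a countable convex combination over the components $V_{\bk}$), and the classical result of \cite{BeiglbockGoldsternMareschSchachermayer.09} to produce a measure $\pi$ charging $B$ with $\pi_t\le\mu_t$. The difficulty is that the Key Lemma is where essentially all the work lies, and your sketch of its inductive proof has two genuine gaps.

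First, the chain $(\nu^k,\mu_2,\dots,\mu_n)$ to which you want to apply the induction hypothesis is not in convex order: $\nu^k\le\mu_1$ and $\mu_1\le_c\mu_2$ only give $\nu^k\le_{pc}\mu_2$ (the total masses do not even agree), so the hypothesis does not apply. You flag this as ``the principal obstacle'' but do not resolve it, and resolving it is precisely the content of the paper's Lemma~\ref{lem:specialpolar}: one must choose downstream sub-marginals that simultaneously dominate the later marginals of $\pi^k$, restore the convex order (which forces adding compensating mass on intervals $B^\pm$ to the left and right of the support), and remain summable to something $\le\mu_t$; the quantitative induction hypothesis (finitely many compact intervals, each inside a single component) exists exactly to make the uniform lower bounds $\epsilon$ and $C$ in that construction strictly positive. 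Moreover, this construction only yields marginals $\le\mu_t$, and the passage to exact marginals requires the separate potential-function argument at the end of the paper's proof of Lemma~\ref{lem:polarstruct}, which your proposal omits.

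Second, and more fundamentally, your forward-in-time induction cannot in general produce a measure dominating $\pi$. The induction hypothesis is applied to (a chain dominating) the projection of $\pi^k$ onto the last $n$ coordinates, so the resulting kernels at time $t\ge2$ depend only on $(x_1,\dots,x_{t-1})$, whereas the disintegration of $\pi^k$ at time $t$ may depend on $x_0$. Absolute continuity of a mixture does not imply absolute continuity of its components: if, under $\pi$, conditionally on $x_1$ the coordinate $x_0$ is uniformly distributed and $x_2=x_0$, then the time-$2$ kernel of $\pi$ is $\delta_{x_0}$ while its average over $x_0$ is Lebesgue measure, so a concatenation built only from the $(X_1,\dots,X_n)$-projection can annihilate the graph $\{x_2=x_0\}$ carrying all of $\pi$. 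This is why the paper's induction peels off the \emph{last} step instead: the dominating last-step kernel $\hat\kappa$ is constructed as a pointwise modification $\hat\kappa\ge c\,\kappa$ of the full-history disintegration $\kappa(x_0,\dots,x_{n-1},dx_n)$ of $\pi$ itself, so domination is automatic and no Markovianization is ever performed.
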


Before stating the proof, we introduce some additional terminology. 
The second part of~\eqref{eqn:polarsets} can be expressed as
\begin{align}
\bigcup_{t=1}^n (X_{t-1},X_t)^{-1}\left(\bigcup_{k \geq 0} V_k^t \right)^c
&= \left(\bigcap_{t=1}^n \bigcup_{k \geq 0} (X_{t-1},X_t)^{-1}(V_k^t)\right)^c \nonumber\\ 
&= \left(\bigcup_{k_1,\dots,k_{n} \geq 0} \, \bigcap_{t=1}^n (X_{t-1},X_t)^{-1}(V_{k_t}^t)\right)^c. \label{eq:irredCompDeriv}
\end{align}
For every $\bk=(k_1,\dots,k_{n})$, the set
$$
   V_{\bk}=\bigcap_{t=1}^n (X_{t-1},X_t)^{-1}(V_{k_t}^t) \subseteq \R^{n+1}
$$
as occurring in the last expression of~\eqref{eq:irredCompDeriv} will be referred to as an \emph{irreducible component} of $\M(\bmu)$; these sets are disjoint since $V^{t}_{k}\cap V^{t}_{k'}=\emptyset$ for $k\neq k'$. Moreover, we call their union
$$
  \V= \cup_{\bk} V_{\bk}
$$
the \emph{effective domain} of $\M(\bmu)$.

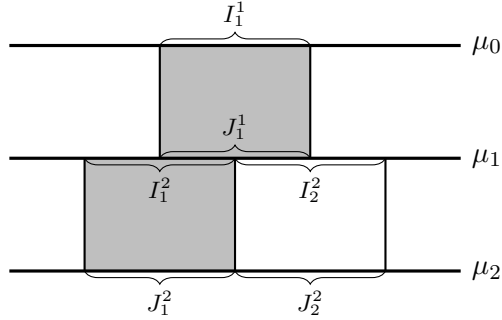
\begin{figure}
\begin{center}
\scalebox{1}{
\begin{tikzpicture}
\draw [fill=gray,gray,opacity=0.5] (-1,3) rectangle (1,1.5);
\draw [fill=gray,gray,opacity=0.5] (-2,1.5) rectangle (0,0);

\draw[very thick] (-3,3) -- (3,3) node[right] {$\mu_0$};
\draw[very thick] (-3,1.5) -- (3,1.5) node[right] {$\mu_1$};
\draw[very thick] (-3,0) -- (3,0) node[right] {$\mu_2$};

\draw[thick] (-1,3) -- (-1,1.5) {};
\draw[thick] (1,3) -- (1,1.5) {};
\draw[thick] (-2,1.5) -- (-2,0) {};
\draw[thick] (0,1.5) -- (0,0) {};
\draw[thick] (2,1.5) -- (2,0) {};

\draw [decorate,decoration={brace,amplitude=5pt}] 
(-1,3) -- (1,3)node [midway,yshift=12pt] {\footnotesize $I^1_1$};
\draw [decorate,decoration={brace,amplitude=5pt}] 
(-1,1.5) -- (1,1.5)node [midway,yshift=12pt] {\footnotesize $J^1_1$};

\draw [decorate,decoration={brace,mirror,amplitude=5pt}] 
(-2,1.5) -- (0,1.5)node [midway,yshift=-12pt] {\footnotesize $I^2_1$};
\draw [decorate,decoration={brace,mirror,amplitude=5pt}] 
(0,1.5) -- (2,1.5)node [midway,yshift=-12pt] {\footnotesize $I^2_2$};

\draw [decorate,decoration={brace,mirror,amplitude=5pt}] 
(-2,0) -- (0,0)node [midway,yshift=-12pt] {\footnotesize $J^2_1$};
\draw [decorate,decoration={brace,mirror,amplitude=5pt}] 
(0,0) -- (2,0)node [midway,yshift=-12pt] {\footnotesize $J^2_2$};
\end{tikzpicture}}
\end{center}
\caption{The shaded area represents $V_{\bk}$ for $\bk=(1,1)$.}
\label{fig:polarStructure}
\end{figure}

Roughly speaking, an irreducible component $V_{\bk}$ is a chain of irreducible components from the individual steps $(t-1,t)$.
In the one-step case considered in~\cite{BeiglbockJuillet.12,BeiglbockNutzTouzi.15}, it was possible and useful to decompose the transport problem into its irreducible components and study those separately to a large extent; cf.\ Proposition~\ref{prop:oneStepDecomposition}. This is impossible in the multistep case, as illustrated by the following example.

\begin{example}
\label{ex:polarStructure}
Consider the two-step martingale transport problem with marginals
$\mu_0 = \delta_0$, $\mu_1 = \frac{1}{2}(\delta_{-1}+\delta_1)$ and
$\mu_2 = \frac{1}{4}(\delta_{-2} + 2\delta_0 + \delta_2)$.
Then the irreducible components are given by
\begin{align*}
V_{00} &= \{(x,x,x) : x \notin (-2,2)\}\\
V_{01} &= \{(x,x) : x \in (-2,-1]\} \times [-2,0]\\
V_{02} &= \{(x,x) : x \in [1,2)\} \times [0,2]\\
V_{10} &= (-1,1) \times \{0\}\times \{0\}\\
V_{11} &= (-1,1) \times [-1,0) \times [-2,0]\\
V_{12} &= (-1,1) \times (0,1] \times [0,2].
\end{align*}
There is only one martingale transport $P\in\M(\bmu)$, given by
\[P = \frac{1}{4}(\delta_{(0,-1,-2)} + \delta_{(0,-1,0)} + \delta_{(0,1,0)} + \delta_{(0,1,2)}). \]
While $P$ is supported on $V_{11} \cup V_{12}$, it cannot be decomposed into two martingale parts that are supported on $V_{11}$ and $V_{12}$, respectively:
$V_{11}$ and $V_{12}$ are disjoint, but 
$P|_{V_{11}} = \frac{1}{4}(\delta_{(0,-1,-2)} + \delta_{(0,-1,0)})$
is not a martingale.
\end{example}

The main step in the proof of Theorem \ref{thm:polarstruct} will be the following lemma.

\begin{lemma}
\label{lem:polarstruct}
Let $V_{\bk}$ be an irreducible component of $\M(\bmu)$ and consider a measure $\pi$
concentrated on $V_{\bk}$ such that $\pi_t \leq \mu_t$ for $t = 0,\dots,n$. Then there exists a transport $P \in \M(\bmu)$ which dominates $\pi$ in the sense of absolute continuity.
\end{lemma}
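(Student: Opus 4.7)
The plan is to proceed by induction on $n$, reducing everything to the one-step Lemma~\ref{le:absContTransportOneStep}. The base case $n=1$ is precisely that lemma when $k_1\ge 1$; the diagonal case $k_1=0$ is trivial, as $\mu_0=\mu_1$ on $I_0$ and $\pi$ is already dominated by the identity coupling of $\mu_0|_{I_0}$ to itself.

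For the inductive step I would combine two ingredients. First, the projection $\pi^{n-1}$ of $\pi$ onto the first $n$ coordinates lives on the irreducible component $V_{\bk'}\subseteq\R^n$ of $\M(\mu_0,\dots,\mu_{n-1})$ indexed by $\bk'=(k_1,\dots,k_{n-1})$, with marginals still satisfying $\pi^{n-1}_t\le\mu_t$; the inductive hypothesis then produces $P'\in\M(\mu_0,\dots,\mu_{n-1})$ with $P'\gg\pi^{n-1}$. Second, the bivariate marginal $\pi_{n-1,n}$ is concentrated on the one-step component $V^n_{k_n}\subseteq\R^2$ with marginals bounded by $\mu_{n-1}$ and $\mu_n$, so Lemma~\ref{le:absContTransportOneStep} yields $Q\in\M(\mu_{n-1},\mu_n)$ with $Q\gg\pi_{n-1,n}$; disintegrate $Q=\mu_{n-1}\otimes\kappa^Q_n$ into a martingale kernel. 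The candidate transport is
\[
P \;:=\; P'\otimes\kappa^Q_n,
\]
where $\kappa^Q_n$ is reinterpreted as a kernel that depends only on $X_{n-1}$. Then $P\in\M(\bmu)$: the marginals at times $t\le n-1$ come from $P'$, at time $n$ one has $\mu_n$ because $\mu_{n-1}\otimes\kappa^Q_n=Q$ transports $\mu_{n-1}$ to $\mu_n$, and $P$ is a martingale since $P'$ is and $\kappa^Q_n$ is a martingale kernel.

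The crux will be to verify $P\gg\pi$. Disintegrate $\pi=\pi^{n-1}\otimes\kappa^\pi_n$ and introduce the bivariate conditional $\bar\kappa^\pi_n(x_{n-1},\cdot):=\pi(X_n\in\cdot\mid X_{n-1}=x_{n-1})$. The dominance follows by chaining, for $\pi^{n-1}$-a.e.\ $\bar\bx$,
\[
\kappa^\pi_n(\bar\bx,\cdot)\;\ll\;\bar\kappa^\pi_n(X_{n-1}(\bar\bx),\cdot)\;\ll\;\kappa^Q_n(X_{n-1}(\bar\bx),\cdot),
\]
together with $\pi^{n-1}\ll P'$. The right-hand dominance is the fiberwise form of $\pi_{n-1,n}\ll Q$ along the $X_{n-1}$-marginal. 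The left-hand dominance comes from a Lebesgue-decomposition argument applied to the mixture representation $\bar\kappa^\pi_n(x_{n-1},\cdot)=\int\kappa^\pi_n(\bar\bx,\cdot)\,\pi(d\bar\bx\mid X_{n-1}=x_{n-1})$: almost every constituent of a mixture of probability measures is absolutely continuous with respect to the mixture itself.

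The main obstacle will be precisely this last step. The constructed kernel $\kappa^Q_n$ depends only on $X_{n-1}$, whereas $\pi$'s conditional $\kappa^\pi_n$ is genuinely path-dependent, so the naive Markov-style gluing succeeds only because the mixture-to-constituent absolute continuity propagates the one-step dominance $\pi_{n-1,n}\ll Q$ to full path space. Everything else is a fairly mechanical induction that reduces the multiperiod dominance to a single invocation of Lemma~\ref{le:absContTransportOneStep} at the last step.
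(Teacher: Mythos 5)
There is a genuine gap, and it sits exactly where you located ``the main obstacle.'' The claim that ``almost every constituent of a mixture of probability measures is absolutely continuous with respect to the mixture itself'' is false: Lebesgue measure on $[0,1]$ is the mixture $\int_0^1 \delta_\theta\,d\theta$, yet $\delta_\theta$ is singular with respect to it for every $\theta$. What is true is only that for each \emph{fixed} Borel set $A$ with $\bar\kappa^\pi_n(x_{n-1},A)=0$ one has $\kappa^\pi_n(\bar\bx,A)=0$ for a.e.\ $\bar\bx$ in the fiber; the exceptional set depends on $A$, and one cannot pass to all Borel sets simultaneously. Consequently the left-hand dominance $\kappa^\pi_n(\bar\bx,\cdot)\ll\bar\kappa^\pi_n(X_{n-1}(\bar\bx),\cdot)$ fails in general, and with it the conclusion $P\gg\pi$. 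A concrete failure: take $\pi$ (scaled so that $\pi_t\leq\mu_t$) under which $X_n=f(X_0)$ a.s.\ for an injective affine $f$ and $X_0$ is atomless given $X_{n-1}$. Then $\bar\kappa^\pi_n(x_{n-1},\cdot)$ is atomless, so $Q\gg\pi_{n-1,n}$ is compatible with $\kappa^Q_n(x_{n-1},\cdot)$ being atomless, and then $P=P'\otimes\kappa^Q_n$ gives zero mass to the set $\{X_n=f(X_0)\}$ which carries all of $\pi$. The defect is structural: any Markovian gluing through a kernel depending only on $x_{n-1}$ forgets the path-dependence of $\pi$'s last-step conditional, and absolute continuity of the bivariate marginals cannot recover it.

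This is precisely why the paper does not replace $\pi$'s kernel. Its proof disintegrates $\pi=\pi'\otimes\kappa(x_0,\dots,x_{n-1},dx_n)$ and keeps the \emph{path-dependent} kernel $\kappa$, repairing the barycenter fiber by fiber by adding mass on compact sets $B^\pm$ to the left and right of the support of $Q'_{n-1}$ (so that $\hat\kappa(\bx,\cdot)=c(\varepsilon^-\mu_n|_{B^-}+\kappa(\bx,\cdot)+\varepsilon^+\mu_n|_{B^+})$ is a martingale kernel dominating $\kappa(\bx,\cdot)$). The price is that one must control the total mass added so that the resulting $n$-th marginal stays below $\mu_n$ after scaling, which is the purpose of the quantitative induction hypothesis in Lemma~\ref{lem:specialpolar} (marginals carried by finitely many compact intervals, each inside a single component, so that the relevant infima of $\mu_n(B^\pm_k)$ over the finitely many active components are strictly positive). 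If you want to salvage your outline, you must replace the step ``$Q=\mu_{n-1}\otimes\kappa^Q_n$ reinterpreted as a Markov kernel'' by a fiberwise correction of $\pi$'s own kernel, and then confront the mass-control issue that your version never encounters.
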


Deferring the proof, we first show how this implies the theorem.

\begin{proof}[Proof of Theorem \ref{thm:polarstruct}]
Clearly $(X_t)^{-1}(N_{t})$ is $\M(\bmu)$-polar for $t=0,\dots,n$ and
$(X_{t-1},X_t)^{-1}\left(\bigcup_{k \geq 0} V_k^t\right)^c$ is $\M(\bmu)$-polar for
$t=1,\dots,n$. This shows that~\eqref{eqn:polarsets} is sufficient for $B \subseteq \R^{n+1}$
to be $\M(\bmu)$-polar.

\FS{Conversely, suppose that~\eqref{eqn:polarsets} does not hold; we show that $B$ is not
$\M(\bmu)$-polar. In view of~\eqref{eq:irredCompDeriv}, by passing to a subset of $B$ if necessary, we may assume that
\[B \subseteq \V = \bigcup_{\bk} V_{\bk} = \bigcup_{\bk} \bigcap_{t=1}^n(X_{t-1},X_t)^{-1}(V_{k_t}^t).\]

We may also assume that there are no $\mu_t$-nullsets $N_{t}$ such that
$B \subseteq \cup_{t=0}^{n}(X_t)^{-1}(N_{t})$. By a result of classical optimal transport \cite[Proposition~2.1]{BeiglbockGoldsternMareschSchachermayer.09}, this entails that $B$ is not $\Pi(\bmu)$-polar; i.e.\ we can find a measure $\rho \in \Pi(\bmu)$ such that $\rho(B) > 0$. 

We now write $B = \bigcup_{\bk} B \cap V_{\bk}$. As $\rho(B) = \sum_{\bk} \rho(B \cap V_{\bk}) > 0$, we can
find some $\bk$ such that $\rho(B \cap V_{\bk}) > 0$. But then $\pi := \rho|_{V_{\bk}}$ satisfies the assumptions of Lemma~\ref{lem:polarstruct} which yields $P \in \M(\bmu)$ such that $P \gg \pi$. In particular, $P(B) > 0$ and $B$ is not $\M(\bmu)$-polar.}
\end{proof}

\subsection{Proof of Lemma~\ref{lem:polarstruct}}

The reasoning for Lemma~\ref{lem:polarstruct} follows an induction on the number $n$ of time steps; its rigorous formulation requires a certain amount of control over subsequent steps of the transport problem. Thus, we first state a more quantitative version of (the core part of) the lemma that is tailored to the inductive argument.

\begin{definition}
Let $\bmu$ be in convex order and $\V$ the effective domain of $\M(\bmu)$. We say that
a finite measure $\pi$ has a \emph{compact support family} if there are 
disjoint compact product sets\footnote{By a compact product set we mean a set $K = A_0 \times \cdots \times A_n$ where
each $A_t \subseteq \R$ is compact.}
$K_1,\dots,K_m \subseteq \V$ with $\pi(\cup_i K_i) = \pi(\R^{n+1})$ such that
$K_i \subseteq V_{\bk_i}$ for some irreducible component $V_{\bk_i}$ for all $i=1,\dots,m$.
\end{definition}

\begin{definition}
\label{def:diagonallycompatible}
Let $\bmu$ be in convex order, $t\leq n$ and $\sigma \leq \mu_t$ a finite measure on~$\R$. 
If $t=n$, we 
say that $\sigma$ is \emph{diagonally compatible} (with $\bmu$) if there is a finite family of compact
sets $L_1,\dots,L_m \subseteq \R$ with $\sigma(\cup_i L_i) = \sigma(\R)$.
Whereas if $t < n$, we require in addition that for every $i$, 
either (a) $L_i \subseteq I_k$ for some irreducible component $(I_k,J_k)$ of $\M(\mu_t,\mu_{t+1})$ or (b)
$L_i \subseteq I_0$ and there is $t+1 \leq t' \leq n$ such that $L_i \subseteq I_0^s$ for the
diagonal components of $\M(\mu_s,\mu_{s+1})$ for all $t \leq s < t'$ and $L_i \subseteq I_k^{t'}$
for some (non-diagonal) irreducible component $(I_k^{t'},J_k^{t'})$ of $\M(\mu_{t'},\mu_{t'+1})$, where we set
$I_k^n = J_k^n= \R$ for notational convenience.
\end{definition}

\begin{lemma} \label{lem:diagonalcomponentcontinuation}
Let $t<n$ and let $L \subseteq I_0$ be a compact interval contained in the diagonal
component of $\M(\mu_t,\mu_{t+1})$ such that $\mu_t(L) > 0$. There exist a compact interval
$L' \subseteq L$ with $\mu_t(L') > 0$ and $t+1 \leq t' \leq n$ such that 
$L' \subseteq I_0^s$ for the diagonal component of $\M(\mu_s,\mu_{s+1})$ for all
$t \leq s < t'$ and $L' \subseteq I_k^{t'}$ for some (non-diagonal) irreducible
component $(I_k^{t'},J_k^{t'})$ of $\M(\mu_{t'},\mu_{t'+1})$, where we again set
$I_k^n = J_k^n = \R$ for notational convenience.
\end{lemma}

\begin{proof}
The statement is trivially satisfied for $t = n-1$ as we can just take $L' = L$. For $t < n-1$,
consider the family of irreducible components $(I_k^{t+1},J_k^{t+1})$ of $\M(\mu_{t+1},\mu_{t+2})$.
We distinguish three cases.

\FS{(i) First, consider the case where $L \cap I_k^{t+1} = \emptyset$ for all $k \geq 1$, then $L$ is contained in 
the diagonal component of $\M(\mu_{t+1},\mu_{t+2})$.

(ia) If $L = \{x\}$ consists of a single point with positive mass, then we can conclude by induction
from the result for $t+1$.

(ib) If no endpoint of $L$ is on the boundary of some component $I_k^t$, then observe that $\mu_t|_L = \mu_{t+1}|_L$.
We can find $L' \subseteq L$ from the statement of the lemma for $t+1$. Then $L'$ gives the result
as $\mu_t(L') = \mu_{t+1}(L') > 0$.

(ic) If $L$ contains more than one point, and also the endpoint of some component $I_k^t$. When
this endpoint $x$ has positive point mass, we can set $L'=\{x\}$ and conclude as in (ia).
If the endpoint has zero mass, we can find $\bar{L} \subseteq L$ compact with $\mu_t(\bar{L}) > 0$
that does not contain this endpoint and argue as in (ib). (Observe that there might be at most two
endpoints.)}

(ii) Next, let $k \geq 1$ be such that $\mu_{t+1}(L \cap I_k^{t+1}) > 0$ (and in particular
$L \cap I_k^{t+1} \neq \emptyset$). Then we can find a compact interval $L' \subseteq L \cap I_k^{t+1}$
such that \FS{$\mu_{t}(L') > 0$} and we directly see that $L'$ satisfies the statement of the lemma.

\FS{(iii) Finally, suppose that there is $k \geq 1$ with $L \cap I_k^{t+1}\neq \emptyset$ but $\mu_{t}(L \cap I_k^{t+1})=0$.
In particular this means that $L \not\subseteq I_k^{t+1}$. It furthermore means that $I_k^{t+1} \not\subseteq L$, as
otherwise $\mu_{t+1}(I_k^{t+1}) = \mu_t(I_k^{t+1}) = \mu_t(L \cap I_k^{t+1}) = 0$ which contradicts the definition
of $I_k^{t+1}$.}
As $L$ is a compact interval and $I_k^{t+1}$ is an open
interval, we have that $L \backslash I_k^{t+1}$ is a compact interval and
$\mu_t(L \backslash I_k^{t+1}) = \mu_t(L) > 0$. Notice that there can be at most two such components
$I_k^{t+1}$ for fixed $L$ and we will be in case~(i) after removing both of them if necessary.
\end{proof}

\begin{lemma}\label{lem:compatiblerestriction}
Let $t\leq n$ and let  $J \subseteq \R$ be an interval such that $\mu_t(J) > 0$. Then we can find a compact interval 
$K \subseteq J$ with $\mu_t(K) > 0$ such that $\mu_t|_K$ is diagonally compatible.
\end{lemma}

\begin{proof}
The case $t=n$ is trivial. Thus, let $t<n$. We consider the family $\{I_k\}_{k \geq 1}$ of open sets corresponding to the irreducible components
of $\M(\mu_t,\mu_{t+1})$ and distinguish two cases.

(i) There is some $k \geq 1$ such that $\mu_t(I_k \cap J) > 0$. In this case, we can choose
a compact interval $K \subseteq I_k \cap J$ such that $\mu_t(K) > 0$.

(ii) Now suppose that $\mu_t(I_k \cap J) = 0$ for all $k \geq 1$. Then we first notice that there
are at most two components $I_{k_1}$,$I_{k_2}$ so that $I_{k_i} \cap J \neq \emptyset$ and $J \backslash (I_{k_1} \cup I_{k_2})$
is still a nonempty interval with positive $\mu_t$-mass, since $I_k$ cannot be contained in $J$. We can therefore assume without loss of generality that
$J \subseteq I_0$ and is compact. Now we can apply Lemma \ref{lem:diagonalcomponentcontinuation} to find
a subinterval $K \subseteq J$ such that $\mu_t|_K$ is diagonally compatible.
\end{proof}

\begin{lemma}\label{lem:specialpolar}
Let $t \leq n$ and let $\pi$ be a measure on 
$\R^{t+1}$  that has a compact support family with respect to
$\mu_0,\dots,\mu_t$ and satisfies $\pi_s \leq \mu_s$ for $s \leq t$. In addition, suppose that $\pi_t$ is diagonally compatible.

Then there is a martingale measure $Q$ on $\R^{t+1}$ that dominates $\pi$ in the sense of absolute continuity and has a compact support
family with respect to $\mu_0,\dots,\mu_t$ and satisfies $Q_s \leq \mu_s$ for $s \leq t$. In addition, $Q_t$ can be chosen to be diagonally compatible. Finally, $Q$ can be chosen such that $dQ= g d\pi + d\sigma$ where the density $g$ is bounded and the measure $\sigma$ is singular with respect to~$\pi$.
\end{lemma}

\begin{proof}
We proceed by induction on $t$. For $t=0$ 
there is nothing to prove; we can set $Q = \pi$. 

Consider $t \geq 1$ and assume that the lemma has already been shown for $(t-1)$-step measures. 
We disintegrate 
\begin{equation}\label{eq:piDecomp}
  \pi = \pi' \otimes \kappa(x_0,\dots,x_{t-1},dx_t)
\end{equation}
and observe that $\pi'$ satisfies the conditions of the lemma. In particular, $\pi'_{t-1}$ must be
diagonally compatible: the compact sets that it is supported on are either contained in
irreducible components of $\M(\mu_{t-1},\mu_t)$ or in the diagonal component.
Any such compact subset of the diagonal component of $\M(\mu_{t-1},\mu_t)$ must
correspond to one of the finitely many compact sets in the support of 
$\pi_t$ so that they inherit the compatibility property from these sets.

By the induction assumption, we then find a martingale measure $Q'\gg \pi'$ on $\R^t$ with the stated properties. In particular, the marginal $Q'_{t-1}$ is diagonally compatible with $\bmu$.

Again, let $\{I_k\}_{k \geq 1}$ be the open intervals from the irreducible domains $(I_k,J_k)$ of $\M(\mu_{t-1},\mu_t)$ and let $I_0$ denote the corresponding diagonal domain. 
We shall construct a martingale kernel $\hat{\kappa}$ by suitably manipulating $\kappa$. Let us observe that since $\pi$ is concentrated on $\V$ and has a compact support family with respect to $\mu_0,\dots,\mu_t$, the following hold for $\pi'$-a.e.\ $\bx=(x_0,\dots,x_{t-1})\in\R^{t}$
and a finite family of compact sets $L_i$ with properties (a) or (b) from Definition \ref{def:diagonallycompatible}:
\begin{itemize}
  \item $\kappa(\bx,\cdot) = \delta_{x_{t-1}}$ whenever $x_{t-1} \in I_0$,
  \item $\kappa(\bx,\cdot)$ is concentrated on some $L_i$ with $L_i \subseteq J_k$ for $x_{t-1} \in I_k$ with $k\geq 1$ and  $Q'_{t-1}(I_k) > 0$.
\end{itemize}
By changing $\kappa$ on a $\pi'$-nullset, we may assume that these two properties 
hold for all $\bx\in\R^{t}$.

\emph{Step~1.} Next, we argue that we may change $Q'$ and $\kappa$ such that the marginal $(Q' \otimes \kappa)_t= (Q' \otimes \kappa)\circ X_{t}^{-1}$ satisfies
\begin{equation}\label{eq:kappaMarginalWlog}
  (Q' \otimes \kappa)_t \leq \mu_t.
\end{equation}
Indeed, recall that $dQ'= dQ'_{abs}+ d\sigma' = g' d\pi' + d\sigma'$ where the density $g'$ is bounded and $\sigma'$ is singular with respect to $\pi'$. Using the Lebesgue decomposition theorem, we find a Borel set $A \subseteq \R^t$ such that $\sigma'(A) = \sigma'(\R^t)$ and $\pi'(A) = 0$.
By scaling $Q'$ with a constant we may assume that $g'\leq 1/2$. As $\pi_{t}\leq\mu_{t}$, the marginal $(Q'_{abs}\otimes \kappa)_{t}$ is then bounded by $\frac12 \mu_{t}$, and it remains to bound $(\sigma'\otimes \kappa)_{t}$ in the same way.

Note that $Q'_{t-1}\leq\mu_{t-1}$ implies $\sigma'_{t-1}\leq\mu_{t-1}$. We may change $\kappa$ arbitrarily on the set $A$ without invalidating~\eqref{eq:piDecomp}. Indeed, for each irreducible component $(I_k,J_k)$ of $\M(\mu_{t-1},\mu_t)$ we choose and fix a compact interval
$K_k \subseteq J_k$ with $\mu_t(K_k)>0$ such that $\mu_t|_{K_k}$ is diagonally compatible; this is possible by Lemma \ref{lem:compatiblerestriction}.
For
$\bx=(x_0,\dots,x_{t-1}) \in A$ such that $x_{t-1} \in I_k$ we then define
\[ \kappa(\bx,\cdot) := \frac{1}{\mu_t(K_k)} \mu_t|_{K_k}.\]
Set $\epsilon_{k} = \mu_t(K_k)/ \mu_{t-1}(I_{k})$. 
Then
\[\epsilon :=  \inf_{k:\, Q'_{t-1}(I_k) > 0} \epsilon_{k}\wedge 1\]
is strictly positive because there are only finitely many $k$ with $Q'_{t-1}(I_k) > 0$ (this is the purpose of the induction assumption that $Q'_{t-1}$ is diagonally compatible). As $\sigma'_{t-1}\leq\mu_{t-1}$, we may scale $Q'$ once again to obtain  $\sigma'_{t-1}\leq \frac{\epsilon}{6} \mu_{t-1}$.  
We now have
\[(\sigma'|_{\R^{t-1}\times I_k} \otimes \kappa)_t = \sigma'_{t-1}(I_k)
\frac{1}{\mu_t(K_k)} \mu_t|_{K_k} \leq \frac{\epsilon}{6} \frac{\mu_{t-1}(I_k)}{\mu_t(K_k)} \mu_t|_{K_k} \leq \frac{1}{6}\mu_t|_{K_k}.\]
For the diagonal domain $I_{0}$ the corresponding inequality holds because we have $\kappa(\bx,\cdot)=\delta_{x_{t-1}}$ for $x_{t-1}\in I_{0}$ and $\sigma'_{t-1}|_{I_{0}}\leq \frac{1}{6}\mu_{t-1}|_{I_{0}}\leq\frac{1}{6}\mu_{t}|_{I_{0}}$. As a consequence, we have $(\sigma'\otimes \kappa)_{t}\leq \frac12 \mu_{t}$ as desired, so that we may assume~\eqref{eq:kappaMarginalWlog} in what follows.

\emph{Step~2.} We now construct a martingale kernel $\hat{\kappa}$ such that $Q=Q'\otimes \hat{\kappa}$ has the required properties. 
For a fixed irreducible component $(I_k,J_k)$ we have that $Q'_{t-1}|_{I_k} = Q'_{t-1}|_K$ for some compact $K \subseteq I_k$.
We can find compact intervals $B^-,B^+ \subseteq J_k$ with $\mu_t(B^-) > 0$ and $\mu_t(B^+)>0$ such that $B^-$ is
to the left of $K$ and $B^+$ is to the right of $K$, in the sense that $x < y < z$ for $x \in B^-$, $y \in K$ and $z \in B^+$. By Lemma~\ref{lem:compatiblerestriction}, we can further assume that we have $B^+ \subseteq I_k^t$ and $B^- \subseteq I_{k'}^t$ for some $k,k' \geq 0$, where $(I_l^t)_{l\geq 0}$ belong to the components of $\M(\mu_t,\mu_{t+1})$, and that $\mu_t|_{B^\pm}$ is diagonally compatible.

Next, we define two nonnegative functions $\bx\mapsto \varepsilon^-(\bx), \varepsilon^+(\bx)$ for $\bx=(x_{0},\dots,x_{t-1})\in\R^{t-1} \times K$ as follows:
\begin{itemize}
  \item for $\bx$ such that $\bary (\kappa(\bx,\cdot))<x_{t-1}$, let $\varepsilon^+$ be the unique number such that $\kappa(\bx,\cdot) + \varepsilon^+(\bx)\cdot \mu_t|_{B^+}$ has barycenter $x_{t-1}$,
  \item  for $\bx$ such that $\bary (\kappa(\bx,\cdot))>x_{t-1}$, let $\varepsilon^-$ be the unique number
  such that $\kappa(\bx,\cdot) + \varepsilon^-(\bx)\cdot\mu_t|_{B^-}$ has barycenter $x_{t-1}$,
  \item $\varepsilon^\pm(\bx)=0$ otherwise.
\end{itemize}
Observe that these numbers always exist because $B^-$ and $B^+$ have positive mass and positive distance from the points $x_{t-1}\in K$. We now define the martingale kernel $\hat{\kappa}$ by
\[\hat{\kappa}(\bx) := c(\varepsilon^-\cdot \mu_t|_{B^-} + \kappa + \varepsilon^+\cdot\mu_t|_{B^+})\]
where $0<c\leq 1$ is a normalizing constant such that $\hat{\kappa}$ is again a stochastic kernel. We also define $\hat{\kappa}(\bx)=\kappa(\bx)$ for $\bx$ on the diagonal domain.

For each $k\geq 1$, let $B^\pm_k$ denote the sets associated with $I_k$ as above. Once again, the number
\[C := \frac{1}{3}\inf_{k:\, Q'_{t-1}(I_k) > 0} \,[\mu_t(B^-_k) \wedge \mu_t(B^+_k)]\]
is strictly positive because there are only finitely many $k$ with $Q'_{t-1}(I_k) > 0$. We can now define 
\[Q := C \cdot (Q' \otimes \hat{\kappa}).\]
Then $Q$ is a martingale transport whose marginals satisfy $Q_s \leq Q'_{s} \leq \mu_s$ for $0\leq s\leq t-1$ whereas $Q_t \leq \mu_t$ by~\eqref{eq:kappaMarginalWlog}, the construction of $\hat{\kappa}$ and the choice of $C$; indeed, for every $x_{t-1} \in I_k^t$ we have
\begin{align*}
3C\hat{\kappa}(\bx) &\leq 3C\varepsilon^-\cdot \mu_t|_{B^-} + 3C\kappa + 3C\varepsilon^+\cdot\mu_t|_{B^+} \\
&\leq \mu_t|_{B^-} + \kappa + \mu_t|_{B^+} \leq 2\mu_t + \kappa.
\end{align*}

To see that $Q_t$ is diagonally compatible, observe that $Q_t$
is supported by a finite family of compact sets consisting of the following:
\begin{itemize}
  \item a finite family of compact sets $\bar{L}_i \subseteq I_0$ such that $Q'_{t-1}|_{\bar{L}_i}$ is diagonally compatible (from the induction hypothesis that $Q'_{t-1}$ is diagonally compatible),
  \item a finite family of compact sets $L_i \subseteq J_k$ for some $k \geq 1$ with $Q_{t-1}'(I_k) > 0$ such that $Q_t|_{L_i} \leq \mu_t|_{L_i}$ is diagonally compatible, and
  \item the sets $B^\pm_k$ for the finitely many $k$ such that $Q'_{t-1}(I_k) > 0$, where
  $Q_t|_{B^\pm_k} \leq \mu_t|_{B^\pm_k}$ is diagonally compatible.
\end{itemize}

It remains to check that $Q$ has the required decomposition with respect to $\pi$. 
Indeed, $\hat{\kappa}$ can be decomposed as
\[\hat{\kappa} = c\kappa+(1-c)\kappa^{\bot}\]
where $\kappa^{\bot}$ is singular to $\kappa$.
Recalling the decomposition $Q'= Q'_{abs}+ \sigma'$, we then have
\[
  Q' \otimes \hat{\kappa} = cQ'_{abs}\otimes \kappa \;+\; (1-c)Q'_{abs}\otimes\kappa^{\bot} \;+\; \sigma'\otimes \hat{\kappa}.
\]
The last two terms are singular with respect to $\pi=\pi'\otimes \kappa$, and the first term is absolutely continuous with bounded density.
\end{proof}

\begin{proof}[Proof of Lemma \ref{lem:polarstruct}]
Let $\pi$ be a measure with marginals $\pi_t \leq \mu_t$ for all $t$ which is concentrated on some irreducible component $V=V_{\bk}$ and thus, in particular, on the effective domain $\V$.

\emph{Step~1.} We first decompose $\pi = \sum_{m=1}^\infty \pi^m$ such that each $\pi^m$ satisfies the requirements of Lemma~\ref{lem:specialpolar} with $t=n$. 

Indeed, let $V = \cap_{t=1}^n(X_{t-1},X_t)^{-1}(V_{k_t}^t)$ and
suppose first that $k_t \neq 0$ for $1 \leq t \leq n$. Then, we can write $V$ as a product of nonempty intervals: $V = A_0 \times \dots \times A_n$
where $A_0 = I^1_{k_1}$, $A_n = J^n_{k_n}$ and $A_t = J^t_{k_t} \cap I^{t+1}_{k_{t+1}}$
for $1 < t < n$. Thus, we can choose increasing families of compact intervals $K_t^m$ such that 
$A_t = \cup_{m\geq 1} K_t^m$ for all $t$. Setting 
$\pi^1 := \pi|_{\prod_{t=0}^n K_t^1}$ and 
$\pi^m := \pi|_{\prod_{t=0}^{n} K_t^{m} \backslash \prod_{t=0}^{n} K_t^{m-1}}$
for $m > 1$ yields the required decomposition.

If $k_t = 0$ for one or more $1 \leq t \leq n$, we have
$V \subseteq A_0 \times \dots \times A_n$, where $A_t$ is defined as above when
 $k_t \neq0\neq k_{t+1}$ but we use $\R$ instead of $J^t_{k_t}$  when $k_t = 0$ and $\R$ instead of $I^{t+1}_{k_{t+1}}$ when $k_{t+1}=0$.
After these modifications, $\pi^m$ can be defined as above; recall that diagonal components are always closed.

\emph{Step~2.} For each of the measures $\pi^m$, Lemma~\ref{lem:specialpolar} yields a martingale measure $Q^m \gg \pi^m$ with the properties stated in the lemma. In particular, each $Q^{m}$ has a compact support family. We show below that there exist $P^m \in \M(\bmu)$ such that $P^m \gg Q^m$, and then $P := \sum 2^{-m} P^m$ satisfies $P \in \M(\bmu)$ and $P \gg \pi$ as desired.

To complete the proof, it remains to show that for fixed $m \geq 1$ there exist $0<\epsilon<1$ and $\bar{Q}^m \in \M(\bmu - \epsilon (Q^m_0,\dots,Q^m_n))$, as we may then conclude by setting $P^m := \epsilon Q^m + \bar{Q}^m \in \M(\bmu)$. By Proposition~\ref{prop:StrassenResult}, the set $\M(\bmu - \epsilon (Q^m_0,\dots,Q^m_n))$ is nonempty if the marginals are in convex order, or equivalently if the potential functions satisfy
\begin{equation}\label{eq:findEps}
u_{\mu_{t-1}} - \epsilon u_{Q^m_{t-1}} \leq u_{\mu_t} - \epsilon u_{Q^m_t}
\end{equation}
for $t =1,\dots,n$. Thus, it suffices to find $\epsilon>0$ with this property for fixed~$t$, and we have reduced to a question about a one-step martingale transport problem. Indeed, we have $u_{\mu_{t-1}}\leq u_{\mu_t}$ on $\R$. Since $Q^{m}$ has a compact support family and in particular is supported by $\V$, there is a finite collection of compact sets $K_{j}\subseteq\R$ such that each $K_{j}$ is contained in one of the intervals $I^{t-1}_{k_{j}}$ from the decomposition of $(\mu_{t-1},\mu_{t})$ into irreducible components, $Q^m$ transports mass from $K_{j}$ to itself for each $j$, and $Q^m$ is the identical Monge transport on the complement $(\cup_{j}K_{j})^{c}$. On each $K_{j}$, 
Steps~(a) and~(b) in the proof of \cite[Lemma~3.3]{BeiglbockNutzTouzi.15} yield $\epsilon>0$ such that~\eqref{eq:findEps} holds on $K_{j}$, and we can choose $\epsilon>0$ independently of~$j$ since there are finitely many $j$. On the other hand, \eqref{eq:findEps} trivially holds on $(\cup_{j}K_{j})^{c}$ since $u_{Q^m_{t-1}} = u_{Q^m_t}$ on that set. This completes the proof.
%
%
%
\end{proof}

\section{The Dual Space}\label{se:dualSpace}

In this section we introduce the domain of the dual optimization problem and show that it has a certain closedness property. The latter will be crucial for the duality theorem in the subsequent section.

We shall need a generalized notion of integrability for the elements of the dual space.
To this end, we first recall the integral for concave functions as detailed in \cite[Section~4.1]{BeiglbockNutzTouzi.15}.

\begin{definition}
\label{def:concaveintegral}
Let $\mu \leq_c \nu$ be irreducible with domain $(I,J)$ and let $\chi: J \to \R$ be a concave function. We define
\[(\mu-\nu)(\chi) := \frac{1}{2}\int_I (u_\mu - u_\nu)d\chi'' + \int_{J \backslash I} |\Delta \chi| d\nu
\in [0,\infty] \]
where $-\chi''$ is the (locally finite) second derivative measure of $-\chi$ on $I$ and $|\Delta \chi|$ is the
absolute magnitude of the jumps of $\chi$ at the boundary points~$J \backslash I$.
\end{definition}

\begin{remark}
\label{rem:concaveKernelIntegral}
As shown in \cite[Lemma~4.1]{BeiglbockNutzTouzi.15}, this integral is well-defined and satisfies
\[(\mu-\nu)(\chi) = \int_I \left[ \chi(x) - \int_J \chi(y) \, \kappa(x,dy)\right] \mu(dx)\]
for any $P = \mu \otimes \kappa \in \M(\mu,\nu)$. Moreover, it 
coincides with the difference $\mu(\chi)-\nu(\chi)$ of the usual integrals when 
$\chi \in L^1(\mu) \cap L^1(\nu)$.
\end{remark}

For later reference, we record two more properties of the integral.

\begin{lemma}
\label{lem:concaveRegularities}
Let $\mu \leq_c \nu$ be irreducible with domain $(I,J)$ and let $\chi : J \to \R$ be concave.
\begin{enumerate}[(i)]
\item Assume that $I$ has a finite right endpoint $r$ and $\chi(a) = \chi'(a) = 0$ for some $a \in I$.
Then $\chi \leq 0$ and $\chi \1_{[a,\infty)}$ is concave. If $\nu$ has an atom at~$r$, then
\[ \chi(r) \geq - \frac{C}{\nu(\{r\})} (\mu - \nu) (\chi \1_{[a,\infty)})\]
for a constant $C \geq 0$ depending only on $\mu,\nu$.
\item For $a,b \in \R$, the concave function $\bar{\chi}(x) := \chi(x) + ax + b$ satisfies
\[(\mu - \nu)(\bar{\chi}) = (\mu-\nu)(\chi).\]
\end{enumerate}
\end{lemma}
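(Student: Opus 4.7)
The plan is to work from the explicit formula in Definition~\ref{def:concaveintegral} for $(\mu-\nu)(\chi)$, together with the standard Taylor-type representation for concave functions. Part~(ii) is essentially a one-line computation: since $ax+b$ has zero second-derivative measure on $I$ and is continuous, $\bar{\chi}'' = \chi''$ as measures on $I$ and $|\Delta\bar{\chi}| = |\Delta\chi|$ at boundary points of $J \setminus I$, so inserting this into Definition~\ref{def:concaveintegral} yields the claim immediately.

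For part~(i), the first two assertions follow from standard properties of concave functions. The tangent-line inequality at $a$, combined with $\chi(a) = \chi'(a) = 0$, gives $\chi(x) \le 0$ on $J$. For concavity of $\tilde{\chi} := \chi\1_{[a,\infty)}$, observe that $\tilde{\chi}$ vanishes on $(-\infty,a]$ (using $\chi(a) = 0$) and coincides with the concave $\chi$ on $[a,\infty)$; because $\chi'(a) = 0$ and $\chi'$ is non-increasing, $\chi' \le 0$ on $(a,\infty)$, so the right-derivative of $\tilde{\chi}$ is identically $0$ on $(-\infty,a)$ and equals $\chi' \le 0$ on $(a,\infty)$, hence globally non-increasing; consequently $\tilde{\chi}$ is concave and $\tilde{\chi}'' = \chi''\,\1_{(a,r)}$ as measures on $I$.

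For the inequality I would decompose $-\chi(r) = -\chi(r^-) + (\chi(r^-) - \chi(r))$ and control each piece separately. The jump is handled directly by the boundary term in Definition~\ref{def:concaveintegral}: since $\tilde{\chi}$ vanishes identically near any left endpoint in $J \setminus I$, the only nontrivial jump of $\tilde{\chi}$ on $J \setminus I$ is at $r$, yielding $(\chi(r^-) - \chi(r))\,\nu(\{r\}) \le (\mu-\nu)(\tilde{\chi})$. For the remaining term, the standard Taylor representation for concave $\chi$ with $\chi(a) = \chi'(a) = 0$ gives
\[
  -\chi(r^-) = \int_{(a,r)} (r-s)\,(-\chi''(ds)),
\]
so if $r - s \le c_0^{-1}(u_\nu - u_\mu)(s)$ holds pointwise on $[a,r)$ for some $c_0 > 0$, comparing with the integral term of Definition~\ref{def:concaveintegral} yields $-\chi(r^-) \le 2 c_0^{-1}(\mu-\nu)(\tilde{\chi})$. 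Combining the two bounds produces the claim with $C := 2 c_0^{-1}\nu(\{r\}) + 1$.

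The main obstacle is establishing the pointwise linear bound $(u_\nu - u_\mu)(s) \ge c_0(r-s)$ on $[a,r)$. The key observation is that $\mu$ is concentrated on $I$, so $\mu(\{r\}) = 0$, while $\nu(\{r\}) > 0$; a direct computation of the one-sided derivatives of the potentials at $r$ gives $(u_\nu - u_\mu)'(r^-) \le -2\nu(\{r\}) < 0$. Because $g := u_\nu - u_\mu$ vanishes at $r$, this forces $g(s)/(r-s) \to -g'(r^-) \ge 2\nu(\{r\}) > 0$ as $s \to r^-$; together with the continuity and strict positivity of $g$ on the compact interval $[a, r-\delta] \subset I$ for sufficiently small $\delta > 0$, the infimum $c_0 := \inf_{s \in [a,r)} g(s)/(r-s)$ is strictly positive, completing the argument.
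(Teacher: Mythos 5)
Your part~(ii) is exactly the paper's argument. For part~(i) the paper simply cites \cite[Remark~4.6]{BeiglbockNutzTouzi.15}, so your self-contained proof is a genuinely different (and more informative) route; it is essentially correct. The skeleton is sound: the tangent-line inequality gives $\chi\le 0$ and the monotonicity of the right derivative gives concavity of $\chi\1_{[a,\infty)}$; the split $-\chi(r)=-\chi(r^-)+(\chi(r^-)-\chi(r))$ lets you absorb the jump into the boundary term $\int_{J\setminus I}|\Delta\cdot|\,d\nu$ of Definition~\ref{def:concaveintegral} and the remainder into the term $\tfrac12\int_I(u_\mu-u_\nu)\,d\chi''$ via the Taylor representation; and your computation $\partial^-(u_\nu-u_\mu)(r)=-2\nu(\{r\})$ (using $u_\nu-u_\mu\equiv 0$ on $[r,\infty)$ by irreducibility and $\mu(\{r\})=0$) correctly yields $g(s)/(r-s)\to 2\nu(\{r\})>0$ as $s\to r^-$, hence $c_0>0$. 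One cosmetic point: if $\chi'(a)=0$ refers to only one of the one-sided derivatives, $\tilde\chi''$ acquires an extra atom at $a$; your estimate still goes through since you take the infimum defining $c_0$ over the closed-at-$a$ interval $[a,r)$, but you should then write the Taylor integral over $[a,r)$ against $-\tilde\chi''$ rather than $-\chi''$.

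The one substantive caveat concerns the constant. Your $C=2c_0^{-1}\nu(\{r\})+1$ depends on $a$ through $c_0=\inf_{s\in[a,r)}g(s)/(r-s)$, whereas the statement asserts dependence on $\mu,\nu$ only. This is not a defect of your proof: the dependence on $a$ is unavoidable. For instance, with $\mu=\delta_{1/2}$, $\nu=\tfrac12(\delta_0+\delta_1)$ (so $I=(0,1)$, $g(b)=b$ for $b\le\tfrac12$) and $\chi=-(\cdot-b)^+$ with $a=b/2$, one has $(\mu-\nu)(\chi\1_{[a,\infty)})=b/2$ while $\chi(1)=-(1-b)$, so the inequality forces $C\ge(1-b)/b\to\infty$ as $b\downarrow 0$. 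Thus the statement should be read with $C=C(\mu,\nu,a)$; this is harmless for its uses in the paper (qualitative finiteness of $\chi$ at $r$ in Proposition~\ref{prop:dualEquivalence}, and Proposition~\ref{prop:moderatorConvergence} where $a$ is the fixed barycenter), and your proof delivers precisely the version that is actually needed.
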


\begin{proof}
The first part is \cite[Remark 4.6]{BeiglbockNutzTouzi.15} and the second part follows directly from $\bar{\chi}'' = \chi''$ and $\Delta\bar{\chi} = \Delta\chi$.
\end{proof}

Let us now return to the multistep case with a vector $\bmu=(\mu_{0},\dots,\mu_{n})$ of measures in convex order and introduce $\bmu(\bphi) := \sum_{t=0}^n \mu_t(\phi_t)$ in cases where
we do not necessarily have $\phi_t \in L^1(\mu_t)$. As mentioned previously, \MN{in contrast to~\cite{BeiglbockNutzTouzi.15},} the multistep transport problem does not decompose into irreducible components, forcing us to directly give a global definition of the integral.

\begin{definition}\label{de:moderator}
Let $\bphi = (\phi_0,\dots,\phi_n)$ be a vector of Borel functions $\phi_t : \R \to \RE$.
A vector $\bchi = (\chi_1,\dots,\chi_n)$ of Borel functions $\chi_t:\R \to \R$ is 
called a \emph{concave moderator} for $\bphi$ if for $1\leq t\leq n$,
\begin{enumerate}[(i)]
\item $\chi_t|_J$ is concave for every domain $(I,J)$ of an irreducible component 
of $\M(\mu_{t-1},\mu_t)$,
\item $\chi_t|_{I_0} \equiv 0$ for the diagonal domain $I_0$ of $\M(\mu_{t-1},\mu_t)$,
\item $\phi_t - \chi_{t+1} + \chi_t \in L^1(\mu_t)$,
\end{enumerate}
where $\chi_{n+1} \equiv 0$. We also convene that $\chi_0 \equiv 0$.
The \emph{moderated integral} of $\bphi$ is then defined by
\begin{equation}
\label{eqn:moderatedIntegral}
\bmu(\bphi) := \sum_{t=0}^n \mu_t(\phi_t - \chi_{t+1} + \chi_t) + \sum_{t=1}^n \sum_{k \geq 1} (\mu_{t-1}-\mu_t)^k(\chi_t) \in (-\infty,\infty],
\end{equation}
where $(\mu_{t-1}-\mu_t)^k(\chi_t)$ denotes the integral of Definition~\ref{def:concaveintegral} on the $k$-th irreducible component of $\M(\mu_{t-1},\mu_t)$.
\end{definition}

\begin{remark}\label{rk:indepChi}
The moderated integral is independent of the choice of the moderator $\bchi$.
To see this, consider a second moderator \MN{$\tilde{\bchi}$} for $\bphi$;
then we have $(\tilde\chi_{t+1} - \chi_{t+1}) - (\tilde\chi_t - \chi_t) \in L^1(\mu_t)$. \MN{We may assume that~\eqref{eqn:moderatedIntegral} is finite for at least one of the moderators.} Using Remark~\ref{rem:concaveKernelIntegral} with arbitrary $\kappa_t$ such
that $\mu_{t-1} \otimes \kappa_t \in \M(\mu_{t-1},\mu_t)$ for $1 \leq t \leq n$, as well as Fubini's theorem for kernels,
\begin{align*}
\sum_{t=1}^n &\sum_{k \geq 1} (\mu_{t-1}-\mu_t)^k(\chi_t) - (\mu_{t-1}-\mu_t)^k(\tilde\chi_t) \\
&= \int \cdots \int \sum_{t=1}^n \chi_t(x_{t-1}) - \chi_t(x_t) \, 
\kappa_n(x_{n-1},dx_n) \cdots\kappa_1(x_0,dx_1) \mu_0(dx_0) \\
&\qquad - \int \cdots \int \sum_{t=1}^n \tilde\chi_t(x_{t-1}) - \tilde\chi_t(x_t) \, 
\kappa_n(x_{n-1},dx_n) \cdots\kappa_1(x_0,dx_1) \mu_0(dx_0) \\
&= \FS{\sum_{t=0}^n \mu_t((\chi_{t+1} - \tilde\chi_{t+1}) - (\chi_t - \tilde\chi_t)).}
\end{align*}
It now follows that~\eqref{eqn:moderatedIntegral}
yields the same value for both moderators.
\end{remark}

For later reference, we also record the following property.

\begin{remark}\label{rk:moderatorRestrictionSum}
If $\bchi$ is a concave moderator, Definition~\ref{de:moderator}\,(ii) implies that
\[\chi_t = \sum_{k \geq 1} \chi_t|_{I_k^t} = \sum_{k \geq 1} \chi_t|_{J_k^t}\]
where $(I_k^t,J_k^t)$ is
the $k$-th irreducible domain of $\M(\mu_{t-1},\mu_t)$.
\end{remark}

Next, we introduce the space of functions which have a finite integral in the moderated sense.

\begin{definition}
We denote by $L^c(\bmu)$ the space of all vectors $\bphi$ admitting a concave moderator $\bchi$ with
$\sum_{t=1}^n \sum_{k \geq 1} (\mu_{t-1}-\mu_t)^k(\chi_t) < \infty$.
\end{definition}

It follows that $\bmu(\bphi)$ is finite for $\bphi \in L^c(\bmu)$, and we have
$\bmu(\phi) = \sum_t \mu_t(\phi_t)$ for $\bphi \in \Pi_{t=0}^n L^1(\mu_t)$. The definition is also consistent with the expectation under martingale transports, in the following sense.

\begin{lemma}
\label{lem:dualPayoff}
Let $\bphi \in L^c(\bmu)$ and let $H=(H_1,\dots,H_n)$ be $\FF$-predictable. If
\[\sum_{t=0}^n \phi_t(X_t) + (H \cdot X)_n   \]
is bounded from below on the effective domain $\V$ of $\M(\bmu)$, then
$$
\bmu(\bphi) = P\left[\sum_{t=0}^n \phi_t(X_t) + (H \cdot X)_n\right],\quad P \in \M(\bmu).
$$
\end{lemma}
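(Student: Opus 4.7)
Fix $P \in \M(\bmu)$ and any concave moderator $\bchi$ for $\bphi$; the choice is immaterial since $\bmu(\bphi)$ is moderator-independent. By Theorem~\ref{thm:polarstruct}, $P$ is concentrated on $\V$, so the assumed lower bound on $L := \sum_{t=0}^n \phi_t(X_t) + (H\cdot X)_n$ holds $P$-almost surely. First I would use $\chi_0 \equiv \chi_{n+1} \equiv 0$ together with a reindexing to write $L = A + \sum_{t=1}^n D_t$, where $A := \sum_{t=0}^n (\phi_t - \chi_{t+1} + \chi_t)(X_t)$ lies in $L^1(P)$ by Definition~\ref{de:moderator}(iii) and $D_t := \chi_t(X_{t-1}) - \chi_t(X_t) + H_t(X_t - X_{t-1})$. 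Then $E^P[A] = \sum_t \mu_t(\phi_t - \chi_{t+1} + \chi_t)$ already delivers the first summand of $\bmu(\bphi)$.

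The main identity to establish is $E^P[D_t] = \sum_{k \geq 1}(\mu_{t-1} - \mu_t)^k(\chi_t)$. I would let $\hat{\kappa}_t$ denote the conditional law of $X_t$ given $X_{t-1}$ under $P$, so that $\mu_{t-1} \otimes \hat{\kappa}_t = P \circ (X_{t-1},X_t)^{-1}$ lies in $\M(\mu_{t-1},\mu_t)$ and, by Proposition~\ref{prop:oneStepDecomposition}, decomposes along the irreducible components $V^t_k$. Applying Remark~\ref{rem:concaveKernelIntegral} on each $V^t_k$ with $k \geq 1$, summing over $k$, and using Remark~\ref{rk:moderatorRestrictionSum} together with $\chi_t|_{I_0^t} \equiv 0$ yields
\[\sum_{k \geq 1}(\mu_{t-1} - \mu_t)^k(\chi_t) = \int\Bigl[\chi_t(x) - \int \chi_t(y)\,\hat{\kappa}_t(x,dy)\Bigr]\mu_{t-1}(dx).\]
Jensen's inequality, applied componentwise using concavity of $\chi_t$ on each $J^t_k$ and the fact that $\hat{\kappa}_t(x,\cdot)$ has barycenter $x$, makes the inner bracket nonnegative $\mu_{t-1}$-a.s., while the resulting integral is finite by $\bphi \in L^c(\bmu)$. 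Combined with the generalized-martingale identity $E^P[H_t(X_t-X_{t-1})\mid \F_{t-1}] = 0$, this identifies the display as $E^P[E^P[D_t\mid\F_{t-1}]]$ with $E^P[D_t\mid\F_{t-1}] \geq 0$.

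To promote this to an unconditional statement for the sum, I would use the uniform lower bound once more: since $A \in L^1(P)$ and $L$ is bounded below, $\sum_t D_t$ is bounded below by a $P$-integrable random variable. Thus the partial sums $S_\tau := \sum_{t \leq \tau} D_t$ form a generalized submartingale whose terminal value has an expectation well-defined in $(-\infty,\infty]$, and iterating the tower property---each increment $E^P[D_t\mid\F_{t-1}]$ being nonnegative with finite $P$-expectation---gives $E^P[\sum_t D_t] = \sum_t\sum_{k \geq 1}(\mu_{t-1}-\mu_t)^k(\chi_t)$. Adding $E^P[A]$ yields $E^P[L] = \bmu(\bphi)$, as desired.

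\textbf{Main obstacle.} The principal delicacy is that individual summands such as $H_t(X_t-X_{t-1})$, $\chi_t(X_t)$, and $D_t$ need not be $P$-integrable, so linearity of expectation cannot split the sum termwise. Everything hinges on using the uniform lower bound and the $L^1$ structure of the moderator to treat $\sum_t D_t$ collectively as a bounded-below generalized submartingale, then decomposing its expectation via only the nonnegativity and finiteness of the $\F_{t-1}$-conditional increments, thereby avoiding any $\infty-\infty$ ambiguity.
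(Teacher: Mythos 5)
Your proof is correct and follows essentially the same route as the paper's: telescope the sum with a concave moderator, isolate the $L^1$ part $\sum_t(\phi_t-\chi_{t+1}+\chi_t)(X_t)$, use the lower bound on $\V$ to conclude that the remaining telescoping-plus-stochastic-integral term has $P$-integrable negative part, and then evaluate it by one-step conditioning, where Remark~\ref{rem:concaveKernelIntegral} identifies each conditional increment with $\sum_{k\geq1}(\mu_{t-1}-\mu_t)^k(\chi_t)$ and the $H$-increments vanish. The paper phrases the iteration as successive disintegrations $P=P_{t-1}\otimes\kappa_t$ with Fubini for kernels rather than your generalized tower-property/submartingale language, but the substance is identical.
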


\begin{proof}
Let $P \in \M(\bmu)$, let $\bchi$ be a concave moderator for $\bphi$, and assume without loss of generality that 
$0$ is the lower bound. Using Remark~\ref{rk:moderatorRestrictionSum}, we have that $\sum_{t=0}^n \phi_t(X_t) + (H \cdot X)_n$ equals 
\[ \sum_{t=0}^n (\phi_t - \chi_{t+1} + \chi_t)(X_t) + 
\sum_{t=1}^n\sum_{k \geq 1} (\chi_t|_{I_k^t}(X_{t-1}) - \chi_t|_{J_k^t}(X_t))+ (H \cdot X)_n \geq 0.\]
By assumption, the functions
$(\phi_t - \chi_{t+1} + \chi_t)(X_t)$ are $P$-integrable. Therefore, the negative part
of the remaining expression must also be $P$-integrable.
Writing $P_t := P\circ (X_0,\dots,X_t)^{-1}$ and using that $(\chi_t|_{J_k^t})^+$ has linear growth, we see that for any disintegration
$P = P_{n-1} \otimes \kappa_n$,
\begin{align*}
\int & \Bigg[\sum_{t=1}^n\sum_{k \geq 1} (\chi_t|_{I_k^t}(X_{t-1}) - \chi_t|_{J_k^t}(X_t))+ (H \cdot X)_n\Bigg] \,
\kappa_n(X_0,\dots,X_{n-1},dX_n) \\
&= \sum_{t=1}^{n-1}\sum_{k \geq 1} (\chi_t|_{I_k^t}(X_{t-1}) - \chi_t|_{J_k^t}(X_t)) + (H \cdot X)_{n-1}\\
& \qquad + \sum_{k\geq 1} \int \left[\chi_n|_{I_k^n}(X_{n-1}) - \chi_n|_{J_k^n}(X_n)\right] \kappa_n(X_0,\dots,X_{n-1},dX_n).
\end{align*}
Iteratively integrating with kernels such that $P_t = P_{t-1} \otimes \kappa_t$ and observing that
we can apply Fubini's theorem to $\sum_{t=1}^n\sum_{k \geq 1} (\chi_t|_{I_k^t}(X_{t-1}) - \chi_t|_{J_k^t}(X_t))+ (H \cdot X)_n$ \MN{as its negative part is $P$-integrable,} we obtain

\[P\left[\sum_{t=1}^n\sum_{k \geq 1} (\chi_t|_{I_k^t}(X_{t-1}) - \chi_t|_{J_k^t}(X_t))+ (H \cdot X)_n\right]
= \sum_{t=1}^n \sum_{k \geq 1} (\mu_{t-1}-\mu_t)^k(\chi_t)\]
and the result follows.
\end{proof}

We can now define our dual space. It will be convenient to work with nonnegative
reward functions $f$ for the moment---we shall relax this constraint later on; cf.\ Remark~\ref{rk:relaxLowerBound}.

\begin{definition}
Let $f: \R^{n+1} \to [0,\infty]$. We denote by $\D_\bmu(f)$ the set of all pairs
$(\bphi,H)$ where $\bphi \in L^c(\bmu)$ and $H=(H_{1},\dots,H_{n})$ is an $\FF$-predictable process such that
\[\sum_{t=0}^n \phi_t(X_t) + (H \cdot X)_n \geq f \quad \text{on}\quad \V. \]
\end{definition}

By Lemma \ref{lem:dualPayoff}, the expectation of the left hand side under any $P\in \M(\bmu)$ is given by the moderated integral $\bmu(\bphi)$; this will be seen as the dual cost of $(\bphi,H)$ when we consider the dual problem $\inf_{(\bphi,H) \in \D_\bmu(f)} \bmu(\bphi)$ in Section~\ref{se:duality} below.

The following closedness property is the key result about the dual space.

\begin{proposition}
\label{prop:dualclosedregular}
Let $f^m: \R^{n+1} \to [0, \infty]$, $m\geq1$  be a sequence of functions such that 
\[f^m \to f \qquad \text{pointwise}\] 
and let $(\bphi^m, H^m) \in \D_\bmu(f^m)$ be such that $\sup_m \bmu(\bphi^m) < \infty$. Then there exist
$(\bphi,H) \in \D_\bmu(f)$ with
\[\bmu(\bphi) \leq \liminf_{m \to \infty} \bmu(\bphi^m).\]
\end{proposition}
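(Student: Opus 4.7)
The plan is to prove the closedness by induction on the number of steps $n$, with the base case $n=1$ supplied by the corresponding one-step closedness in~\cite{BeiglbockNutzTouzi.15}. The idea is to split each $\bphi^m$ into a concave moderator $\bchi^m$ and an $L^1$-integrable remainder $\psi^m_t := \phi^m_t - \chi^m_{t+1} + \chi^m_t$, extract subsequential limits of both pieces using compactness and a Komlos-type lemma, and then reconstruct the predictable process $H$ by a fiberwise reduction of the super-hedging inequality to the one-step case.

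First, I pick for each $m$ a concave moderator $\bchi^m$ realising $\bphi^m \in L^c(\bmu)$, and exploit Lemma~\ref{lem:concaveRegularities}(ii): adding an affine function to $\chi^m_t$ on any irreducible domain $J^t_k$ does not alter the moderated integral, while the corresponding correction can be absorbed into $\psi^m_t$ and (for the linear parts, which are martingale increments of the form $a(X_t-X_{t-1})$) into $H^m$. Using this gauge, I normalize $\chi^m_t$ on each component to vanish together with its derivative at a fixed interior point of $I^t_k$. Lemma~\ref{lem:concaveRegularities}(i) then yields $\chi^m_t \leq 0$ on each $J^t_k$ with $|\chi^m_t|$ controlled on compact subsets in terms of the nonnegative quantities $(\mu_{t-1}-\mu_t)^k(\chi^m_t)$, which, together with the $L^1$-remainders $\mu_t(\psi^m_t)$, are uniformly bounded since $\sup_m \bmu(\bphi^m)<\infty$. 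A diagonal pointwise extraction on each component yields concave limits $\chi_t$ with $(\mu_{t-1}-\mu_t)^k(\chi_t) \leq \liminf_m (\mu_{t-1}-\mu_t)^k(\chi^m_t)$ (Fatou applied to second-derivative measures and boundary jumps), and a subsequent Komlos-type extraction on each $\mu_t$ passes to convex combinations of the $\psi^m_t$ converging $\mu_t$-a.s.\ to some $\psi_t \in L^1(\mu_t)$ with $\mu_t(\psi_t) \leq \liminf_m \mu_t(\psi^m_t)$; then I set $\phi_t := \psi_t + \chi_{t+1} - \chi_t$.

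To construct the predictable process $H$, I condition the inequality $\sum_t \phi^m_t(X_t) + (H^m\cdot X)_n \geq f^m$ on $(X_0,\dots,X_{n-1})$: each fiber becomes a one-step super-replication inequality between $\mu_{n-1}$ and $\mu_n$ on the appropriate irreducible component, to which the one-step closedness applies. A measurable-selection argument then produces a predictable $H_n$ along the same subsequence of convex combinations already fixed, and iterating this fiberwise step backward in $t$ recovers $H_{n-1},\dots,H_1$. Passing to the limit in the inequality via Fatou and $f^m\to f$ yields $(\bphi,H) \in \D_\bmu(f)$, and Lemma~\ref{lem:dualPayoff} combined with Fatou on the moderated-integral decomposition~\eqref{eqn:moderatedIntegral} gives the desired $\bmu(\bphi)\leq \liminf_m \bmu(\bphi^m)$.

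The main obstacle is the coordination of the various extractions: the global Komlos extraction for $(\bchi^m,\bphi^m)$ fixes one subsequence, whereas the fiberwise one-step closedness used to extract $H^m$ naturally wants fiber-dependent subsequences. The fix is either to apply a measurable-selection theorem so that the fiberwise limits assemble into a predictable process along a common subsequence, or to do a joint extraction for $(\bphi^m,H^m)$ upfront; in either case the normalization of the $\chi^m_t$ must be arranged simultaneously across all $t$, since a shift at level $t$ affects both $\psi^m_{t-1}$ and $\psi^m_t$ and changes the candidate $H^m$ through the telescoping sum.
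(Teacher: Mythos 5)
Your proposal has the right overall shape (normalize the moderators, extract concave limits by compactness, extract the $L^1$ remainders by Komlos, build $H$ last), but it is missing the two devices that the paper's proof actually turns on, and at both places the step as you describe it would fail.

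First, the uniform bounds. You assert that the quantities $(\mu_{t-1}-\mu_t)^k(\chi^m_t)$ and $\mu_t(\psi^m_t)$ are ``uniformly bounded since $\sup_m\bmu(\bphi^m)<\infty$.'' The hypothesis only bounds the \emph{sum} $\sum_t\mu_t(\psi^m_t)+\sum_{t,k}(\mu_{t-1}-\mu_t)^k(\chi^m_t)$; the terms $\mu_t(\psi^m_t)$ have no a priori lower bound and can go to $-\infty$ while the concave terms blow up, so neither Proposition~\ref{prop:moderatorConvergence} nor Komlos can be invoked. The paper's resolution is Lemma~\ref{lem:moderatorChoice}: a specific ``value function'' moderator $\chi^k_t(x_t)=\inf_{\bx\in\Psi^k_t(x_t)}\{\sum_{s<t}\phi_s^{\bbk_s(\bx)}(x_s)+(H\cdot\bx)_t\}$ that converts the joint superhedging inequality into the termwise inequality $\phi^k_t+\chi^k_t-\chi_{t+1}\geq0$, after which every summand is nonnegative and the bound on the sum controls each piece. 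An arbitrary moderator, as in your first step, does not have this property. Moreover, the simultaneous affine normalizations you need cannot all be performed on \emph{single} functions $\phi_t,\chi_t$: adjacent domains $J^t_k$ may share an endpoint (an atom of $\mu_t$), and the partition $\{J^t_k\}$ carrying $\chi_t$ does not align with the partition $\{I^{t+1}_{k'}\}$ carrying $\chi_{t+1}$, so the corrections for different components conflict at common points. This is exactly the obstruction the paper flags (``they do not allow sufficiently many normalizations'') and removes by lifting to the generalized dual space $\D^g_\bmu$, where one keeps a separate function $\phi^k_t,\chi^k_t$ per component, normalizes each at the barycenter $a^k_t$ (the point required by Proposition~\ref{prop:moderatorConvergence}, not an arbitrary interior point), and only collapses back to $\D_\bmu$ at the end via Proposition~\ref{prop:dualEquivalence}. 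You identify the coordination problem in your last paragraph but do not resolve it.

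Second, the construction of $H$. Conditioning on $(X_0,\dots,X_{n-1})$ does not produce a one-step problem to which the closedness of~\cite{BeiglbockNutzTouzi.15} applies: that result is for a fixed irreducible pair $\mu\leq_c\nu$, whereas the conditional law of $X_n$ varies with the fiber and with $P\in\M(\bmu)$, and the inequality is only required on $\V$. Even granting a fiberwise statement, the subsequence extracted fiberwise would depend on the fiber, and neither the measurable-selection route nor the ``joint extraction upfront'' is carried out; the $H^m$ themselves satisfy no compactness. The paper avoids any further extraction: it sets $G^{\bk}_t:=\liminf_m G^{\bk}_{t,m}$ for the partial sums, takes the concave hull of $G^{\bk}_t-\phi^{k_t}_t$ in the last variable, and defines $H_t$ as its right derivative at $x_{t-1}$; the inequality \eqref{eqn:superhedgeIterative} then propagates the superhedge forward in $t$ deterministically. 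Your appeal to ``Fatou'' for the limit inequality hides precisely this difficulty, since $\liminf_m\bigl[\sum_t\phi^m_t+(H^m\cdot X)_n\bigr]\geq f$ does not by itself produce a single predictable $H$ paired with the limits $\phi_t$.
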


\subsection{Proof of Proposition~\ref{prop:dualclosedregular}}

An attempt to prove Proposition~\ref{prop:dualclosedregular} directly along the lines of~\cite{BeiglbockNutzTouzi.15} runs into a technical issue in controlling the concave moderators. Roughly speaking, they do not allow sufficiently many normalizations; this is related to the aforementioned fact that the multistep problem cannot be decomposed into its components. We shall introduce a generalized dual space with families of functions indexed by the components, and prove a ``lifted'' version of Proposition~\ref{prop:dualclosedregular} in this larger space. Once that is achieved, we can infer the closedness result in the original space as well. (The reader willing to admit Proposition~\ref{prop:dualclosedregular} may skip this subsection without \MN{much} loss of continuity.)

\begin{definition}
\label{def:generalHedge}
Let $\bphi=\{\phi_t^k:\, 0\leq t\leq n,\, k\geq0\}$ be a family of Borel functions, consisting of one function $\phi_t^k: J_k^t \to \RE$ for each irreducible component
$(I_k^t,J_k^t)$ of $\M(\mu_{t-1},\mu_t)$ as indexed by $k\geq1$ and $1\leq t\leq n$, functions $\phi_t^0 : I_0^t \to \RE$ for 
the diagonal components $I_0^t$ indexed by $1\leq t\leq n$ , and a single function
$\phi_0^0 : \R \to \RE$ for $t=0$.
Similarly, let $\bchi=\{\chi_t^k:\, 1\leq t\leq n,\, k\geq0\}$ be a family of functions, consisting of one concave function $\chi_t^k : J_k^t \to \R$ for each irreducible component $(I_k^t,J_k^t)$ 
and Borel functions $\chi_t^0 : I_0^t \to \R$ for the diagonal components. We also convene that $\chi_0^0 \equiv 0$
and define the functions\footnote{The restriction to $I_k^{t}$ is important to avoid ``double counting'' in the sums. Note that the intervals $J$ may overlap at their endpoints.} $\chi_t := \sum_{k \geq 0}\chi_t^k|_{I_k^t}$  for $t = 1,\dots,n$, as well as
$\chi_{n+1} \equiv 0$.

We call $\bchi$ a \emph{concave moderator} for $\bphi$ if for all $t=0,\dots,n$ and $k \geq 0$,
\[\phi_t^k + \chi_t^k - \chi_{t+1} \in L^1(\mu_t^k)\]
and the sum $\sum_{k \geq 0} \mu_t^k(\phi_t^k + \chi_t^k - \chi_{t+1})$ converges in $(-\infty,\infty]$, where $\mu_t^k$ is the second marginal of the $k$-th 
irreducible component in the decomposition of $\M(\mu_{t-1},\mu_t)$ as in Proposition~\ref{prop:oneStepDecomposition} and $\mu_0^0 \equiv \mu_0$.
The generalized\footnote{\MNN{This integral is not related to the notion of a generalized martingale.}} moderated integral is then defined by 
\[\bmu(\bphi) := \sum_{t = 0}^n \sum_{k \geq 0} \mu_t^k(\phi_t^k + \chi_t^k - \chi_{t+1}) +  \sum_{t=1}^n \sum_{k \geq 1} (\mu_{t-1}-\mu_t)^k(\chi_t^k).\]
We denote by $L^{c,g}(\bmu)$ the set of all families $\bphi$ which admit a concave moderator $\bchi$ such that
\[\sum_{t = 0}^n \sum_{k \geq 0} |\mu_t^k(\phi_t^k + \chi_t^k - \chi_{t+1})| + \sum_{t=1}^n \sum_{k \geq 1} (\mu_{t-1}-\mu_t)^k(\chi_t^k) < \infty.\]
\end{definition}

For $\bphi\in L^{c,g}(\bmu)$, the value of $\bmu(\bphi)$ is independent of the choice of the moderator $\bchi$. This is shown similarly as in Remark~\ref{rk:indepChi}. We can now introduce the generalized dual space.

\begin{definition}
Let $f: \R^{n+1} \to [0,\infty]$. We denote by $\D^g_\bmu(f)$ the set of all pairs
$(\bphi,H)$ where $\bphi \in L^{c,g}(\bmu)$, $H=(H_{1},\dots,H_{n})$ is $\FF$-predictable, and
\[\sum_{t=0}^n \phi_t^{k_t}(x_t) + (H \cdot \bx)_n \geq f(\bx) \]
for all $\bx = (x_0,\dots,x_n)$ and $\bk = (k_0,\dots,k_n)$ such that $(x_{t-1},x_t) \in (I_{k_t}^t,J_{k_t}^t)$ for some (irreducible or diagonal) component%
\footnote{
Given an irreducible component $(I,J)$, the notation $(x,y) \in (I,J)$ means that $x \in I, y \in  J$, whereas for
a diagonal component $(I_0,I_0)$ it is to be understood as $x = y \in I_0$.
}  and $t = 1,\dots,n$.
\end{definition}

We observe that for any $\bx \in \V$ the corresponding $\bk = (k_0,\dots,k_n)$ is uniquely defined, where the index $k_0 \equiv 0$ exists purely for notational convenience.

For later reference, the following lemma elaborates on certain degrees of freedom in choosing elements of $\D^g_\bmu(f)$.

\begin{lemma}
\label{lem:modify}
Let $(\bphi,H)\in\D^g_\bmu(f)$ and let $\bchi$ be a corresponding concave moderator.
Let $1 \leq t \leq n$, let $(I_k^t,J_k^t)$ be the domain of an irreducible component of $\M(\mu_{t-1},\mu_t)$ and $c_1,c_2 \in \R$. Introduce new families $(\tilde \bphi,\tilde H)$ and $\tilde \bchi$ by either (i) or (ii):
\begin{enumerate}
\item Define
\begin{align*}
  \tilde{\phi}_t^k(y) &= \phi_t^k(y) - (c_1 y - c_2), \qquad \tilde \chi_t^k(y) =  \chi_t^k(y) + (c_1 y - c_2),\\
  \tilde{\phi}_{t-1}^{k'}(x) &= \phi_{t-1}^{k'}(x) + (c_1x - c_2)|_{I_k^t}, \qquad \tilde \chi_{t-1}^{k'} = \chi_{t-1}^{k'},\\
  \tilde{\phi}_s^{k'} &= \phi_s^{k'}, \qquad \tilde \chi_s^{k'} = \chi_s^{k'} \quad \mbox{for }s \notin \{t-1,t\}, \\
  \tilde{H}_t &= H_t + c_1|_{X_{t-1}^{-1}(I_k^t)}, \qquad  \tilde{H}_s = H_s\quad \mbox{for }s \neq t
\end{align*}
where $k'$ runs over all components of the corresponding step in the subscript.
\item Define
\begin{align*} 
\tilde{\phi}_t^0 = \phi_t^0 + \chi_t^0 - \chi_{t+1}^0|_{I_0^t}, \qquad \tilde \chi_t^0 = 0, \quad \mbox{ and}\\
\tilde \phi_t^k = \phi_t^k - \chi_{t+1}^0,\qquad  \tilde\chi_t^k = \chi_t^k \quad \text{ for } k \geq 1,\,\; t=0,\dots,n.
\end{align*}
\end{enumerate}
Then $(\tilde \bphi,\tilde H)\in \D^g_\bmu(f)$ and $\tilde \bchi$ is
a corresponding concave moderator. Moreover, we have
\[ \sum_{t=0}^n \phi_t^{k_t}(x_t) + (H \cdot \bx)_n = \sum_{t=0}^n \tilde \phi_t^{k_t}(x_t) + (\tilde H \cdot \bx)_n \quad \text{and}\]
\[\phi_t^k + \chi_t^k - \chi_{t+1}  = \tilde \phi_t^k + \tilde \chi_t^k - \tilde \chi_{t+1}  \quad \text{ for all } \;k \geq 1, \; t = 0,\dots,n,\]
as well as $\bmu(\bphi) = \bmu(\tilde \bphi)$.
\end{lemma}

\begin{proof}
(i) If $\bx$ is such that $(x_{t-1},x_t) \notin I_k^t \times J_k^t$, then
$\tilde\phi_t^{k_t}(x_t) = \phi_t^{k_t}(x_t)$ for $t = 0,\dots,n$ and $\tilde H(\bx) = H(\bx)$. 
Otherwise,
\begin{align*}
\tilde \phi_t^{k_t}(x_t) + \tilde \phi_{t-1}^{k_{t-1}}(x_{t-1}) + \tilde H_t(x_t - x_{t-1}) 
&= \phi_t^{k_t}(x_t) + \phi_{t-1}^{k_{t-1}}(x_{t-1}) \\&\quad\,+ H_t(x_t - x_{t-1}), \\
\tilde \phi_t^k + \tilde \chi_t^k - \tilde \chi_{t+1} &= \phi_t^k + \chi_t^k - \chi_{t+1}, \text{ and}\\ 
\tilde \phi_{t-1}^{k'} + \tilde \chi_{t-1}^{k'} - \tilde \chi_{t} &= \phi_{t-1}^{k'} + \chi_{t-1}^{k'} - \chi_{t}.
\end{align*}
Along with the fact that $(\mu_t - \mu_{t-1})^k(\chi_t^k)=(\mu_t - \mu_{t-1})^k(\tilde \chi_t^k)$, these identities imply the assertions.

(ii) Similarly as in (i), the terms in question coincide by construction.
\end{proof}

\begin{remark}
\label{rem:manymodify}
The modification of Lemma~\ref{lem:modify}\,(i) can be applied simultaneously for infinitely
many $k$'s without difficulties. In this case we set
\[\tilde{\phi}_{t-1}^{k'}(x) := \phi_{t-1}^{k'}(x) + \sum_{k\geq 1} (c_1^kx - c_2^k)|_{I_k^t},\]
as well as $\tilde \phi_t^k(y) = \phi_t^k(y) - (c_1^ky-c_2^k)$ and
$\tilde \chi_t^k(y) =  \chi_t^k(y) + (c_1^k y - c_2^k)$ for the components $k \geq 1$ in step $t$.
The pointwise equalities still hold as above and in particular, the
moderated integral does not change.
\end{remark}

\begin{remark}
\label{rem:liftdual}
Any $(\bphi,H) \in \D_\bmu(f)$ induces an element $(\bphi^g,H) \in \D_\bmu^g(f)$ with $\bmu(\bphi^{g})=\bmu(\bphi)$ by choosing some concave moderator $\bchi$ for $\bphi$ and setting 
\[\phi_t^k := \phi_t|_{J_k^t}, \qquad \chi_t^k := \chi_t|_{J_k^t}.\]
\end{remark}

We now show the analogue to Lemma \ref{lem:dualPayoff} for the generalized dual space.

\begin{lemma}
\label{lem:dualPayoffGeneral}
Let $\bphi \in L^{c,g}(\bmu)$ and let $H = (H_1,\dots,H_n)$ be $\FF$-predictable.~If
\[\sum_{t=0}^n \phi_t^{\bk_t(\bx)}(x_t) + (H\cdot\bx)_n\]
is bounded from below on the effective domain $\V$ of $\M(\bmu)$, then
\[\bmu(\phi) = P\left[\sum_{t=0}^n \phi_t^{\bk_t(\bx)}(x_t) + (H\cdot\bx)_n\right], \quad P \in \M(\bmu).\]
\end{lemma}

\begin{proof}
Let $P \in \M(\bmu)$, let $\bchi$ be a concave moderator for $\bphi$ such that
$\chi_t^0 \equiv 0$ and assume that $0$ is the lower bound. It is easy to see that
$\sum_{t=0}^n \phi_t^{\bk_t(\bx)}(x_t) + (H\cdot\bx)_n$ equals
\[\sum_{t=0}^n(\phi_t^{\bk_t(\bx)} - \chi_{t+1} + \chi_t^{\bk_t(\bx)})(x_t) + \sum_{t=1}^n(\chi_t^{\bk_t(\bx)}(x_{t-1}) - \chi_t^{\bk_t(\bx)}(x_t)) + (H \cdot \bx)_n \geq 0.\]

By assumption $\sum_{t=0}^n(\phi_t^{\bk_t(\bx)} - \chi_{t+1} + \chi_t^{\bk_t(\bx)})(x_t)$ is $P$-integrable.
Therefore, the negative part of the remaining expression must also be $P$-integrable. Writing 
$P_t := P \circ (X_0,\dots,X_t)^{-1}$ and using that $(\chi_t^k)^+$ has linear growth, we see that
for any disintegration $P = P_{n-1} \otimes \kappa_n$,

\begin{align*}
\int &\left[\sum_{t=1}^n(\chi_t^{\bk_t(\bx)}(x_{t-1}) - \chi_t^{\bk_t(\bx)}(x_t)) + (H \cdot \bx)_n\right]\kappa_n(x_0,\dots,x_{n-1},dx_n) \\
&= \sum_{t=1}^{n-1}(\chi_t^{\bk_t(\bx)}(x_{t-1}) - \chi_t^{\bk_t(\bx)}(x_t)) + (H \cdot \bx)_{n-1} \\
&\qquad + \int \left[(\chi_n^{\bk_n(\bx)}(x_{n-1}) - \chi_n^{\bk_n(\bx)}(x_n)) \right] \kappa_n(x_0,\dots,x_{n-1},dx_n).
\end{align*}

Iteratively integrating with kernels such that $P_t = P_{t-1}\otimes \kappa_t$ and observing that
we can apply Fubini's theorem to $\sum_{t=1}^n(\chi_t^{\bk_t(\bx)}(x_{t-1}) - \chi_t^{\bk_t(\bx)}(x_t)) + (H \cdot \bx)_n$
as its negative part is $P$-integrable, we obtain
\[P\left[\sum_{t=1}^n(\chi_t^{\bk_t(\bx)}(x_{t-1}) - \chi_t^{\bk_t(\bx)}(x_t)) + (H \cdot \bx)_n\right] = \sum_{t=1}^n \sum_{k \geq 1} (\mu_{t-1}-\mu_t)^k(\chi_t^k)\]
and the result follows.
\end{proof}





Next, we establish that lifting from $\D_\bmu(f)$ to $\D^g_\bmu(f)$ does not change the range of dual costs.

\begin{proposition}\label{prop:dualEquivalence}
Let $f: \R^{n+1} \to [0,\infty]$. We have
\[\{\bmu(\bphi^g) : (\bphi^g,H) \in \D^g_\bmu(f) \} =\{\bmu(\bphi) : (\bphi,H) \in \D_\bmu(f) \}.  \]
\end{proposition}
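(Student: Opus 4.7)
My plan for the inclusion $\supseteq$ is to invoke Remark~\ref{rem:modify}(iii) directly: given $(\bphi,H) \in \D_\bmu(f)$ with moderator $\bchi$, the choice $\phi_t^k := \phi_t|_{J_k^t}$ and $\chi_t^k := \chi_t|_{J_k^t}$ for $k \geq 1$, together with $\phi_t^0 := \phi_t|_{I_0^t}$ and $\chi_t^0 := 0$, yields $(\bphi^g,H) \in \D^g_\bmu(f)$, and the identities $\mu_t = \sum_{k \geq 0} \mu_t^k$ and $\chi_t|_{J_k^t} = \chi_t^k$ give $\bmu(\bphi^g) = \bmu(\bphi)$ directly from the definitions.

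For the reverse inclusion $\subseteq$, I plan to glue the per-component data into global functions. Given $(\bphi^g,H) \in \D^g_\bmu(f)$ with moderator $\bchi^g$, the candidate is
\[
\phi_t(y) := \sum_{k \geq 0} \phi_t^k(y)\,\mathbf{1}_{I_k^t}(y), \qquad \chi_t(y) := \sum_{k \geq 1} \chi_t^k(y)\,\mathbf{1}_{I_k^t}(y),
\]
which is well-defined on $\R$ because $\{I_k^t\}_{k \geq 0}$ partitions $\R$, and which satisfies $\chi_t|_{I_0^t} \equiv 0$. Provided I can arrange that $\phi_t|_{J_k^t} = \phi_t^k$ and $\chi_t|_{J_k^t} = \chi_t^k$ on \emph{all} of $J_k^t$ (not just the open interior $I_k^t$), the remainder will be bookkeeping: concavity of $\chi_t|_{J_k^t}$ passes down from $\chi_t^k$; integrability $\phi_t - \chi_{t+1} + \chi_t \in L^1(\mu_t)$ follows by summing the per-component integrabilities supplied by the $L^{c,g}$-assumption; the $\D_\bmu(f)$-constraint transfers from the $\D^g_\bmu(f)$-one since each $\bx \in \V$ has $x_t \in J_{k_t(\bx)}^t$; and $\bmu(\bphi) = \bmu(\bphi^g)$ follows from $\mu_t = \sum_k \mu_t^k$ together with $(\mu_{t-1}-\mu_t)^k(\chi_t) = (\mu_{t-1}-\mu_t)^k(\chi_t^k)$.

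The only genuine obstacle is atoms. A point $r$ which is an atom of $\mu_t$ and sits on the boundary of some $I_k^t$ with $k \geq 1$ lies in $J_k^t \cap I_0^t$, so the gluing is consistent at $r$ only if $\phi_t^k(r) = \phi_t^0(r)$ and $\chi_t^k(r) = 0 = \chi_t^0(r)$. I plan to pre-normalize in three sweeps. First, Remark~\ref{rem:modify}(ii) lets me assume $\chi_t^0 \equiv 0$ for every $t$. Second, for each $t$ and $k \geq 1$, I replace $\chi_t^k$ by $\chi_t^k - L$, where $L$ is the unique affine function whose values at the (at most two) boundary atoms of $I_k^t$ match those of $\chi_t^k$; the new family is still a concave moderator for the \emph{unchanged} $\bphi^g$, and the generalized integral is preserved because $(\mu_{t-1}-\mu_t)^k(\chi_t^k)$ is invariant under affine perturbations by Lemma~\ref{lem:concaveRegularities}(ii), while the induced changes $-\mu_t^k(L)$ in the $\mu_t^k$-term and $+\mu_{t-1}|_{I_k^t}(L)$ in the $\mu_{t-1}^{k'}$-terms cancel via the convex-order identity $\mu_{t-1}|_{I_k^t}(L) = \mu_t^k(L)$ (equal total mass and equal barycenter). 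Third, I apply Remark~\ref{rem:modify}(i) with an appropriate affine shift of $\phi_t^k$ (absorbed by compensating shifts of $\phi_{t-1}^{k'}$ and $H_t$) so that $\phi_t^k(r) = \phi_t^0(r)$ at those atoms; since this touches only $\phi$, the second-sweep normalization of $\chi$ persists.

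The step I expect to require the most care is the second sweep: verifying that a pure shift of $\chi_t^k$ by an affine function leaves the generalized cost invariant requires tracking every occurrence of $\chi_t$, $\phi_t^k$, and $\phi_{t-1}^{k'}$ in Definition~\ref{def:generalHedge} and relies on the martingale-type identity $\mu_{t-1}|_{I_k^t}(L) = \mu_t^k(L)$ for affine $L$. Once this invariance and the three normalizations are in place, the gluing of the second paragraph produces $(\bphi,H) \in \D_\bmu(f)$ with $\bmu(\bphi) = \bmu(\bphi^g)$, which closes the argument.
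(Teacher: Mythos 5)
Your argument is correct and follows essentially the same route as the paper: Remark~\ref{rem:modify}\,(iii) gives one inclusion, and the other is obtained by affinely renormalizing the per-component functions at the boundary atoms of the irreducible domains (via Remark~\ref{rem:modify}\,(i)--(ii) and Lemma~\ref{lem:concaveRegularities}\,(ii)) and then gluing --- the paper normalizes the boundary values to $0$ rather than to the diagonal values $\phi_t^0(r)$, but this difference is cosmetic, and your explicit cancelation $\mu_{t-1}|_{I_k^t}(L)=\mu_t^k(L)$ for affine $L$ is exactly what makes the paper's Remark~\ref{rem:modify}\,(i) cost-preserving. The one detail you leave implicit is that $\chi_t^k$ is finite at the boundary atoms of $I_k^t$, which is needed for your affine $L$ to exist and which the paper deduces from Lemma~\ref{lem:concaveRegularities}\,(i) together with $(\mu_{t-1}-\mu_t)^k(\chi_t^k)<\infty$.
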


\begin{proof}
\FS{Remark \ref{rem:liftdual} shows the inclusion ``$\supseteq$.'' To show the reverse, we may apply Lemma \ref{lem:modify} (i) together with Remark \ref{rem:manymodify} to modify a given pair $(\bphi^g,H) \in \D^g_\bmu(f)$
such that $\phi_t^k(x) = 0$ for $x \in J_k^t \backslash I_k^t$, for all irreducible domains $(I_k^t,J_k^t)$ of $\M(\mu_{t-1},\mu_t)$ and $1 \leq t \leq n$. 
Here we have used that $x \in J_k^t \backslash I_k^t$ implies $\mu_k^t(\{x\}) > 0$, cf.\ Definition~\ref{de:componentSinglestep}, and therefore
$\phi^g \in L^{c,g}(\bmu)$ implies $\phi_t^k(x) \in \R$; that is, such endpoints can indeed be shifted to~$0$ by
adding affine functions to $\phi_t^k$.

Let $\bchi^g$ be a concave moderator for $\bphi^g$. Using Lemma~\ref{lem:concaveRegularities}\,(ii)
and again Lemma \ref{lem:modify} as above, we can modify $\chi_t^k$ to satisfy 
$\chi_t^k(x) = 0$ for $x \in J_k^t \backslash I_k^t$,
for all irreducible domains $(I_k^t,J_k^t)$ of $\M(\mu_{t-1},\mu_t)$ and $1 \leq t \leq n$. Here, the finiteness of $\chi_t^k$ at the endpoints follows from  Lemma \ref{lem:concaveRegularities}\,(i) and $(\mu_{t-1}-\mu_t)^k(\chi_t^k) < \infty$.}

Still denoting the modified dual element by $(\bphi^g,H)$, we
define $\bphi \in L^c(\bmu)$ and a corresponding concave moderator $\bchi$ by 
\[\phi_t(x) := \phi_t^k(x), \quad \chi_t(x) := \chi_t^k(x), \quad \text{ for } x \in J_k^t; \]
they are well-defined since $\phi_t^k$ and $\chi_t^k$ vanish at points that belong to more than one set $J_k^t$.
We have $\bmu(\bphi) = \bmu(\bphi^g)$ by construction and the result follows.
\end{proof}

\begin{definition}
Let $1\leq t \leq n$ and $x_t \in \R$. A sequence $\bx = (x_0,\dots,x_t)$ is a \emph{predecessor path} of
$x_t$ if there are indices $(k_0,\dots,k_t)$ such that
$(x_{s-1},x_s) \in (I_{k_s}^s,J_{k_s}^s)$ for some component (irreducible or diagonal) of
$\M(\mu_{s-1},\mu_s)$, for all $1\leq s \leq t$.
We write $\bbk(\bx)$ for the (unique) associated
sequence $(k_0,\dots,k_t)$ followed by the path $\bx$ in the above sense, and 
$\Psi^k_t(x_t)$ for the set of all predecessor paths with $k_t = k$.
\end{definition}

These notions will be useful in the next step towards the closedness result, which is to ``regularize'' the concave moderators. \FS{For concreteness in some of the expressions below, we convene that $\infty - \infty := \infty$.}
\FS{\begin{lemma}
\label{lem:moderatorChoice}
Let $(\bphi,H) \in \D^g_\bmu(0)$. There is a concave moderator
$\bchi$ of $\bphi$ such that
\begin{equation}
\label{eqn:specialModerator}
\phi^k_t + \chi^k_t - \chi_{t+1} \geq 0 \quad \text{on } J_k^t \quad \text{ for all} \quad  t=0,\dots,n, \quad k \geq 1, \text{ and}
\end{equation}
\begin{equation}
\label{eqn:specialModerator2}
\phi^0_t + \chi^0_t - \chi_{t+1} \geq 0 \quad \mu_t\text{-a.s. on } I_0^t \quad\text{ for all} \quad t=1,\dots,n.
\end{equation}
As a consequence,
\[\sum_{t=1}^n\sum_{k \geq 1}(\mu_{t-1}-\mu_t)^k(\chi_t^k) \leq \bmu(\bphi).\]
\end{lemma}

\begin{proof}
Fix $1\leq t \leq n$ and let $(I_k^t,J_k^t)$ be the domain of some component of
$\M(\mu_{t-1},\mu_t)$. We define $\bchi=(\chi_t^k)$ by $\chi_{0}^{0}=0$ and
\[\chi_t^k(x_t) = \inf_{\bx \in \Psi^k_t(x_t)}\left\{\sum_{s=0}^{t-1} \phi_s^{\bbk_s(\bx)}(x_s) + (H \cdot \bx)_t\right\};\]
then $\chi_t^k$ is concave on $J_k^t$ for $k \geq 1$ as an infimum of affine functions.

We first show that 
\[\{\chi_t^k = +\infty\} \subseteq \left\{ \phi_{t-1}^{k'} = +\infty\right\} \cup \{\chi_{t-1}^{k'} = +\infty\}.\]

In particular, such points only exist after a chain of diagonal components from a point where $\phi_t^k(x_t) = \infty$.
Suppose $\chi_t^k(x_t) = +\infty$ and $k \geq 1$, then the predecessor paths of $x_t$ agree with the predecessor
paths of all of $J^k_t$ up to $t-1$, but $\{\sum_{s=0}^{t-1} \phi_s^{\bbk_s(\bx)}(x_s) < \infty\}$
must hold $\M(\bmu)$-q.s. as $\bphi \in L^{c,g}(\bmu)$. We must therefore have $x_t \in I_t^0$. Then, by definition,
$\chi_t^0(x_t) = \chi_{t-1}^k(x_t) + \phi_{t-1}^k(x_t)$ and the claim follows.

Next, we verify that $\bchi$ satisfies~\eqref{eqn:specialModerator} and \eqref{eqn:specialModerator2}. 
For notational convenience we for now set $\chi_{n+1} \equiv \inf_{\bx \in \V} \left\{\sum_{s=0}^{n} \phi_s^{\bbk_s(\bx)}(x_s) + (H \cdot \bx)_{n}\right\} \geq 0$. Restricting the infimum in the definition of $\bchi$ to the set of paths $\bx$ with $x_{t+1} = x_t \in I_{k'}^{t+1} \cap J_{k}^t$
yields
\begin{align*}
\chi_{t+1}(x_t) &= \chi_{t+1}^{k'}(x_t) = \inf_{\bx \in \Psi^{k'}_{t+1}(x_t)}\left\{\sum_{s=0}^{t} \phi_s^{\bbk_s(\bx)}(x_s) + (H \cdot \bx)_{t+1}\right\} \\
&\leq \inf_{\bx \in \Psi^{k}_t(x_t)}\left\{\sum_{s=0}^{t-1} \phi_s^{\bbk_s(\bx)}(x_s) + (H \cdot \bx)_t\right\} + \phi_t^k(x_t) \\
&= \chi_t^k(x_t) + \phi_t^k(x_t).
\end{align*}
Since $\cup_{k' \geq 0} I_{k'}^{t+1}=\R$, this will imply~\eqref{eqn:specialModerator}
after we check that $\chi_t^k > -\infty$ for $k \geq 1$ and $\chi_t^0 > -\infty$ holds $\mu_{t}^0$-a.s.,
which also implies that $\chi_t > -\infty$ holds $\mu_{t-1}$-almost surely. We show this inductively
for $t \geq 1$.

Clearly $\chi_{n+1} \geq 0 > -\infty$. Now, for $t \leq n$ the induction hypothesis is that
$\chi_{t+1} > -\infty$ holds almost surely $\mu_{t}$. 

From $\bphi \in L^{c,g}$ and $\chi_{t+1} > -\infty$ $\mu_t$-a.s.\ we have that
\[ \phi_t^k < \infty, \quad \chi_{t+1} > -\infty \quad \text{hold } \mu_t^k\text{-a.s.}\]
As $\chi_t^k$ is concave and $J_t^k$ is the convex hull of the topological support of $\mu_t^k$ we then get $\chi_t^k > -\infty$ on all of $J_t^k$ from the previous inequality.

For $k=0$, the inequality yields $\{\chi_t^0 = -\infty\} \subseteq \{\chi_{t+1} = -\infty\} \cup \{\phi_t^0(x_t) = \infty\}$ and both of these sets are $\mu_t$ nullsets. Finally $\mu_{t-1}(\{\chi_t = -\infty\}) = 0$
as this is a subset of the diagonal component where $\mu_{t-1}$ is dominated by $\mu_t$.

Set 
$\bar{\phi}_t^k := \phi_t^k + \chi_t^k - \chi_{t+1}|_{J_k^t}$ for $0 \leq t \leq n$; then
$\bar{\phi}_t^k \geq 0$. Moreover, choose an arbitrary $P \in \M(\bmu)$ with disintegration
$P = \mu_0 \otimes \kappa_1 \otimes \dots \otimes \kappa_n$ for some stochastic kernels
$\kappa_t(x_0,\dots,x_{t-1},dx_t)$. From Lemma \ref{lem:dualPayoffGeneral} we know that
\[\bmu(\bphi) = P\left[\sum_{t=0}^n\phi_t^{\bbk_t(\bX)}(X_t) + (H \cdot X)_n \right] < \infty.\]
We can therefore apply Fubini's theorem for kernels as in the proof of Lemma~\ref{lem:dualPayoffGeneral} to the expression
\begin{align*}
0 &\leq \sum_{t=0}^n \phi_t^{\bbk_t(\bx)}(x_t) + (H \cdot \bx)_n \\
&= \sum_{t=0}^n\bar{\phi}_t^{\bbk_t(\bx)}(x_t) + \sum_{t=1}^n \left(\chi_t(x_{t-1}) - \chi_t^{\bbk_t(\bx)}(x_t)\right) + (H \cdot \bx)_n
\end{align*}
and obtain

\begin{align*}
P&\left[\sum_{t=0}^n \phi_t^{\bbk_t(\bX)}(X_t) + (H \cdot X)_n \right] = \sum_{t=0}^n\sum_{k \geq 0}\mu_t^k(\bar{\phi}_t^k) +
\sum_{t=1}^n \sum_{k \geq 1} (\mu_{t-1}-\mu_t)^k(\chi_t^k)
\end{align*}
which shows that the right hand side is finite, and therefore $\bchi$ is
a concave moderator for $\bphi$. Finally, the second claim follows from
$\mu_t^k(\bar{\phi}^{k}_t) \geq 0$.
\end{proof}}

The last tool for our closedness result is a compactness property for concave functions in the one-step case; cf.\ \cite[Proposition~5.5]{BeiglbockNutzTouzi.15}.

\begin{proposition}
\label{prop:moderatorConvergence}
Let $\mu \leq_c \nu$ be irreducible with domain $(I,J)$ and let $a \in I$ be the common barycenter of
$\mu$ and $\nu$. Let $\chi_m : J \to \R$ be concave functions such that\footnote{To be specific, let us convene that $\chi_m'$ is the left derivative---this is not important here.}
\[\chi_m(a) = \chi_m'(a) = 0 \quad \text{ and } \quad \sup_{m \geq 1}(\mu - \nu)(\chi_m) < \infty. \]
There exists a subsequence $\chi_{m_k}$ which converges pointwise on $J$ to a concave function
$\chi: J \to \R$, and $(\mu - \nu)(\chi) \leq \liminf_{k}(\mu - \nu)(\chi_{m_k})$.
\end{proposition}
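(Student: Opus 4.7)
The plan is to extract a convergent subsequence via an Arzelà--Ascoli (Helly-type) argument that rests on a uniform bound for the second-derivative measures $-\chi_m''$ on compacts of $I$, and then to establish the semicontinuity from the two-piece representation of $(\mu-\nu)(\chi)$ in Definition~\ref{def:concaveintegral}.

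First I would exploit the normalization. Concavity together with $\chi_m(a) = \chi_m'(a) = 0$ forces $\chi_m \leq 0$ on $J$ and makes $\chi_m$ nondecreasing on $J \cap (-\infty,a]$ and nonincreasing on $J \cap [a,\infty)$. On any compact interval $K \subset I$, the continuous function $u_\nu - u_\mu$ is bounded below by some constant $c_K > 0$ (since $I = \{u_\mu < u_\nu\}$ is open, connected, and contains $K$). Combined with the nonnegativity of the endpoint term in Definition~\ref{def:concaveintegral}, this yields the uniform bound
\[ c_K\,(-\chi_m'')(K) \;\leq\; \tfrac{1}{2}\int_I (u_\nu - u_\mu)\,d(-\chi_m'') \;\leq\; (\mu-\nu)(\chi_m),\]
so $(-\chi_m'')(K)$ is uniformly bounded in $m$. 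Since $\chi_m'(a) = 0$, this bounds $|\chi_m'|$ uniformly on $K$, hence $\chi_m$ is uniformly Lipschitz and uniformly bounded there. A diagonal Arzelà--Ascoli extraction along an exhausting sequence $K_j \uparrow I$ then yields a subsequence converging uniformly on compacts of $I$ to a concave function $\chi$ on $I$ with $\chi(a) = \chi'(a) = 0$. For any endpoint $r$ of $I$ lying in $J$, Lemma~\ref{lem:concaveRegularities}(i) supplies a uniform lower bound $\chi_m(r) \geq -C(\mu-\nu)(\chi_m)/\nu(\{r\})$; a further (at most twofold) extraction produces convergence at such endpoints and extends $\chi$ to a concave function on all of $J$.

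For the semicontinuity I would bound the two pieces of Definition~\ref{def:concaveintegral} separately. Uniform convergence on compacts of $I$ implies vague convergence of the nonnegative measures $-\chi_{m_k}''$ to $-\chi''$ there; combined with the continuity and nonnegativity of $u_\nu - u_\mu$, a Portmanteau-type inequality on each compact $K \subset I$ followed by monotone convergence as $K \uparrow I$ handles the bulk term. The endpoint term is the delicate piece: a sharp ``dip'' of $\chi_{m_k}$ concentrating near an atomic endpoint $r$ could in the limit produce a jump of $\chi$ at $r$ that is strictly larger than $\liminf_k|\Delta\chi_{m_k}(r)|$, which a priori would threaten the semicontinuity. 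The main obstacle is to account for this transfer of mass between the bulk and the boundary. This is resolved by the identity $\nu(\{r\}) = \tfrac{1}{2}|(u_\nu-u_\mu)'(r^-)|$ at an atomic endpoint (a direct consequence of the potential-function properties recalled in Section~\ref{se:preliminaries}, using that $\mu(\{r\})=0$ since $\mu$ is concentrated on $I$): it guarantees that any concentration of $-\chi_{m_k}''$ near $r$ contributes to the bulk integral exactly the value $\nu(\{r\})$ times the depth of the dip, matching the corresponding additional jump of $\chi$ at $r$. Splitting $I$ into a compact core together with small neighborhoods of each atomic endpoint and combining this accounting with the Portmanteau step above then yields the claimed inequality $(\mu-\nu)(\chi) \leq \liminf_k (\mu-\nu)(\chi_{m_k})$.
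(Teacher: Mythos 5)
A point of reference first: the paper does not actually prove this proposition---it is quoted verbatim from \cite[Proposition~5.5]{BeiglbockNutzTouzi.15}---so there is no in-paper proof to compare against, and your argument has to stand on its own. It does. The compactness half is the standard one and is correct: the uniform bound on $(-\chi_m'')(K)$ for compacts $K\subseteq I$ coming from $u_\nu-u_\mu\geq c_K>0$, the resulting local Lipschitz bounds via $\chi_m'(a)=0$, a diagonal Arzel\`a--Ascoli extraction, and Lemma~\ref{lem:concaveRegularities}\,(i) combined with $(\mu-\nu)(\chi_m\1_{[a,\infty)})\leq(\mu-\nu)(\chi_m)$ to control the values at atomic endpoints of $I$. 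For the semicontinuity you correctly isolate the one genuinely delicate phenomenon---mass of $-\chi_{m_k}''$ escaping toward an atomic endpoint $r$ and reappearing as an enlarged jump of the limit---and the identity $|(u_\nu-u_\mu)'(r^-)|=2\nu(\{r\})$ (valid because $u_\nu-u_\mu\equiv 0$ beyond $r$ and $\mu(\{r\})=0$ for an irreducible pair) is exactly the right bookkeeping device: the tangent-line computation shows the bulk integral over a shrinking neighborhood of $r$ picks up $\nu(\{r\})$ times the depth of the dip, which is precisely the excess jump of $\chi$. The final ``splitting into a core plus endpoint neighborhoods'' step is only sketched, but the $\epsilon$-management it requires is routine. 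That said, there is a substantially shorter route to the semicontinuity via the representation in Remark~\ref{rem:concaveKernelIntegral}: writing $(\mu-\nu)(\chi_{m_k})=\int_I\bigl[\chi_{m_k}(x)-\int_J\chi_{m_k}(y)\,\kappa(x,dy)\bigr]\mu(dx)$ with a nonnegative integrand (Jensen, since $\kappa(x,\cdot)$ has barycenter $x$), one applies Fatou in the outer integral and reverse Fatou in the inner one (legitimate because $\chi_{m_k}\leq 0$ after your normalization) to conclude directly from pointwise convergence on $J$. This sidesteps the bulk-versus-boundary accounting entirely, which is the most technical part of your argument; your direct approach buys a more explicit picture of where the second-derivative mass goes, at the cost of that extra care.
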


We are now ready to state and prove the analogue of Proposition~\ref{prop:dualclosedregular} in the generalized
dual.

\begin{proposition}
\label{prop:dualclosed}
Let $f^m: \R^{n+1} \to [0, \infty]$, $m\geq1$ be a sequence of functions such that 
\[f^m \to f \qquad \text{pointwise}\] 
and let $(\bphi^m, H^m) \in \D^g_\bmu(f^m)$ be such that $\sup_m \bmu(\bphi^m) < \infty$. Then there exist
$(\bphi,H) \in \D^g_\bmu(f)$ with
\[\bmu(\bphi) \leq \liminf_{m \to \infty} \bmu(\bphi^m).\]
\end{proposition}

\begin{proof}
Since $(\bphi^m,H^m) \in \D^g_\bmu(f^m)$ and $f^m \geq 0$, we can introduce a sequence of concave moderators
$\bchi_m$ as in Lemma \ref{lem:moderatorChoice}. A normalization of $(\bphi^m,H^m)$ as in
Lemma \ref{lem:modify}\,(i) and~(ii), in the general form of Remark \ref{rem:manymodify}, allows us to assume without loss of generality that 
$\chi_{t,m}^0 \equiv 0$ and
$\chi_{t,m}^k(a_t^k) = ({\chi_{t,m}^k})'(a_t^k) = 0$, where $a_t^k$ is the barycenter of $\mu_t^k$---this modification is the main merit of lifting to the generalized dual space. While the generalized dual gives enough degrees of freedom to choose this normalization, the dual without the generalization does not. This is related to the possible overlap of the intervals $I,J$ at the different times $t$; see also Figure~\ref{fig:polarStructure} and the paragraph preceding Example~\ref{ex:polarStructure}.

By passing to a subsequence as in
Proposition~\ref{prop:moderatorConvergence} for each component and using a diagonal argument, 
we obtain pointwise limits $\chi_t^k: J_k^t \to \R$ for
$\chi_{t,m}^k$ after passing to another subsequence.

Since
$\phi_{t,m}^k + \chi_{t,m}^k - \chi_{t+1,m} \geq 0$ on $J_k^t$\footnote{Observe that this inequality will still hold
after modifying $\bphi$ and $\bchi$ as in Lemma \ref{lem:modify}.} and $\chi_{t,m}^k \to \chi_t^k$ as well as $
\chi_{t+1,m} \to \chi_{t+1}$, we can apply Komlos' lemma \MN{(in the form of \cite[Lemma~A1.1]{DelbaenSchachermayer.94} and its remark)} to find \MNN{convex combinations}
$\tilde{\phi}_{t,m}^k \in \conv\{\phi_{t,m}^k,\phi_{t,m+1}^k, \dots\}$ 
which converge $\mu_t^k$-a.s.\ for $0 \leq t \leq n$. We may assume without loss of generality that $\tilde{\phi}_{t,m}^k = \phi_{t,m}^k$. Thus,
we can set
\begin{align*}
\phi_t^k &:= \limsup \phi_{t,m}^k \quad \text{ on } J_k^t \quad \text{for } t=1,\dots,n, \\
\phi_0 &:= \liminf \phi_{0,m}
\end{align*}
to obtain
\[\phi_{t,m}^k \to \phi_t^k \quad \mu_t^k\text{-a.s.} \quad \text{ and }\quad
\phi_t^k + \chi_t^k - \chi_{t+1} \geq 0 \text{ on }J_k^t. \]
We can now apply Fatou's lemma and Proposition~\ref{prop:moderatorConvergence} to deduce that
\begin{align*}
\bmu&(\bphi) = \sum_{t = 0}^n \sum_{k \geq 0} \mu_t^k(\phi_t^k + \chi_t^k - \chi_{t+1}) +  \sum_{t=1}^n \sum_{k \geq 1} (\mu_{t-1}-\mu_t)^k(\chi_t^k) \\
&\leq \sum_{t = 0}^n \sum_{k \geq 0} \liminf \mu_t^k(\phi_{t,m}^k + \chi_{t,m}^k - \chi_{t+1,m}) \\
& \quad\; + \sum_{t=1}^n \sum_{k \geq 1} \liminf (\mu_{t-1}-\mu_t)^k(\chi_{t,m}^k) \\
& \leq \liminf \left[ \sum_{t = 0}^n \sum_{k \geq 0} \mu_t^k(\phi_{t,m}^k + \chi_{t,m}^k - \chi_{t+1,m}) +  \sum_{t=1}^n \sum_{k \geq 1} (\mu_{t-1}-\mu_t)^k(\chi_{t,m}^k) \right] \\
&= \liminf \bmu(\bphi^m) < \infty.
\end{align*}
In particular, we see that $\bphi \in L^{c,g}(\bmu)$ with concave moderator $\bchi$.

It remains to construct the predictable process $H=(H_{1},\dots,H_{n})$. With a mild abuse of notation, we shall identify $H_{t}(x_{0},\dots,x_{n})$ with the corresponding function of $(x_{0},\dots,x_{t-1})$ in this proof.

We first define for each $\bk = (k_0,\dots,k_t)$ and $\bx=(x_{0},\dots,x_{t})$ such that $\bk=\bbk(\bx)$, the functions $G_{t,m}^{\bk}$ and $G_{t}^{\bk}$ by 
\begin{align*}
  G_{t,m}^{\bk}(\bx) &:= \sum_{s=0}^t \phi_{s,m}^{k_s}(x_s) + \sum_{s=1}^t H_{s,m}(x_{0},\dots,x_{s-1})\cdot (x_s - x_{s-1}),\\
  G_{t}^{\bk}(\bx)&:= \liminf G_{t,m}^{\bk}(\bx).
\end{align*}
Given $\bk=(k_{0},\dots,k_{t})$, we write $\bk'=(k_{0},\dots,k_{t-1})$. We claim that there exists
an $\FF$-predictable process $H$ such that for all $1 \leq t \leq n$,
\begin{equation} 
\label{eqn:superhedgeIterative}
G_{t-1}^{\bk'}(x_{0},\dots,x_{t-1}) + \phi_t^{k_t}(x_t)+ H_t(x_{0},\dots,x_{t-1}) \cdot (x_t - x_{t-1}) \geq G_{t}^{\bk}(x_{0},\dots,x_{t}).
\end{equation}
Once this is established, the proposition follows by induction since $G_0^{(0)}(x_{0}) = \phi_0(x_{0})$ and $G_{n}^{\bk}(x_{0},\dots,x_{n}) \geq f(x_{0},\dots,x_{n})$.

To prove the claim, write $g^{\conc}$ for the concave hull of a function $g$ and observe that
\begin{align*}
\liminf[G_{t-1,m}^{\bk'}&(x_{0},\dots,x_{t-1}) + H_{t,m}(x_{0},\dots,x_{t-1}) \cdot (x_t - x_{t-1})] \\
 &\geq  \liminf[(G_{t,m}^{\bk}(x_{0},\dots,x_{t-1},\cdot) - \phi_{t,m}^{k_t}(\cdot))^{\conc}(x_t)] \\
 &\geq  [\liminf(G_{t,m}^{\bk}(x_{0},\dots,x_{t-1},\cdot) - \phi_{t,m}^{k_t}(\cdot)]^{\conc}(x_t) \\
 &\geq [G_{t}^{\bk}(x_{0},\dots,x_{t-1},\cdot) - \phi_t^{k_t}(\cdot)]^{\conc}(x_t)\\
 &=: \hat{\phi}_t^{\bk}(x_0,\dots,x_{t-1},x_{t}).
\end{align*}
By construction, $\hat{\phi}_t^{\bk}$ is concave in the last variable and satisfies
\[G_{t-1}^{\bk'}(x_{0},\dots,x_{t-1}) \geq \hat{\phi}_t^{\bk}(x_0,\dots,x_{t-1},x_{t-1}).\]
Let $\partial_t \hat{\phi}_t^{\bk}$ denote the \FS{left} partial derivative in the last variable
and set 
\[H_t^{\bk}(x_{0},\dots,x_{t-1}) := \partial_t \hat{\phi}_t^{\bk}(x_0,\dots,x_{t-1},x_{t-1})\]
\FS{for $\bk_t \geq 1$ and $H_t^{\bk}(x_{0},\dots,x_{t-1})=0$ for $\bk_t = 0$;} then we have
\begin{align*}
G_{t-1}^{\bk'}&(x_{0},\dots,x_{t-1}) + H_t^{\bk}(x_{0},\dots,x_{t-1}) \cdot (x_t - x_{t-1}) \\
&\geq \hat{\phi}_t^{\bk}(x_{0},\dots,x_{t-1},x_{t-1}) + H_t^{\bk}(x_{0},\dots,x_{t-1}) \cdot (x_t - x_{t-1}) \\
&\geq \hat{\phi}_t^{\bk}(x_0,\dots,x_{t-1},x_t)\\
&\geq G_{t}^{\bk}(x_{0},\dots,x_{t}) - \phi_t^{k_t}(x_{t}).
\end{align*}
Finally, for any $(x_{0},\dots,x_{t-1})\in\R^{t}$, we define $H_{t}(x_{0},\dots,x_{t-1})$ as
$$
  \begin{cases}
  H_t^{\bk}(x_{0},\dots,x_{t-1}),&\mbox{if $\bk=\bbk(x_{0},\dots,x_{t-1},x_{t})$ for some $x_{t}\in\R$}\\
  0,&\mbox{otherwise};
  \end{cases}
$$
this is well-defined since $\bbk(x_{0},\dots,x_{t})$ depends only on $(x_{0},\dots,x_{t-1})$. The predictable process $H$ satisfies~\eqref{eqn:superhedgeIterative} and thus the proof is complete.
\end{proof}

\begin{proof}[Proof of Proposition~\ref{prop:dualclosedregular}]
  In view of Remark~\ref{rem:liftdual} and Proposition~\ref{prop:dualEquivalence}, the result follows from Proposition~\ref{prop:dualclosed}.
\end{proof}

\section{Duality Theorem and Monotonicity Principle}\label{se:duality}

The first goal of this section is a duality result for the multistep martingale transport problem; it establishes the absence of a duality gap and the existence of optimizers in the dual problem. (As is well known, an optimizer for the primal problem only exists under additional conditions, such as continuity of $f$.) The second goal is a monotonicity principle describing the geometry of optimal transports; it will be a consequence of the duality result.

As above, we consider a fixed vector $\bmu=(\mu_{0},\dots,\mu_{n})$ of marginals in convex order. The primal
and dual problems as defined follows.

\begin{definition}
Let $f: \R^{n+1} \to [0,\infty]$. The \emph{primal problem} is
\[\S_\bmu(f) := \sup_{P \in \M(\bmu)} P(f) \in [0,\infty],\]
where $P(f)$ refers to the outer integral if $f$ is not measurable. The
\emph{dual problem} is
\[\I_\bmu(f) := \inf_{(\bphi,H) \in \D_\bmu(f)} \bmu(\bphi) \in [0,\infty]. \]
\end{definition}

We recall that a function $f: \R^{n+1}\to [0,\infty]$ is called \emph{upper semianalytic} if the sets $\{f\geq c\}$ are analytic for all $c\in\R$, where a subset of $\R^{n+1}$ is called analytic if it is the image of a Borel subset of a Polish space under a Borel mapping. Any Borel function is upper semianalytic and any upper semianalytic function is universally measurable; we refer to \cite[Chapter~7]{BertsekasShreve.78} for background. The following is the announced duality result.

\begin{theorem}[Duality]
\label{thm:duality}
Let $f:\R^{n+1} \to [0,\infty]$.
\begin{enumerate}[(i)]
\item If $f$ is upper semianalytic, then $\S_\bmu(f) = \I_\bmu(f) \in [0,\infty]$.
\item If $\I_\bmu(f) < \infty$, there exists a dual optimizer $(\bphi,H) \in \D_\bmu(f)$.
\end{enumerate}
\end{theorem}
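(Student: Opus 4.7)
Weak duality $\S_\bmu(f)\le \I_\bmu(f)$ is immediate: for any $P\in\M(\bmu)$ and $(\bphi,H)\in\D_\bmu(f)$, the polar structure (Theorem~\ref{thm:polarstruct}) ensures $P$ is concentrated on the effective domain $\V$, so the superhedging inequality $f\le \sum_t\phi_t(X_t)+(H\cdot X)_n$ holds $P$-a.s., and Lemma~\ref{lem:dualPayoff} then identifies the $P$-expectation of the right-hand side with $\bmu(\bphi)$, giving $P(f)\le\bmu(\bphi)$. The existence claim in~(ii) will drop out of Proposition~\ref{prop:dualclosedregular}: if $\I_\bmu(f)<\infty$, take a minimizing sequence $(\bphi^m,H^m)\in\D_\bmu(f)$ with $\bmu(\bphi^m)\to\I_\bmu(f)$; applying the closedness proposition with the constant sequence $f^m\equiv f$ produces $(\bphi,H)\in\D_\bmu(f)$ with $\bmu(\bphi)\le\liminf_m\bmu(\bphi^m)=\I_\bmu(f)$, hence a dual optimizer.

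For the no-gap assertion in~(i), I would proceed in two stages. First, for a bounded continuous $f\ge 0$, show $\S_\bmu(f)=\I_\bmu(f)$ by a standard Hahn--Banach/minimax argument. Concretely, the set of ``subhedgeable'' vectors $(\bphi,H)\in\D_\bmu(f)$ with $\bphi$ bounded is convex, and separating a potential gap by a linear functional on a suitable space of finite signed measures on $\R^{n+1}$ with marginals absolutely continuous w.r.t.\ $\bmu$ produces, after normalization, a martingale transport $P\in\M(\bmu)$ achieving the dual value; the marginal and martingale constraints appear as Lagrange multipliers precisely of the form $\phi_t(X_t)$ and $H_t(X_t-X_{t-1})$. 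Alternatively, one can import the one-step duality of \cite{BeiglbockNutzTouzi.15} inductively via disintegration, picking up the concave moderators as the conditional value functions; this is the route most consistent with our dual space setup.

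Second, extend from bounded continuous $f$ to upper semianalytic $f$ by a Choquet-type capacitability argument. Define $C(f):=\S_\bmu(f)$ and verify that $C$ is a Choquet capacity on the paving of bounded upper semicontinuous functions: monotonicity and upward continuity along increasing sequences follow from monotone convergence applied pointwise for each $P\in\M(\bmu)$, while downward continuity along decreasing sequences of bounded USC functions uses weak compactness of $\M(\bmu)$ (finite first moments of the marginals yield tightness) together with the Portmanteau theorem. Choquet's capacitability theorem then gives $\S_\bmu(f)=\sup\{\S_\bmu(g)\colon g\le f,\ g\ \text{bounded USC}\}$ for upper semianalytic~$f$. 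For each such $g$ we have $\S_\bmu(g)=\I_\bmu(g)$ from the first stage (extended from $C_b$ to bounded USC by approximating from above by continuous functions and invoking Proposition~\ref{prop:dualclosedregular}), and since $g\le f$ implies $\I_\bmu(g)\le\I_\bmu(f)$, we obtain $\S_\bmu(f)\le\I_\bmu(f)$, which combined with weak duality closes the gap.

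The main obstacle is the first stage: justifying the minimax/Hahn--Banach step in the multiperiod setting where neither the hedging portfolios nor the concave moderators are bounded a priori and where the inequality~\eqref{eq:dualIneqIntro} is only required on the effective domain $\V$. The key technical leverage is the closedness Proposition~\ref{prop:dualclosedregular}, which lets one truncate $f$ and the moderators, run the duality argument in a bounded regime, and then pass to the limit while preserving membership in $\D_\bmu$; the relaxations built into $L^c(\bmu)$ and $\V$ are exactly what allow that limit to land in the dual space rather than falling outside it.
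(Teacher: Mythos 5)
Your weak duality step, your treatment of dual attainment via Proposition~\ref{prop:dualclosedregular} with the constant sequence, and your two-stage architecture (Hahn--Banach for continuous rewards, then a Choquet capacitability extension) all match the paper's proof. However, the final step of your capacitability argument has a directional error that leaves the no-gap claim unproved. You apply Choquet's theorem only to $\S_\bmu$, obtaining $\S_\bmu(f)=\sup\{\S_\bmu(g):g\le f,\ g\in\U\}=\sup\{\I_\bmu(g):g\le f,\ g\in\U\}\le\I_\bmu(f)$, and then say this ``combined with weak duality closes the gap.'' But weak duality is exactly the inequality $\S_\bmu(f)\le\I_\bmu(f)$ that you have just rederived; what is missing is the reverse inequality $\I_\bmu(f)\le\S_\bmu(f)$, and your argument never produces it. The inequality $\I_\bmu(g)\le\I_\bmu(f)$ for $g\le f$ goes the wrong way for this purpose: you would need $\I_\bmu(f)\le\sup\{\I_\bmu(g):g\le f,\ g\in\U\}$, i.e.\ inner regularity of the \emph{dual} functional with respect to bounded upper semicontinuous functions, and that does not follow from monotonicity.

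The fix is to apply Choquet's capacitability theorem to $\I_\bmu$ as well, which is what the paper does: one checks that $\I_\bmu$ is itself a $\U$-capacity. Its sequential downward continuity on $\U$ comes for free once $\S_\bmu=\I_\bmu$ on $\U$ (your first stage), but its sequential \emph{upward} continuity on all of $[0,\infty]^{\R^{n+1}}$ is precisely the content of Proposition~\ref{prop:dualclosedregular}: given $f^m\uparrow f$ and near-optimizers $(\bphi^m,H^m)\in\D_\bmu(f^m)$, the closedness result produces $(\bphi,H)\in\D_\bmu(f)$ with $\bmu(\bphi)\le\liminf_m\bmu(\bphi^m)$, hence $\I_\bmu(f)\le\lim_m\I_\bmu(f^m)$. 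So the closedness proposition is essential for part~(i) and not only for the attainment in part~(ii). With both $\S_\bmu$ and $\I_\bmu$ established as $\U$-capacities coinciding on $\U$, Choquet gives $\I_\bmu(f)=\sup\{\I_\bmu(g):g\le f,\ g\in\U\}=\sup\{\S_\bmu(g):g\le f,\ g\in\U\}=\S_\bmu(f)$ for upper semianalytic $f$, which is the missing direction. One further small point: because the marginals are only assumed to have finite first moments, the Hahn--Banach stage in the paper is run on the weighted space $C_\zeta$ of continuous functions dominated by a superlinear gauge (via de la Vall\'ee--Poussin), rather than on $C_b$ alone; this costs nothing since $C_b\subseteq C_\zeta$, but it is the natural space on which the separating functional can be identified with a (martingale) transport.
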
 

\begin{proof}
  Given our preceding results, much of the proof follows the lines of the corresponding result for the one-step case in~\cite[Theorem~6.2]{BeiglbockNutzTouzi.15}; therefore, we shall be brief. We mention that the present theorem is slightly more general than the cited one in terms of the measurability condition \FS{($f$ is upper semianalytic instead of Borel);} this is due to the global proof given here.
  
\emph{Step~1.} Using~Lemma \ref{lem:dualPayoff} we see that $\S_\bmu(f) \leq \I_\bmu(f)$ holds
for all upper semicontinuous $f:\R^{n+1} \to [0,\infty]$.

\emph{Step~2.} Using the de la Vall\'ee--Poussin theorem and our assumption that the marginals have a finite first moment, there exist increasing, superlinearly growing functions
$\zeta_{\mu_t} : \R^+ \to \R^+$ such that $x \mapsto \zeta_{\mu_t}(|x|)$ is $\mu_t$-integrable for all $0\leq t\leq n$. Define 
\[\zeta(x_{0},\dots,x_{n}) := 1 + \sum_{t=0}^n \zeta_{\mu_t}(|x_t|)\]
and let $C_\zeta$ be the vector space of all continuous functions $f$ such that $f/\zeta$
vanishes at infinity. Then, a Hahn--Banach separation argument can be used to
show that $\S_\bmu(f) \geq \I_\bmu(f)$ holds for all $f \in C_{\zeta}$; \FS{the details of the argument are the same as in the proof of \cite[Lemma~6.4]{BeiglbockNutzTouzi.15}}.

\emph{Step~3.} Let $f$ be bounded and upper semicontinuous; then there exists a sequence 
 of bounded continuous functions $f^m\in C_b(\R^{n+1})$ which decrease to $f$ pointwise. As $C_b(\R^{n+1}) \subseteq C_\zeta$, we have $\S_\bmu(f^m) = \I_\bmu(f^m)$ for all $m$ by the first two steps.
 
Let $\U$ be the set of all bounded, nonnegative, upper semicontinuous functions on $\R^{n+1}$.
We recall that a map $\bC:[0,\infty]^{\R^{n+1}} \to [0,\infty]$ is called
a $\U$-capacity if it is monotone, sequentially continuous upwards on $[0,\infty]^{\R^{n+1}}$
and sequentially continuous downwards on $\U$.
The functional
$f \mapsto \S_\bmu(f)$ is a $\U$-capacity; this follows from the weak
compactness of $\M(\bmu)$ and the arguments in~\cite[Propositions~1.21,\,1.26]{Kellerer.84}.

It follows that $\S_\bmu(f^m) \to \S_\bmu(f)$. By the monotonicity of $f \mapsto \I_\bmu(f)$
and Step~1 we obtain
\[\I_\bmu(f) \leq \lim \I_\bmu(f^m) = \lim \S_\bmu(f^m) = \S_\bmu(f) \leq \I_\bmu(f).\]

\emph{Step~4.} Since $\S_\bmu=\I_\bmu$ on $\U$ by Step~3, $\I_\bmu$ is sequentially downward
continuous on $\U$ like $\S_\bmu$. On the other hand, Proposition~\ref{prop:dualclosedregular} implies that it is sequentially upwards continuous on $[0,\infty]^{\R^{n+1}}$. As a result, $\I_\bmu$ is a $\U$-capacity.

\emph{Step~5.} Let $f:\R^{n+1} \to [0,\infty]$ be upper semianalytic. For any $\U$-capacity~$\bC$, Choquet's capacitability theorem shows that
\[\bC(f) = \sup\{\bC(g) : g \in \U, g \leq f\}.\]
As $\S_\bmu$ and $\I_\bmu$ are $\U$-capacities that coincide on $\U$, it follows that $\S_\bmu(f)=\I_\bmu(f)$. This completes the proof of~(i).

\emph{Step~6.} To see that the infimum $\I_\bmu(f)$ is attained if it is finite, we merely need to apply Proposition~\ref{prop:dualclosedregular} with the constant sequence $f^m = f$.
\end{proof}

\MN{We can easily relax the lower bound on $f$.}

\begin{remark}\label{rk:relaxLowerBound}
Let $f:\R^{n+1} \to (-\infty,\infty]$ and suppose there exist
$\bphi \in \prod_{t=0}^n L^1(\mu_t)$ and a predictable process $H$ such that
\[f \geq \sum_{t=0}^n\phi_t(X_t) + (H \cdot X)_n \quad \text{on} \quad \V.\]
Then we can apply Theorem~\ref{thm:duality} to
$\left[f - \sum_{t=0}^n\phi_t(X_t) - (H \cdot X)_n\right]^+$
and obtain the analogue of its assertion for $f$.
\end{remark}

The duality result gives rise to a monotonicity principle describing the support of optimal martingale transports, in the spirit of the cyclical monotonicity condition from classical transport theory. The following generalizes the results of \cite[Lemma~1.11]{BeiglbockJuillet.12} and \cite[Corollary~7.8]{BeiglbockNutzTouzi.15} for the one-step martingale transport problem.

\begin{theorem}[Monotonicity Principle]
\label{thm:monotonicityPrinciple}
Let $f: \R^{n+1} \to [0,\infty]$ be Borel and suppose that $\S_\bmu(f) < \infty$. There exists a Borel set 
$\Gamma \subseteq \R^{n+1}$
with the following properties.
\begin{enumerate}[(i)]
\item A measure $P \in \M(\bmu)$ is concentrated on $\Gamma$ if and only if it is optimal
for $\S_\bmu(f)$.
\item Let $\bar{\bmu}=(\bar{\mu}_{0},\dots,\bar{\mu}_{n})$ be another vector of marginals in convex order. If $\bar{P} \in \M(\bar{\bmu})$
is concentrated on $\Gamma$, then $\bar{P}$ is optimal for $\S_{\bar{\bmu}}(f)$.
\end{enumerate}
Indeed, if $(\bphi,H) \in \D_\bmu(f)$ is an optimizer for $\I_\bmu(f)$, then we can take
\[\Gamma := \left\{\sum_{t=0}^n \phi_t(X_t) + (H \cdot X)_n = f\right\} \cap \V.\]
\end{theorem}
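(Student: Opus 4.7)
The strategy is to deduce the monotonicity principle directly from Theorem~\ref{thm:duality} via Lemma~\ref{lem:dualPayoff}. Since $\S_\bmu(f)=\I_\bmu(f)<\infty$, Theorem~\ref{thm:duality}\,(ii) furnishes a dual optimizer $(\bphi,H)\in\D_\bmu(f)$ with $\bmu(\bphi)=\S_\bmu(f)$. Writing $g:=\sum_{t=0}^{n}\phi_{t}(X_{t})+(H\cdot X)_{n}$, the definition of $\D_\bmu(f)$ gives $g\geq f$ on $\V$, so the set $\Gamma:=\{g=f\}\cap\V$ is Borel. Because $f\geq 0$, the integrand $g$ is bounded below by $0$ on $\V$, which is exactly what Lemma~\ref{lem:dualPayoff} requires.

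For (i), I would first invoke Theorem~\ref{thm:polarstruct} to note that every $P\in\M(\bmu)$ is concentrated on $\V$. Lemma~\ref{lem:dualPayoff} then yields $P(g)=\bmu(\bphi)=\S_\bmu(f)$, and since $g-f\geq 0$ on $\V$ with $\{g=f\}\cap\V=\Gamma$, the chain
\[ P(f)=\S_\bmu(f)\iff P(g-f)=0\iff g=f \text{ holds }P\text{-a.s.}\iff P(\Gamma)=P(\R^{n+1}) \]
gives the claimed equivalence.

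For (ii), let $\bar P\in\M(\bar\bmu)$ be concentrated on $\Gamma$. We may assume $\bar P(f)<\infty$, since otherwise $\bar P(f)=\S_{\bar\bmu}(f)=\infty$ and optimality is immediate. Because $g=f$ on $\Gamma$ we then have $\bar P(g)=\bar P(f)<\infty$. The plan is to establish $\bphi\in L^c(\bar\bmu)$ and apply Lemma~\ref{lem:dualPayoff} in the form $\bar R(g)=\bar\bmu(\bphi)$ for every $\bar R\in\M(\bar\bmu)$. In particular $\bar P(g)=\bar\bmu(\bphi)=\bar Q(g)$ for any competitor $\bar Q\in\M(\bar\bmu)$, and combining with $g\geq f$ on the effective domain of $\M(\bar\bmu)$ (where $\bar Q$ actually lives), we get $\bar Q(f)\leq \bar Q(g)=\bar P(g)=\bar P(f)$, which is the optimality of $\bar P$. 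To produce a concave moderator for $\bphi$ with respect to $\bar\bmu$, the natural idea is to take the moderator $\bchi$ associated with $\bmu$ and restrict/re-extend it on the irreducible domains of $\M(\bar\mu_{t-1},\bar\mu_{t})$, leveraging the fact that $\bar P$ concentrates on $\Gamma\subseteq\V$ to pin down the supports of $\bar\mu_{t}$ and using the $\bar P$-integrability of $g$ together with the Fubini-for-kernels argument in the proof of Lemma~\ref{lem:dualPayoff}.

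The main obstacle is part (ii): one must verify that $\bphi$ actually lies in the moderated space $L^c(\bar\bmu)$ and that the pointwise inequality $g\geq f$ extends from $\V$ to the portion of $\R^{n+1}$ charged by an arbitrary $\bar Q\in\M(\bar\bmu)$ (whose effective domain is a priori unrelated to $\V$). The leverage comes entirely from $\bar P$ being supported on $\Gamma\subseteq \V$, which constrains the marginals $\bar\bmu$ enough for the concave-moderator machinery of Section~\ref{se:dualSpace} to transfer from $\bmu$ to $\bar\bmu$.
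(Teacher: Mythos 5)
Part~(i) of your argument matches the paper's proof. For part~(ii), however, you have correctly named the two obstacles---that $g\geq f$ must be extended from $\V$ to the effective domain $\bar{\V}$ of $\M(\bar{\bmu})$, and that $\bphi$ must be shown to lie in $L^c(\bar{\bmu})$---but you have not actually overcome either of them, so the proof as written has a genuine gap precisely where the real work lies.

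The missing key step is the inclusion $\bar{\V}\subseteq\V$, which the paper proves as follows: it suffices to show that $u_{\mu_{t-1}}(x)=u_{\mu_t}(x)$ implies $u_{\bar{\mu}_{t-1}}(x)=u_{\bar{\mu}_t}(x)$ (by Proposition~\ref{prop:oneStepDecomposition} this forces the components of $\M(\bar\mu_{t-1},\bar\mu_t)$ to refine those of $\M(\mu_{t-1},\mu_t)$). For such $t$ and $x$, the inclusion $\Gamma\subseteq\V$ forces $\Gamma\subseteq(X_{t-1},X_t)^{-1}\big((-\infty,x]^2\cup[x,\infty)^2\big)$, i.e.\ no path in $\Gamma$ crosses $x$ between times $t-1$ and $t$; since $\bar{P}$ is a martingale concentrated on $\Gamma$, one computes $u_{\bar{\mu}_{t-1}}(x)=\EE^{\bar{P}}[|X_{t-1}-x|]=\EE^{\bar{P}}[(X_t-x)\1_{X_{t-1}\geq x}]+\EE^{\bar{P}}[(x-X_t)\1_{X_{t-1}\leq x}]=u_{\bar{\mu}_t}(x)$. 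Once $\bar{\V}\subseteq\V$ is in hand, $g\geq f$ holds on $\bar{\V}$, and the membership $(\bphi,H)\in\D_{\bar{\bmu}}(f)$ is obtained not by ``restricting/re-extending'' the old moderator as you suggest, but by rerunning the infimum construction of Lemma~\ref{lem:moderatorChoice} with the finiteness $\bar{P}[g]=\bar{P}(f)<\infty$ serving as a substitute for the hypothesis $(\bphi,H)\in\D^g_\bmu(0)$, followed by the normalization of Proposition~\ref{prop:dualEquivalence}. Your closing appeal to ``the concave-moderator machinery transferring from $\bmu$ to $\bar\bmu$'' is exactly the assertion that needs proof, so you should supply these two arguments explicitly.
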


\begin{proof}
As $\S_\bmu(f) < \infty$, Theorem~\ref{thm:duality} shows that $\I_\bmu(f) = \S_\bmu(f) < \infty$ and that there exists a dual optimizer $(\bphi,H) \in \D_\bmu(f)$. In particular, we can define $\Gamma$ as above.

(i) As $0\leq f$ and $P(f) \leq \S_\bmu(f) < \infty$ for all
$P \in \M(\bmu)$, we see that $f$ is $P$-integrable for all $P \in \M(\bmu)$. Since
$\sum_{t=0}^n \phi_t(X_t) + (H \cdot X)_n \geq 0$ on the effective domain $\V$, and
$P\left[\sum_{t=0}^n \phi_t(X_t) + (H \cdot X)_n\right] = \bmu(\bphi) = \I_\bmu(f) < \infty$ by
Lemma~\ref{lem:dualPayoff}, we
also obtain the $P$-integrability of $\sum_{t=0}^n \phi_t(X_t) + (H \cdot X)_n$. In particular,
\[0 \leq P\left[\sum_{t=0}^n \phi_t(X_t) + (H \cdot X)_n - f\right] = \bmu(\bphi) - P(f) = \S_\bmu(f) - P(f)\]
and equality holds if and only if $P$ is concentrated on $\Gamma$.

(ii) We may assume that $\bar{P}$ is a probability measure with $\bar{P}(f)<\infty$. As a first step, we show that the effective domain $\bar{\V}$ of
$\M(\bar{\bmu})$ is a subset of the effective domain $\V$ of $\M(\bmu)$. \FS{To that end, it is sufficient to show that if $1\leq t \leq n$ and $x\in\R$ are such that $u_{\mu_{t-1}}(x) = u_{\mu_t}(x)$, then $u_{\bar{\mu}_{t-1}}(x) = u_{\bar{\mu}_t}(x)$, and if moreover
$\partial^{+}u_{\mu_{t-1}}(x) = \partial^{+} u_{\mu_t}(x)$, then $\partial^{+}u_{\bar{\mu}_{t-1}}(x) = \partial^{+}u_{\bar{\mu}_t}(x)$, and similarly for the left derivative $\partial^{-}$ (cf.\ Proposition~\ref{prop:oneStepDecomposition}).
Indeed, for $t$ and $x$ such that $u_{\mu_{t-1}}(x) = u_{\mu_t}(x)$, our assumption that $\Gamma\subseteq \V$ implies
\[\Gamma \subseteq (X_{t-1},X_t)^{-1}\big( 
(-\infty,x]^2 \cup [x,\infty)^2\big). \]
Using also that $\EE^{\bar{P}}[X_{t}|\F_{t-1}]=X_{t-1}$ and that $\bar{P}$ is concentrated on $\Gamma$,
\begin{align*}
u_{\bar{\mu}_{t-1}}(x) &= \EE^{\bar{P}}[|X_{t-1} - x|] \\
&= \EE^{\bar{P}}[(X_{t-1} - x)\1_{X_{t-1} \geq x}] + \EE^{\bar{P}}[(x - X_{t-1})\1_{X_{t-1} \leq x}]\\
&= \EE^{\bar{P}}[(X_t - x)\1_{X_{t-1} \geq x}] + \EE^{\bar{P}}[(x - X_t)\1_{X_{t-1} \leq x}] \\
&= \EE^{\bar{P}}[|X_t - x|]= u_{\bar{\mu}_t}(x)
\end{align*}
as desired. If in addition $\partial^{+}u_{\mu_{t-1}}(x) = \partial^{+}u_{\mu_t}(x)$, then $\Gamma\subseteq \V$
implies
\[\Gamma \subseteq (X_{t-1},X_t)^{-1}\big( 
(-\infty,x]^2 \cup (x,\infty)^2\big). \]
As $\bar{P}$ is concentrated on $\Gamma$, it follows that
\begin{align*}
\partial^{+}u_{\bar{\mu}_{t-1}}(x) &= \bar P[X_{t-1}\leq x] - \bar P [X_{t-1} > x] \\
&= \bar P[X_t\leq x] - \bar P [X_t > x] = \partial^{+}u_{\bar{\mu}_{t}}(x)
\end{align*}
as desired. The same argument can be used for the left derivative
and we have shown that $\bar{\V}\subseteq \V$.}

In view of that inclusion, the inequality $\sum_{t=0}^n \phi_t(X_t) + (H \cdot X)_n \geq f$ holds
on~$\bar{\V}$. Since $\bar P$ is concentrated on $\Gamma$, 
\[
  \bar{P}\left[\sum_{t=0}^n \phi_t(X_t) + (H \cdot X)_n\right] = \bar{P}(f) <\infty.
\]
We may follow the arguments in the proof of Lemma~\ref{lem:moderatorChoice} to construct a moderator 
$\bchi$ and establish that $(\bphi,H) \in \D^g_{\bar \bmu}(f)$, where we are implicitly using the embedding 
detailed in Remark~\ref{rem:liftdual}. \FS{(Note that the proof of Lemma~\ref{lem:moderatorChoice} uses 
the condition $(\bphi,H) \in \D^g_{\bar \bmu}(0)$ only to establish 
$\bar{P}\left[\sum_{t=0}^n \phi_t(X_t) + (H \cdot X)_n\right] <\infty$. In the present situation the latter 
is known a priori and  the condition is not needed.)}
 Then, we can modify $\bchi$ as in the proof of Proposition~\ref{prop:dualEquivalence} to see that $(\bphi,H) \in \D_{\bar \bmu}(f)$.
As a result, we may apply Lemma~\ref{lem:dualPayoff} to obtain that
\[ \bar{P}(f) = \bar{P}\left[\sum_{t=0}^n \phi_t(X_t) + (H \cdot X)_n\right] = \bar{\bmu}(\bphi),\]
whereas for any other $P' \in \M(\bar{\bmu})$ we have
\[P'(f) \leq P'\left[\sum_{t=0}^n \phi_t(X_t) + (H \cdot X)_n\right] = \bar{\bmu}(\bphi)= \bar{P}(f). \]
This shows that $\bar{P}\in\M(\bar{\bmu})$ is optimal.
\end{proof}

\section{Left-Monotone Transports}\label{se:shadows}

In this section we define left-monotone transports through a shadow property and prove their existence.

\subsection{Preliminaries}

Before moving on to the $n$-step case, we recall the essential definitions and results regarding the one-step version of the left-monotone transport (also called the Left-Curtain coupling). The first notion is the so-called shadow, and it will be useful to define it for measures $\mu \leq_{pc} \nu$ in \emph{positive convex order}, meaning that $\mu(\phi) \leq \nu(\phi)$ for any nonnegative convex function $\phi$. Clearly, this order is weaker than the convex order $\mu \leq_{c} \nu$, and it is worth noting that $\mu$ may have a smaller mass than $\nu$. The following is the result of \cite[Lemma~4.6]{BeiglbockJuillet.12}. 

\begin{lemma}
\label{lem:shadowDefinition}
Let $\mu \leq_{pc} \nu$. Then the set
\[\casts{\mu}{\nu} := \{ \theta : \mu \leq_c \theta \leq \nu\}\]
is non-empty and contains a unique least element $\shadow{\mu}{\nu}$ for the convex order:
\[\shadow{\mu}{\nu} \leq_c \theta \text{ for all } \theta \in \casts{\mu}{\nu}.\]
The measure $\shadow{\mu}{\nu}$ is called the \emph{shadow} of $\mu$ in $\nu$.
\end{lemma}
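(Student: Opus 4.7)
I would work throughout with potential functions $u_\rho(x)=\int|x-y|\,d\rho(y)$. The key facts are that $\rho_1\leq_c\rho_2$ is equivalent to $u_{\rho_1}\leq u_{\rho_2}$ with matching mass and barycenter, and that $\rho_1\leq\rho_2$ as measures gives $u_{\rho_2}-u_{\rho_1}=u_{\rho_2-\rho_1}$, which is a potential and hence convex and nonnegative. The hypothesis $\mu\leq_{pc}\nu$ applied to the nonnegative convex functions $x\mapsto|x-a|$ yields $u_\mu\leq u_\nu$ pointwise, and I denote $f:=u_\nu-u_\mu\geq0$. The plan is to (1) construct a candidate $\theta^*$ and show $\theta^*\in\casts{\mu}{\nu}$; (2) show $\theta^*$ is the least element in convex order; and (3) derive uniqueness.

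The candidate is defined via its potential as $u_{\theta^*}:=u_\nu-g^*$, where $g^*$ is the largest convex function satisfying $g^*\leq f$ together with the asymptotic behavior of a potential $u_{\nu-\theta}$ for $\theta$ with $\theta(\R)=\mu(\R)$ and $\bary(\theta)=\bary(\mu)$; concretely, $g^*(x)-(\nu(\R)-\mu(\R))|x-c|\to 0$ as $|x|\to\infty$ for the constant $c$ fixed by the difference of first moments. Existence of $g^*$ follows by taking the pointwise supremum over the class, which is convex as a supremum of convex functions. The main technical step is verifying that $\theta^*:=\frac{1}{2}(u_{\theta^*})''$ is a positive measure lying in $\casts{\mu}{\nu}$. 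On each connected component of the open set $\{g^*<f\}$, $g^*$ must be affine (else it could be enlarged within the class while remaining convex), so locally $(u_{\theta^*})''=u_\nu''=2\nu\geq 0$; on the contact set $\{g^*=f\}$ one has $u_{\theta^*}=u_\mu$ locally, whose second derivative $2\mu$ is nonnegative. Hence $\theta^*\geq 0$. The inequality $\theta^*\leq\nu$ is immediate from $u_\nu-u_{\theta^*}=g^*$ being convex (encoding $\nu-\theta^*\geq 0$), and $\mu\leq_c\theta^*$ follows from $u_{\theta^*}\geq u_\mu$ combined with the mass and barycenter matching built into the asymptotic specification of $g^*$.

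For the least-element property, let $\theta'\in\casts{\mu}{\nu}$ be arbitrary. Since $\theta'\leq\nu$, the function $g':=u_\nu-u_{\theta'}=u_{\nu-\theta'}$ is a convex potential; since $u_{\theta'}\geq u_\mu$, it satisfies $g'\leq f$; and its asymptotic behavior matches the class defining $g^*$, as $\theta'(\R)=\mu(\R)$ and $\bary(\theta')=\bary(\mu)$. By maximality of $g^*$, $g'\leq g^*$, i.e., $u_{\theta^*}\leq u_{\theta'}$, which, combined with equal mass and barycenter, gives $\theta^*\leq_c\theta'$. Uniqueness of the least element then follows by antisymmetry of $\leq_c$: two least elements $\theta^1,\theta^2$ satisfy $u_{\theta^1}=u_{\theta^2}$ together with equal mass and barycenter, forcing $\theta^1=\theta^2$.

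The chief obstacle is the envelope construction in step two---verifying that $u_{\theta^*}$ is a valid potential and that the asymptotic/first-moment constraints on $g^*$ can be consistently imposed. A cleaner alternative would first establish non-emptiness of $\casts{\mu}{\nu}$ via a Strassen-type argument for the positive convex order (producing a kernel sending $\mu$ to some $\theta\leq\nu$), then extract the least element by minimizing a strictly order-preserving functional such as $\theta\mapsto\int\psi\,d\theta$ (with strictly convex $\psi$ of suitable growth) over the weakly compact convex set $\casts{\mu}{\nu}$, and finally identify this minimizer with the shadow by applying the potential comparison in step three.
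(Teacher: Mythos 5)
The paper does not prove this lemma; it is quoted directly from Beiglb\"ock--Juillet (their Lemma~4.6), so your argument is necessarily independent of anything in the text. Your strategy---realizing $u_{\shadow{\mu}{\nu}}$ as $u_\nu-g^*$ with $g^*$ the convex envelope of $f=u_\nu-u_\mu$---is a viable and in fact known characterization of the shadow, and your least-element and uniqueness steps are complete and clean: every $\theta'\in\casts{\mu}{\nu}$ produces the competitor $g'=u_{\nu-\theta'}$ in the defining class, whence $g'\leq g^*$, $u_{\theta^*}\leq u_{\theta'}$, and $\theta^*\leq_c\theta'$.

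The existence step, however, has two genuine gaps. First, you never show that the class over which $g^*$ is a supremum is nonempty; if it were empty the construction would be vacuous. This is exactly where $\mu\leq_{pc}\nu$ must be used beyond $u_\mu\leq u_\nu$: testing against the nonnegative convex functions $y\mapsto(y-x)^+$ and $y\mapsto(x-y)^+$ gives $f(x)\geq m|x-c|$ with $m=\nu(\R)-\mu(\R)$, so that $m|x-c|$ itself lies in the class; this also pins down the asymptotics of $g^*$ (squeezed between $m|x-c|$ and $f$), without which the mass/barycenter matching you invoke both for $\mu\leq_c\theta^*$ and for the least-element comparison is unjustified. Second, the convexity of $u_\nu-g^*$ is not established by your local computation: it covers the open set $\{g^*<f\}$ (where $g^*$ is affine) and the \emph{interior} of the contact set (where $u_\nu-g^*=u_\mu$ on an open set), but says nothing about the boundary of the contact set, which can be uncountable and where $(g^*)''$ may carry atoms or singular mass exceeding $2\nu$; the values of $u_\nu-g^*$ on a set with empty interior do not determine its second derivative measure there. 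The missing ingredients are the one-sided derivative comparisons $\partial^-f(p)\leq\partial^-g^*(p)\leq\partial^+g^*(p)\leq\partial^+f(p)$ at contact points $p$ (valid because $g^*\leq f$ with equality at $p$), which give $(g^*)''(\{p\})\leq 2\nu(\{p\})-2\mu(\{p\})$, together with a global argument (e.g.\ fixing $x<z$ and showing the chord of $u_\nu-g^*$ over $[x,z]$ dominates the function, using concavity of the defect on each component of $\{g^*<f\}$ and the direct estimate on the contact set). Finally, your ``cleaner alternative'' is not actually complete either: a minimizer of a single strictly convex functional over the compact convex set $\casts{\mu}{\nu}$ need not be a least element for $\leq_c$ unless one additionally proves that $\casts{\mu}{\nu}$ is downward directed in convex order, and the proposed ``identification via the potential comparison in step three'' reintroduces the very envelope construction it was meant to avoid.
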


It will be useful to have the following picture in mind: if $\mu$ is a Dirac measure, its shadow in $\nu$ is a measure $\theta$ of equal mass and barycenter, chosen such as to have minimal variance subject to the constraint $\theta\leq \nu$.


The second notion is a class of reward functions.

\begin{definition}
A Borel function $f: \R^2 \to \R$ is called \emph{second-order Spence--Mirrlees} if
$y \mapsto f(x',y) - f(x,y)$ is strictly convex for any $x < x'$.
\end{definition}

We note that if $f$ is sufficiently differentiable, this can be expressed as the cross-derivative condition $f_{xyy}>0$ which has also been called the martingale Spence--Mirrlees condition, in analogy to the classical Spence--Mirrlees condition $f_{xy}>0$.

In the one-step case, the left-monotone transport is unique and can be characterized as follows; cf.\ \cite[Theorems~4.18,\,4.21,\,6.1]{BeiglbockJuillet.12} where this transport is called the Left-Curtain coupling, as well as \cite[Theorem~1.2]{NutzStebegg.16} for the third equivalence in the stated generality.

\begin{proposition}
\label{prop:leftMonotoneTransport}
Let $\mu \leq_c \nu$ and $P \in \M(\mu,\nu)$. The following are equivalent:
\begin{enumerate}[(i)]
\item For all $x \in \R$ and $A \in \B(\R)$, 
\[P[(-\infty,x] \times A] = \shadow{\mu|_{(-\infty,x]}}{\nu}(A).\]
\item $P$ is concentrated on a Borel set $\Gamma \subseteq \R^2$ satisfying
\[(x,y^-),(x,y^+),(x',y') \in \Gamma,\;\; x < x' \quad \Rightarrow \quad y' \notin (y^-,y^+). \]
\item $P$ is an optimizer of $\S_{\mu,\nu}(f)$ for some (and then all) 
$f : \R^2 \to \R$ second-order Spence--Mirrlees such that there exist functions 
$a \in L^1(\mu)$, $b \in L^1(\nu)$
with $|f(x,y)| \leq a(x) + b(y)$. 

\end{enumerate}
There exists a unique measure $\bar{P} \in \M(\mu,\nu)$ satisfying (i)--(iii), and $\bar{P}$ is called the (one-step) left-monotone transport.
\end{proposition}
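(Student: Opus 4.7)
The plan is to construct the transport via the shadow characterization~(i), and then to close the cycle (i)$\Rightarrow$(ii)$\Rightarrow$(iii)$\Rightarrow$(i); uniqueness follows automatically from~(i).

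For existence from~(i), the key input is that the map $x \mapsto \shadow{\mu|_{(-\infty,x]}}{\nu}$ is nondecreasing in $x$ as a family of measures, which follows from the minimality in Lemma~\ref{lem:shadowDefinition}: if $\tilde\mu \leq \mu'$ and both lie in positive convex order with $\nu$, then the shadow of $\tilde\mu$ is dominated by that of $\mu'$. Consequently one can assemble a Borel measure $P$ on $\R^{2}$ with $P[(-\infty,x]\times A] = \shadow{\mu|_{(-\infty,x]}}{\nu}(A)$ via a Lebesgue--Stieltjes construction in~$x$. This $P$ has first marginal $\mu$ by design, second marginal $\shadow{\mu}{\nu}=\nu$ (since $\mu\leq_c \nu$ forces equality), and is a martingale because each shadow shares a barycenter with the slice of $\mu$ it serves. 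Uniqueness given~(i) is automatic, since half-infinite rectangles generate $\B(\R^{2})$.

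For (i)$\Rightarrow$(ii), I would describe the incremental shadow $\shadow{\mu|_{(-\infty,x+\epsilon]}}{\nu} - \shadow{\mu|_{(-\infty,x]}}{\nu}$ via the explicit characterization of shadows (roughly: the new mass is spread on the residual $\nu - \shadow{\mu|_{(-\infty,x]}}{\nu}$ as tightly as possible around the barycenter of the new slice). The Borel set $\Gamma$ obtained by collecting the supports of these increments over $x$ carries $P$, and a forbidden configuration as in~(ii) would contradict the convex-order minimality of the shadow at the smaller value of $x$, since one could swap the interposed mass with the outer pair and reduce the shadow in convex order.

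For (ii)$\Rightarrow$(iii) I would invoke the duality Theorem~\ref{thm:duality} and the monotonicity principle Theorem~\ref{thm:monotonicityPrinciple} for $n=1$: the growth bound $|f(x,y)| \leq a(x)+b(y)$ makes the primal finite and allows the relaxation in Remark~\ref{rk:relaxLowerBound}. For a second-order Spence--Mirrlees~$f$, any local four-point exchange $(x,y^-),(x,y^+) \leftrightarrow (x',y'),(x',\tilde y)$ with $y^-<y'<y^+$ and $x<x'$ is excluded by the no-crossing geometry of $\Gamma$, whereas $f_{xyy}>0$ makes precisely the reverse swap strictly improving. Therefore no competitor can beat $P$, so $P$ is optimal.

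The main obstacle is (iii)$\Rightarrow$(i), since optimality for one class of rewards must yield a pointwise shadow identity. I would proceed indirectly via uniqueness: the transport $\bar P$ obtained from~(i) is an optimizer for every Spence--Mirrlees~$f$ by the implications already established, so any $P$ satisfying~(iii) obeys $P(f)=\bar P(f)$ for all admissible~$f$. Since the cone of second-order Spence--Mirrlees functions, augmented by affine functions of~$y$ (absorbed by the martingale property) and arbitrary functions of~$x$ (absorbed by the $\mu$-marginal), is rich enough to separate elements of $\M(\mu,\nu)$ with identical values on these test functions, one concludes $P=\bar P$. The technical heart of this last step is the approximation of indicators of half-infinite rectangles $(-\infty,x]\times A$ by smoothed second-order Spence--Mirrlees rewards modulo the absorbed pieces, which I would carry out by a routine mollification argument.
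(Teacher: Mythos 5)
The paper itself does not prove Proposition~\ref{prop:leftMonotoneTransport}; it is imported from \cite{BeiglbockJuillet.12} and \cite{NutzStebegg.16}. The correct architecture is visible, however, in the paper's multistep analogues (Theorem~\ref{th:leftMonotoneMain} and its ingredients), and it runs the cycle in the \emph{opposite} direction from yours: optimality $\Rightarrow$ geometric support (via the dual optimizer's contact set, Lemma~\ref{lem:variationalPrinciple} and Lemma~\ref{lem:SpenceMirrleesToLeftMonotone}), geometric support $\Rightarrow$ shadow property (the potential-function argument of Theorem~\ref{thm:leftMonotoneGeometry} built on Lemma~\ref{lem:differenceWitness}), and shadow property $\Rightarrow$ optimality (the building blocks $\1_{(-\infty,a]}(X_0)\varphi(X_t)$ of Lemma~\ref{le:optimalityProductFun} together with the integration-by-parts representation~\eqref{eq:intByParts}). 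Your construction of $\bar P$ from the shadow property and the uniqueness via the $\pi$-system of half-infinite rectangles are fine.

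Two of your reversed arrows have genuine gaps. For (i)$\Rightarrow$(ii): a forbidden configuration $(x,y^-),(x,y^+),(x',y')$ in the support only yields positive mass in small neighbourhoods of the three points, and the sources near $x$ feeding $y^-$ may be disjoint from those feeding $y^+$; then no modification of the kernel below a level $a\in(x,x')$ both moves mass into $(y^-,y^+)$ and preserves conditional barycenters, so the modified target measure need not lie in $\casts{\mu|_{(-\infty,a]}}{\nu}$ and no contradiction with shadow minimality arises. (The four-point swap that does preserve barycenters also reroutes the mass of $x'>a$, which is invisible to $\shadow{\mu|_{(-\infty,a]}}{\nu}$.) This alignment problem is exactly what the monotonicity principle resolves, which is why both \cite{BeiglbockJuillet.12} and this paper derive the geometric property from optimality rather than from the shadow. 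For (ii)$\Rightarrow$(iii): ``no improving local swap, hence no competitor can beat $P$'' is not a valid inference. Theorem~\ref{thm:monotonicityPrinciple} gives sufficiency of concentration only for the specific contact set of a dual optimizer, not for an arbitrary left-monotone set carrying $P$; sufficiency of the pointwise no-crossing condition for optimality is precisely the hard content of Theorem~\ref{thm:leftMonotoneGeometry} combined with Theorem~\ref{th:leftMonotoneOptimalitySpenceMirrlees}. Since your (iii)$\Rightarrow$(i) separation argument needs $\bar P$ to be optimal for all Spence--Mirrlees rewards, it inherits these gaps; it becomes sound once (i)$\Rightarrow$(iii) is proved directly as in Lemma~\ref{le:optimalityProductFun} and Theorem~\ref{th:leftMonotoneOptimalitySpenceMirrlees}, after which identifying $P$ with $\bar P$ through the quantities $P[\1_{(-\infty,a]}(X_0)(X_1-b)^+]$ for all $a,b$ works as you indicate.
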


If $\mu$ is a discrete measure, the characterization in~(i) can be understood as follows: the left-monotone transport $\bar{P}$ processes the atoms of $\mu$ from left to right, mapping each one of them to its shadow in the remaining target measure.

Next, we record two more results about shadows that will be used below. The first one, cited from \cite[Theorem 3.1]{BeiglbockJuillet.16}, generalizes the above idea in the sense that the atoms are still mapped to their shadows but can be processed in any given order; in the general (non-discrete) case, such an order is defined by a coupling~$\pi$ from the uniform measure to $\mu$.

\begin{proposition}
\label{prop:generalShadowPlan}
Let $\mu \leq_c \nu$ and $\pi \in \Pi(\lambda,\mu)$ where $\lambda$ denotes the Lebesgue measure
on $[0,1]$. Then there exists a unique measure $Q \in \Pi(\lambda,\mu,\nu)$ on $\R^{3}$ such that $Q \circ (X_0,X_1)^{-1} = \pi$ and 
\[Q|_{[0,s] \times \R \times \R} \circ (X_1,X_2)^{-1} \in \M(\pi_s,\shadow{\pi_s}{\nu}),\quad s\in\R,\]
where $\pi_s := \pi|_{[0,s]\times \R} \circ (X_1)^{-1}$.
\end{proposition}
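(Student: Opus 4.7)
My approach is to construct $Q$ by approximation from a discrete version and to prove uniqueness via a consistency argument; the engine in both directions is the shadow associativity identity
\begin{equation*}
\shadow{\mu_1 + \mu_2}{\nu} = \shadow{\mu_1}{\nu} + \shadow{\mu_2}{\nu - \shadow{\mu_1}{\nu}},
\end{equation*}
valid whenever both shadows are well-defined (this is \cite[Proposition~4.15]{BeiglbockJuillet.12}, which I would invoke as an input). It says that nested subfamilies of mass assemble their shadows cumulatively, and is exactly what the shadow condition on $Q$ mirrors in the first coordinate.

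\emph{Step~1: Discrete construction.} Consider first the case where $\mu = \sum_{i=1}^N m_i\delta_{x_i}$ is finitely supported and $\pi = \sum_i \lambda|_{(a_{i-1},a_i]}\otimes \delta_{x_i}$ for some partition $0=a_0<\dots<a_N=1$ with $a_i-a_{i-1}=m_i$. Iteratively define
\[ \theta_i := \shadow{m_i\delta_{x_i}}{\nu - \textstyle\sum_{j<i}\theta_j}, \qquad 1\leq i\leq N,\]
where the positive convex order needed at each step is established inductively using the associativity identity. That same identity yields $\sum_{j\leq i}\theta_j = \shadow{\pi_{a_i}}{\nu}$, so $\sum_j\theta_j = \shadow{\mu}{\nu}=\nu$ (since $\mu,\nu$ have equal mass and $\shadow{\mu}{\nu}\leq\nu$). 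Setting
\[Q := \sum_{i=1}^N \lambda|_{(a_{i-1},a_i]}\otimes \delta_{x_i}\otimes (\theta_i/m_i),\]
one checks by inspection that $Q\in\Pi(\lambda,\mu,\nu)$, that $Q\circ(X_0,X_1)^{-1}=\pi$, and that $Q|_{[0,a_i]\times\R^2}\circ(X_1,X_2)^{-1}$ is a martingale coupling of $\pi_{a_i}$ to $\sum_{j\leq i}\theta_j=\shadow{\pi_{a_i}}{\nu}$; the martingale property of each summand follows because $\theta_i/m_i$ has barycenter $x_i$. Since $\pi_s$ is piecewise constant in $s$, the shadow identity at the breakpoints propagates to all $s\in\R$.

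\emph{Step~2: General $\pi$ by approximation.} For arbitrary $\pi\in\Pi(\lambda,\mu)$, approximate by discrete couplings $\pi^m\in\Pi(\lambda,\mu^m)$ with $\mu^m$ finitely supported, $\mu^m\leq_c\nu^m\leq_c\nu$, and $\pi^m\to\pi$ weakly; this quantization is standard. Apply Step~1 to produce $Q^m$. The family $\{Q^m\}$ is tight by the uniform marginal constraints, so a subsequence converges weakly to some $Q\in\Pi(\lambda,\mu,\nu)$ with $Q\circ(X_0,X_1)^{-1}=\pi$. To inherit the shadow property, one needs $\shadow{\pi^m_s}{\nu}\to\shadow{\pi_s}{\nu}$ for each~$s$ and that the limiting $(X_1,X_2)$-projection remains a martingale coupling; the martingale property is preserved by weak limits, and the continuity of the shadow map is controlled via convergence of the potential functions $u_{\pi^m_s}\to u_{\pi_s}$.

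\emph{Step~3: Uniqueness and main obstacle.} Suppose $Q,Q'$ both satisfy the conditions. For every $s<s'$, the associativity identity forces the $(X_1,X_2)$-projection of both $Q|_{(s,s']\times\R^2}$ and $Q'|_{(s,s']\times\R^2}$ to lie in $\M\!\bigl(\pi|_{(s,s']\times\R}\circ X_1^{-1},\,\shadow{\pi|_{(s,s']\times\R}\circ X_1^{-1}}{\nu-\shadow{\pi_s}{\nu}}\bigr)$. Disintegrating $\pi=\lambda\otimes\pi_u$ and refining the partition through a countable dense set of breakpoints, Lebesgue differentiation reduces the $\lambda$-a.e.\ infinitesimal increment to a Dirac mass (for that fraction of $u$ where $\pi_u$ is a Dirac), and more generally to the unique shadow-based coupling; since a Dirac admits a unique martingale coupling to any target, $Q$ and $Q'$ agree on a generating set of rectangles, hence everywhere. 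The main obstacle I anticipate is the shadow-continuity input (needed both for the limit in Step~2 and the differentiation argument in Step~3): the shadow operator is not automatically continuous under weak convergence, and controlling it uniformly in $s$ is where most of the technical work will concentrate.
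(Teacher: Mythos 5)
First, note that the paper does not prove this proposition at all: it is imported verbatim from the literature (\cite[Theorem~3.1]{BeiglbockJuillet.16}, the ``shadow couplings'' paper), so there is no in-paper argument to compare yours against. Your attempt therefore has to stand on its own, and as written it has a concrete error already in Step~1. The defining condition must hold for \emph{every} $s$, and $\pi_s$ is not piecewise constant in $s$ --- it has total mass $s$, so it grows continuously; only its support is piecewise constant. Your construction fixes the conditional law of $X_2$ to be $\theta_i/m_i$ uniformly over $u\in(a_{i-1},a_i]$, which forces the second marginal of $Q|_{[0,s]\times\R^2}\circ(X_1,X_2)^{-1}$ to be $\shadow{\pi_{a_{i-1}}}{\nu}+\frac{s-a_{i-1}}{m_i}\theta_i$, i.e.\ linear in $s$ on each block. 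But $s\mapsto\shadow{\pi_s}{\nu}$ is not linear on the block. Simplest counterexample: $N=1$, $\mu=\delta_0$, $\nu=\frac13(\delta_{-1}+\delta_0+\delta_1)$, $\pi=\lambda\otimes\delta_0$. Your $Q=\lambda\otimes\delta_0\otimes\nu$ gives $Q|_{[0,s]\times\R^2}\circ(X_1,X_2)^{-1}=s\,(\delta_0\otimes\nu)$ with second marginal $s\nu$, whereas $\shadow{s\delta_0}{\nu}=s\delta_0$ for $s\le\frac13$. So the shadow condition fails at every interior $s$ of the block. Repairing this requires solving the problem already for a single Dirac source with Lebesgue first coordinate --- i.e.\ producing a kernel $u\mapsto\kappa(u,\cdot)$ whose cumulative integrals realize the nested shadows $\shadow{u\delta_{x}}{\eta}$ --- which is essentially the whole difficulty of the proposition, not a triviality one can defer.

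The later steps inherit this and add further unproven claims. In Step~2 the ``continuity of the shadow map'' under your approximation is exactly the stability result that is the subject of a separate paper (\cite{Juillet.14}); asserting it is controlled ``via convergence of the potential functions'' is not a proof, and weak convergence of $Q^m$ does not obviously pass the identity $\bigl(Q|_{[0,s]\times\R^2}\bigr)\circ(X_1,X_2)^{-1}\circ X_2^{-1}=\shadow{\pi_s}{\nu}$ to the limit, since restriction to $[0,s]\times\R^2$ is not weakly continuous. In Step~3 the uniqueness argument is the right idea in spirit (increments over $(s,s']$ have both marginals pinned down by Lemma~\ref{lem:shadowProperties}\,(i)), but the Lebesgue-differentiation step ``reduces the increment to a Dirac'' only handles the first marginal of the increment; when $\pi_u$ is not a Dirac the coupling of $\pi_u$ to its (infinitesimal) shadow is not unique in general, and your phrase ``the unique shadow-based coupling'' is circular. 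As it stands the proposal is a plausible outline of the strategy of \cite{BeiglbockJuillet.16} but not a proof; the honest course in this paper is to cite that theorem, as the authors do.
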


We shall also need the following facts about shadows.

\begin{lemma}
\label{lem:shadowProperties}
\begin{enumerate}[(i)]
\item Let $\mu_1,\mu_2,\nu$ be finite measures satisfying $\mu_1+\mu_2 \leq_{pc} \nu$. Then
$\mu_2 \leq_{pc} \nu - \shadow{\mu_1}{\nu}$ and $\shadow{\mu_1+\mu_2}{\nu} = \shadow{\mu_1}{\nu}
+ \shadow{\mu_2}{\nu - \shadow{\mu_1}{\nu}}$.
\item Let $\mu, \nu_1,\nu_2$ be finite measures such that $\mu \leq_{pc} \nu_1 \leq_c \nu_2$.
Then, it follows that $\shadow{\mu}{\nu_1} \leq_{pc} \nu_2$. Moreover, $\shadow{\shadow{\mu}{\nu_1}}{\nu_2} = \shadow{\mu}{\nu_2}$
if and only if $\shadow{\mu}{\nu_1} \leq_c \shadow{\mu}{\nu_2}$.
\end{enumerate}
\end{lemma}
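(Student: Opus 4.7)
My plan is to lean heavily on the minimality property of shadows (Lemma~\ref{lem:shadowDefinition}) together with antisymmetry of the convex order: $\mu\leq_c\nu$ and $\nu\leq_c\mu$ imply $\mu=\nu$. Part~(ii) is essentially formal, while the real work lies in part~(i), where one direction forces us to \emph{produce} a martingale between certain measures, and that is where Proposition~\ref{prop:generalShadowPlan} becomes indispensable.

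For part~(ii), set $\eta_1:=\shadow{\mu}{\nu_1}$, $\eta_2:=\shadow{\mu}{\nu_2}$, and $\eta_{12}:=\shadow{\eta_1}{\nu_2}$. The chain $\eta_1\leq\nu_1\leq_c\nu_2$ yields $\eta_1\leq_{pc}\nu_2$ (a measure-dominated pair composed with $\leq_c$ always yields $\leq_{pc}$ on nonnegative convex integrands), so $\eta_{12}$ is well-defined. Because $\mu\leq_c\eta_1\leq_c\eta_{12}\leq\nu_2$, one has $\eta_{12}\in\casts{\mu}{\nu_2}$, hence $\eta_2\leq_c\eta_{12}$ by minimality of $\eta_2$. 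If $\eta_{12}=\eta_2$, then trivially $\eta_1\leq_c\eta_2$; conversely, if $\eta_1\leq_c\eta_2$, then $\eta_2\in\casts{\eta_1}{\nu_2}$, minimality of $\eta_{12}$ gives $\eta_{12}\leq_c\eta_2$, and antisymmetry forces equality.

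For part~(i), put $\eta_1:=\shadow{\mu_1}{\nu}$, which exists since $\mu_1\leq_{pc}\mu_1+\mu_2\leq_{pc}\nu$. Granting momentarily that $\eta_2:=\shadow{\mu_2}{\nu-\eta_1}$ makes sense, the easy inclusion is immediate from $\mu_1+\mu_2\leq_c\eta_1+\eta_2\leq\nu$, which places $\eta_1+\eta_2$ in $\casts{\mu_1+\mu_2}{\nu}$ and so gives $\shadow{\mu_1+\mu_2}{\nu}\leq_c\eta_1+\eta_2$ by minimality. The main obstacle is the reverse inequality, which must simultaneously establish $\mu_2\leq_{pc}\nu-\eta_1$ (i.e.\ justify the very definition of $\eta_2$); both assertions come from exhibiting a concrete martingale from $\mu_2$ into $\shadow{\mu_1+\mu_2}{\nu}-\eta_1$.

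To construct that martingale, choose a coupling $\pi\in\Pi(\lambda,\mu_1+\mu_2)$ of the uniform measure on an interval of length $\mu_1(\R)+\mu_2(\R)$ with $\mu_1+\mu_2$, arranged so that $\pi_s$ coincides with $\mu_1$ at $s=\mu_1(\R)$ and with $\mu_1+\mu_2$ at the endpoint---concretely, concatenate the quantile couplings of $\mu_1$ and $\mu_2$. Proposition~\ref{prop:generalShadowPlan} then produces $Q\in\Pi(\lambda,\mu_1+\mu_2,\nu)$ whose restriction to $[0,\mu_1(\R)]$ is a martingale from $\mu_1$ to $\shadow{\mu_1}{\nu}=\eta_1$, while the full $Q$ is a martingale from $\mu_1+\mu_2$ to $\shadow{\mu_1+\mu_2}{\nu}$. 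The slab over $[\mu_1(\R),\mu_1(\R)+\mu_2(\R)]$ is thus a martingale from $\mu_2$ to $\shadow{\mu_1+\mu_2}{\nu}-\eta_1$, yielding $\mu_2\leq_c\shadow{\mu_1+\mu_2}{\nu}-\eta_1\leq\nu-\eta_1$. This gives $\mu_2\leq_{pc}\nu-\eta_1$ and places $\shadow{\mu_1+\mu_2}{\nu}-\eta_1$ in $\casts{\mu_2}{\nu-\eta_1}$; minimality of $\eta_2$ then delivers $\eta_2\leq_c\shadow{\mu_1+\mu_2}{\nu}-\eta_1$, i.e.\ $\eta_1+\eta_2\leq_c\shadow{\mu_1+\mu_2}{\nu}$. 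Combined with the easy direction and antisymmetry of $\leq_c$, the desired identity $\shadow{\mu_1+\mu_2}{\nu}=\eta_1+\eta_2$ follows.
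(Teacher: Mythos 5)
Your part~(ii) is correct and coincides with the paper's own proof: the same derivation of $\shadow{\mu}{\nu_1}\leq_{pc}\nu_2$ from $\shadow{\mu}{\nu_1}\leq\nu_1\leq_c\nu_2$, and the same minimality-plus-antisymmetry argument for the equivalence. For part~(i), the paper offers no proof at all---it is quoted from \cite[Theorem~4.8]{BeiglbockJuillet.12}---so your attempt is a genuinely different route, but it has a concrete gap. Proposition~\ref{prop:generalShadowPlan} is stated for $\mu\leq_c\nu$, hence for measures of \emph{equal total mass}, and it delivers $Q\in\Pi(\lambda,\mu,\nu)$, whose marginals must all have the same mass. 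You invoke it for $\mu_1+\mu_2\leq_{pc}\nu$ with third marginal $\nu$; when $(\mu_1+\mu_2)(\R)<\nu(\R)$, which is the typical case here, the set $\Pi(\lambda,\mu_1+\mu_2,\nu)$ is empty and the proposition simply does not apply as written. The step is repairable: put $\rho:=\nu-\shadow{\mu_1+\mu_2}{\nu}\geq0$; then $\tilde\mu:=\mu_1+\mu_2+\rho\leq_c\nu$ with equal mass (add $\mu_1+\mu_2\leq_c\shadow{\mu_1+\mu_2}{\nu}$ and $\rho\leq_c\rho$), and applying the proposition to the pair $(\tilde\mu,\nu)$ with $\pi$ the concatenation of the quantile couplings of $\mu_1,\mu_2,\rho$ yields martingales from $\mu_1$ into $\shadow{\mu_1}{\nu}$ and from $\mu_1+\mu_2$ into $\shadow{\mu_1+\mu_2}{\nu}$ on nested slabs; after that your difference argument and the two applications of minimality go through.

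A second caveat is logical rather than technical: the proof of Proposition~\ref{prop:generalShadowPlan} in \cite{BeiglbockJuillet.16} itself rests on the additivity of shadows---one needs $s\mapsto\shadow{\pi_s}{\nu}$ to be increasing for the set-inclusion order in order to even construct $Q$, and that monotonicity is precisely a consequence of part~(i). Within this paper both statements are black-box citations, so your derivation is formally admissible here, but it inverts the logical order of the literature; the original proof of part~(i) in \cite{BeiglbockJuillet.12} is a direct argument via potential functions and approximation by atomic measures, independent of shadow couplings.
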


\begin{proof}
Part~(i) is \cite[Theorem~4.8]{BeiglbockJuillet.12}.
To obtain the first statement in~(ii), we observe that $\shadow{\mu}{\nu_1} \leq \nu_1 \leq_c \nu_2$ and hence
\[\shadow{\mu}{\nu_1}(\phi) \leq \nu_1(\phi) \leq \nu_2(\phi)\]
for any nonnegative convex function $\phi$.
Turning to the second statement, the  ``only if'' implication follows directly from the definition of the shadow in Lemma~\ref{lem:shadowDefinition}.
To show the reverse implication, suppose that $\shadow{\mu}{\nu_1} \leq_c \shadow{\mu}{\nu_2}$. Then, we have
\[
\mu  \leq_c \shadow{\mu}{\nu_1} \leq_c \shadow{\shadow{\mu}{\nu_1}}{\nu_2} \leq \nu_2 \quad\text{and} \quad
\shadow{\mu}{\nu_1}  \leq_c \shadow{\mu}{\nu_2} \leq \nu_2.
\]
These inequalities imply that
\[\shadow{\shadow{\mu}{\nu_1}}{\nu_2} \in \casts{\mu}{\nu_2} \quad \text{and} \quad
\shadow{\mu}{\nu_2} \in \casts{\shadow{\mu}{\nu_1}}{\nu_2},\]
and now the minimality property of the shadow shows that
\[\FS{\shadow{\mu}{\nu_2} \leq_c \shadow{\shadow{\mu}{\nu_1}}{\nu_2} \quad \text{and} \quad 
\shadow{\shadow{\mu}{\nu_1}}{\nu_2} \leq_c \MNN{\shadow{\shadow{\mu}{\nu_2}}{\nu_{2}}}=\shadow{\mu}{\nu_2}}\]
as desired.
\end{proof}

\subsection{Construction of a Multistep Left-Monotone Transport}

Our next goal is to define and construct a multistep left-monotone transport. The following concept will be crucial.

\begin{definition}\label{def:obstructedShadow}
Let $\mu_0 \leq_{pc} \mu_1 \leq_c \dots \leq_c \mu_n$.
For $1\leq t\leq n$, the \emph{obstructed shadow of $\mu_0$ in $\mu_t$ through $\mu_1, \dots,\mu_{t-1}$} is iteratively defined by
\[\shadow{\mu_0}{\mu_1,\dots,\mu_t} := \shadow{\shadow{\mu_0}{\mu_1,\dots,\mu_{t-1}}}{\mu_t}.\]
\end{definition}

The obstructed shadow is well-defined due to Lemma~\ref{lem:shadowProperties}\,(ii). An alternative definition is provided by the following characterization.

\begin{lemma}
\label{le:shadowLeastElement}
Let $\mu_0 \leq_{pc} \mu_1 \leq_c \dots \leq_c \mu_n$ and $1\leq t\leq n$.  Then $\shadow{\mu_0}{\mu_1,\dots,\mu_t}$ is the unique least element of the set 
\begin{equation*}
\casts{\mu_0}{\mu_t}^{\mu_1,\dots,\mu_{t-1}} :=  \{\theta_t \leq \mu_t : \exists \theta_s\leq \mu_s, \, 1 \leq s \leq t-1, \, \mu_0 \leq_c \theta_1 \leq_c \dots \leq_c \theta_t\}
\end{equation*}
for the convex order; that is,
$\shadow{\mu_0}{\mu_1,\dots,\mu_t} \leq_c \theta$ for all elements $\theta$.
\end{lemma}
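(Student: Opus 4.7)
The plan is to prove the lemma by induction on $t$, using the recursive definition of the obstructed shadow together with the least-element characterization of the ordinary shadow from Lemma~\ref{lem:shadowDefinition}. For the base case $t=1$, the set $\casts{\mu_0}{\mu_1}^{\emptyset}$ coincides with $\casts{\mu_0}{\mu_1}$, and the claim is immediate from Lemma~\ref{lem:shadowDefinition}.

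For the inductive step, assume the result for $t-1$ and split the proof into a membership part and a minimality part. For membership, I need to exhibit a chain $\mu_0 \leq_c \theta_1 \leq_c \dots \leq_c \theta_{t-1} \leq_c \shadow{\mu_0}{\mu_1,\dots,\mu_t}$ with $\theta_s \leq \mu_s$. By the induction hypothesis applied to $\shadow{\mu_0}{\mu_1,\dots,\mu_{t-1}}$, such a chain exists up to index $t-2$ with terminal element $\shadow{\mu_0}{\mu_1,\dots,\mu_{t-1}}$; setting $\theta_{t-1} := \shadow{\mu_0}{\mu_1,\dots,\mu_{t-1}} \leq \mu_{t-1}$ completes the chain, and the definition of the obstructed shadow as $\shadow{\theta_{t-1}}{\mu_t}$ ensures $\theta_{t-1} \leq_c \shadow{\mu_0}{\mu_1,\dots,\mu_t} \leq \mu_t$.

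For minimality, take any $\theta_t \in \casts{\mu_0}{\mu_t}^{\mu_1,\dots,\mu_{t-1}}$ with associated witnesses $\theta_1,\dots,\theta_{t-1}$. Then $\theta_{t-1}$ lies in $\casts{\mu_0}{\mu_{t-1}}^{\mu_1,\dots,\mu_{t-2}}$, so the induction hypothesis yields $\shadow{\mu_0}{\mu_1,\dots,\mu_{t-1}} \leq_c \theta_{t-1}$. Combining with $\theta_{t-1} \leq_c \theta_t \leq \mu_t$ shows that $\theta_t$ lies in $\casts{\shadow{\mu_0}{\mu_1,\dots,\mu_{t-1}}}{\mu_t}$, and the least-element property of the ordinary shadow (Lemma~\ref{lem:shadowDefinition}) gives $\shadow{\mu_0}{\mu_1,\dots,\mu_t} = \shadow{\shadow{\mu_0}{\mu_1,\dots,\mu_{t-1}}}{\mu_t} \leq_c \theta_t$, as desired. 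Uniqueness of the least element for the convex order is then automatic.

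I do not anticipate a genuine obstacle here; the only point requiring care is verifying at each stage that the appropriate positive-convex-order and convex-order relations hold so that the shadows involved are well-defined. This is ensured by iterated application of Lemma~\ref{lem:shadowProperties}\,(ii), which guarantees $\shadow{\mu_0}{\mu_1,\dots,\mu_s} \leq_{pc} \mu_{s+1}$ at each stage $s<t$, so that the recursive definition of $\shadow{\mu_0}{\mu_1,\dots,\mu_t}$ makes sense and the chain constructed in the membership step satisfies the required marginal bounds.
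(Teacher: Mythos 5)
Your proof is correct and follows essentially the same route as the paper: induction on $t$, applying the induction hypothesis to $\theta_{t-1}$ and then the least-element property of the ordinary shadow (Lemma~\ref{lem:shadowDefinition}) to conclude $\shadow{\mu_0}{\mu_1,\dots,\mu_t}\leq_c\theta_t$. Your explicit verification of the membership of $\shadow{\mu_0}{\mu_1,\dots,\mu_t}$ in $\casts{\mu_0}{\mu_t}^{\mu_1,\dots,\mu_{t-1}}$ is a small point the paper leaves implicit, but otherwise the arguments coincide.
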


\begin{proof}
For $t=1$ this holds by the definition of the shadow in Lemma~\ref{lem:shadowDefinition}. For $t > 1$, we inductively assume that
$\shadow{\mu_0}{\mu_1,\dots,\mu_{t-1}}$ is the least element of
$\casts{\mu_0}{\mu_{t-1}}^{\mu_1,\dots,\mu_{t-2}}$.
Consider an arbitrary element $\theta_t \in \casts{\mu_0}{\mu_t}^{\mu_1,\dots,\mu_{t-1}}$ and
fix some 
\[
  \mu_0 \leq_c \theta_1 \leq_c \dots \leq_c \theta_{t-1} \leq_c \theta_t\quad\mbox{with}\quad \theta_s\leq \mu_s, \quad  1 \leq s \leq t-1.
\]
Then, $\theta_{t-1} \in \casts{\mu_0}{\mu_{t-1}}^{\mu_1,\dots,\mu_{t-2}}$ and in particular $\shadow{\mu_0}{\mu_1,\dots,\mu_{t-1}} \leq_c \theta_{t-1}$. Recall that
$\shadow{\mu_0}{\mu_1,\dots,\mu_t}$ is defined as the least element \MN{for $\leq_c$} of 
\begin{align*}
\casts{\shadow{\mu_0}{\mu_1,\dots,\mu_{t-1}}}{\mu_t} &= \{\theta \leq \mu_t :
\shadow{\mu_0}{\mu_1,\dots,\mu_{t-1}} \leq_c \theta\} \\
&\supseteq \{\theta \leq \mu_t :  \theta_{t-1} \leq_c \theta\} \ni \theta_t.
\end{align*}
Hence, $\shadow{\mu_0}{\mu_1,\dots,\mu_t} \leq_c \theta_t$, and as $\theta_t \in \casts{\mu_0}{\mu_t}^{\mu_1,\dots,\mu_{t-1}}$ was arbitrary,
this shows that $\shadow{\mu_0}{\mu_1,\dots,\mu_t}$ is a least element of $\casts{\mu_0}{\mu_t}^{\mu_1,\dots,\mu_{t-1}}$. The uniqueness of the least element follows from the general fact that $\theta^{1}_{t}\leq_{c}\theta^{2}_{t}$ and $\theta^{2}_{t}\leq_{c}\theta^{1}_{t}$ imply $\theta^{1}_{t}=\theta^{2}_{t}$.
\end{proof}

We can now state the main result of this section.
\begin{theorem}
\label{thm:multistepleft}
Let $\bmu=(\mu_{0},\dots,\mu_{n})$ be in convex order. Then there exists $P \in \M(\bmu)$ such that the bivariate projections
$P_{0t} := P \circ (X_0,X_t)^{-1}$ satisfy
\[
P_{0t}[(-\infty,x] \times A] = \shadow{\mu_0|_{(-\infty,x]}}{\mu_1,\dots,\mu_t}(A)
\quad \text{for} \quad x \in \R,\; A \in \B(\R),
\]
for all $1\leq t\leq n$. Any such $P \in \M(\bmu)$ is called a \emph{left-monotone transport}.
\end{theorem}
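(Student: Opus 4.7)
The plan is to build $P$ by induction on the number of marginals, at each step invoking Proposition~\ref{prop:generalShadowPlan} as a single-step building block. The crucial compatibility is that the iterative definition $\shadow{\mu_0}{\mu_1,\dots,\mu_t}=\shadow{\shadow{\mu_0}{\mu_1,\dots,\mu_{t-1}}}{\mu_t}$ of the obstructed shadow is exactly what Proposition~\ref{prop:generalShadowPlan} produces when applied consecutively, so the shadow structure propagates automatically.

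Write $\lambda$ for Lebesgue measure on $[0,1]$ with coordinate $y$, and let $\pi^{(0)}\in\Pi(\lambda,\mu_0)$ be the Hoeffding--Fr\'echet (monotone) coupling; by monotonicity of $X_0$ in $y$, one has $\{X_0\leq x\}=\{y\leq F(x)\}$ $\pi^{(0)}$-a.s., where $F(x)=\mu_0((-\infty,x])$. I construct inductively, for $1\leq t\leq n$, measures $\pi^{(t)}\in\Pi(\lambda,\mu_0,\mu_1,\dots,\mu_t)$ on $\R^{t+2}$ enjoying two properties: (I) the process $(X_0,X_1,\dots,X_t)$ is a martingale under $\pi^{(t)}$, and (II) for every $x\in\R$ and $A\in\B(\R)$,
\[\pi^{(t)}\bigl[\{X_0\leq x\}\cap\{X_t\in A\}\bigr]=\shadow{\mu_0|_{(-\infty,x]}}{\mu_1,\dots,\mu_t}(A).\]

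At step $t$, I set $\tilde\pi:=\pi^{(t-1)}\circ(y,X_{t-1})^{-1}\in\Pi(\lambda,\mu_{t-1})$. By the inductive hypothesis~(II) (taking $A=\R$), the $X_{t-1}$-marginal of $\tilde\pi|_{\{y\leq F(x)\}}$ equals $\shadow{\mu_0|_{(-\infty,x]}}{\mu_1,\dots,\mu_{t-1}}$. Applying Proposition~\ref{prop:generalShadowPlan} with $\mu=\mu_{t-1}\leq_c\mu_t=\nu$ and input coupling $\tilde\pi$ yields $Q\in\Pi(\lambda,\mu_{t-1},\mu_t)$ whose slice $Q|_{\{y\leq F(x)\}}$, projected on the last two coordinates, is a martingale ending at
\[\shadow{\tilde\pi_{F(x)}}{\mu_t}=\shadow{\shadow{\mu_0|_{(-\infty,x]}}{\mu_1,\dots,\mu_{t-1}}}{\mu_t}=\shadow{\mu_0|_{(-\infty,x]}}{\mu_1,\dots,\mu_t},\]
by the very definition of the obstructed shadow (noting that $\shadow{\mu_0|_{(-\infty,x]}}{\mu_1,\dots,\mu_{t-1}}\leq_{pc}\mu_t$ by iterated application of Lemma~\ref{lem:shadowProperties}(ii)). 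I then glue $\pi^{(t-1)}$ and $Q$ over their common $(y,X_{t-1})$-marginal $\tilde\pi$ via disintegration to obtain $\pi^{(t)}$, whose $X_t$-marginal restricted to $\{X_0\leq x\}=\{y\leq F(x)\}$ coincides with that of $Q$ on the corresponding slice; this yields~(II) at step $t$. Setting $P:=\pi^{(n)}\circ(X_0,\dots,X_n)^{-1}$ gives the desired transport.

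The main obstacle is verifying that the full martingale property~(I) is preserved by the gluing, since Proposition~\ref{prop:generalShadowPlan} only delivers slicewise martingales $Q|_{[0,s]\times\R^2}$. The slice identity $\int_{[0,s]\times B\times\R}(x_t-x_{t-1})\,dQ=0$, valid for every $s\in[0,1]$ and every Borel $B\subseteq\R$, upgrades by a monotone class argument to the conditional statement $\EE^Q[X_t-X_{t-1}\mid y,X_{t-1}]=0$ a.s. The gluing is designed so that, under $\pi^{(t)}$, the conditional law of $X_t$ given $(y,X_0,\dots,X_{t-1})$ factors through $(y,X_{t-1})$; the tower property then gives
\[\EE^{\pi^{(t)}}[X_t\mid X_0,\dots,X_{t-1}]=\EE^{\pi^{(t)}}\bigl[\EE^{\pi^{(t)}}[X_t\mid y,X_{t-1}]\,\bigm|\,X_0,\dots,X_{t-1}\bigr]=X_{t-1},\]
which, combined with the inductively inherited martingale property of $(X_0,\dots,X_{t-1})$, closes the induction and completes the construction.
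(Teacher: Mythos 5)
Your proposal is correct and follows essentially the same route as the paper's proof: seed with the quantile coupling of $(\lambda,\mu_0)$, apply Proposition~\ref{prop:generalShadowPlan} iteratively to the $(y,X_{t-1})$-marginal so that the slice marginals reproduce the iteratively defined obstructed shadows, and glue via kernels $\kappa_t(y,x_{t-1},dx_t)$ chosen to be martingale kernels. Your explicit monotone-class upgrade of the slicewise martingale property and the tower-property argument are exactly the justification the paper compresses into ``we may choose $\kappa_t$ to be a martingale kernel.''
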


We observe that an $n$-step left-monotone transport is defined  purely in terms of its bivariate projections $P \circ (X_0,X_t)^{-1}$. In the one-step case, this completely determines the transport. For $n>1$, we shall see that there can be multiple (and then infinitely many) left-monotone transports; in fact, they form a convex compact set. This will be discussed in more detail in Section~\ref{se:uniqueness}, where it will also be shown that uniqueness does hold if $\mu_{0}$ is atomless.

\begin{proof}[Proof of Theorem \ref{thm:multistepleft}]
\emph{Step~1.}
We first construct measures $\pi_t \in \Pi(\lambda,\mu_t)$, $0 \leq t \leq n$ such that
\[
  \pi_t|_{[0,\mu_0((-\infty,x])]\times \R} \circ X_1^{-1} = \shadow{\mu_0|_{(-\infty,x]}}{\mu_1,\dots,\mu_t}
\]
for all $x \in \R$, as well as measures $Q_t \in \Pi(\lambda,\mu_{t-1},\mu_t)$, $1 \leq t \leq n$ such that
\begin{align}
Q_t|_{[0,\mu_0((-\infty,x])]\times \R \times \R} & \circ (X_1,X_2)^{-1} \in \nonumber \\
&\M\big(\shadow{\mu_0|_{(-\infty,x]}}{\mu_1,\dots,\mu_{t-1}},\shadow{\mu_0|_{(-\infty,x]}}{\mu_1,\dots,\mu_t}\big) \label{eqn:martingaleproperty}
\end{align}
for all $x \in \R$. Indeed, for $t=0$, we take $\pi_0 \in \Pi(\lambda,\mu_0)$ to be the quantile\footnote{The quantile coupling (or Fr\'echet--Hoeffding coupling) is given by the law of $(F^{-1}_{\lambda},F^{-1}_{\mu_{0}})$ under $\lambda$, where $F^{-1}_{\mu_{0}}$ is the inverse c.d.f.\ of $\mu_{0}$.}  coupling. Then, applying Proposition~\ref{prop:generalShadowPlan} to $\pi_{0}$ yields the measure $Q_{1}$, and we can define $\pi_1 := Q_1 \circ (X_0,X_2)^{-1}$. Proceeding inductively, applying Proposition~\ref{prop:generalShadowPlan} to $\pi_{t-1}$ yields $Q_{t}$ which in turn allows us to define $\pi_t := Q_t \circ (X_0,X_2)^{-1}$.

\emph{Step~2.} For $1\leq t\leq n$, consider a disintegration $Q_t = \pi_{t-1} \otimes \kappa_t$ of $Q_{t}$. \FS{By~\eqref{eqn:martingaleproperty}}, we may choose $\kappa_t(s,x_{t-1},dx_t)$ to be a martingale kernel; that is,
\[ \int x_t \, \kappa_t(s,x_{t-1},dx_t) = x_{t-1}\]
holds for all $(s,x_{t-1})\in\R^{2}$. We now define a measure $\pi \in \Pi(\lambda,\mu_0,\dots,\mu_n)$ on $\R^{n+2}$ via
\[\pi = \pi_0 \otimes \kappa_1 \otimes \dots \otimes \kappa_n.\]
Then, $\pi$ satisfies
\[ \pi \circ (X_0,X_t)^{-1} = \pi_{t-1} \quad\mbox{and}\quad \pi \circ (X_0,X_t,X_{t+1})^{-1} = Q_t\]
for $1\leq t\leq n$, and setting $P = \pi \circ (X_1,\dots,X_{n+1})^{-1}$ yields the theorem.
\end{proof}

The following result studies the bivariate projections $P_{0t}$ of a left-monotone transport and shows in particular that $P_{0t}$ may differ from the Left-Curtain coupling~\cite{BeiglbockJuillet.12} in $\M(\mu_{0},\mu_{t})$.

\begin{proposition}\label{pr:multiLeftCurtain}
  Let $\bmu=(\mu_{0},\dots,\mu_{n})$ be in convex order and let $P \in \M(\bmu)$ be a left-monotone transport. The following are equivalent:
  \begin{enumerate}
  \item The bivariate projection $P_{0t} = P \circ (X_0,X_t)^{-1}\in\M(\mu_{0},\mu_{t})$ is left-monotone for all $1\leq t\leq n$.
  \item The marginals $\bmu$ satisfy
  \begin{equation}
   \label{eqn:strongOrder}
   \shadow{\mu_0|_{(-\infty,x]}}{\mu_1} \leq_c \dots \leq_c \shadow{\mu_0|_{(-\infty,x]}}{\mu_n}
   \quad \text{for all}\quad x\in \R.
\end{equation}
  \end{enumerate}
\end{proposition}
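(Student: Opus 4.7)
The plan is to reduce the proposition to a statement purely about (obstructed versus unobstructed) shadows, and then apply Lemma~\ref{lem:shadowProperties}(ii) inductively. Write $\mu^x := \mu_0|_{(-\infty,x]}$ for brevity. By the defining property of a multistep left-monotone transport, $P_{0t}[(-\infty,x]\times A] = \shadow{\mu^x}{\mu_1,\dots,\mu_t}(A)$, whereas by Proposition~\ref{prop:leftMonotoneTransport}(i) the bivariate marginal $P_{0t}$ is the (one-step) left-monotone transport in $\M(\mu_0,\mu_t)$ if and only if $P_{0t}[(-\infty,x]\times A] = \shadow{\mu^x}{\mu_t}(A)$ for all $x\in\R$ and $A\in\B(\R)$. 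Hence (i) is equivalent to the identity
\[
\shadow{\mu^x}{\mu_1,\dots,\mu_t} \;=\; \shadow{\mu^x}{\mu_t} \qquad \text{for all } x\in\R \text{ and } 1\leq t\leq n. \tag{$\ast$}
\]

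Next, I would show that $(\ast)$ is equivalent to (ii) by an induction on $t$. The base case $t=1$ is trivial, since both sides are $\shadow{\mu^x}{\mu_1}$ and (ii) is a vacuous chain. For the inductive step, assume $\shadow{\mu^x}{\mu_1,\dots,\mu_{t-1}} = \shadow{\mu^x}{\mu_{t-1}}$. Unfolding the definition of the obstructed shadow,
\[
\shadow{\mu^x}{\mu_1,\dots,\mu_t} \;=\; \shadow{\shadow{\mu^x}{\mu_1,\dots,\mu_{t-1}}}{\mu_t} \;=\; \shadow{\shadow{\mu^x}{\mu_{t-1}}}{\mu_t}.
\]
By Lemma~\ref{lem:shadowProperties}(ii) applied with $\nu_1=\mu_{t-1}$ and $\nu_2=\mu_t$, the right-hand side equals $\shadow{\mu^x}{\mu_t}$ if and only if $\shadow{\mu^x}{\mu_{t-1}} \leq_c \shadow{\mu^x}{\mu_t}$. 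Thus, given the inductive hypothesis, condition $(\ast)$ at level $t$ is equivalent to the single order relation $\shadow{\mu^x}{\mu_{t-1}} \leq_c \shadow{\mu^x}{\mu_t}$ for every $x$. Chaining these equivalences over $t=2,\dots,n$ yields exactly the chain in (ii).

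To tie the two directions together: for (ii)$\Rightarrow$(i), one builds up $(\ast)$ by induction, using at each step the relation $\shadow{\mu^x}{\mu_{t-1}}\leq_c\shadow{\mu^x}{\mu_t}$ provided by (ii) together with Lemma~\ref{lem:shadowProperties}(ii). For (i)$\Rightarrow$(ii), the identity $(\ast)$ at levels $t-1$ and $t$ gives $\shadow{\shadow{\mu^x}{\mu_{t-1}}}{\mu_t}=\shadow{\mu^x}{\mu_t}$, and the ``if and only if'' in Lemma~\ref{lem:shadowProperties}(ii) forces $\shadow{\mu^x}{\mu_{t-1}}\leq_c\shadow{\mu^x}{\mu_t}$.

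There is no real obstacle here: the only nontrivial input is Lemma~\ref{lem:shadowProperties}(ii), which supplies exactly the bridge between the two formulations. The one point that deserves a brief remark is that $\mu^x \leq_{pc} \mu_1 \leq_c \cdots \leq_c \mu_n$, so that all obstructed and unobstructed shadows appearing in the argument are well-defined; this follows from $\mu^x\leq\mu_0\leq_c\mu_1$ together with the standing assumption that the marginals are in convex order.
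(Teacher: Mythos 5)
Your proof is correct and follows essentially the same route as the paper: the paper likewise reduces (i) to the identity $\shadow{\mu_0|_{(-\infty,x]}}{\mu_1,\dots,\mu_t}=\shadow{\mu_0|_{(-\infty,x]}}{\mu_t}$ and then obtains the equivalence with~\eqref{eqn:strongOrder} by an iterative application of Lemma~\ref{lem:shadowProperties}\,(ii). Your write-up merely makes explicit the induction and the appeal to Proposition~\ref{prop:leftMonotoneTransport}\,(i), both of which the paper leaves implicit.
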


\begin{proof}
  Given $\mu\leq\mu_{0}$, an iterative application of Lemma~\ref{lem:shadowProperties}\,(ii) shows that
  the obstructed shadows coincide with the ordinary shadows, i.e.\ $\shadow{\mu}{\mu_1,\dots,\mu_t} = \shadow{\mu}{\mu_t}$ for $1\leq t\leq n$, if and only if
$\shadow{\mu}{\mu_1} \leq_c \dots \leq_c \shadow{\mu}{\mu_n}$.
The proposition follows by applying this observation to $\mu=\mu_0|_{(-\infty,x]}$.
\end{proof}

The following example illustrates the proposition and shows that~\eqref{eqn:strongOrder} may indeed fail.

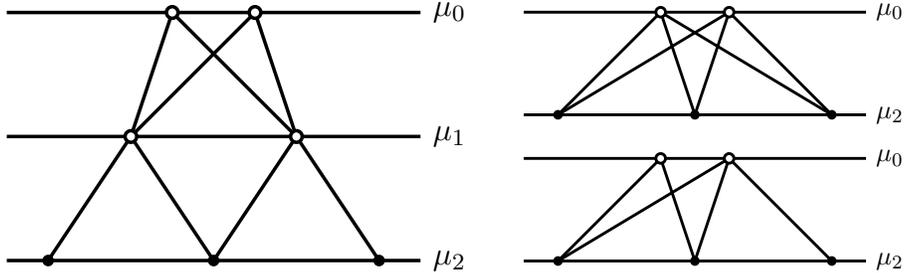
\begin{figure}
\hspace{-2.3mm}
\begin{tabular}{cc}
\raisebox{-0.46\height}{
\scalebox{1.1}{
\begin{tikzpicture}
\draw[very thick] (-2.5,3)  -- (2.5,3) node[right] {$\mu_0$};
\draw[very thick] (-2.5,1.5) -- (2.5,1.5) node[right] {$\mu_1$};
\draw[very thick] (-2.5,0) -- (2.5,0) node[right] {$\mu_2$};

\draw[very thick] (-0.5,3) -- (-1,1.5) {};
\draw[very thick] (-0.5,3) node[minimum width=4pt,inner sep=0,draw,fill=white,circle] {} -- (1,1.5) {};
\draw[very thick] (0.5,3) -- (-1,1.5) {};
\draw[very thick] (0.5,3) node[minimum width=4pt,inner sep=0,draw,fill=white,circle] {} -- (1,1.5) {};

\draw[very thick] (1,1.5) -- (2,0) node[minimum width=4pt,inner sep=0,fill,circle] {};
\draw[very thick] (1,1.5) node[minimum width=4pt,inner sep=0,draw,fill=white,circle] {} -- (0,0) {};
\draw[very thick] (-1,1.5) -- (-2,0) node[minimum width=4pt,inner sep=0,fill,circle] {};
\draw[very thick] (-1,1.5) node[minimum width=4pt,inner sep=0,draw,fill=white,circle] {} -- (0,0) node[minimum width=4pt,inner sep=0,fill,circle] {};
\end{tikzpicture}}}
& \scalebox{.91}{\begin{tabular}{c}
\begin{tikzpicture}
\draw[very thick] (-2.5,3) -- (2.5,3) node[right] {$\mu_0$};
\draw[very thick] (-2.5,1.5) -- (2.5,1.5) node[right] {$\mu_2$};

\draw[very thick] (-0.5,3) -- (-2,1.5) {};
\draw[very thick] (-0.5,3) -- (0,1.5) {};
\draw[very thick] (-0.5,3) node[minimum width=4pt,inner sep=0,draw,fill=white,circle] {} -- (2,1.5) {};
\draw[very thick] (0.5,3) -- (-2,1.5) node[minimum width=4pt,inner sep=0,fill,circle] {};
\draw[very thick] (0.5,3) -- (0,1.5) node[minimum width=4pt,inner sep=0,fill,circle] {};
\draw[very thick] (0.5,3) node[minimum width=4pt,inner sep=0,draw,fill=white,circle] {} -- (2,1.5) node[minimum width=4pt,inner sep=0,fill,circle] {};
\end{tikzpicture} \\
\begin{tikzpicture}
\draw[very thick] (-2.5,3) -- (2.5,3) node[right] {$\mu_0$};
\draw[very thick] (-2.5,1.5) -- (2.5,1.5) node[right] {$\mu_2$};

\draw[very thick] (-0.5,3) -- (-2,1.5) {};
\draw[very thick] (-0.5,3) node[minimum width=4pt,inner sep=0,draw,fill=white,circle] {} -- (0,1.5) {};
\draw[very thick] (0.5,3) -- (-2,1.5) node[minimum width=4pt,inner sep=0,fill,circle] {};
\draw[very thick] (0.5,3) -- (0,1.5) node[minimum width=4pt,inner sep=0,fill,circle] {};
\draw[very thick] (0.5,3) node[minimum width=4pt,inner sep=0,draw,fill=white,circle] {} -- (2,1.5) node[minimum width=4pt,inner sep=0,fill,circle] {};
\end{tikzpicture}
\end{tabular}}
\end{tabular}
\caption{The left panel shows the support of the left-monotone transport~$P$ from Example~\ref{ex:NotLeftCurtain}. The right panel shows the support of $P_{02}$ (top) and the support of the left-monotone transport in $\M(\mu_{0},\mu_{2})$ (bottom). The elements of the support are represented by the diagonal lines.}
\label{fig:notStrictlyLeftMonotone}
\end{figure}

\begin{example}\label{ex:NotLeftCurtain}
Consider the marginals
\[\mu_0 = \frac{1}{2}\delta_{-1} + \frac{1}{2}\delta_1, \quad
\mu_1 = \frac{1}{2}\delta_{-2} + \frac{1}{2}\delta_2, \quad 
\mu_2 = \frac{1}{4}\delta_{-4} + \frac{1}{2} \delta_0 + \frac{1}{4}\delta_4. \]
Then the set $\M(\bmu)$ consists of a single transport $P$; cf.\ the left panel of
Figure~\ref{fig:notStrictlyLeftMonotone}. Thus, $P$ is necessarily left-monotone. Similarly, $P_{01}=P \circ (X_0,X_1)^{-1}$ is the unique element of $\M(\mu_0,\mu_1)$.
However, $P_{02} = P \circ (X_0,X_2)^{-1}$ is given by
\[ \frac{3}{16} \delta_{(-1,-4)} + \frac{1}{4} \delta_{(-1,0)} + \frac{1}{16}\delta_{(-1,4)}
+ \frac{1}{16} \delta_{(1,-4)} + \frac{1}{4}\delta_{(1,0)} + \frac{3}{16}\delta_{(1,4)} \]
whereas the unique left-monotone transport in $\M(\mu_0,\mu_2)$ can be found to be
\[\frac{1}{8}\delta_{(-1,-4)} + \frac{3}{8}\delta_{(-1,0)} + \frac{1}{8}\delta_{(1,-4)}
+ \frac{1}{8}\delta_{(1,0)} + \frac{1}{4}\delta_{(1,4)}.\]
Therefore, there exists no transport $P\in\M(\bmu)$ such that
both $P_{01}$ and $P_{02}$ are left-monotone, and Proposition~\ref{pr:multiLeftCurtain} shows that~\eqref{eqn:strongOrder} fails.
\end{example}

\begin{remark}\label{rk:rightMonotone}
  Of course, all our results on left-monotone transports have ``right-monotone'' analogues, obtained by reversing the orientation on the real line \MN{(i.e.\ replacing $x\mapsto -x$ everywhere)}.
\end{remark}
\section{Geometry and Optimality Properties}\label{se:geometry}

In this section we introduce the optimality properties for transports and the geometric properties of their supports that were announced in the Introduction, and prove that they equivalently characterize left-monotone transports.

\subsection{Geometry of Optimal Transports for Reward Functions of Spence--Mirrlees Type}

The first goal is to show that optimal transports for specific reward functions are concentrated on sets $\Gamma\subseteq\R^{n+1}$ satisfying certain no-crossing conditions that we introduce next.
Given $1\leq t\leq n$, we write
\[\Gamma^t = \{ (x_0,\dots,x_t) \in \R^{t+1} : (x_0,\dots,x_n) \in \Gamma \text{ for some } (x_{t+1},\dots,x_n) \in \R^{n-t}\}\]
for the projection of $\Gamma$ onto the first $t+1$ coordinates.

\begin{definition}
\label{def:leftMonotoneSupport}
Let $\Gamma \subseteq \R^{n+1}$ and $1 \leq t \leq n$. Consider $\bx=(x_{0},\dots,x_{t-1})$, $\bx'=(x'_{0},\dots,x'_{t-1}) \in \R^t$ and $y^+,y^-,y' \in \R$ with $y^- < y^+$
such that $(\bx,y^+), (\bx,y^-), (\bx',y') \in \Gamma^t$. Then, the projection
\begin{center}
  $\Gamma^t$ is \emph{left-monotone}~~~if~~~$y' \notin (y^-,y^+)$ whenever
$x_0 < x_0'$.
\end{center}
The set $\Gamma$ is left-monotone\footnote{\label{foot:abuse}\FS{This terminology for $\Gamma$ is abusive since $\Gamma=\Gamma^{n}$ is in fact a projection itself}---it will be clear from the context what is meant.} if $\Gamma^t$ is left-monotone for all
$1 \leq t \leq n$.
\end{definition}

We also need the following notion.

\begin{definition}
\label{def:nondegenerate}
Let $\Gamma \subseteq \R^{n+1}$ and $1 \leq t \leq n$. The projection $\Gamma^t$ is \emph{nondegenerate} if for all $\bx=(x_{0},\dots,x_{t-1}) \in \R^t$ and $y \in \R$
such that $(\bx,y) \in \Gamma^t$, the following hold:
\begin{enumerate}[(i)]
\item if $y > x_{t-1}$, there exists $y' < x_{t-1}$ such that
$(\bx,y') \in \Gamma^t$;
\item if $y < x_{t-1}$, there exists $y' > x_{t-1}$ such that 
$(\bx,y') \in \Gamma^t$.
\end{enumerate}
The set $\Gamma$ is called nondegenerate\footnote{Footnote~\ref{foot:abuse} applies here as well.} if $\Gamma^t$ is nondegenerate for all $1 \leq t \leq n$.
\end{definition}

Broadly speaking, this definition says that for any path \FS{to the right} in~$\Gamma$ there exists a path \FS{to the left}, and vice versa. For a set supporting a martingale, nondegeneracy is not a restriction, in the following sense.

\begin{remark}
\label{rem:nondegenerate}
Let $\bmu$ be in convex order, $\V$ its effective domain and $\Gamma \subseteq \V$.

(i) There exists a nondegenerate, universally measurable set $\Gamma' \subseteq \Gamma$ such that $P(\Gamma') = 1$ for all $P \in \M(\bmu)$ with $P(\Gamma) = 1$.

(ii) Fix $P \in \M(\bmu)$ with $P(\Gamma) = 1$. There exists a nondegenerate, Borel-measurable set $\Gamma'_{P} \subseteq \Gamma$ such that \FS{$P(\Gamma'_P) = 1$}.
\end{remark}

\begin{proof}
Let $N_{t}$ be the set of all \FS{$\bx \in \Gamma^t$} such that~(i) or~(ii) of Definition~\ref{def:nondegenerate}
fail. If $P$ is a martingale with $P(\Gamma)=1$, we see that $N_{t} \times \R^{n-t+1}$ is $P$-null. Moreover, $N_{t}$ is universally measurable \FS{(as the projection of a Borel set)} and we can set
$$\Gamma' := \Gamma \backslash  \bigcup_{t=1}^{n}(N_{t} \times \R^{n-t+1} )$$
to prove~(i). Turning to~(ii), universal measurability implies that there exists a Borel set $N'_{t} \supseteq N_{t}$ such that $N'_{t} \backslash N_{t}$ is $P_{t-1}$-null, where $P_{t-1}=P\circ (X_{0},\dots,X_{t-1})^{-1}$.
We can then set $\Gamma'_{P} := \Gamma \backslash  \cup_{t=1}^{n}(N'_{t} \times \R^{n-t+1} )$.
\end{proof}

Next, we introduce a notion of competitors along the lines of \cite[Definition~1.10]{BeiglbockJuillet.12}.

\begin{definition}
Let $\pi$ be a finite measure on $\R^{t+1}$ whose marginals have finite first moments and consider a disintegration $\pi = \pi_t \otimes \kappa$, where $\pi_t$ is the projection of $\pi$ onto the first $t$ coordinates.
A measure $\pi' = \pi_t \otimes \kappa'$ is a \emph{$t$-competitor} of $\pi$ if it has the same last
marginal and 
\[\bary(\kappa(\bx,\cdot)) = \bary(\kappa'(\bx,\cdot))
\quad \text{for } \pi_t\text{-a.e.}\quad \bx =(x_0,\dots,x_{t-1}).\]
\end{definition}

Using these definitions, we now formulate a variant of the monotonicity principle stated
in Theorem~\ref{thm:monotonicityPrinciple} \FS{(i)} that will \MN{be} convenient to infer the geometry
of~$\Gamma$.

\begin{lemma}
\label{lem:variationalPrinciple}
Let $\bmu = (\mu_0,\dots,\mu_n)$ be in convex order, $1\leq t\leq n$ and let $\bar{f}: \R^{t+1} \to [0,\infty)$ be Borel. Consider $f(X_0,\dots,X_n) := \bar{f}(X_0,\dots,X_t)$
and suppose that $\I_\bmu(f) < \infty$. Let $(\bphi,H) \in \D_\bmu(f)$ be
an optimizer for $\I_\bmu(f)$ with the property that $\phi_{s}\equiv H_{s}\equiv 0$ for $s=t+1,\dots,n$
and define the set
\[\Gamma := \left\{\sum_{t=0}^n \phi_t(X_t) + (H \cdot X)_n = f\right\} \cap \V.\]
Let $\pi$ be a finitely supported probability on $\R^{t+1}$ which is concentrated on
$\Gamma^t$. Then $\pi(\bar{f}) \geq \pi'(\bar{f})$ for any $t$-competitor $\pi'$ of $\pi$ that is
concentrated on $\V^t$.
\end{lemma}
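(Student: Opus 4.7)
The strategy is to read the lemma as a direct consequence of dual optimality paired with the defining property of a $t$-competitor. Introduce the shorthand
\[
  g(x_0,\dots,x_t) := \sum_{s=0}^{t} \phi_s(x_s) + \sum_{s=1}^{t} H_s(x_0,\dots,x_{s-1})(x_s - x_{s-1}),
\]
which, under the hypothesis $\phi_s \equiv H_s \equiv 0$ for $s > t$, coincides with the full dual cost $\sum_{s=0}^{n} \phi_s(X_s) + (H\cdot X)_n$ and therefore depends only on the first $t+1$ coordinates. The membership $(\bphi,H) \in \D_\bmu(f)$ then yields $g \geq \bar{f}$ on $\V^t$ (the projection of $\V$ onto the first $t+1$ coordinates), and the definition of $\Gamma$ gives $\Gamma^t = \V^t \cap \{g = \bar{f}\}$.

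The plan is to combine this with the concentration assumptions to obtain
\[
  \pi(\bar{f}) = \pi(g) \quad\text{and}\quad \pi'(\bar{f}) \leq \pi'(g),
\]
reducing the claim to the identity $\pi(g) = \pi'(g)$. This identity I verify term by term in the defining sum of $g$. The summands $\phi_s(X_s)$ and $H_s(X_0,\dots,X_{s-1})(X_s - X_{s-1})$ for $s \leq t-1$ are functions of the first $t$ coordinates only, so they integrate identically since $\pi$ and $\pi'$ share the projection $\pi_t$ onto those coordinates (by definition of $t$-competitor). The term $\phi_t(X_t)$ integrates identically because $\pi$ and $\pi'$ share the same last marginal. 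For the remaining term $H_t(X_0,\dots,X_{t-1})(X_t - X_{t-1})$, disintegrating $\pi = \pi_t \otimes \kappa$ and $\pi' = \pi_t \otimes \kappa'$ reduces its integral to $\int H_t(\bx)\bigl(\bary(\kappa(\bx,\cdot)) - x_{t-1}\bigr)\,\pi_t(d\bx)$, and by the equality of conditional barycenters this coincides with the corresponding integral under $\pi'$.

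The main potential obstacle is integrability. Since $\pi$ is finitely supported its $t$-coordinate projection $\pi_t$ is a finite atomic measure, so the integrals involving only the first $t$ coordinates are finite sums. Because $\pi'$ shares the last marginal of $\pi$, which has finite first moment, the kernel $\kappa'(\bx,\cdot)$ has finite first moment at each atom $\bx$ of $\pi_t$, legitimizing the manipulation of the stochastic integral term; the competitor barycenter identity is then applied pointwise on the (finitely many) atoms of $\pi_t$. Nonnegativity of $\bar{f}$ and the dual inequality give $g \geq 0$ on $\V^t$, so $\pi'(g)$ is well-defined in $[0,\infty]$, while $\pi(g) = \pi(\bar{f}) < \infty$ (since $\bar{f}$ is finite-valued and $\pi$ has finite support) forces all the partial integrals to be finite. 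Chaining the equalities then gives $\pi(\bar{f}) = \pi(g) = \pi'(g) \geq \pi'(\bar{f})$, completing the argument.
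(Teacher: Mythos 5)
Your proposal is correct and follows essentially the same route as the paper: both arguments use that $\pi$ charges only the equality set $\Gamma^t$ while $\pi'$ charges only $\V^t$ where the dual inequality holds, and then verify $\pi(g)=\pi'(g)$ term by term, with the $H_t$-term handled via the equality of conditional barycenters of the disintegration kernels. Your additional remarks on integrability (finite supports, finiteness of $g$ on $\Gamma^t$) are a harmless refinement of what the paper leaves implicit.
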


\begin{proof}
Recall that the projections $\pi_{t}$ and $\pi'_{t}$ onto the first $t$ coordinates coincide.
Thus,
\begin{align*}
\pi[H_t\cdot(X_t - X_{t-1})] &=
\int H_t \cdot(\bary(\kappa(X_0,\dots,X_{t-1},\cdot) - X_{t-1}) d\pi_t \\
&= \int H_t \cdot(\bary(\kappa'(X_0,\dots,X_{t-1},\cdot) - X_{t-1}) d\pi_t'\\
&= \pi'[H_t\cdot(X_t-X_{t-1})].
\end{align*}
Using also that the last marginals coincide, we deduce that
\begin{align*}
\pi[\bar{f}] &= \pi\left[\sum_{s=0}^t\phi_s(X_s) + (H \cdot X)_t\right]
= \pi'\left[\sum_{s=0}^t\phi_s(X_s) + (H \cdot X)_t\right] \geq \pi'[\bar{f}].
\end{align*}
\end{proof}

Next, we formulate an intermediate result relating optimality for Spence--Mirrlees reward functions to left-monotonicity of the support.

\begin{lemma}
\label{lem:SpenceMirrleesToLeftMonotone}
Let $1\leq t\leq n$ and let $\Gamma \subseteq \V$ be a subset such that $\Gamma^t$ is nondegenerate. Moreover, let $f: \R^{t+1} \to \R$ be of the form $f(X_0,\dots,X_t) = \bar{f}(X_0,X_t)$ for a second-order Spence--Mirrlees function $\bar{f}$. Assume that for any finitely supported probability $\pi$ that is 
concentrated on $\Gamma^t$ and any $t$-competitor $\pi'$ of $\pi$ that is concentrated on~$\V^t$, we have
$\pi(f) \geq \pi'(f)$. Then, the projection $\Gamma^t$ is left-monotone.
\end{lemma}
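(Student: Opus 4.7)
My plan is to argue by contradiction: assume $\Gamma^t$ is not left-monotone, so there exist $\bx,\bx' \in \R^t$ with $x_0 < x'_0$ and reals $y^- < y' < y^+$ such that $(\bx,y^-), (\bx,y^+), (\bx',y') \in \Gamma^t$. I will exhibit a finitely supported probability $\pi$ concentrated on these three points together with a $t$-competitor $\pi'$ concentrated on $\V^t$ satisfying $\pi'(f) > \pi(f)$, contradicting the hypothesis.

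The natural choice is $\pi := \tfrac{1}{3}(\delta_{(\bx,y^-)} + \delta_{(\bx,y^+)} + \delta_{(\bx',y')})$. With $p := y^+ - y'$ and $q := y' - y^-$ (both strictly positive) and a small parameter $\varepsilon > 0$, I define $\pi'$ by moving mass $\varepsilon p$ from $(\bx,y^-)$ to $(\bx',y^-)$ and mass $\varepsilon q$ from $(\bx,y^+)$ to $(\bx',y^+)$, while compensating with mass $\varepsilon(p+q)$ transferred from $(\bx',y')$ to $(\bx,y')$. The identity $py^- + qy^+ = (p+q)y'$ makes the conditional barycenters at $\bx$ and $\bx'$ coincide with those of $\pi$, while the first-$t$-projection $\tfrac{2}{3}\delta_{\bx} + \tfrac{1}{3}\delta_{\bx'}$ and the last marginal on $\R$ are preserved by inspection. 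For $\varepsilon$ sufficiently small, all six weights are nonnegative, so $\pi'$ is a bona fide $t$-competitor of $\pi$.

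The crux is verifying that $\pi'$ is concentrated on $\V^t$. Since $(\bx,y^\pm)$ and $(\bx',y')$ already lie in $\Gamma^t \subseteq \V^t$, only the three new atoms $(\bx,y'), (\bx',y^-), (\bx',y^+)$ need attention, and only the last-step condition at index $t$ can fail. Here I will invoke the one-step structure from Proposition~\ref{prop:oneStepDecomposition}: since $y^- < y^+$, the pair $(x_{t-1},y^\pm)$ cannot lie in the diagonal component $V_0^t$, so both $y^-$ and $y^+$ belong to a common irreducible domain $J_k^t$ with $x_{t-1} \in I_k^t$. The interval property of $J_k^t$ places $y' \in (y^-,y^+)$ in the open interior $I_k^t$. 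Since the open intervals $I_{k'}^t$ are pairwise disjoint and any boundary atoms of a distinct $J_{k'}^t$ lie in $I_0^t \subseteq \R \setminus I_k^t$, the membership $(x'_{t-1},y') \in \V^t$ forces $x'_{t-1} \in I_k^t$ as well. Hence all three new atoms lie in $V_k^t$, and $\pi'$ is supported on $\V^t$.

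Finally, setting $g(y) := \bar{f}(x'_0,y) - \bar{f}(x_0,y)$, the second-order Spence--Mirrlees hypothesis guarantees that $g$ is strictly convex, and a direct expansion yields
\[
\pi'(f) - \pi(f) = \varepsilon\bigl[p\, g(y^-) + q\, g(y^+) - (p+q)\, g(y')\bigr],
\]
which is strictly positive by strict convexity of $g$ applied at the convex combination $y' = (py^- + qy^+)/(p+q)$ with $p,q > 0$. This contradicts the assumed inequality $\pi(f) \geq \pi'(f)$ and proves the lemma. The main obstacle is the support analysis in the third paragraph; the barycenter bookkeeping and the final Jensen-type computation are mechanical.
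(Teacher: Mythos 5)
Your proof is correct and follows essentially the same route as the paper: a three-point configuration violating left-monotonicity, a competitor that exchanges the spread between $\bx$ and $\bx'$ while preserving projections and conditional barycenters, and a contradiction via strict convexity of $y\mapsto \bar f(x_0',y)-\bar f(x_0,y)$; the paper simply performs a full swap with weights $\lambda/2,(1-\lambda)/2,1/2$ where $\lambda=(y^+-y')/(y^+-y^-)$ rather than your $\varepsilon$-perturbation. Your third paragraph spells out the support verification that the paper dismisses with ``by the shape of $\V$,'' and it is accurate.
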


\begin{proof}
Consider $(\bx,y_1),(\bx,y_2),(\bx',y') \in \Gamma^t$ satisfying $x_0 < x_0'$ and suppose
for contradiction that $y_1 < y' < y_2$. We define $\lambda = \frac{y_2 - y'}{y_2 - y_1}$ and
\begin{align*}
\pi &= \frac{\lambda}{2} \delta_{(\bx,y_1)} + \frac{1-\lambda}{2} \delta_{(\bx,y_2)} + \frac{1}{2}\delta_{(\bx',y')} \\
\pi' &= \frac{\lambda}{2} \delta_{(\bx',y_1)} + \frac{1-\lambda}{2} \delta_{(\bx',y_2)} + \frac{1}{2}\delta_{(\bx,y')}.
\end{align*}
Then $\pi$ and $\pi'$ have the same projection $\pi_t=\pi'_t$ on the first $t$ marginals and their last marginals also coincide. Moreover, disintegrating
$\pi = \pi_t \otimes \kappa$ and $\pi' = \pi_t \otimes \kappa'$, the measures
$\kappa(\bx), \kappa(\bx'), \kappa'(\bx), \kappa(\bx')$ all have barycenter~$y'$. Therefore,
$\pi$ and $\pi'$ are $t$-competitors. We must also have that $\pi'$ is concentrated on
$\V^t$, by the shape of $\V$. Now our assumption implies that $\pi(f) \geq \pi'(f)$, but the second-order
Spence--Mirrlees property of $\bar{f}$ implies that $\pi(f) < \pi'(f)$.
\end{proof}

\subsection{Geometry of Left-Monotone Transports}

Next, we establish that transports with left-monotone support are indeed left-monotone in the sense of Theorem~\ref{thm:multistepleft}.

\begin{theorem}
\label{thm:leftMonotoneGeometry}
Let $\bmu=(\mu_{0},\dots,\mu_{n})$ be in convex order and let $P \in \M(\bmu)$ be concentrated on a nondegenerate, left-monotone set~$\Gamma\subseteq\R^{n+1}$. Then~$P$ is left-monotone.
\end{theorem}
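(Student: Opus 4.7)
The plan is to prove by induction on $t$ that, for every $x \in \R$ and $1 \leq t \leq n$,
\[\theta_t^x := P|_{\{X_0 \leq x\}} \circ X_t^{-1} = \shadow{\mu_0|_{(-\infty,x]}}{\mu_1,\dots,\mu_t},\]
which is precisely the defining property of a left-monotone transport in the sense of Theorem~\ref{thm:multistepleft}. Before starting, I would replace $\Gamma$ by the Borel nondegenerate subset $\Gamma'_P \subseteq \Gamma$ furnished by Remark~\ref{rem:nondegenerate}(ii), so that the support arguments can be carried out against a Borel set without breaking measurability.

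For the base case $t=1$, the projection $\Gamma^1 \subseteq \R^2$ is nondegenerate, and the $t=1$ instance of Definition~\ref{def:leftMonotoneSupport} is verbatim the no-crossing condition in Proposition~\ref{prop:leftMonotoneTransport}(ii). Hence $P_{01}=P \circ (X_0,X_1)^{-1}$ is the unique one-step left-monotone (Left-Curtain) coupling in $\M(\mu_0,\mu_1)$, and part~(i) of that proposition directly yields $\theta_1^x = \shadow{\mu_0|_{(-\infty,x]}}{\mu_1}$.

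For the inductive step, assume the identity for $s<t$; in particular $\alpha := \theta_{t-1}^x = \shadow{\mu_0|_{(-\infty,x]}}{\mu_1,\dots,\mu_{t-1}}$. By the iterative definition of the obstructed shadow, it suffices to show $\theta_t^x = \shadow{\alpha}{\mu_t}$. The martingale property of $P$ gives $\alpha \leq_c \theta_t^x \leq \mu_t$, so $\theta_t^x \in \casts{\alpha}{\mu_t}$, and hence $\shadow{\alpha}{\mu_t} \leq_c \theta_t^x$ by the minimality property of the shadow (Lemma~\ref{lem:shadowDefinition}). The nontrivial direction is the reverse inequality $\theta_t^x \leq_c \shadow{\alpha}{\mu_t}$, which I would prove by contradiction. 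If it fails, then since $\theta_t^x$ and $\shadow{\alpha}{\mu_t}$ both lie in $\casts{\alpha}{\mu_t}$ and hence share mass and barycenter, the signed measure $\theta_t^x - \shadow{\alpha}{\mu_t}$ must carry positive-part mass in the tails relative to $\alpha$ and compensating negative-part mass in a central region, so that $\mu_t - \theta_t^x$ correspondingly has excess mass in the central region. Decomposing $P \circ (X_{t-1},X_t)^{-1}=\tilde Q + \tilde Q'$ with $\tilde Q := P|_{\{X_0 \leq x\}} \circ (X_{t-1},X_t)^{-1} \in \M(\alpha,\theta_t^x)$ and $\tilde Q' := P|_{\{X_0 > x\}} \circ (X_{t-1},X_t)^{-1}$, an application of Lemma~\ref{le:absContTransportOneStep} to a carefully chosen sub-measure would extract two atoms $(\bx,y^-),(\bx,y^+) \in \supp(\tilde Q)$ sharing a common path prefix $\bx$ with $x_0 \leq x$ (existence of both a high and a low destination being guaranteed by nondegeneracy of $\Gamma^t$), together with a third atom $(\bx',y') \in \supp(\tilde Q')$ with $x_0'>x$ and $y^- < y' < y^+$. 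Lifted to $\Gamma^t$ via the measurable versions of these supports, this trio directly contradicts the left-monotonicity of $\Gamma^t$.

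The hard part is carrying out the swap-extraction rigorously: one must respect the martingale constraint at step $t$, correctly synchronize the $x_0$-coordinate between the restricted and complementary pieces, and use nondegeneracy to ensure the extracted points genuinely belong to $\Gamma^t$ rather than to an exceptional nullset of $P$. An alternative and perhaps cleaner route is to lift $P$ to a measure $\tilde P \in \Pi(\lambda,\mu_0,\dots,\mu_n)$ via the quantile coupling of $\mu_0$ and then apply the uniqueness clause of Proposition~\ref{prop:generalShadowPlan} inductively, matching $\tilde P$ step by step with the canonical lift constructed in the proof of Theorem~\ref{thm:multistepleft}; left-monotonicity of $\Gamma$ is then precisely what is needed, at each stage, to verify the one-step shadow identity required to invoke that proposition.
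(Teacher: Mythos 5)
Your overall skeleton coincides with the paper's: induct on the time horizon, reduce the step $t$ to the identity $\theta_t^x=\shadow{\theta_{t-1}^x}{\mu_t}$ via the iterative definition of the obstructed shadow, obtain $\shadow{\theta_{t-1}^x}{\mu_t}\leq_c\theta_t^x$ from minimality, and derive a contradiction from the failure of equality using the geometry of $\Gamma$. But the entire difficulty of the theorem sits inside the step you leave as a sketch, and the sketch as written would not go through. First, the claim that the signed measure $\theta_t^x-\shadow{\alpha}{\mu_t}$ "carries positive-part mass in the tails and compensating negative-part mass in a central region'' is unjustified and is not the structure the argument actually exploits; the paper instead applies Lemma~\ref{lem:differenceWitness} to $\sigma=\nu^n_x-\mu^n_x$ to produce a single level $a\in\supp(\sigma^+)$ and a test function $(b-y)^+\1_{[a,\infty)}$ with $\sigma\bigl((b-\cdot)^+\1_{[a,\infty)}\bigr)>0$. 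Second, Lemma~\ref{le:absContTransportOneStep} constructs a dominating martingale coupling; it does not "extract atoms'' from a support, and a direct extraction of a forbidden trio $(\bx,y^-),(\bx,y^+),(\bx',y')$ with $y^-<y'<y^+$ from the mere failure of $\theta_t^x\leq_c\shadow{\alpha}{\mu_t}$ is exactly the step you have not supplied. What the paper actually does is different in kind: since $\sigma^+\leq\mu_n-\mu^n_x$, the level $a$ is approximated by endpoints $x_n^m$ of paths $\bx^m\in\Gamma$ with $x_0^m>x$; left-monotonicity against these third paths, combined with nondegeneracy, shows that the time-$(n-1)\to n$ kernel of the $\{X_0\leq x\}$ part of $P$ never crosses the level $a$ (paths at or below $a$ at time $n-1$ stay in $(-\infty,a]$, paths at or above stay in $[a,\infty)$). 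The contradiction is then obtained at the level of measures, splitting at $a$, using the additivity of the shadow (Lemma~\ref{lem:shadowProperties}\,(i)), Lemma~\ref{lem:convexSupportConstraints}, and the convexity of $(b-y)^+$ on $[a,\infty)$ to show $\nu^n_x\bigl((b-\cdot)^+\1_{[a,\infty)}\bigr)\leq\mu^n_x\bigl((b-\cdot)^+\1_{[a,\infty)}\bigr)$, contradicting the choice of $a,b$. None of this machinery appears in your outline.

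Your proposed "cleaner route'' via Proposition~\ref{prop:generalShadowPlan} is circular as stated: the uniqueness clause of that proposition only identifies $\tilde P$ with the canonical lift once you already know that the conditional laws $P|_{\{X_0\leq x\}}\circ(X_{t-1},X_t)^{-1}$ couple $\shadow{\mu_0|_{(-\infty,x]}}{\mu_1,\dots,\mu_{t-1}}$ with $\shadow{\mu_0|_{(-\infty,x]}}{\mu_1,\dots,\mu_t}$ for every $x$ --- which is precisely the assertion being proved. The subtlety is that the no-crossing condition on $\Gamma^t$ is indexed by the starting point $x_0$, not by $x_{t-1}$, so Proposition~\ref{prop:leftMonotoneTransport} cannot be applied to the one-step projection $(X_{t-1},X_t)$, and a genuinely new argument (the level-$a$ analysis above) is required.
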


Before stating the proof of the theorem, we record two auxiliary results about measures on the real line. The first one is a direct consequence of Proposition~\ref{prop:StrassenResult}.

\begin{lemma}
\label{lem:convexSupportConstraints}
Let $a < b$ and $\mu \leq_c \nu$. If $\nu$ is concentrated on
$(-\infty,a]$, then so is $\mu$, and moreover $\nu(\{a\}) \geq \mu(\{a\})$. The analogue holds
for $[b,\infty)$.
\end{lemma}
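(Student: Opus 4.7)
The plan is to invoke Strassen's theorem (Proposition~\ref{prop:StrassenResult}) to reduce the statement to a pointwise argument on a martingale coupling. Concretely, pick any $P \in \M(\mu,\nu)$ and let $(X_0,X_1)$ denote the canonical projections on $\R^2$, so that $X_0 \sim \mu$, $X_1 \sim \nu$, and $\EE^P[X_1 \mid X_0] = X_0$ $P$-a.s. This martingale identity will do all the work.

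For the first claim, the hypothesis that $\nu$ is concentrated on $(-\infty,a]$ means $X_1 \leq a$ $P$-a.s. Taking conditional expectations gives $X_0 = \EE^P[X_1 \mid X_0] \leq a$ $P$-a.s., hence $\mu$ is concentrated on $(-\infty,a]$ as claimed.

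For the atom comparison, I would focus on the event $\{X_0 = a\}$, whose $P$-probability is $\mu(\{a\})$. On this event the conditional expectation of $X_1$ equals $a$, while $X_1 \leq a$ everywhere; the nonnegative random variable $a - X_1$ therefore has vanishing conditional expectation on $\{X_0 = a\}$, and hence vanishes $P$-a.s.\ there. Consequently
\[
  P(X_0 = a,\, X_1 = a) = P(X_0 = a) = \mu(\{a\}),
\]
which immediately yields $\nu(\{a\}) = P(X_1 = a) \geq \mu(\{a\})$.

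Finally, the analogous statement on $[b,\infty)$ follows from the already-proved case by pushing $\mu$ and $\nu$ forward under $x \mapsto -x$, which preserves the convex order and exchanges $[b,\infty)$ with $(-\infty,-b]$. There is no genuine obstacle here; the only step requiring any care is the standard observation that a nonnegative random variable with zero conditional expectation on a (positive-mass) event must be zero $P$-almost surely on that event.
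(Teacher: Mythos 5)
Your proof is correct and follows the route the paper intends: the paper simply notes the lemma is a direct consequence of Proposition~\ref{prop:StrassenResult}, i.e.\ one picks a martingale coupling $P\in\M(\mu,\nu)$ and argues exactly as you do, via $X_0=\EE^P[X_1\mid X_0]\leq a$ and the vanishing of the nonnegative variable $a-X_1$ on $\{X_0=a\}$. The only cosmetic point is that $\mu,\nu$ are finite measures rather than probabilities, which is handled by normalizing (the case of the zero measure being trivial).
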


The second result is~\cite[Lemma~5.2]{BeiglbockJuillet.12}.

\begin{lemma}
\label{lem:differenceWitness}
Let $\sigma$ be a nontrivial signed measure on $\R$ with $\sigma(\R) = 0$ and
let $\sigma = \sigma^+ - \sigma^-$ be its Hahn decomposition. There exist
$a \in \supp(\sigma^+)$ and $b>a$ such that $\int (b-y)^+\1_{[a,\infty)}\, d\sigma(y) > 0$.
\end{lemma}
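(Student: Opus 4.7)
My first move is to simplify the integrand via Fubini: writing $(b-y)^{+}=\int_{a}^{b}\1_{\{y\le x\}}\,dx$ for $y\in[a,b]$ and swapping the order of integration yields
\begin{equation*}
  \int (b-y)^{+}\1_{[a,\infty)}(y)\,d\sigma(y) \;=\; \int_{a}^{b} \sigma([a,x])\,dx, \qquad a\le b.
\end{equation*}
So the task reduces to finding $a\in \supp(\sigma^{+})$ and $b>a$ for which $x\mapsto \sigma([a,x])$ is strictly positive throughout $(a,b]$; any such pair makes the right-hand side strictly positive.

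Next, I would split according to whether $\sigma^{+}$ has an atom. In the atomic case, pick $a_{0}$ with $\sigma^{+}(\{a_{0}\})>0$; then $a_{0}\in \supp(\sigma^{+})$, and mutual singularity of $\sigma^{\pm}$ forces $\sigma^{-}(\{a_{0}\})=0$, so $\sigma(\{a_{0}\})>0$. The map $x\mapsto \sigma([a_{0},x])$ is right-continuous at $a_{0}$ with value $\sigma(\{a_{0}\})>0$, so there exists $b>a_{0}$ on which this map stays positive. In the atomless case I would invoke the one-sided Lebesgue--Besicovitch differentiation theorem on $\R$ applied to the mutually singular pair $(\sigma^{+},\sigma^{-})$:
\begin{equation*}
  \lim_{h\downarrow 0}\frac{\sigma^{-}([a,a+h])}{\sigma^{+}([a,a+h])}=0 \qquad \text{for $\sigma^{+}$-almost every }a.
\end{equation*}
Any such $a$ necessarily lies in $\supp(\sigma^{+})$ (otherwise $\sigma^{+}([a,a+h])$ vanishes for small $h$, making the ratio ill-defined), and for $h>0$ small enough we get $\sigma([a,a+h])>0$, hence $\sigma([a,x])>0$ for all $x\in(a,a+h]$; setting $b:=a+h$ concludes this case.

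The principal obstacle is the rigorous use of the differentiation theorem in its one-sided form, as the textbook statement of Besicovitch is phrased with symmetric balls. On $\R$ the one-sided variant is nonetheless classical: one can apply Lebesgue's differentiation theorem directly to the monotone distribution functions of $\sigma^{+}$ and $\sigma^{-}$, or route through the dominating measure $\sigma^{+}+\sigma^{-}$, where the Radon--Nikodym derivatives of $\sigma^{\pm}$ take only the values $0$ and $1$ by singularity, and then extract the sided ratio. Either route is routine but does require a few lines of justification.
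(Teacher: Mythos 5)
Your reduction via Fubini and your treatment of the atomic case are correct. The genuine gap is the step you lean on in the atomless case: the ``one-sided Lebesgue--Besicovitch differentiation theorem'' asserting that $\sigma^-([a,a+h])/\sigma^+([a,a+h])\to 0$ for $\sigma^+$-a.e.\ $a$. This statement is true on $\R$, but neither of your two proposed justifications delivers it. Applying Lebesgue's differentiation theorem to the distribution functions of $\sigma^\pm$ only yields information at Lebesgue-a.e.\ point, which is useless precisely in the interesting case where $\sigma^+$ is singular with respect to Lebesgue measure (e.g.\ carried by a Cantor set); and routing through the dominating measure $\lambda=\sigma^++\sigma^-$ merely renames the problem, since you then need a differentiation theorem for the basis of intervals $[a,a+h]$ with respect to an arbitrary, possibly Lebesgue-singular, finite Borel measure $\lambda$. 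That is not covered by the textbook Besicovitch theorem (centered balls) nor by the Vitali covering theorem for Radon measures (balls centered at the points of the set); it requires a genuinely one-dimensional covering/maximal argument, e.g.\ the lemma that any family of intervals contains a countable subfamily with the same union and overlap at most two, which gives a weak $(1,1)$ bound for the uncentered (hence one-sided) maximal operator relative to any finite Borel measure, plus a separate small argument that $\sigma^+([a,a+h])>0$ for all $h>0$ at $\sigma^+$-a.e.\ $a$ (your parenthetical about the support does not quite give this, since a support point can carry no mass on its right). So the plan can be completed, but the invoked theorem is the entire content of the proof in your approach, not a few lines of routine justification.

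For comparison: the paper does not prove this lemma at all, it cites \cite[Lemma~5.2]{BeiglbockJuillet.12}, and the argument there is elementary and avoids differentiation of measures. After your Fubini reduction one can argue directly with $g(x):=\sigma((-\infty,x])$, which is right-continuous, of bounded variation, and vanishes at $\pm\infty$: having disposed of atoms of $\sigma^+$ as you did, pick $c$ with $g(c)\neq 0$, set $\eta:=g(c)/2$, and let $a$ be the last crossing of the level $\eta$ before $c$ (if $g(c)>0$) respectively after $c$ (if $g(c)<0$), i.e.\ $a=\sup\{x\le c: g(x^-)\le\eta\}$ or $a=\sup\{x\ge c: g(x^-)\le\eta\}$. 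One checks that $g(a^-)\le\eta$, that $g>\eta$ off a countable set on the relevant interval to the right of $a$ (so $\int_a^b\sigma([a,x])\,dx=\int_a^b(g(x)-g(a^-))\,dx>0$ for a suitable $b$), and that $a\in\supp(\sigma^+)$ because $g$ increases from below $\eta$ to above $\eta$ across every neighborhood of $a$. This level-crossing argument is shorter and entirely self-contained, which is what the cited reference buys; your route is correct in outline but trades that for a differentiation theorem that you would still have to prove.
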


We can now give the proof of the theorem; \MN{it is inspired by~\cite[Theorem~5.3]{BeiglbockJuillet.12} which corresponds to the case $n=1$}.

\begin{proof}[Proof of Theorem \ref{thm:leftMonotoneGeometry}]
Since the case $n=1$ is covered by Proposition~\ref{prop:leftMonotoneTransport}, we may assume that the theorem has been proved for transports with $n-1$ steps and focus on the induction argument.

For every $x\in \R$ we denote by $\mu^t_x$ the marginal $(P|_{(-\infty,x] \times \R^n})\circ X_{t}^{-1}$.
In particular, we then have $\mu^0_x = \mu_0|_{(-\infty,x]}$ and $\mu_x^t$ is the image of
$\mu^0_x$ under~$P$ after~$t$ steps. For the sake of brevity, we also set
$\nu^t_x := \shadow{\mu^0_x}{\mu_1,\dots,\mu_t}$. By definition, $P$ is left-monotone
if $\mu^t_x = \nu^t_x$ for all $x\in \R$ and $t \leq n$, and by the induction hypothesis,
we may assume that this holds for $t \leq n-1$.

We argue by contradiction and assume that there exists $x \in \R$ such that
$\mu^n_x \neq \nu^n_x$. Then, the signed measure
\[\sigma := \nu^n_x - \mu^n_x\]
is nontrivial and we can find $a < b$ with $a \in \supp(\sigma^+)$ as in Lemma \ref{lem:differenceWitness}.
Observe that $\sigma^+ \leq \mu_n - \mu^n_x$ where $\mu_n - \mu^n_x$ is the image of
$\mu_n|_{(x,\infty)}$ under~$P$. Hence, $a \in \supp(\mu_n - \mu^n_x)$ and as $P$ is concentrated on $\Gamma$, we conclude that there exists a sequence of points
\begin{equation}
\label{eqn:witnessApproaching}
\bx^m = (x_0^m,\dots,x_n^m)\in \Gamma \quad \text{with } x < x_0^m \text{ and }x_n^m \to a.
\end{equation}
Moreover, by the characterization of the obstructed shadow in Lemma~\ref{le:shadowLeastElement}, we must have
\[\nu^n_x \leq_c \mu^n_x \]
as $\mu^n_x \in \casts{\mu^0_x}{\mu_n}^{\mu_1,\dots,\mu_{n-1}}$ due to the fact that $\mu^n_x$ is the image of $\mu^0_x$ under a martingale transport.

\emph{Step~1.} We claim that for all $\bx = (x_0,\dots,x_{n-1})$ with $x_0 \leq x$ and
$x_{n-1} \leq a$, it holds that
\[\Gamma_{\bx} \cap (a,\infty) = \emptyset,\]
where $\Gamma_{\bx}=\{y\in \R:\, (\bx,y)\in \Gamma\}$ is the section of $\Gamma$ at $\bx$.
By way of contradiction, assume that for some $\bx$ with $x_0 \leq x$ and $x_{n-1} \leq a$ we have
$\Gamma_{\bx} \cap (a,\infty) \neq \emptyset$, then in particular
$\Gamma_\bx \cap (x_{n-1},\infty) \neq \emptyset$. In view of the nondegeneracy of $\Gamma$, we conclude that $\Gamma_\bx \cap (-\infty,x_{n-1}) \neq \emptyset$ and hence that
$\Gamma_{\bx} \cap (-\infty,a) \neq \emptyset$. This yields a contradiction to the
left-monotonicity of~$\Gamma$ by using~$\bx^m$ from \eqref{eqn:witnessApproaching}
for $\bx'$ in Definition \ref{def:leftMonotoneSupport} for large enough $m$, and the proof of the claim is complete.

\emph{Step~2.} Similarly, we can show that for all $\bx = (x_0,\dots,x_{n-1})$ with $x_0 \leq x$
and $x_{n-1} \geq a$,
\[\Gamma_{\bx} \cap (-\infty,a) = \emptyset.\]

\emph{Step~3.} Next, we consider the marginals 
$$
  \mu^t_{x,a} := \big(P|_{(-\infty,x]\times \R^{n-2} \times (-\infty,a] \times \R} \big) \circ X_{t}^{-1}.
$$
Then, in particular, $\mu^{n-1}_{x,a} = \mu^{n-1}_x |_{(-\infty,a]}$ and
$\mu^n_{x,a}$ is the image of $\mu^{n-1}_{x,a}$ under the last step of $P$. Step~1 of the proof thus implies that $\mu^n_{x,a}$ is concentrated on $(-\infty,a]$.
We also write 
\[\FS{\nu^n_{x,a} := \shadow{\mu^{n-1}_x|_{(-\infty,a]}}{\mu_n}.}\]
We have $\mu^{n-1}_{x,a} \leq_c \mu^n_{x,a}$ as $\M(\mu^{n-1}_{x,a},\mu^n_{x,a}) \neq \emptyset$, and $\mu^n_{x,a} \leq \mu^n_x\FS{ \leq \mu_n}$. Therefore,
\begin{equation}
\label{eqn:leftPartOrder}
\nu^n_{x,a} \leq_c \mu^n_{x,a}
\end{equation}
by the minimality of the shadow.
Next, we show that
\begin{equation}
\label{eqn:rightPartOrder}
\nu^n_x - \nu^n_{x,a} \leq_c \mu^n_x - \mu^n_{x,a}.
\end{equation}
Observe that $\mu^n_x - \mu^n_{x,a}$ is the image of $\mu^{n-1}_x|_{(a,\infty)}$ under $P$ and therefore
concentrated on $[a,\infty)$ by Step~2. Using this observation, that $\mu^n_{x,a}$ is concentrated on $(-\infty,a]$ as mentioned above, and the fact that $\nu^n_{x,a}(\{a\}) \leq \mu^n_{x,a}(\{a\})$
 as a consequence of~\eqref{eqn:leftPartOrder} and Lemma~\ref{lem:convexSupportConstraints},
we have
\[\mu^n_x - \mu^n_{x,a} = (\mu^n_x - \mu^n_{x,a})|_{[a,\infty)}
\leq (\mu_n - \mu^n_{x,a})|_{[a,\infty)} \leq (\mu_n - \nu^n_{x,a})|_{[a,\infty)}
\leq \mu_n - \nu^n_{x,a}.\]
We also have $\mu^{n-1}_x|_{(a,\infty)} \leq_c \mu^n_x - \mu^n_{x,a}$ since the latter measure is the image of the former under $P$. Together with the preceding display, we have established that
\[\mu^n_x - \mu^n_{x,a} \in \casts{\mu^{n-1}_x|_{(a,\infty)}}{\mu_n - \nu^n_{x,a}}.\]
On the other hand,
\[\nu^n_x - \nu^n_{x,a}
= \shadow{\mu^{n-1}_x|_{(a,\infty)}}{\mu_n - \nu^n_{x,a}}\]
from the additivity property of the
shadow in Lemma~\ref{lem:shadowProperties}\,(i),
and therefore~\eqref{eqn:rightPartOrder} follows by the minimality of the shadow.

\emph{Step~4.} Recall from Step~3 that $\mu^n_{x,a}$ is concentrated on $(-\infty,a]$ and that
$\mu^n_x - \mu^n_{x,a}$ is concentrated on $[a,\infty)$. Therefore,
$\nu^n_{x,a}$ is concentrated on $(-\infty,a]$ and
$\nu^n_x - \nu^n_{x,a}$ is concentrated on $[a,\infty)$, by Lemma~\ref{lem:convexSupportConstraints}.
Moreover, we have
$\nu^n_{x,a}(\{a\}) \leq \mu^n_{x,a}(\{a\})$ by the same lemma, and finally, the function
$y \mapsto (b-y)^+\1_{[a,\infty)}(y)$ is convex on $[a,\infty)$ as $a<b$. Using these facts and~\eqref{eqn:rightPartOrder},
\begin{align*}
\int (b-y)^+&\1_{[a,\infty)}(y) \nu^n_x(dy) \\
&= \int (b-y)^+\1_{[a,\infty)}(y) (\nu^n_x-\nu^n_{x,a})(dy) + (b-a) \nu^n_{x,a}(\{a\}) \\
&\leq \int (b-y)^+\1_{[a,\infty)}(y) (\mu^n_x-\mu^n_{x,a})(dy) + (b-a) \mu^n_{x,a}(\{a\}) \\
&= \int (b-y)^+\1_{[a,\infty)}(y) \mu^n_x(dy).
\end{align*}
This contradicts the choice of $a$ and $b$, cf.\ Lemma~\ref{lem:differenceWitness}, and thus completes the proof.
\end{proof}

\subsection{Optimality Properties}

In this section we relate left-monotone transports and left-monotone sets to the optimal transport problem for Spence--Mirrlees functions.

\begin{theorem}
\label{thm:optimizerLeftMonotonicity}
For $1 \leq t \leq n$, let $f_t: \R^2 \to \R$  be second-order Spence--Mirrlees functions
such that $|f_t(x,y)| \leq a_{0}(x) + a_{t}(y)$ for some $a_{0} \in L^1(\mu_0)$ and $a_{t} \in L^1(\mu_t)$. There exists a universally measurable, nondegenerate, left-monotone set $\Gamma'\subseteq \R^{n+1}$ such that any simultaneous optimizer $P\in \M(\bmu)$ for $\S_\bmu(f_t(X_0,X_t)),$ $1 \leq t \leq n$ is concentrated on $\Gamma'$. In particular, any such $P$ is left-monotone.
\end{theorem}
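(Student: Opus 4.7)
The plan is to chain together the monotonicity principle (Theorem \ref{thm:monotonicityPrinciple}) with the geometric lemmas of this section and the convergence-back result Theorem \ref{thm:leftMonotoneGeometry}.

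First, for each $t \in \{1,\dots,n\}$ I would apply Theorem \ref{thm:monotonicityPrinciple} to the reward $f_t(X_0,X_t)$ to obtain a Borel set $\Gamma_t \subseteq \R^{n+1}$ such that a measure $P \in \M(\bmu)$ is optimal for $\S_\bmu(f_t(X_0,X_t))$ if and only if $P$ is concentrated on $\Gamma_t$. The growth bound $|f_t(x,y)| \leq a_0(x) + a_t(y)$ with $a_0 \in L^1(\mu_0), a_t \in L^1(\mu_t)$ permits us to subtract an affine dual element and normalize $f_t$ to be nonnegative, placing us in the scope of Remark \ref{rk:relaxLowerBound}. Crucially, I need $\Gamma_t$ to come from a dual optimizer $(\bphi^{(t)},H^{(t)})$ whose components vanish beyond step $t$, so that Lemma \ref{lem:variationalPrinciple} can be applied. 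Such an optimizer exists because $f_t$ only depends on $(X_0,X_t)$: the values of the primal problems over $\M(\bmu)$ and $\M(\mu_0,\dots,\mu_t)$ coincide (any $Q \in \M(\mu_0,\dots,\mu_t)$ extends to some $P \in \M(\bmu)$ by Strassen and kernel gluing), so a dual optimizer for the $t$-step problem embeds via zero-padding into $\D_\bmu$ with the same cost.

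Second, I would set $\Gamma := \bigcap_{t=1}^{n} \Gamma_t$, which is Borel, and observe that any simultaneous optimizer $P \in \M(\bmu)$ satisfies $P(\Gamma) = 1$. Then I invoke Remark \ref{rem:nondegenerate}(i) to extract a universally measurable, nondegenerate subset $\Gamma' \subseteq \Gamma$ with $P(\Gamma') = 1$ for every such $P$. Passing to this refinement is what converts the monotonicity principle's output into a set to which the geometric lemmas apply.

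Third, I would verify that $\Gamma'$ is left-monotone. Fix $t$ and consider the projection $(\Gamma')^t$; by construction it is nondegenerate and contained in $\Gamma_t^t$. For any finitely supported probability $\pi$ concentrated on $(\Gamma')^t$ and any $t$-competitor $\pi'$ concentrated on $\V^t$, Lemma \ref{lem:variationalPrinciple} (whose zero-tail hypothesis was arranged in Step~1) yields $\pi(f_t) \geq \pi'(f_t)$. Since $f_t$ is second-order Spence--Mirrlees, Lemma \ref{lem:SpenceMirrleesToLeftMonotone} forces $(\Gamma')^t$ to be left-monotone. As this holds for every $t$, the set $\Gamma'$ itself is left-monotone.

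Finally, any simultaneous optimizer $P$ concentrates on $\Gamma'$, and Theorem \ref{thm:leftMonotoneGeometry} (left-monotonicity of support $\Rightarrow$ left-monotonicity of $P$) delivers the conclusion. The main obstacle I expect is the zero-tail dual optimizer in Step~1: one has to justify the reduction to the $t$-step problem carefully, both on the primal side (extension of martingale couplings) and on the dual side (embedding into $\D_\bmu$ preserves the moderated integral), in order to cleanly apply Lemma \ref{lem:variationalPrinciple}. Once that technical bridge is in place, the rest of the argument is a direct assembly of the machinery developed earlier.
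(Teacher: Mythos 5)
Your proposal is correct and follows essentially the same route as the paper: monotonicity principle with a zero-tail dual optimizer obtained by reducing to the $t$-step problem, intersection of the contact sets, passage to a nondegenerate universally measurable subset via Remark~\ref{rem:nondegenerate}(i), then Lemmas~\ref{lem:variationalPrinciple} and~\ref{lem:SpenceMirrleesToLeftMonotone} followed by Theorem~\ref{thm:leftMonotoneGeometry}. Your justification of the zero-tail optimizer (equality of primal values via extension of $t$-step couplings, plus zero-padding on the dual side) is exactly the argument the paper sketches parenthetically.
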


\begin{proof}
The last assertion follows by an application of Theorem~\ref{thm:leftMonotoneGeometry}, so we may focus on finding $\Gamma'$. For each $1 \leq t \leq n$, we use Theorem~\ref{thm:duality} and Remark~\ref{rk:relaxLowerBound} to find a dual optimizer $(\bphi,H) \in \D_\bmu(\FS{f_t})$ for $\I_\bmu(f_t(X_0,X_t))$
and define the Borel set
\[\Gamma(t) := \left\{\sum_{s=0}^n \phi_s(X_s) + (H \cdot X)_n = \FS{f_t}\right\} \cap \V.\]
Here, we may choose a dual optimizer such that $\phi_{s}\equiv H_{s}\equiv 0$ for $s=t+1,\dots,n$. (This can be seen by applying Theorem~\ref{thm:duality} to the transport problem involving only the marginals $(\mu_{0},\dots,\mu_{t})$ and taking the corresponding dual optimizer.) Theorem~\ref{thm:monotonicityPrinciple} shows that any simultaneous optimizer $P\in \M(\bmu)$ is concentrated on $\Gamma(t)$ for all $t$, and hence also on the Borel set
$$
  \Gamma:=\bigcap_{t=1}^{n}\Gamma(t).
$$
Using Remark~\ref{rem:nondegenerate}\,(i), we find a universally measurable, nondegenerate subset $\Gamma'\subseteq \Gamma$ with the same property. Since the projection $(\Gamma')^{t}$ is contained in the projection $(\Gamma(t))^{t}$, Lemma~\ref{lem:variationalPrinciple} and Lemma~\ref{lem:SpenceMirrleesToLeftMonotone} yield that $(\Gamma')^{t}$ is left-monotone for all $t$; that is, $\Gamma'$ is left-monotone.
\end{proof}

\begin{remark}\label{rk:BorelVersionOfGamma}
  In Theorem~\ref{thm:optimizerLeftMonotonicity}, if we only wish to find a nondegenerate, left-monotone set $\Gamma'_{P}\subseteq \R^{n+1}$ such that a given simultaneous optimizer $P\in \M(\bmu)$ is concentrated on $\Gamma'_{P}$, then we may choose $\Gamma'_{P}$ to be Borel instead of universally measurable. This follows by replacing the application of Remark~\ref{rem:nondegenerate}\,(i) by Remark~\ref{rem:nondegenerate}\,(ii) in the proof.
\end{remark}

The following is a converse to Theorem~\ref{thm:optimizerLeftMonotonicity}.

\begin{theorem}\label{th:leftMonotoneOptimalitySpenceMirrlees}
  Given $1\leq t\leq n$, let $f\in C^{1,2}(\R^{2})$ be such that $f_{xyy}\geq0$ and suppose that the following integrability condition holds:
  \begin{equation}\label{eq:smoothSMint}
  \begin{cases}
    f(X_{0},X_{t}), \quad f(0,X_{t}),  \quad f(X_{0},0), \quad \bar{h}(X_{0})X_{0}, \quad \bar{h}(X_{0})X_{t}\\
      \mbox{are $P$-integrable for all}\;\; P\in\M(\bmu),
  \end{cases}
  \end{equation}
  where $\bar{h}(x):=\partial_{y}|_{y=0} [f(x,y) - f(0,y)]$. 
  Then every left-monotone transport $P\in \M(\bmu)$ is an optimizer for $\S_\bmu(f)$. 
\end{theorem}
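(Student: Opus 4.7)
The plan is to reduce to the case of strictly Spence--Mirrlees reward via a smooth perturbation, apply Theorem~\ref{thm:optimizerLeftMonotonicity} together with the uniqueness of the bivariate projection of left-monotone transports from Theorem~\ref{thm:multistepleft}, and pass to the limit. I would first introduce the auxiliary function $g(x,y) := \phi(x)\sqrt{1+y^{2}}$ with $\phi \in C^{\infty}(\R)$ bounded and $\phi' > 0$ (e.g.\ $\phi=\arctan$). Then $g \in C^{1,2}(\R^{2})$, $g_{xyy}(x,y) = \phi'(x)(1+y^{2})^{-3/2} > 0$, the associated $\bar{g}$ of~\eqref{eq:smoothSMint} vanishes because $\partial_{y}\sqrt{1+y^{2}}|_{y=0}=0$, and the bound $|g(x,y)| \leq \|\phi\|_{\infty}(1+|y|)$ together with the finite first moment of $\mu_{t}$ gives that $g$ satisfies~\eqref{eq:smoothSMint} and is uniformly $Q$-integrable over $\M(\bmu)$. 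Setting $f^{\epsilon} := f + \epsilon g$ for $\epsilon > 0$ thus produces a smooth strictly Spence--Mirrlees function ($f^{\epsilon}_{xyy} > 0$) that still satisfies~\eqref{eq:smoothSMint}.

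The central step is to show that every left-monotone $P \in \M(\bmu)$ is optimal for $\S_{\bmu}(f^{\epsilon}(X_{0},X_{t}))$. The decisive point here is that by Theorem~\ref{thm:multistepleft} the bivariate projection $P \circ (X_{0},X_{t})^{-1}$ equals a single measure $P^{*}_{0t}$ (the iterated obstructed shadow coupling) for every left-monotone $P$, so $P(f^{\epsilon}(X_{0},X_{t})) = P^{*}_{0t}(f^{\epsilon}) =: V^{\epsilon}$ does not depend on the choice of $P$. It therefore suffices to produce a single primal optimizer $P^{\epsilon} \in \M(\bmu)$: since Theorem~\ref{thm:optimizerLeftMonotonicity} then forces $P^{\epsilon}$ to be left-monotone, one obtains $\S_{\bmu}(f^{\epsilon}(X_{0},X_{t})) = P^{\epsilon}(f^{\epsilon}) = V^{\epsilon}$, giving the desired optimality of $P$. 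Because~\eqref{eq:smoothSMint} is weaker than the pointwise linear bound required by Theorem~\ref{thm:optimizerLeftMonotonicity}, I would obtain $P^{\epsilon}$ by approximating $f^{\epsilon}$ by a sequence of bounded smooth strictly Spence--Mirrlees functions $f^{\epsilon,N}$, for which primal optimizers exist by weak compactness of $\M(\bmu)$ and the above reasoning yields $\S_{\bmu}(f^{\epsilon,N}(X_{0},X_{t})) = P^{*}_{0t}(f^{\epsilon,N})$; the required uniform integrability in the limit $N \to \infty$ would be extracted from the canonical decomposition $f^{\epsilon}(x,y) = \tilde{f}^{\epsilon}(x,y) + \phi_{0}(x) + \phi_{t}(y) + \bar{h}^{\epsilon}(x)(y-x)$, in which the last three terms have $Q$-expectation constant on $\M(\bmu)$, combined with the dual superhedging bound of Theorem~\ref{thm:duality} (applied via Remark~\ref{rk:relaxLowerBound}).

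The hardest part will be executing this bounded approximation while preserving $f^{\epsilon,N}_{xyy} > 0$ globally, since a naive value truncation of $f^{\epsilon}$ destroys the strict convexity in $y$ that the invocation of Theorem~\ref{thm:optimizerLeftMonotonicity} needs; one workable route is to truncate the density $f^{\epsilon}_{xyy}$ by a smooth positive bump of growing support, reintegrate against suitable linear-in-$y$ boundary data, and add a bounded strictly Spence--Mirrlees correction on the complement to restore strict positivity of the third cross-derivative. Once these two approximation limits are secured, passing $\epsilon \to 0$ is straightforward: the uniform estimate $\sup_{Q \in \M(\bmu)} |Q(f^{\epsilon})-Q(f)| \leq \epsilon\|\phi\|_{\infty}(1+\mu_{t}(|y|))$ yields both $P(f^{\epsilon}(X_{0},X_{t})) \to P(f(X_{0},X_{t}))$ and $\S_{\bmu}(f^{\epsilon}(X_{0},X_{t})) \to \S_{\bmu}(f(X_{0},X_{t}))$, and equating the two limits gives $P(f(X_{0},X_{t})) = \S_{\bmu}(f(X_{0},X_{t}))$ for any left-monotone $P$, completing the proof.
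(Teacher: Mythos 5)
There is a genuine gap at the heart of your argument. Your central step reads: produce one primal optimizer $P^{\epsilon}$ for the single problem $\S_{\bmu}(f^{\epsilon}(X_{0},X_{t}))$ (which indeed exists by weak compactness once $f^{\epsilon,N}$ is bounded and continuous), then invoke Theorem~\ref{thm:optimizerLeftMonotonicity} to conclude that $P^{\epsilon}$ is left-monotone, so that $P^{\epsilon}_{0t}$ coincides with the obstructed-shadow coupling and every left-monotone transport attains the same value. But Theorem~\ref{thm:optimizerLeftMonotonicity} applies only to \emph{simultaneous} optimizers of all $n$ problems $\S_{\bmu}(f_s(X_0,X_s))$, $1\leq s\leq n$: optimality for a single fixed $t$ only yields (via Lemmas~\ref{lem:variationalPrinciple} and~\ref{lem:SpenceMirrleesToLeftMonotone}) that the projection $\Gamma^{t}$ of the contact set is left-monotone, not the full set. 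And Theorem~\ref{thm:leftMonotoneGeometry}, which converts geometric left-monotonicity into the shadow property $P_{0t}=\shadow{\mu_0|_{(-\infty,x]}}{\mu_1,\dots,\mu_t}$, is proved by induction on the time horizon and crucially uses left-monotonicity of \emph{all} earlier projections $\Gamma^{s}$, $s<t$ (it needs $\mu^{s}_x=\nu^{s}_x$ for $s\leq t-1$ as induction hypothesis). So from single-$t$ optimality you cannot conclude $P^{\epsilon}_{0t}=P^{*}_{0t}$. If instead you intend $P^{\epsilon}$ to be a simultaneous optimizer for all $t$, its existence is exactly what is at stake: the paper obtains a simultaneous optimizer only by first constructing the shadow coupling (Theorem~\ref{thm:multistepleft}) and then proving the present theorem, so assuming it here would be circular.

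For comparison, the paper's proof avoids all of this and needs neither strictness of $f_{xyy}$ nor uniqueness of optimizers. After subtracting $g(x,y)=f(x,0)+f(0,y)-f(0,0)+\bar h(x)y$ (whose expectation is constant on $\M(\bmu)$ by the martingale property and~\eqref{eq:smoothSMint}), one has $f(x,y)=\int_0^y\int_0^x(y-t)f_{xyy}(s,t)\,ds\,dt$, and after a cutoff of the density $f_{xyy}$ this becomes a positive superposition of building blocks $-\1_{(-\infty,s]}(x)(y-t)^{+}$ plus terms with constant expectation. Optimality of the left-monotone transport for each building block is immediate from the order-theoretic characterization of the obstructed shadow (Lemmas~\ref{le:shadowLeastElement} and~\ref{le:optimalityProductFun}), and Fubini plus monotone convergence finish the proof. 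Your truncation-of-the-density idea is in fact the device the paper uses; the missing ingredient in your proposal is a direct argument for why the shadow coupling maximizes the elementary rewards, rather than a detour through the converse implication.
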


The integrability condition clearly holds when $f$ is Lipschitz continuous; in particular, a smooth second-order Spence--Mirrlees function (as defined in the Introduction) satisfies the assumptions of the theorem for any $\bmu$.

The proof will be given by an approximation based on the following building blocks for Spence--Mirrlees functions; the construction is novel and may be of independent interest.

\begin{lemma}\label{le:optimalityProductFun}
Let $1 \leq t \leq n$ and let $f(X_0,\dots,X_n) := \1_{(-\infty,a]}(X_0)\varphi(X_t)$ for a concave function $\varphi$ and $a\in\R$. Then every left-monotone transport $P\in \M(\bmu)$ is an optimizer for $\S_\bmu(f)$.
\end{lemma}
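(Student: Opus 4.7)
The plan is to reduce the optimality statement to the minimality property of the obstructed shadow via the convex order. Writing $\theta := \shadow{\mu_0|_{(-\infty,a]}}{\mu_1,\dots,\mu_t}$ and recalling from Theorem~\ref{thm:multistepleft} that a left-monotone $P \in \M(\bmu)$ satisfies
\[
P_{0t}[(-\infty,a]\times A] = \theta(A) \quad\text{for all } A \in \B(\R),
\]
we immediately obtain the identity
\[
P(f) \;=\; \int_\R \varphi(y)\, P_{0t}[(-\infty,a]\times dy] \;=\; \int \varphi\, d\theta.
\]
Thus it suffices to show that for every $Q \in \M(\bmu)$, the measure $\theta_Q := (Q|_{(-\infty,a]\times\R^n})\circ X_t^{-1}$ satisfies $Q(f) = \int \varphi\, d\theta_Q \leq \int \varphi\, d\theta$.

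To that end, I would show $\theta \leq_c \theta_Q$. Define intermediate marginals $\theta_{Q,s} := (Q|_{(-\infty,a]\times\R^n})\circ X_s^{-1}$ for $0 \leq s \leq t$, so that $\theta_{Q,0} = \mu_0|_{(-\infty,a]}$ and $\theta_{Q,t} = \theta_Q$. Since $Q$ is a martingale, applying the conditional Jensen inequality to convex test functions yields $\theta_{Q,s-1} \leq_c \theta_{Q,s}$ for $1 \leq s \leq t$; the domination $\theta_{Q,s} \leq \mu_s$ is immediate from the marginal constraint on $Q$. Hence
\[
\mu_0|_{(-\infty,a]} = \theta_{Q,0} \leq_c \theta_{Q,1} \leq_c \dots \leq_c \theta_{Q,t} = \theta_Q, \qquad \theta_{Q,s} \leq \mu_s,
\]
so $\theta_Q \in \casts{\mu_0|_{(-\infty,a]}}{\mu_t}^{\mu_1,\dots,\mu_{t-1}}$. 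By Lemma~\ref{le:shadowLeastElement}, $\theta$ is the $\leq_c$-least element of this set, so $\theta \leq_c \theta_Q$.

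Finally, since $\varphi$ is concave, $-\varphi$ is convex, and the definition of convex order gives $\int (-\varphi)\, d\theta \leq \int (-\varphi)\, d\theta_Q$, i.e.\ $\int \varphi\, d\theta \geq \int \varphi\, d\theta_Q$; equivalently, $P(f) \geq Q(f)$, establishing optimality. The only subtlety is integrability: a concave $\varphi:\R\to\R$ is majorized by an affine function, so $\varphi^+$ is integrable against $\mu_t$ (and hence against all the sub-probabilities $\theta_{Q,t}$) by finite first moment, which is enough for the convex-order inequality to be applied to $-\varphi$ in the usual extended sense. I expect no other real obstacle; the core of the argument is simply the identification of $P(f)$ with the integral against the shadow and the recognition that any competitor produces an element of the cast whose terminal marginal dominates the shadow in convex order.
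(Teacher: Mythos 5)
Your proposal is correct and is precisely the argument the paper intends: the paper's proof consists of one sentence invoking the defining shadow property of Theorem~\ref{thm:multistepleft} with $x=a$ together with the least-element characterization of the obstructed shadow in Lemma~\ref{le:shadowLeastElement}, which is exactly the reduction you carry out in detail. Your verification that the image of $\mu_0|_{(-\infty,a]}$ under any competitor $Q$ lies in $\casts{\mu_0|_{(-\infty,a]}}{\mu_t}^{\mu_1,\dots,\mu_{t-1}}$, and the integrability remark for concave $\varphi$, simply make explicit what the paper leaves implicit.
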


\begin{proof}
  In view of Lemma~\ref{le:shadowLeastElement}, this follows directly by applying the defining shadow property from Theorem~\ref{thm:multistepleft} with $x=a$.
\end{proof}

The integrability condition~\eqref{eq:smoothSMint} implies that setting
$$
  g(x,y):=f(x,0)+f(0,y)-f(0,0)+\bar{h}(x)y,
$$
the three terms constituting
$$
  g(X_{0},X_{t}) = [f(X_{0},0)+\bar{h}(X_{0})X_{0}] +  [f(0,X_{t})-f(0,0)] + [\bar{h}(X_{0})(X_{t}-X_{0})]
$$
are $P$-integrable and $P[g(X_{0},X_{t})]$ is constant over $P\in\M(\bmu)$.
By replacing $f$ with $f-g$, we may thus assume without loss of generality that
\begin{equation}\label{eq:normalized}
  f(x,0)=f(0,y)= f_{y}(x,0)=0 \quad \mbox{for all}\quad  (x,y)\in\R^{2}.
\end{equation}
After this normalization, integration by parts yields the representation
\begin{equation}\label{eq:intByParts}
  f(x,y) = \int_0^y \int_0^x (y-t) f_{xyy}(s,t)\,ds\,dt.
\end{equation}

\begin{lemma}\label{le:optimalityRestrictedFun}
  Theorem~\ref{th:leftMonotoneOptimalitySpenceMirrlees} holds under the following additional condition: there exists a constant $c>0$ such that 
  \begin{align*}
  &\mbox{$x\mapsto f(x,y)$ is constant on $\{x>c\}$ and on $\{x<-c\}$},\\
  &\mbox{$y\mapsto f(x,y)$ is affine on $\{y>c\}$ and on $\{y<-c\}$.}
  \end{align*}
\end{lemma}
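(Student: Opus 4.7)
The plan is to decompose $f$ as a superposition of ``atomic'' reward functions of the form $\1_{(-\infty,s]}(x)\varphi_s(y)$ with $\varphi_s$ concave, for which Lemma~\ref{le:optimalityProductFun} already yields optimality of left-monotone transports, and then transfer the optimality to $f$ by linearity and Fubini.

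First I would use the normalization \eqref{eq:normalized}: replacing $f$ by $f-g$ with $g(x,y)=f(x,0)+f(0,y)-f(0,0)+\bar h(x)\,y$ does not alter the optimization problem because, as noted above \eqref{eq:intByParts}, $P[g(X_0,X_t)]$ is finite and constant over $P\in\M(\bmu)$ under \eqref{eq:smoothSMint}. After this reduction, the representation \eqref{eq:intByParts} applies. The additional structural assumptions of the lemma then give $f_{xyy}(s,t)\equiv 0$ whenever $|s|>c$ (from $f$ being constant in $x$ for $|x|>c$) or $|t|>c$ (from $f$ being affine in $y$ for $|y|>c$), so $f_{xyy}$ is supported in $[-c,c]^2$.

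Next I would establish the key representation. For any function $h:\R\to\R$ with compact support, one has the elementary identity
\[\int_0^x h(s)\,ds \;=\; \int_0^\infty h(s)\,ds \;-\; \int_\R \1_{(-\infty,s]}(x)\,h(s)\,ds,\]
valid up to an $s$-nullset (a direct check splits $x\gtreqless 0$). Applied to $h(s):=f_x(s,y)=\int_0^y (y-t)f_{xyy}(s,t)\,dt$, whose support lies in $[-c,c]$, this yields
\[f(x,y) \;=\; H(y)\;+\;\int_{-c}^{c}\1_{(-\infty,s]}(x)\,\varphi_s(y)\,ds,\]
with $H(y):=\int_0^c f_x(s,y)\,ds$ and $\varphi_s(y):=-f_x(s,y)$. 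Since $\varphi_s''(y)=-f_{xyy}(s,y)\leq 0$, each $\varphi_s$ is concave; moreover, the affinity of $f$ in $y$ outside $[-c,c]$ makes $\varphi_s$ affine for $|y|>c$, and $f_{xyy}$ being bounded on $[-c,c]^2$ gives a linear-growth bound $|\varphi_s(y)|\leq A(1+|y|)$ uniformly in $s\in[-c,c]$; the same holds for $H$.

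With the decomposition in hand, the optimality argument is routine. For any left-monotone $P\in\M(\bmu)$ and any competitor $P'\in\M(\bmu)$, the uniform linear-growth of $\varphi_s$ together with $\mu_t(|\cdot|)<\infty$ justifies Fubini, giving
\[P[f(X_0,X_t)] \;=\; \mu_t(H) + \int_{-c}^c P\bigl[\1_{(-\infty,s]}(X_0)\,\varphi_s(X_t)\bigr]\,ds,\]
and likewise for $P'$. Lemma~\ref{le:optimalityProductFun} applied to each $s\in[-c,c]$ gives $P\bigl[\1_{(-\infty,s]}(X_0)\,\varphi_s(X_t)\bigr]\geq P'\bigl[\1_{(-\infty,s]}(X_0)\,\varphi_s(X_t)\bigr]$, and integrating over $s$ yields $P[f(X_0,X_t)]\geq P'[f(X_0,X_t)]$, as desired. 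The main technical hurdle will be the representation step---verifying the sign conventions in $\int_0^x$ when $x<0$ and justifying Fubini---but this is essentially calculus, made transparent by the compact support of $f_{xyy}$ that the restrictive assumption provides (which is precisely why a further approximation will be needed to drop these assumptions and reach Theorem~\ref{th:leftMonotoneOptimalitySpenceMirrlees}).
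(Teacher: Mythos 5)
Your proof is correct and follows essentially the same route as the paper: both decompose $f$ into a positive superposition of the building blocks of Lemma~\ref{le:optimalityProductFun} plus a remainder of the form $\tilde{\phi}(x)+\tilde{\psi}(y)+\tilde{h}(x)y$ whose expectation is constant over $\M(\bmu)$, and conclude by Fubini. The only cosmetic difference is that you integrate out the $t$-variable first, working with the concave profiles $\varphi_s=-f_x(s,\cdot)$, whereas the paper keeps the double integral over $(s,t)\in[-c,c]^2$ with the elementary concave atoms $-(y-t)^+f_{xyy}(s,t)$ and anchors its integration by parts at the corner $(c,-c)$ rather than at the origin.
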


\begin{proof}
  \MN{Integration by parts implies that} for all $(x,y)\in\R^{2}$, we have the representation
  \begin{align*}
  f(x,y) = \,&- \int_{-c}^c \int_{-c}^c \1_{(-\infty,s]}(x)(y-t)^+f_{xyy}(s,t)\,ds\,dt \\
  & + [f(x,-c)-(-c)f_y(x,-c)] \\
  & + [f(c,y) - f(c,-c) - f_y(c,-c)(y - (-c))] \\
  & + f_y(x,-c)y.
\end{align*} 
The last three terms are of the form $g(x,y)=\tilde{\phi}(x) + \tilde{\psi}(y)+\tilde{h}(x)y$ and of linear growth due to the additional condition. Hence, as above, $P'[g(X_{0},X_{t})]=C$ is constant for $P'\in\M(\bmu)$. If $P\in\M(\bmu)$ is left-monotone and $P'\in\M(\bmu)$ is arbitrary, Fubini's theorem and Lemma~\ref{le:optimalityProductFun} yield that
  \begin{align*}
 P[f] &= - \int_{-c}^c \int_{-c}^c P[\1_{(-\infty,s]}(x)(y-t)^+]f_{xyy}(s,t) \,ds\,dt + C \\
      & \geq - \int_{-c}^c \int_{-c}^c P'[\1_{(-\infty,s]}(x)(y-t)^+]f_{xyy}(s,t) \,ds\,dt + C\\
      &= P'[f],
  \end{align*}
where $P,P'$ are understood to integrate with respect to $(x,y)$ and the application of Fubini's theorem is justified by the nonnegativity of the integrand.
\end{proof}

\begin{proof}[Proof of Theorem~\ref{th:leftMonotoneOptimalitySpenceMirrlees}]
  Let $f$ be as in the theorem. We shall construct functions $f^{m}$, $m\geq1$ satisfying the assumption of Lemma~\ref{le:optimalityRestrictedFun} as well as $P[f^{m}]\to P[f]$ for all $P\in\M(\bmu)$. Once this is achieved, the theorem follows from the lemma.
  
  Indeed, we may assume that $f$ is normalized as in~\eqref{eq:normalized}. Let $m\geq 1$ and let $\rho_{m}:\R\to[0,1]$ be a smooth function such that $\rho_{m}=1$ on $[-m,m]$ and $\rho_{m}=0$ on $[-m-1,m+1]^{c}$. In view of~\eqref{eq:intByParts}, we define $f^{m}$ by
  $$
    f^{m}(x,y) = \int_0^y \int_0^x (y-t) f_{xyy}(s,t) \rho_{m}(s)\rho_{m}(t)\,ds\,dt.
  $$
  It then follows that $f^{m}$ satisfies the assumptions of Lemma~\ref{le:optimalityRestrictedFun} with the constant $c=m+1$. Moreover, we have 
  $$
   0\leq f^{m}(x,y)\leq f^{m+1}(x,y) \leq f(x,y)\quad\mbox{for}\quad x\geq 0
  $$
  and the opposite inequalities for $x\leq 0$, as well as $f^{m}(x,y)\to f(x,y)$ for all~$(x,y)$.
  
  Let $P\in\M(\bmu)$. Since $f$ is $P$-integrable, applying monotone convergence separately on $\{x\geq0\}$ and $\{x\leq0\}$ yields that $P[f^{m}]\to P[f]$, and the proof is complete.
\end{proof}

\begin{remark}\label{rk:smoothSMexample}
The function
$$
  \bar{f}(x,y) := \tanh(x)\sqrt{1+y^2}
$$
satisfies the conditions of Theorem~\ref{th:leftMonotoneOptimalitySpenceMirrlees} for all marginals $\bmu$ in convex order, since the latter are assumed to have a finite first moment.
\end{remark}

We can now collect the preceding results to obtain, in particular, the equivalences stated in Theorem~\ref{th:leftMonotoneIntro}.

\begin{theorem}\label{th:leftMonotoneMain}
Let $\bmu = (\mu_0,\dots,\mu_n)$ be in convex order. There exists a left-monotone, nondegenerate, universally measurable set $\Gamma\subseteq\R^{n+1}$ such that for any $P \in \M(\bmu)$, the following are equivalent:
\begin{enumerate}
	\item $P$ is an optimizer for $\S_{\bmu}(f(X_0,X_t))$ whenever $f$ is a smooth second-order Spence--Mirrlees function and $1\leq t \leq n$,
	\item $P$ is concentrated on $\Gamma$,
	\item[(ii')] $P$ is concentrated on a left-monotone set,
	\item $P$ is left-monotone; i.e.\ $P_{0t}$ transports $\mu_0|_{(-\infty,a]}$ to 
	$\shadow{\mu_0|_{(-\infty,a]}}{\mu_1,\dots,\mu_t}$ for all $1 \leq t \leq n$
	and $a \in \R$.
\end{enumerate}
Moreover, there exists $P \in \M(\bmu)$ satisfying (i)--(iii).
\end{theorem}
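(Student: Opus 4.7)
The plan is to establish the cycle of implications (i) $\Rightarrow$ (ii) $\Rightarrow$ (ii$'$) $\Rightarrow$ (iii) $\Rightarrow$ (i), and to invoke Theorem~\ref{thm:multistepleft} for existence. At this point most of the work has been done in the previous subsections, and the theorem is essentially a consolidation; the plan is to assemble the pieces and verify that a single universal witness set $\Gamma$ can be extracted.

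For the construction of $\Gamma$, I would fix one concrete smooth second-order Spence--Mirrlees function, for instance $\bar{f}(x,y) = \tanh(x)\sqrt{1+y^2}$ as in Remark~\ref{rk:smoothSMexample}, which satisfies the integrability condition~\eqref{eq:smoothSMint} for any $\bmu$ in convex order with finite first moments. Applying Theorem~\ref{thm:optimizerLeftMonotonicity} with $f_t := \bar{f}$ for all $1 \leq t \leq n$ produces a universally measurable, nondegenerate, left-monotone set $\Gamma$ on which every simultaneous optimizer of $\S_\bmu(\bar{f}(X_0,X_t))$, $1\leq t\leq n$, is concentrated.

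For (i) $\Rightarrow$ (ii): if $P$ satisfies (i), it is in particular a simultaneous optimizer for the specific choice $\bar f$, hence concentrated on $\Gamma$ by construction. The implication (ii) $\Rightarrow$ (ii$'$) is immediate since $\Gamma$ is itself left-monotone. For (ii$'$) $\Rightarrow$ (iii): assume $P$ is concentrated on some left-monotone set $\tilde\Gamma$. Using Remark~\ref{rem:nondegenerate}(ii), I would pass to a Borel nondegenerate subset $\tilde\Gamma' \subseteq \tilde\Gamma$ with $P(\tilde\Gamma') = 1$. The no-crossing condition in Definition~\ref{def:leftMonotoneSupport} only constrains points that remain in the set, so any subset of a left-monotone set is again left-monotone; thus $\tilde\Gamma'$ is nondegenerate and left-monotone, and Theorem~\ref{thm:leftMonotoneGeometry} yields that $P$ is left-monotone in the shadow sense of Theorem~\ref{thm:multistepleft}. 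Finally, (iii) $\Rightarrow$ (i) is exactly the content of Theorem~\ref{th:leftMonotoneOptimalitySpenceMirrlees}, since any smooth second-order Spence--Mirrlees function (being Lipschitz by definition) satisfies the integrability condition~\eqref{eq:smoothSMint} for marginals with finite first moment. Existence of $P \in \M(\bmu)$ satisfying (iii) is guaranteed by Theorem~\ref{thm:multistepleft}.

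The main obstacle is not really technical at this stage; the key conceptual point to verify is that a \emph{single} concrete Spence--Mirrlees test function $\bar f$ suffices to produce the universal witness set $\Gamma$ that captures \emph{every} left-monotone transport, rather than needing a separate set per transport. This relies on the fact that left-monotonicity of a transport forces optimality for \emph{all} smooth second-order Spence--Mirrlees rewards (Theorem~\ref{th:leftMonotoneOptimalitySpenceMirrlees}), which makes the a priori weaker test ``optimality for $\bar f$'' equivalent to left-monotonicity, and hence sufficient to characterize $\Gamma$.
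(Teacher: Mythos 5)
Your proposal is correct and follows essentially the same route as the paper: the witness set $\Gamma$ is obtained from Theorem~\ref{thm:optimizerLeftMonotonicity} applied to the single function $\bar f$ of Remark~\ref{rk:smoothSMexample}, the implications (i)$\Rightarrow$(ii)$\Rightarrow$(ii')$\Rightarrow$(iii)$\Rightarrow$(i) are closed using Theorems~\ref{thm:leftMonotoneGeometry} and~\ref{th:leftMonotoneOptimalitySpenceMirrlees} together with Remark~\ref{rem:nondegenerate}, and existence comes from Theorem~\ref{thm:multistepleft}. Your closing observation---that (iii)$\Rightarrow$(i) is what lets a single test function generate a universal $\Gamma$---is exactly the point on which the argument hinges.
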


\begin{proof}
  Let $\Gamma$ be the set provided by Theorem~\ref{thm:optimizerLeftMonotonicity} for the function $f_{t}=\bar f$ of Remark~\ref{rk:smoothSMexample}. Given $P\in \M(\bmu)$, Theorem~\ref{thm:optimizerLeftMonotonicity} shows that~(i) implies~(ii) which trivially implies (ii'). Theorem~\ref{thm:leftMonotoneGeometry} and Remark~\ref{rem:nondegenerate} show that~(ii') implies~(iii),
  and Theorem~\ref{th:leftMonotoneOptimalitySpenceMirrlees} shows that~(iii) implies~(i). Finally, the existence of a left-monotone transport was stated in Theorem~\ref{thm:multistepleft}.
\end{proof}

We conclude this section with an example showing that left-monotone transports are not Markovian in general, even if they are unique and~\eqref{eqn:strongOrder} holds for $\bmu$.

\begin{example}\label{ex:NotMarkovian} 
  Consider the marginals
  $$
    \mu_0 = \frac{1}{2}\delta_0+\frac{1}{2}\delta_1,\quad \mu_1 = \frac{3}{4}\delta_{0} + \frac{1}{4}\delta_2,\quad \mu_2 = \frac{1}{8} \delta_{-1} + \frac{1}{2}\delta_0 + \frac{1}{8}\delta_1+\frac{1}{4}\delta_2.
  $$
  The transport $P\in\M(\bmu)$ given by
  $$
    P= \frac{1}{2}\delta_{(0,0,0)}+\frac{1}{8}\delta_{(1,0,-1)} + \frac{1}{8}\delta_{(1,0,1)}+\frac{1}{4}\delta_{(1,2,2)}
  $$
  is left-monotone because its support is left-monotone (Figure~\ref{fig:nonMarkov}), and it is clearly not Markovian. On the other hand, it is not hard to see that this is the only way to build a left-monotone transport in $\M(\bmu)$.

\begin{figure}[b]
\begin{center}
\begin{tabular}{c}
\scalebox{1.1}{
\begin{tikzpicture}
\draw[very thick] (-2,3) -- (3,3) node[right] {$\mu_0$};
\draw[very thick] (-2,1.5) -- (3,1.5) node[right] {$\mu_1$};
\draw[very thick] (-2,0) -- (3,0) node[right] {$\mu_2$};

\draw[very thick] (0,3) node[minimum width=4pt,inner sep=0,draw,fill=white,circle] {} -- (0,1.5) node[minimum width=4pt,inner sep=0,draw,fill=white,circle] {};
\draw[very thick] (1,3) node[minimum width=4pt,inner sep=0,draw,fill=white,circle] {} -- (0,1.5) node[minimum width=4pt,inner sep=0,draw,fill=white,circle] {};
\draw[very thick] (1,3) node[minimum width=4pt,inner sep=0,draw,fill=white,circle] {} -- (2,1.5) node[minimum width=4pt,inner sep=0,draw,fill=white,circle] {};
\draw[very thick] (0,1.5) node[minimum width=4pt,inner sep=0,draw,fill=white,circle] {} -- (-1,0) node[minimum width=4pt,inner sep=0,draw,fill,circle] {};
\draw[very thick] (0,1.5) node[minimum width=4pt,inner sep=0,draw,fill=white,circle] {} -- (0,0) node[minimum width=4pt,inner sep=0,draw,fill,circle] {};
\draw[very thick] (0,1.5) node[minimum width=4pt,inner sep=0,draw,fill=white,circle] {} -- (1,0) node[minimum width=4pt,inner sep=0,draw,fill,circle] {};
\draw[very thick] (2,1.5) node[minimum width=4pt,inner sep=0,draw,fill=white,circle] {} -- (2,0) node[minimum width=4pt,inner sep=0,draw,fill,circle] {};
\end{tikzpicture}}
\end{tabular}
\end{center}
\caption{Support of the non-Markovian transport in Example~\ref{ex:NotMarkovian}.}
\label{fig:nonMarkov}
\end{figure}
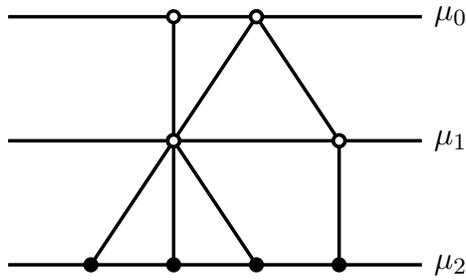
\end{example}


\section{Uniqueness of Left-Monotone Transports}\label{se:uniqueness}

In this section we consider the (non-)uniqueness of left-monotone transports. It turns out the presence of atoms in $\mu_{0}$ is important in this respect---let us start with the following simple observation.

\begin{remark}\label{rk:nonUniqueDirac}
  Let $\bmu=(\mu_{0},\dots,\mu_{n})$ be in convex order. If $\mu_{0}$ is a Dirac mass, then every $P\in\M(\bmu)$ is left-monotone. Indeed, $\M(\mu_{0},\mu_{t})$ is a singleton for every $1\leq t\leq n$, hence $P_{0t}$ must be the (one-step) left-monotone transport.
\end{remark}

Exploiting this observation, the following shows that left-monotone transports need not be unique when $n\geq2$.

\begin{example}
Let $\mu_0 = \delta_0$, $\mu_1 = \frac{1}{2}\delta_{-1} + \frac{1}{2}\delta_1$,
$\mu_2 = \frac{3}{8} \delta_{-2} + \frac{1}{4}\delta_0 + \frac{3}{8}\delta_2$.
By the remark, any element in $\M(\bmu)$ is left-monotone. Moreover, $\M(\bmu)$ is a continuum since  $\M(\mu_1,\mu_2)$ contains the
convex hull of the two measures
\begin{align*}
P_l &= \frac{1}{4}\delta_{(-1,-2)}+\frac{1}{4}\delta_{(-1,0)} + \frac{1}{8}\delta_{(1,-2)}+\frac{3}{8}\delta_{(1,2)},  \\
P_r &= \frac{3}{8}\delta_{(-1,-2)}+\frac{1}{8}\delta_{(-1,2)} + \frac{1}{4}\delta_{(1,0)}+\frac{1}{4}\delta_{(1,2)}.
\end{align*}
The corresponding supports are depicted in Figure~\ref{fig:nonunique}. 
\end{example}

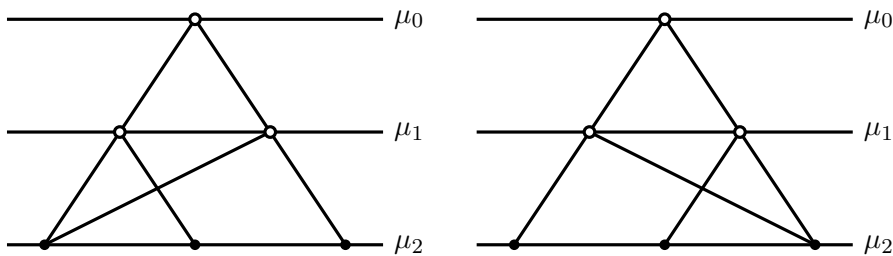
\begin{figure}[h]
\begin{center}
\begin{tabular}{cc}
\scalebox{1}{
\begin{tikzpicture}
\draw[very thick] (-2.5,3) -- (2.5,3) node[right] {$\mu_0$};
\draw[very thick] (-2.5,1.5) -- (2.5,1.5) node[right] {$\mu_1$};
\draw[very thick] (-2.5,0) -- (2.5,0) node[right] {$\mu_2$};

\draw[very thick] (0,3) -- (-1,1.5) {};
\draw[very thick] (0,3) node[minimum width=4pt,inner sep=0,draw,fill=white,circle] {} -- (1,1.5) {};
\draw[very thick] (-1,1.5) -- (-2,0) node[minimum width=4pt,inner sep=0,fill,circle] {};
\draw[very thick] (-1,1.5) node[minimum width=4pt,inner sep=0,draw,fill=white,circle] {} -- (0,0) node[minimum width=4pt,inner sep=0,fill,circle] {};
\draw[very thick] (1,1.5) -- (-2,0) {};
\draw[very thick] (1,1.5) node[minimum width=4pt,inner sep=0,draw,fill=white,circle] {} -- (2,0) node[minimum width=4pt,inner sep=0,fill,circle] {};
\end{tikzpicture}}
&
\scalebox{1}{
\begin{tikzpicture}
\draw[very thick] (-2.5,3) -- (2.5,3) node[right] {$\mu_0$};
\draw[very thick] (-2.5,1.5) -- (2.5,1.5) node[right] {$\mu_1$};
\draw[very thick] (-2.5,0) -- (2.5,0) node[right] {$\mu_2$};

\draw[very thick] (0,3) -- (-1,1.5) {};
\draw[very thick] (0,3) node[minimum width=4pt,inner sep=0,draw,fill=white,circle] {} -- (1,1.5) {};
\draw[very thick] (-1,1.5) -- (-2,0) node[minimum width=4pt,inner sep=0,fill,circle] {};
\draw[very thick] (-1,1.5) node [minimum width=4pt,inner sep=0,draw,fill=white,circle] {} -- (2,0) node[minimum width=4pt,inner sep=0,fill,circle] {};
\draw[very thick] (1,1.5) -- (0,0) node[minimum width=4pt,inner sep=0,fill,circle] {};
\draw[very thick] (1,1.5) node [minimum width=4pt,inner sep=0,draw,fill=white,circle] {} -- (2,0) {};
\end{tikzpicture}}
\end{tabular}
\end{center}
\caption{Supports of two left-monotone transports for the same marginals.}
\label{fig:nonunique}
\end{figure}

The example illustrates that non-uniqueness can typically be expected when $\mu_{0}$ has atoms. On the other hand, we have the following uniqueness result.

\begin{theorem}
\label{thm:Uniqueness}
Let $\bmu=(\mu_{0},\dots,\mu_{n})$ be in convex order. If $\mu_0$ is atomless, there exists a unique left-monotone transport $P\in\M(\bmu)$.
\end{theorem}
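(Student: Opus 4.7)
I would argue by induction on $n$. The base case $n=1$ is the uniqueness statement in Proposition~\ref{prop:leftMonotoneTransport}, which in fact does not require any atomless hypothesis.

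For the inductive step, suppose the assertion holds for $(n-1)$-step problems and let $P,P'\in\M(\bmu)$ be two left-monotone transports with $\mu_0$ atomless. The natural first move is to project onto the first $n$ coordinates: set $\tilde P:=P\circ(X_0,\ldots,X_{n-1})^{-1}$ and $\tilde P':=P'\circ(X_0,\ldots,X_{n-1})^{-1}$. Both lie in $\M(\mu_0,\ldots,\mu_{n-1})$ and their bivariate projections $\tilde P\circ (X_0,X_t)^{-1}=P\circ (X_0,X_t)^{-1}$ for $1\leq t\leq n-1$ satisfy exactly the shadow identity of Theorem~\ref{thm:multistepleft}, hence $\tilde P,\tilde P'$ are left-monotone for $(\mu_0,\ldots,\mu_{n-1})$. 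The induction hypothesis therefore gives $\tilde P=\tilde P'$, so $P$ and $P'$ agree on the first $n$ coordinates.

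To identify the remaining step I would lift via the quantile coupling. Since $\mu_0$ is atomless, $\pi_0\in\Pi(\lambda,\mu_0)$ is uniquely characterized as the graph of $F_{\mu_0}^{-1}$, and the map $U\mapsto X_0$ is a $\pi_0$-a.s. measurable bijection. Define $\Pi:=\pi_0\otimes K$ on $[0,1]\times\R^{n+1}$, with $K((u,x_0),\cdot)$ the conditional law of $(X_1,\ldots,X_n)$ given $X_0=x_0$ under $P$, and define $\Pi'$ analogously from $P'$. For $0\leq t\leq n-1$, the left-monotonicity of $P$ together with the iterative identity
\[
  \shadow{\mu_0|_{(-\infty,x]}}{\mu_1,\dots,\mu_{t+1}}=\shadow{\shadow{\mu_0|_{(-\infty,x]}}{\mu_1,\dots,\mu_t}}{\mu_{t+1}}
\]
translates into the shadow hypothesis of Proposition~\ref{prop:generalShadowPlan} for the triple $\Pi\circ(U,X_t,X_{t+1})^{-1}$ relative to $\Pi\circ(U,X_t)^{-1}$ and $\mu_{t+1}$, and similarly for $\Pi'$. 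Starting from $\Pi\circ(U,X_0)^{-1}=\pi_0=\Pi'\circ(U,X_0)^{-1}$ and iterating the uniqueness clause of Proposition~\ref{prop:generalShadowPlan} in $t$, I obtain $\Pi\circ(U,X_t,X_{t+1})^{-1}=\Pi'\circ(U,X_t,X_{t+1})^{-1}$ for every $t$. The bijection $U\leftrightarrow X_0$ then yields that $P\circ(X_0,X_t,X_{t+1})^{-1}=P'\circ(X_0,X_t,X_{t+1})^{-1}$ for all $0\leq t\leq n-1$.

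The hard part, and the place where the atomless hypothesis does its real work, is bridging from these three-dimensional slices to the full joint law. My plan is to establish that under $\mu_0$ atomless any left-monotone transport admits a disintegration $P=\mu_0\otimes\kappa_1\otimes\cdots\otimes\kappa_n$ in which $\kappa_t((x_0,x_1,\ldots,x_{t-1}),dx_t)$ depends only on $(x_0,x_{t-1})$; this is the Markov-type structure already present in the explicit construction of Theorem~\ref{thm:multistepleft} via Proposition~\ref{prop:generalShadowPlan}. Once this factorization is in hand, the unique determination of $P\circ(X_0,X_{t-1},X_t)^{-1}$ pins down each $\kappa_t$ pointwise and the proof is complete. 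To prove the factorization for an arbitrary left-monotone $P$, I would argue step by step: having fixed the transition kernels for steps $1,\ldots,t-1$ via the inductive hypothesis (so that these depend only on the relevant $(x_0,x_{s-1})$), the no-crossing condition at time $t$ in Definition~\ref{def:leftMonotoneSupport}, applied to triples originating at different $X_0$ values, rules out any genuine dependence of $\kappa_t$ on the intermediate path $(x_1,\ldots,x_{t-2})$: the atomless assumption on $\mu_0$ guarantees a continuum of admissible comparison starting points $x_0'>x_0$, and any such path dependence would produce a forbidden configuration. Pushing forward the resulting identification of $\kappa_t$ and $\kappa_t'$ back through the iteration then yields $P=P'$.
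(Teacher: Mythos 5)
Your induction setup and the determination of the three-dimensional projections $P\circ(X_0,X_t,X_{t+1})^{-1}$ via the quantile lift and Proposition~\ref{prop:generalShadowPlan} are sound (the restriction of $P$ to $\{X_0\leq x\}$ is indeed a martingale measure whose $(X_t,X_{t+1})$-marginals are the successive obstructed shadows, so the uniqueness clause of that proposition applies). But the argument breaks at exactly the point you flag as the hard part. Your proposed bridge is the claim that any left-monotone $P$ admits a disintegration in which $\kappa_t$ depends only on $(x_0,x_{t-1})$, to be proved from the no-crossing condition. This does not follow from Definition~\ref{def:leftMonotoneSupport}: that condition only constrains a triple in which the two points $y^-,y^+$ sit above the \emph{same} history $\bx$ and the third point sits above a history with strictly larger starting value $x_0'>x_0$. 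It says nothing about two histories $\bx$ and $\by$ sharing the same $x_0$ and the same $x_{t-1}$ but differing in the intermediate coordinates, and having a continuum of nearby starting points does not manufacture the forbidden configuration you would need. In fact the factorization you want is a \emph{consequence} of uniqueness (because the transport built in Theorem~\ref{thm:multistepleft} happens to have it), so using it as a tool here is close to circular unless you supply an independent proof, which the sketch does not.

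The paper's proof avoids this entirely with a convexity argument. By the induction hypothesis both left-monotone transports project to the same $P'\in\M(\mu_0,\dots,\mu_{n-1})$, so one writes $P_1=P'\otimes\kappa_1$, $P_2=P'\otimes\kappa_2$. Since left-monotonicity is a condition on the bivariate marginals $P_{0t}$ only, the average $\tfrac12(P_1+P_2)=P'\otimes\tfrac12(\kappa_1+\kappa_2)$ is again left-monotone. The key structural input, which your proposal is missing, is Proposition~\ref{prop:leftMonotoneBinomial}: when $\mu_0$ is atomless, every left-monotone transport is carried by a set whose fibers over $(x_0,\dots,x_{t-1})$ contain at most two points (this follows from left-monotonicity of the supporting set via the accumulation-point Lemma~\ref{lem:graphAccumulation}, the atomless hypothesis killing the countable exceptional set). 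Hence $\tfrac12(\kappa_1+\kappa_2)(\bx,\cdot)$ is supported on at most two points $P'$-a.s.; two subprobability kernels supported on the same two points, with equal mass and equal barycenter $x_{n-1}$, must coincide, so $\kappa_1=\kappa_2$. I would recommend reorganizing your proof around this binomial-support property rather than the Markov-type factorization.
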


The remainder of this section is devoted to the proof.  Let us call a kernel $\kappa(x,dy)$ \emph{binomial} if for all $x \in \R$, the measure $\kappa(x,dy)$ consists of (at most) two point masses. A martingale transport will be called binomial if it can be disintegrated using only binomial kernels. We shall show that when $\mu_{0}$ is atomless, any left-monotone transport is a binomial martingale, and then conclude the uniqueness via a convexity argument.

The first step is the following set-theoretic result.

\begin{lemma}
\label{lem:graphAccumulation}
Let $k\geq 1$ be an integer and $\Gamma \subseteq \R^{t+1}$. For $\bx\in \R^t$, we denote by
$\Gamma_\bx := \{ y \in \R : (\bx,y) \in \Gamma \}$ the section at $\bx$. If the set
\[\{\bx \in \R^t : |\Gamma_\bx| \geq k\}\]
is uncountable, then it has an accumulation point. More precisely, there are $\bx=(x_{0},\dots,x_{t}) \in \R^t$ and $y_1 < \dots < y_k$ in $\Gamma_{\bx}$ such that for all $\epsilon > 0$ there exist $\bx'=(x'_{0},\dots,x'_{t}) \in \R^t$ and $y_1' < \dots < y'_k$ in $\Gamma_{\bx'}$ satisfying
\begin{enumerate}[(i)]
	\item $\|\bx - \bx'\| < \epsilon$,
	\item $x_0 < x_0'$,
	\item $\max_{i=1,\dots,k} |y_i - y_i'| < \epsilon$. 
\end{enumerate}
\end{lemma}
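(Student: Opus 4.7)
My plan is to reduce the assertion to a Cantor--Bendixson-style statement in Euclidean space. For each $\bx \in A := \{\bx \in \R^t : |\Gamma_\bx| \geq k\}$, invoke the axiom of choice to pick $k$ distinct ordered points $y_1(\bx) < \dots < y_k(\bx) \in \Gamma_\bx$, and form the injective map $\Phi(\bx) := (\bx, y_1(\bx), \dots, y_k(\bx))$ from $A$ into $\R^{t+k}$. The image $B := \Phi(A)$ is then an uncountable subset of $\R^{t+k}$, and finding the required $\bx, \bx', y_i, y_i'$ amounts to finding a point $p \in B$ approached by points $q \in B$ with strictly larger first coordinate.

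Using the second countability of $\R^{t+k}$, the Cantor--Bendixson theorem yields an uncountable subset $B^{\ast} \subseteq B$ in which every point is a condensation point of $B$, i.e.\ every neighborhood meets $B$ in an uncountable set. I would then introduce $D \subseteq B^{\ast}$ as those $p$ admitting some open neighborhood $V_p$ with $V_p \cap B \subseteq \{q : q_1 \leq p_1\}$, where $q_1$ denotes the first coordinate. Fixing a countable basis $\{W_n\}$ of open boxes of $\R^{t+k}$, for each $p \in D$ I pick a basic box $W_{n(p)}$ with $p \in W_{n(p)} \subseteq V_p$. Any two distinct $p, p' \in D$ with $n(p) = n(p') = n$ both sit inside $W_n$, hence in each other's witnessing neighborhoods, which forces $p_1 = p'_1$. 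Consequently each level set $\{p \in D : n(p) = n\}$ lies in a single hyperplane $\{x_0 = c_n\}$, so $D$ is contained in a countable union of hyperplanes. Picking any $p^{\ast} \in B^{\ast} \setminus D$ yields, by the defining property of $D$, points $p_m \in B$ with $p_m \to p^{\ast}$ and $(p_m)_1 > (p^{\ast})_1$; reading off the first $t$ and the last $k$ coordinates of $p_m = \Phi(\bx^{(m)})$ supplies, for any prescribed $\epsilon > 0$, the desired $\bx'$ and $y_1' < \dots < y_k'$ satisfying (i)--(iii).

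The main obstacle is the step guaranteeing $B^{\ast} \setminus D \neq \emptyset$. The bookkeeping through the countable basis is routine, but the conclusion genuinely uses that $A$ itself is not contained in a countable union of hyperplanes $\{x_0 = c_n\} \subseteq \R^{t}$; purely set-theoretically one can construct uncountable $A \subseteq \{x_0 = c\}$, and for such $A$ the asserted right-accumulation cannot hold. This nondegeneracy is automatic in the intended application to Theorem~\ref{thm:Uniqueness}, where the atomlessness of $\mu_0$ will force the first-coordinate projection of the relevant set to be uncountable, so that $B$ cannot be absorbed into any countable union of hyperplanes of the form $\{x_0 = c_n\}$.
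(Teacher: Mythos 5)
Your construction is correct and is essentially the intended argument: the paper omits the proof and defers to Lemma~3.2 of \cite{BeiglbockJuillet.12}, which is precisely the case $t=1$ of the same scheme (lift to the graph of a selection $\bx\mapsto(y_1(\bx),\dots,y_k(\bx))$ in $\R^{t+k}$, then show that the set of points that are not right-accumulated in the $x_0$-coordinate is small). Your bookkeeping with the basic boxes $W_n$ is sound: two points of $D$ assigned the same box lie in each other's witnessing neighborhoods and hence have equal first coordinates, so $D$ is covered by countably many hyperplanes $\{x_0=c_n\}$; the Cantor--Bendixson step is actually not needed, since the same argument applied to $D$ defined as a subset of $B$ already works.

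You are also right that this is exactly where the statement as printed breaks down for $t\geq 2$: take $t=2$, $k=1$ and $\Gamma=\{(0,s,0):s\in[0,1]\}$; then $\{\bx:|\Gamma_\bx|\geq 1\}=\{0\}\times[0,1]$ is uncountable, yet no pair $\bx,\bx'$ with nonempty sections can satisfy $x_0<x_0'$. So the lemma requires the additional hypothesis that the projection of $\{\bx:|\Gamma_\bx|\geq k\}$ onto the $x_0$-axis is uncountable (equivalently, that this set is not covered by countably many hyperplanes $\{x_0=c\}$), and under that hypothesis your argument is complete. One correction to your closing remark, though: in the application (Proposition~\ref{prop:leftMonotoneBinomial}) the atomlessness of $\mu_0$ does \emph{not} force the $x_0$-projection of $A_t$ to be uncountable --- $A_t$ is defined from $\Gamma$ alone and could a priori sit inside a single fiber $\{x_0=c\}$. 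What saves the application is a dichotomy: if the projection is uncountable, the corrected lemma yields the contradiction with left-monotonicity; if it is countable, then $(X_0,\dots,X_{t-1})^{-1}(A_t)\subseteq X_0^{-1}(\mathrm{proj}_{x_0}(A_t))$ is a Borel set that is $P$-null for every $P\in\M(\bmu)$ because $\mu_0$ is atomless, so $A_t$ can be discarded without invoking the lemma at all.
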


\begin{proof}
The proof is similar to the one of \cite[Lemma 3.2]{BeiglbockJuillet.12} and therefore omitted.
\end{proof}

The following statement on the binomial structure generalizes a result of~\cite{BeiglbockJuillet.12} for the one-step case and is of independent interest.

\begin{proposition}
\label{prop:leftMonotoneBinomial}
Let $\bmu=(\mu_{0},\dots,\mu_{n})$ be in convex order and let $\mu_0$ be atomless.
There exists a universally measurable set $\Gamma \subseteq \R^{n+1}$ such that every left-monotone transport $P \in \M(\bmu)$ is concentrated on $\Gamma$ and such that for all $1\leq t\leq n$ and $\bx \in \R^t$,
\begin{equation} 
\label{eqn:SuperBinomial}
|\{ y\in\R : (X_0,\dots,X_t)^{-1}(\bx,y) \cap \Gamma \neq \emptyset\}| \leq 2.
\end{equation}
In particular, every left-monotone transport $P \in \M(\bmu)$ is a binomial martingale.
\end{proposition}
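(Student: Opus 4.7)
The plan is to start with the universally measurable, nondegenerate, left-monotone set $\Gamma_0\subseteq\R^{n+1}$ furnished by Theorem~\ref{th:leftMonotoneMain}, which carries every left-monotone transport, and then refine it by excising a $\mu_0$-null slab of starting points so that the section bound~\eqref{eqn:SuperBinomial} holds everywhere.

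Fix $1 \leq t \leq n$ and set
$$E_t := \{\bx \in \R^t : |(\Gamma_0^t)_\bx| \geq 3\}.$$
I would first show that $E_t$ is countable by contradiction. Assume it is uncountable; then Lemma~\ref{lem:graphAccumulation} applied to $\Gamma_0^t\subseteq \R^{t+1}$ with $k=3$ yields some $\bx \in \R^t$ and $y_1 < y_2 < y_3 \in (\Gamma_0^t)_\bx$ such that for every $\epsilon > 0$ one can find $\bx' \in \R^t$ and $y'_1 < y'_2 < y'_3 \in (\Gamma_0^t)_{\bx'}$ with $\|\bx - \bx'\| < \epsilon$, $x_0 < x'_0$ and $\max_i |y_i - y'_i| < \epsilon$. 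Taking $\epsilon < \min(y_2 - y_1,\, y_3 - y_2)/2$ forces $y'_2 \in (y_1, y_3)$, which contradicts the left-monotonicity of $\Gamma_0^t$ applied to the triple $(\bx, y_1), (\bx, y_3), (\bx', y'_2)$ since $x_0 < x'_0$.

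Since $E_t$ is countable, its projection $A_t := \{x_0 \in \R : (x_0, x_1, \dots, x_{t-1}) \in E_t \text{ for some } (x_1, \dots, x_{t-1})\}$ onto the first coordinate is also countable, and hence $A := \bigcup_{t=1}^n A_t$ is a countable Borel subset of $\R$ with $\mu_0(A) = 0$ by atomlessness of $\mu_0$. Define
$$\Gamma := \Gamma_0 \setminus X_0^{-1}(A);$$
this is universally measurable, and since $P \circ X_0^{-1} = \mu_0$, every left-monotone transport $P\in\M(\bmu)$ is still concentrated on $\Gamma$. By construction, if $\bx = (x_0, \dots, x_{t-1})$ has $x_0 \notin A$ then $\bx \notin E_t$ and hence $|(\Gamma^t)_\bx| \leq |(\Gamma_0^t)_\bx| \leq 2$; otherwise $(\Gamma^t)_\bx = \emptyset$. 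This is exactly~\eqref{eqn:SuperBinomial}.

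The binomial conclusion then drops out by disintegration: for any martingale disintegration $P = \mu_0 \otimes \kappa_1 \otimes \cdots \otimes \kappa_n$, the kernel $\kappa_t(\bx, \cdot)$ is supported on $(\Gamma^t)_\bx$ for $P \circ (X_0, \dots, X_{t-1})^{-1}$-a.e.\ $\bx$, and this set has at most two points. The main obstacle is the accumulation-point step: it is essential that Lemma~\ref{lem:graphAccumulation} produces the approximating $\bx'$ strictly to the right of $\bx$ in the first coordinate, because left-monotonicity prohibits the middle point $y'_2$ from lying in $(y_1, y_3)$ only in that direction; without this asymmetric control one could not derive the contradiction from the definition of a left-monotone support.
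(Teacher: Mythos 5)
Your proposal is correct and follows essentially the same route as the paper: start from the universally measurable left-monotone set of Theorem~\ref{th:leftMonotoneMain}, use Lemma~\ref{lem:graphAccumulation} together with left-monotonicity to show the set of $\bx$ with at least three successors is countable, and then delete a $P$-null set using the atomlessness of $\mu_0$. The only cosmetic difference is that you excise $X_0^{-1}(A)$ for the countable first-coordinate projection $A$, whereas the paper removes $(X_0,\dots,X_{t-1})^{-1}(A_t)$ directly; both are $P$-null for the same reason and yield the section bound.
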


\begin{proof}
Let $\Gamma$ be as in Theorem~\ref{th:leftMonotoneMain}; then every left-monotone $P \in \M(\bmu)$ is concentrated on $\Gamma$. Let $A_{t}$ be the set of all $\bx \in \R^t$ such that
\eqref{eqn:SuperBinomial} fails. Suppose that $A_{t}$ is uncountable; then Lemma~\ref{lem:graphAccumulation} yields points $\bx,\bx'$ such that
for some $y_1,y_2 \in \Gamma^{t}_\bx$ and $y \in \Gamma^{t}_{\bx'}$ we have
$y_1 < y < y_2$. This contradicts the left-monotonicity of $\Gamma$ (Definition~\ref{def:leftMonotoneSupport}), thus $A_{t}$ must be countable. Hence, $(X_{0},\dots,X_{t-1})^{-1}(A_{t})$ is Borel and $P$-null for all $P\in\M(\bmu)$, \MN{as $\mu_{0}$ is atomless}. The set $\Gamma'=\Gamma\setminus \cup_{t=1}^{n} (X_{0},\dots,X_{t-1})^{-1}(A_{t})$ then has the required properties.
\end{proof}

\begin{proof}[Proof of Theorem~\ref{thm:Uniqueness}]
We will prove this result using induction on $n$. For $n=1$ the result holds by Proposition~\ref{prop:leftMonotoneTransport}, with or without atoms.  To show the induction step, let $P'$ be the unique left-monotone transport in $\M(\mu_0,\dots,\mu_{n-1})$ and let $P_1 = P' \otimes \kappa_1$ and
$P_2 = P' \otimes \kappa_2$ be disintegrations of two $n$-step left-monotone transports. Then, 
\[\frac{P_1 + P_2}{2} = P' \otimes \frac{\kappa_1 + \kappa_2}{2}\]
is again left-monotone, and Proposition~\ref{prop:leftMonotoneBinomial} yields that $(\kappa_1 + \kappa_2)/2$
must be a binomial kernel $P'$-a.s. Using also the martingale property of $\kappa_{1}$ and $\kappa_{2}$, this can only be true if $\kappa_1 = \kappa_2$ holds $P'$-a.s., and therefore $P_1 = P_2$.
\end{proof}

\section{Free Intermediate Marginals}\label{se:freeIntermediate}

In this section we discuss a variant of our transport problem where the intermediate marginal constraints $\mu_{1},\dots,\mu_{n-1}$ are omitted; that is, only the first and last marginals $\mu_{0},\mu_{n}$ are prescribed. (One could similarly adapt the results to a case where some, but not all of the intermediate marginals are given.) 

The primal space will be denoted by $\M^n(\mu_0,\mu_n)$ and consists of all martingale measures $P$ on 
$\R^{n+1}$ such that $\mu_0 = P \circ (X_0)^{-1}$ and $\mu_n = P \circ (X_n)^{-1}$.
To make the connection with the previous sections, we note that 
\[\M^n(\mu_0,\mu_n) = \bigcup \M(\bmu)\]
where the union is taken over all vectors $\bmu=(\mu_{0},\mu_{1},\dots,\mu_{n-1},\mu_{n})$ in convex order.

\subsection{Polar Structure}

We first characterize the polar sets of $\M^n(\mu_0,\mu_n)$. To that end, we introduce an analogue of the irreducible components.

\begin{definition}\label{de:nStepComponent}
  Let $\mu_0 \leq_c \mu_n$ and let $(I_k,J_k)\subseteq \R^{2}$ be the corresponding irreducible domains in the sense of Proposition~\ref{prop:oneStepDecomposition}. The \emph{$n$-step components} of $\M^n(\mu_0,\mu_n)$ are the sets\footnote{A superscript $m$ indicates the $m$-fold Cartesian product; $\Delta_n$ is the diagonal in $\R^{n+1}$.}
  \begin{enumerate}
  \item $I_k^n \times J_k$, where $k\geq 1$,
  \item $I_0^{n+1} \cap \Delta_n$,
  \item $I_k^t \times \{p\}^{n-t+1}$, where $p\in J_{k}\setminus I_{k}$ and $1\leq t\leq n$, $k\geq 1$.
  \end{enumerate}
\end{definition}

The characterization then takes the following form.

\begin{theorem}[Polar Structure]
\label{thm:polarstructFree}
Let $\mu_0 \leq_c \mu_n$. A Borel set $B \subseteq \R^{n+1}$ is $\M^n(\mu_0,\mu_n)$-polar if
and only if there exist a $\mu_0$-nullset~$N_0$ and a $\mu_n$-nullset~$N_n$ such that
\[B \;\;\subseteq\;\; (N_0 \times \R^n) \;\cup\; (\R^n \times N_n) \;\cup\; \left(\bigcup V_j\right)^c\]
where the union runs over all $n$-step components $V_j$ of $\M^n(\mu_0,\mu_n)$.
\end{theorem}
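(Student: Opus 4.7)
The plan is to follow the template of Theorem~\ref{thm:polarstruct}, with a modified analysis of the $n$-step components that exploits the freedom in the intermediate marginals.

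For the \emph{if} direction, i.e., to show that every set of the prescribed form is $\M^n(\mu_0,\mu_n)$-polar, the marginal nullset contributions $N_0 \times \R^n$ and $\R^n \times N_n$ are trivially polar, so the nontrivial task is to establish that every $P \in \M^n(\mu_0,\mu_n)$ is concentrated on $\bigcup_j V_j$. Fix such $P$ and apply Proposition~\ref{prop:polarOneStep} to the bivariate projection $P_{0,n} := P \circ (X_0,X_n)^{-1} \in \M(\mu_0,\mu_n)$, which places $(X_0,X_n)$ on the diagonal of $I_0 \times I_0$ or in an irreducible component $I_k \times J_k$ for some $k \geq 1$. On the diagonal event $\{X_0 = X_n\}$, the orthogonality of martingale increments $\EE[(X_n-X_0)^2] = \sum_{t=1}^n \EE[(X_t-X_{t-1})^2]$ forces every increment to vanish, so the entire path lies in $I_0^{n+1} \cap \Delta_n$. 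On the event $\{X_0 \in I_k,\ X_n \in J_k\}$ with $I_k = (a,b)$, Jensen's inequality applied to $(x-b)_+$ and $(a-x)_+$ combined with $X_n \in [a,b]$ confines $X_t$ to $[a,b]$ for every $t$; a further martingale-plus-Jensen argument shows that once $X_t$ hits an endpoint $p \in \{a,b\}$, the martingale is absorbed at $p$ thereafter, which is only consistent with $X_n \in J_k$ if $p \in J_k \setminus I_k$ (i.e., $p$ is an atom of $\mu_n$). The path therefore lies either in $I_k^n \times J_k$ or in $I_k^t \times \{p\}^{n-t+1}$ for some $t$ and some boundary atom~$p$.

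For the \emph{only if} direction, the contrapositive is to produce $P \in \M^n(\mu_0,\mu_n)$ with $P(B) > 0$ whenever $B$ is not of the stated form. By the reduction used in the proof of Theorem~\ref{thm:polarstruct}, we may pass to a subset and assume that $B \subseteq V_j$ for a single $n$-step component and that $B$ has positive $\mu_0$- and $\mu_n$-projection. The strategy is then to construct a vector $\bmu = (\mu_0,\mu_1,\dots,\mu_{n-1},\mu_n)$ of intermediate marginals in convex order such that $B$ is not $\M(\bmu)$-polar in the sense of Theorem~\ref{thm:polarstruct}, which will deliver the required $P \in \M(\bmu) \subseteq \M^n(\mu_0,\mu_n)$. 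For $V_j = I_0^{n+1} \cap \Delta_n$, the identical martingale on the $I_0$-part combined with any Strassen-existing transport on $I_0^c$ charges any admissible $B$. For $V_j = I_k^n \times J_k$ with $k \geq 1$, choose $\mu_s|_{\bar I_k}$ ($s = 1,\dots,n-1$) in convex order, supported in $\bar I_k$, so that a multi-step irreducible component of $\M(\bmu)$ equals $I_k \times \cdots \times I_k \times J_k = V_j$. For $V_j = I_k^t \times \{p\}^{n-t+1}$, additively decompose the target $\nu_k = \nu_k(\{p\})\delta_p + \tilde\nu_k$ via Lemma~\ref{lem:shadowProperties}\,(i) and build $\mu_s$ with support in $\bar I_k$ for $s < t$ and with the correct atom at $p$ for $s \geq t$, so that a multi-step component of $\M(\bmu)$ captures $V_j$.

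The main obstacle will be case (iii) above: one must simultaneously satisfy the convex order, enforce absorption at $p$ from time $t$ onwards, respect the fixed terminal marginal $\mu_n$, and produce a multi-step component of $\M(\bmu)$ that covers arbitrary points of $V_j$ up to marginal nullsets. I expect to handle this via the shadow calculus of Section~\ref{se:shadows}: split the transport from $\mu_0|_{I_k}$ to $\nu_k$ additively into the part targeting the atom at $p$ and the part remaining in the interior, interpolate each piece by a chain of measures in convex order, and then recombine them to obtain $\mu_1,\dots,\mu_{n-1}$ with the required absorption structure. An application of Theorem~\ref{thm:polarstruct} to the resulting $\bmu$ then completes the argument.
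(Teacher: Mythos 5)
Your overall strategy for the ``only if'' direction---reduce to $B\subseteq V_j$ and manufacture intermediate marginals $\mu_1,\dots,\mu_{n-1}$ so that Theorem~\ref{thm:polarstruct} applies---is exactly the paper's route (via Lemma~\ref{le:polarFree}). But there is a genuine gap in how you choose the intermediate marginals. To conclude from Theorem~\ref{thm:polarstruct} that $B$ is not $\M(\bmu)$-polar, it is not enough that $B\subseteq V_j$ and that $B$ escapes $\mu_0$- and $\mu_n$-nullsets: you must also rule out that $B\subseteq (X_t)^{-1}(N_t)$ for a $\mu_t$-nullset $N_t$ of one of \emph{your} intermediate marginals, $1\leq t\leq n-1$. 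Since $B$ is arbitrary inside $V_j$ (e.g.\ it may force $X_1$ into an uncountable Lebesgue-null set carrying no atoms), any choice of $\mu_1,\dots,\mu_{n-1}$ made independently of $B$ can fail. The correct procedure is to first fix a measure $\pi$ on $\R^{n+1}$ charging $B$ with $\pi_0\leq\mu_0$, $\pi_n\leq\mu_n$, and then build each $\mu_t$ so that $\mu_t\gg\pi_t$ \emph{while simultaneously} preserving the convex order and the irreducibility of each consecutive pair (so that $V_j$ really is an irreducible component of $\M(\bmu)$). This is precisely the content of the paper's Lemma~\ref{lem:intermediateDominator} (perturbing $\nu$ by $\nu-\shadow{\epsilon\delta_x}{\nu}+\epsilon\delta_x$ and averaging over $x\sim\rho$), which is the technical heart of the proof and is entirely missing from your plan; a further truncation of $\pi$ is then needed because one only gets $\mu_t\gg\pi_t$ rather than $\pi_t\leq\mu_t$ before invoking Lemma~\ref{lem:polarstruct}. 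Your case-(iii) discussion, by contrast, worries about the absorption structure, which the paper handles for free once each step has the single irreducible domain $(I,J)$ plus a diagonal piece on $J\setminus I$.

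A secondary but real flaw is in your ``if'' direction: the identity $\EE[(X_n-X_0)^2]=\sum_t\EE[(X_t-X_{t-1})^2]$ cannot be restricted to the event $\{X_0=X_n\}$, which is not $\F_{t-1}$-measurable (the cross terms no longer vanish), and square-integrability is not assumed anyway. The correct argument is through potential functions: the actual marginals $\mu_t'$ of $P$ satisfy $u_{\mu_0}\leq u_{\mu_t'}\leq u_{\mu_n}$, so each one-step component of $(\mu_{t-1}',\mu_t')$ sits inside a component of $(\mu_0,\mu_n)$; applying Theorem~\ref{thm:polarstruct} to $\bmu'=(\mu_0,\mu_1',\dots,\mu_n)$ and checking that its effective domain is contained in $\bigcup_j V_j$ then gives the claim. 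Your Jensen argument for confinement to $[a,b]$ and absorption at boundary atoms on the irreducible components is sound.
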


It turns out that our previous results can be put to work to prove the theorem, by means of the following lemma which may be of independent interest.

\begin{lemma}
\label{lem:intermediateDominator}
Let $\mu \leq_c \nu$ be irreducible with domain $(I,J)$ and let $\rho$ be a probability
concentrated on $J$. Then, there exists a probability $\mu \leq_c \theta \leq_c \nu$ satisfying
$\theta \gg\rho$ such that $\mu \leq_c \theta$ and $\theta|_{I} \leq_c (\nu-\theta|_{J\setminus I})$ are both irreducible.
\end{lemma}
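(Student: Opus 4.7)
The plan is to construct $\theta$ as an average $\theta=\tfrac{1}{2}(\theta_{0}+\theta_{1})$ of two auxiliary measures in $\{\vartheta:\mu\leq_c\vartheta\leq_c\nu\}$, where $\theta_0$ will enforce the two irreducibility conditions while $\theta_1$ will provide $\theta\gg\rho$.

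My first step would be to reduce the two irreducibility requirements to a single analytic condition. Any candidate $\theta$ with $\theta\leq_c\nu$ is forced to be concentrated on $J$: applying Lemma~\ref{lem:convexSupportConstraints} to the half-lines beyond $J$ shows that $\theta$ can assign no mass outside $J$ and at most $\nu(\{l\})$, $\nu(\{r\})$ to the boundary points, so $\nu-\theta|_{J\setminus I}$ is a nonnegative measure. Linearity of potentials then yields the identity
\[u_{\nu-\theta|_{J\setminus I}}-u_{\theta|_I} \;=\; u_\nu-u_\theta,\]
so that $\theta|_I\leq_c(\nu-\theta|_{J\setminus I})$ is irreducible precisely when $\{u_\theta<u_\nu\}=I$. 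An analogous argument, using that $\mu$ is concentrated on $I$ and $\{u_\mu<u_\nu\}=I$, shows that $\mu\leq_c\theta$ is irreducible precisely when $\{u_\mu<u_\theta\}=I$. The two irreducibility conditions therefore collapse to the single requirement that $u_\mu<u_\theta<u_\nu$ strictly on all of $I$.

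For the first auxiliary measure I would take $\theta_{0}:=\tfrac{1}{2}(\mu+\nu)$. Its potential is $\tfrac{1}{2}(u_\mu+u_\nu)$, which inherits the strict inequality $u_\mu<u_{\theta_0}<u_\nu$ on $I$ directly from the irreducibility of $\mu\leq_c\nu$, and it is a probability on $J$ with $\mu\leq_c\theta_0\leq_c\nu$ by Strassen's theorem (Proposition~\ref{prop:StrassenResult}). Given any $\theta_{1}\in[\mu,\nu]_c$ (a probability with $\mu\leq_c\theta_1\leq_c\nu$) that satisfies $\theta_1\gg\rho$, the average $\theta:=\tfrac{1}{2}(\theta_{0}+\theta_{1})$ is automatically a probability in $[\mu,\nu]_c$ (convex combination), dominates $\rho$ via $\theta\geq\tfrac{1}{2}\theta_{1}$, and preserves $u_\mu<u_\theta<u_\nu$ on $I$, since $u_{\theta_1}$ is only sandwiched by the weak inequalities $u_\mu\leq u_{\theta_1}\leq u_\nu$ and averaging with the strict $u_{\theta_0}$ still produces strict inequalities.

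The main obstacle is therefore the construction of $\theta_{1}\in[\mu,\nu]_c$ with $\theta_{1}\gg\rho$. I would build it as a countable convex combination $\theta_{1}=\sum_n c_n\sigma_n$ of measures $\sigma_n\in[\mu,\nu]_c$ chosen so that every Borel set $A$ with $\rho(A)>0$ is charged by some $\sigma_n$. For atoms of $\rho$ at $l$ or $r$ in $J\setminus I$, which are atoms of $\nu$, the choice $\sigma=\nu$ already suffices. For an atom $y\in I$, the idea is a small perturbation of a base measure in $[\mu,\nu]_c$ of the form $\sigma_y=\sigma^{\ast}+\epsilon(\delta_y+\delta_{y^*}-2\eta)$, with $y^*\in J$ and $\eta$ a unit-mass measure barycenter-matched to $(y+y^*)/2$ and dominated by $\sigma^{\ast}/(2\epsilon)$; the strict gap $u_\nu-u_\mu>0$ on compact subsets of $I$ provides the slack that keeps $\sigma_y$ in $[\mu,\nu]_c$ for small $\epsilon$. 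For any absolutely continuous part of $\rho$, a single Brownian regularization of a Skorokhod embedding of $\nu$ from $\mu$ yields a measure in $[\mu,\nu]_c$ absolutely continuous on $I$ and hence dominating any diffuse probability on $I$. The delicate point will be tuning the perturbation parameters $y^*$, $\eta$, and $\epsilon$ uniformly for the countable family, since the slack $u_\nu-u_\mu$ vanishes at $\partial I$ and the perturbations must remain simultaneously in $[\mu,\nu]_c$.
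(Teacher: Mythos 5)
Your reduction of the two irreducibility requirements to the single condition $u_\mu<u_\theta<u_\nu$ on $I$, and the device of averaging with $\tfrac12(\mu+\nu)$ to enforce strictness, are both correct and coincide with the paper's final step (the paper takes $\theta=(\mu+\theta'+\nu)/3$). The genuine gap is in the construction of $\theta_1$ with $\mu\leq_c\theta_1\leq_c\nu$ and $\theta_1\gg\rho$, which is the heart of the lemma. Your case analysis on $\rho$ omits its singular continuous part: a measure that is absolutely continuous with respect to Lebesgue measure on $I$ does \emph{not} dominate every diffuse (i.e.\ atomless) probability on $I$ in the sense of absolute continuity---the Cantor measure is atomless yet carried by a Lebesgue-null set, hence singular to any such measure. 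So the ``Brownian regularization'' step, even granting its unproved claims, cannot cover a general $\rho$. The perturbation argument for atoms of $\rho$ in $I$ is also left unfinished, as you yourself acknowledge; in particular the uniform tuning of $\epsilon$, $y^*$, $\eta$ over a countable family near $\partial I$ is exactly where the work lies.

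The idea you are missing is a mixing construction that treats all of $\rho$ at once and removes any case analysis. First solve the problem for $\rho=\delta_x$, $x\in J$: if $\nu(\{x\})>0$ take $\theta_x=\nu$; otherwise $x\in I$ and one sets $\theta_x:=\nu-\shadow{\epsilon\delta_x}{\nu}+\epsilon\delta_x$, which satisfies $\mu\leq_c\theta_x\leq_c\nu$ for small $\epsilon>0$ because $u_{\shadow{\epsilon\delta_x}{\nu}}-u_{\epsilon\delta_x}\geq 0$ vanishes outside a compact interval and tends to $0$ uniformly as $\epsilon\to0$, while $u_\nu-u_\mu$ is bounded away from zero on compact subsets of $I$ by irreducibility. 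Choosing $x\mapsto\theta_x$ measurably, the measure $\theta'(A):=\int_J\theta_x(A)\,\rho(dx)$ again satisfies $\mu\leq_c\theta'\leq_c\nu$, and it dominates $\rho$: if $\theta'(A)=0$ then $\theta_x(A)=0$ for $\rho$-a.e.\ $x$, hence $x\notin A$ for $\rho$-a.e.\ $x$ since $\theta_x\gg\delta_x$, i.e.\ $\rho(A)=0$. Substituting $\theta'$ for your $\theta_1$ completes the proof along your own outline.
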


\begin{proof}
\emph{Step~1.} We first assume that $\rho = \delta_x$ for some $x \in J$ and show that there exists $\theta$ satisfying
\[\mu \leq_c \theta \leq_c \nu \quad \text{and} \quad \theta \gg \delta_x.\]
 If $\nu$ has an atom at $x$, we can choose $\theta = \nu$. Thus, we may assume that $\nu(\{x\})=0$ and in particular that $x\in I$.
Let $a$ be the common barycenter of $\mu$ and $\nu$ and suppose that $x < a$. For all $b\in\R$ and $0\leq c\leq \nu(\{b\})$, the measure 
\[\nu_{b,c}:=\nu|_{(-\infty,b)} + c \delta_b\]
satisfies $\nu_{b,c}\leq \nu$, and as $x<a$ there are unique $b,c$ such that $\bary(\nu_{b,c})=x$. Setting $\alpha=\nu_{b,c}$ and $\epsilon_0=\alpha(\R)$, we then have
$\epsilon_0\delta_x \leq_c \alpha\leq \nu$, and a similar construction yields this result for $x\geq a$. The existence of such $\alpha$ implies that
\[
  \epsilon  \delta_x \leq_{pc} \nu,\quad 0\leq \epsilon\leq \epsilon_0
\]
and thus the shadow $\shadow{\epsilon \delta_x}{\nu}$ is well-defined. This measure is given by the restriction of $\nu$ to an interval (possibly including fractions of atoms at the endpoints); cf.\ \cite[Example 4.7]{BeiglbockJuillet.12}. Moreover, the interval is bounded after possibly reducing the mass $\epsilon_{0}$. 
Thus, for all $\epsilon < \epsilon_0$, the difference of potential functions
\[u_{\shadow{\epsilon \delta_x}{\nu}} - u_{\epsilon \delta_x} \geq 0\]
vanishes outside a compact interval, and it converges uniformly to zero as $\epsilon \to 0$.

On the other hand, as $\mu \leq_c \nu$ is irreducible, the difference 
$u_\nu - u_\mu\geq0$ is uniformly bounded away from zero on compact subsets of $I$ and has nonzero derivative on $J\setminus I$. Together, it follows that
\begin{equation}\label{eq:ineqForMbl}
  u_\nu - u_{\shadow{\epsilon \delta_x}{\nu}} + u_{\epsilon \delta_x} \geq u_\mu
\end{equation}
for small enough $\epsilon>0$, so that
\[\theta := \nu - \shadow{\epsilon \delta_x}{\nu} + \epsilon \delta_x\]
satisfies $\mu \leq_c \theta \leq_c \nu$; moreover, $\theta \gg\delta_x$ as $\nu(\{x\})=0$.

\emph{Step~2.} 
We turn to the case of a general probability measure $\rho$ on $J$. By Step~1, we can find a measure $\theta_x$ for each $x\in J$ such that
\[\mu \leq_c \theta_x \leq_c \nu \quad \text{and} \quad \theta_x \gg \delta_x.\]
The map $x \mapsto \theta_x$ can easily be chosen to be measurable (\MN{by choosing the~$\epsilon$ for~\eqref{eq:ineqForMbl} in a measurable way}).
We can then define the probability measure 
\[\theta'(A) := \int_J \theta_x(A) \rho(dx),\quad A\in \B(\R)\]
which satisfies $\mu \leq_c \theta' \leq_c \nu$. Moreover, we have \FS{$\theta' \gg \rho$}; indeed, if $A\in \B(\R)$ is a $\theta'$-nullset, then $\theta_x(A) =0$ for $\rho$-a.e.\ $x$ and thus $\rho(A) = 0$ as $\theta_x \gg \delta_x$.

Finally, $\theta := (\mu + \theta' + \nu)/3$ shares these properties. As $u_{\mu}<u_{\nu}$ on $I$ due to irreducibility, we have $u_{\mu}<u_{\theta}<u_{\nu}$ on $I$ and it follows that $\mu \leq_c \theta$ and $\theta|_{I} \leq_c (\nu-\theta|_{J\setminus I})$ are irreducible.
\end{proof}

\begin{lemma}\label{le:polarFree}
Let $\mu_0 \leq_c \mu_n$ and let $\pi$ be a measure on $\R^{n+1}$ which is concentrated on an $n$-step component $V$ of $\M^n(\mu_0,\mu_n)$ and whose first and last marginals satisfy
\[\pi_0 \leq \mu_0, \quad \pi_n \leq \mu_n.\]
Then there exists $P \in \M^n(\mu_0,\mu_n)$ such that $P\gg\pi$.
\end{lemma}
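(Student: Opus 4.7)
The plan is to reduce the claim to the full-marginal polar structure Lemma~\ref{lem:polarstruct} by constructing intermediate marginals $\mu_{1}, \ldots, \mu_{n-1}$ such that $\bmu := (\mu_{0}, \mu_{1}, \ldots, \mu_{n})$ is in convex order and $\pi$ is concentrated on an irreducible component $V_{\bk}$ of $\M(\bmu)$. Since $\M(\bmu) \subseteq \M^{n}(\mu_{0}, \mu_{n})$, this reduction suffices. The construction splits into three cases according to the type of $n$-step component $V$ carrying $\pi$ in Definition~\ref{de:nStepComponent}.

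For the diagonal case $V = I_{0}^{n+1} \cap \Delta_{n}$, I would take the linear interpolation $\mu_{s} := (1-s/n)\mu_{0} + (s/n)\mu_{n}$. The convex order $\mu_{s-1} \leq_{c} \mu_{s}$ follows from $\mu_{0} \leq_{c} \mu_{n}$, and the potential difference $u_{\mu_{s}} - u_{\mu_{s-1}}$ is a positive multiple of $u_{\mu_{n}} - u_{\mu_{0}}$, so every step has the same open irreducible components $\{I_{k}\}_{k \geq 1}$ as $\mu_{0} \leq_{c} \mu_{n}$; in particular $V$ coincides with the component $V_{(0, \ldots, 0)}$ of $\M(\bmu)$. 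Since $\mu_{0}|_{I_{0}} = \mu_{n}|_{I_{0}}$ we get $\mu_{s}|_{I_{0}} = \mu_{0}|_{I_{0}}$ and therefore $\pi_{s} = \pi_{0} \leq \mu_{s}$, so Lemma~\ref{lem:polarstruct} delivers $P$ directly.

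For the irreducible case $V = I_{k}^{n} \times J_{k}$ and the boundary case $V = I_{k}^{t} \times \{p\}^{n-t+1}$, the intermediate marginals $\pi_{s}$ need not be absolutely continuous with respect to $\mu_{0}^{k}$ or $\mu_{n}^{k}$, so linear interpolation alone does not suffice. I would first invoke Lemma~\ref{lem:intermediateDominator} on the irreducible pair $\mu_{0}^{k} \leq_{c} \mu_{n}^{k}$ with $\rho := (n-1)^{-1}\sum_{s=1}^{n-1} \pi_{s}/\pi_{s}(\R)$ to obtain an anchor measure $\theta$ satisfying $\mu_{0}^{k} \leq_{c} \theta \leq_{c} \mu_{n}^{k}$ (both relations irreducible with domain $(I_{k}, J_{k})$) and $\theta \gg \pi_{s}$ for every intermediate~$s$. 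In the irreducible case I would then set $\mu_{s} := \mu_{0}^{0} + \sum_{j \neq k, j \geq 1}\bigl[(1-s/n)\mu_{0}^{j} + (s/n)\mu_{n}^{j}\bigr] + (1-s/(n-1))\mu_{0}^{k} + (s/(n-1))\theta$ for $1 \leq s \leq n-1$; each step then admits an irreducible component with domain $(I_{k}, J_{k})$, and $V$ coincides with $V_{(k, \ldots, k)}$. The boundary case is handled analogously by compressing the $\mu_{0}^{k} \to \theta \to \mu_{n}^{k}$ interpolation into the first $t$ steps and freezing the $k$-th block at $\mu_{n}^{k}$ for $s > t$; after time~$t$ the atom at $p$ lies in the diagonal component of $\M(\mu_{s-1}, \mu_{s})$, so $V \subseteq V_{(k, \ldots, k, 0, \ldots, 0)}$.

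Finally, to secure the marginal bounds $\pi_{s} \leq \mu_{s}$ required by Lemma~\ref{lem:polarstruct} (which may fail because $d\pi_{s}/d\theta$ is only $\theta$-a.e.\ finite), I would restrict $\pi$ to the sets $B_{M} := \bigcap_{1 \leq s \leq n-1} X_{s}^{-1}(\{d\pi_{s}/d\theta \leq M\})$ and choose $\epsilon_{M} > 0$ so small that $\epsilon_{M}\pi|_{B_{M}}$ has marginals pointwise below the corresponding $\mu_{s}$. Lemma~\ref{lem:polarstruct} then furnishes $P_{M} \in \M(\bmu) \subseteq \M^{n}(\mu_{0}, \mu_{n})$ with $P_{M} \gg \pi|_{B_{M}}$, and $P := \sum_{M \geq 1} 2^{-M}P_{M}$ lies in $\M^{n}(\mu_{0}, \mu_{n})$ and dominates $\pi$ since $\pi|_{B_{M}} \uparrow \pi$. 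The hard part is engineering the chain $\bmu$ so that it simultaneously (i) remains in convex order, (ii) preserves the single $(I_{k}, J_{k})$-component structure at every step (and the atom at $p$ in the boundary case), and (iii) absolutely dominates the intermediate marginals of $\pi$; this delicate interplay is what Lemma~\ref{lem:intermediateDominator} makes possible.
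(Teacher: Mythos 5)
Your proposal is correct and follows essentially the same route as the paper: construct a chain of intermediate marginals via Lemma~\ref{lem:intermediateDominator} so that $V$ lies in an irreducible component of the resulting $\M(\bmu)$ with $\mu_s\gg\pi_s$, truncate the densities to obtain pointwise marginal domination, apply Lemma~\ref{lem:polarstruct}, and sum the resulting transports. The only differences are cosmetic: the paper dispatches the diagonal case directly (any element of $\M(\mu_0,\dots,\mu_0,\mu_n)$ works) and first reduces to a single irreducible pair $\mu_0\leq_c\mu_n$, while you carry all components along and spell out the interpolation formulas explicitly.
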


\begin{proof}
  If $V=I_0^{n+1} \cap \Delta_n$, then $\pi$ must be an identical transport and we can take $P$ to be any element of $\M(\mu_{0},\mu_{0},\dots,\mu_{0},\mu_{n})$. Thus, we may assume that $V$ is of type~(i) or~(iii) in Definition~\ref{de:nStepComponent}, and then, by fixing $k\geq 1$, that $\mu_0 \leq_c \mu_n$ is irreducible with domain $(I,J)$.

Using Lemma~\ref{lem:intermediateDominator}, we can find intermediate marginals $\mu_{t}$ with 
\[\FS{\mu_0 \leq_c \mu_1 \leq_c \dots \leq_c \mu_{n-1} \leq_c \mu_n}\]
such that $\mu_t \gg \pi_t$ for all $1\leq t\leq n-1$, and each of the steps $\mu_{t-1} \leq_c \mu_t$, $1\leq t \leq n$ has a single irreducible domain given by $(I,J)$ as well as (possibly) a diagonal component on $J\setminus I$. We note that $V$ is an irreducible component of $\M(\mu_0, \mu_1, \dots, \mu_n)$ as introduced after Theorem~\ref{thm:polarstruct}.

Let $f_{t}=d\pi_t/d\mu_t$ be the Radon--Nikodym derivative of the marginal at date $t$. For $m\geq 1$, we define the measure $\pi^{m}\ll \pi$ by
\[
   \pi^{m}(dx_{0},\dots,dx_{n}) = 2^{-m} \left(\prod_{t = 1}^{n-1}\1_{f_t(x_t) \leq 2^m}\right)\pi(dx_{0},\dots,dx_{n}).
\]
Then, the marginals $\pi^{m}_{t}$ satisfy the stronger condition $\pi_t^m \leq \mu_t$ for $0\leq t\leq n$. 
Thus, we can apply Lemma~\ref{lem:polarstruct} to $\bmu = (\mu_0,\dots,\mu_n)$ and the
irreducible component $V$, to find $P^m \in \M(\bmu) \subseteq \M^n(\mu_0,\mu_n)$ such that $P^m\gg \pi^m$. Noting that $\sum_{m \geq 1} 2^{-m} \pi^m\gg \pi$, we see that $P := \sum_{m \geq 1} 2^{-m} P^m\gg \pi$ satisfies the requirements of the lemma.
\end{proof}

\begin{proof}[Proof of Theorem~\ref{thm:polarstructFree}]
  The result is deduced from Lemma~\ref{le:polarFree} by following the argument in the proof of Theorem~\ref{thm:polarstruct}.
\end{proof}

\subsection{Duality}

In this section we formulate a duality theorem for the transport problem with free intermediate marginals.

\begin{definition}
Let $f:\R^{n+1} \to [0,\infty]$. The \emph{primal problem} is
\[\S_{\mu_0,\mu_n}^n(f) := \sup_{P \in \M^n(\mu_0,\mu_n)} P(f) \in [0,\infty]\]
and the dual problem is
\[\I_{\mu_0,\mu_n}^n(f) := \inf_{(\phi,\psi,H) \in \D_{\mu_0,\mu_n}^n(f)} \mu_0(\phi) + \mu_n(\psi) \in [0,\infty],\]
where $\D_{\mu_0,\mu_n}^n(f)$ consists of all triplets $(\phi,\psi,H)$ such that
$(\phi,\psi) \in L^c(\mu_0,\mu_n)$ and $H = (H_1,\dots,H_n)$ is $\FF$-predictable with
\[\phi(X_0) + \psi(X_n) + (H \cdot X)_n \geq f \quad \M^n(\mu_0,\mu_n)\text{-q.s.}\]
i.e.\ the inequality holds $P$-a.s.\ for all $P\in\M^n(\mu_0,\mu_n)$.
\end{definition}

The analogue of Theorem~\ref{thm:duality} reads as follows.

\begin{theorem}[Duality]
\label{th:dualityFree}
Let $f:\R^{n+1} \to [0,\infty]$.
\begin{enumerate}[(i)]
\item If $f$ is upper semianalytic, then $\S_{\mu_0,\mu_n}^n(f) = \I_{\mu_0,\mu_n}^n(f) \in [0,\infty]$.
\item If $\I_{\mu_0,\mu_n}^n(f) < \infty$, there exists a dual optimizer 
$(\phi,\psi,H) \in \D_{\mu_0,\mu_n}^n(f)$.
\end{enumerate}
\end{theorem}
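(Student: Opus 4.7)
The plan is to mirror the six-step argument used in Theorem~\ref{thm:duality}, with Theorem~\ref{thm:polarstructFree} replacing Theorem~\ref{thm:polarstruct}. A key simplification is that the dual integrability is governed by the one-step moderated space $L^c(\mu_0,\mu_n)$, so only a single concave moderator on the irreducible components of $\M(\mu_0,\mu_n)$ is needed, rather than a moderator at every time step as in Section~\ref{se:dualSpace}.

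First I would establish weak duality $\S^n_{\mu_0,\mu_n}(f)\le\I^n_{\mu_0,\mu_n}(f)$. Given $P\in\M^n(\mu_0,\mu_n)$ and $(\phi,\psi,H)\in\D^n_{\mu_0,\mu_n}(f)$ with finite dual cost, the two-dimensional projection $\tilde P:=P\circ(X_0,X_n)^{-1}$ lies in $\M(\mu_0,\mu_n)$. Choosing a concave moderator $\chi$ for $(\phi,\psi)$ and decomposing
$$\phi(X_0)+\psi(X_n)+(H\cdot X)_n = (\phi-\chi)(X_0)+(\psi+\chi)(X_n)+\big(\chi(X_0)-\chi(X_n)\big)+(H\cdot X)_n$$
into pieces that are nonnegative on $\V^n$ modulo integrable terms, one may repeat the Fubini-for-kernels argument of Lemma~\ref{lem:dualPayoff}. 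The first two terms integrate against $P$ to $\mu_0(\phi-\chi)+\mu_n(\psi+\chi)$; Remark~\ref{rem:concaveKernelIntegral} applied to $\tilde P$ gives $P[\chi(X_0)-\chi(X_n)]=(\mu_0-\mu_n)(\chi)$; and the stochastic integral has vanishing $P$-expectation by the generalized martingale property. The sum equals the one-step moderated integral $(\mu_0,\mu_n)(\phi,\psi)$, so $P(f)\le\mu_0(\phi)+\mu_n(\psi)$.

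Next I would prove the closedness analogue of Proposition~\ref{prop:dualclosedregular}: for $f^m\ge 0$ converging pointwise to $f$ and $(\phi^m,\psi^m,H^m)\in\D^n_{\mu_0,\mu_n}(f^m)$ with uniformly bounded dual costs, there exists $(\phi,\psi,H)\in\D^n_{\mu_0,\mu_n}(f)$ with cost at most the $\liminf$. Following Section~\ref{se:dualSpace}, I would lift to a generalized dual space indexed by the irreducible components $(I_k,J_k)$ of $\M(\mu_0,\mu_n)$; normalize each component's moderator via Remark~\ref{rem:modify} so that $\chi^m_k$ and its derivative vanish at the common barycenter; extract concave pointwise limits by Proposition~\ref{prop:moderatorConvergence}; apply Kom\'los' lemma to get $\mu_0$- and $\mu_n$-almost-sure limits of $\phi^m$ and $\psi^m$; and reconstruct $H=(H_1,\ldots,H_n)$ by the same iterative concave-hull construction as in the proof of Proposition~\ref{prop:dualclosed}, taking concave hulls in the last coordinate at each step. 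With weak duality and closedness in hand, the theorem follows the capacitability scheme of Theorem~\ref{thm:duality}: weak compactness of $\M^n(\mu_0,\mu_n)$—automatic because convex order forces every intermediate marginal to have first moment bounded by that of $\mu_n$—yields duality on $C_\zeta$ via Hahn--Banach; upper-semicontinuous boundedness follows from capacitability of $\S^n$; closedness makes $\I^n$ a $\U$-capacity agreeing with $\S^n$ on $\U$; Choquet's theorem extends the equality to upper semianalytic $f$; and the existence of a dual optimizer when $\I^n_{\mu_0,\mu_n}(f)<\infty$ is obtained by applying the closedness result to the constant sequence $f^m\equiv f$.

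The main obstacle I anticipate lies in the closedness step, specifically in handling the $n$-step components of type~(iii) in Definition~\ref{de:nStepComponent}—degenerate paths that collapse onto a boundary atom $p\in J_k\setminus I_k$ and remain there for all subsequent times. Along such components, the values of $H_t$ after the collapse do not appear in the one-step moderated integral and have extra freedom, so the concave-hull construction of the limiting $H_t$ must be performed consistently with these degenerate trajectories to ensure that $\phi(X_0)+\psi(X_n)+(H\cdot X)_n\ge f$ is preserved $\M^n(\mu_0,\mu_n)$-quasi-surely in the limit. Verifying this compatibility—essentially, that the iterative concave-hull construction respects the collapsed-coordinate structure of $\V^n$—is the delicate technical point that is not present in the full-marginals case.
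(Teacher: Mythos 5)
Your overall scheme---weak duality via the one-step moderated integral on the projection $P\circ(X_0,X_n)^{-1}$, a closedness result for the dual space, and then the Hahn--Banach/capacitability/Choquet machinery of Theorem~\ref{thm:duality}---is the same as the paper's, and those steps are sound. Where you genuinely diverge is in the proof of the key closedness lemma (the paper's Proposition~\ref{pr:closednessFree}). You propose to rebuild the entire generalized-dual-space construction of Section~\ref{se:dualSpace} directly in the free-marginal setting: lifting to the components of $\M(\mu_0,\mu_n)$, normalizing moderators at barycenters, Koml\'os, and reconstructing $H$ by iterated concave hulls, and you rightly flag the compatibility with the collapsed type-(iii) components of Definition~\ref{de:nStepComponent} as the delicate point. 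The paper instead proceeds by reduction: it inserts auxiliary intermediate marginals $\mu_1,\dots,\mu_{n-1}$ so that $\bmu=(\mu_0,\dots,\mu_n)$ is in convex order with each consecutive pair irreducible, regards $(\phi^m,0,\dots,0,\psi^m)$ together with $H^m$ as an element of $\D^g_\bmu(f^m)$, and applies the already-established Proposition~\ref{prop:dualclosed}; the crucial observation is that the limit construction there returns $\phi_t\equiv 0$ at the intermediate times, so the limit pair is again of free-marginal form and the dual costs match. This reduction absorbs exactly the bookkeeping you anticipate (along a path collapsing onto a boundary atom $p\in J_k\setminus I_k$ the later increments vanish, so only the jump at the collapse time enters, and the concave hull over $J_k$ must be taken so as to include $p$) into a result that is already proven, at the modest cost of choosing the auxiliary potentials so that the effective domain of $\M(\bmu)$ covers the $n$-step components. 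Your direct route should go through, but it is strictly more work; note also that the paper states its closedness lemma only for irreducible $\mu_0\leq_c\mu_n$ and defers the general case to the component-wise lifting, which is what your proposal carries out explicitly.
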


The main step for the proof is again a closedness result. We shall only discuss the case where $\mu_0 \leq_c \mu_n$ is irreducible; the extension to the general case can be obtained along the lines of Section~\ref{se:dualSpace}.

\begin{proposition}\label{pr:closednessFree}
Let $\mu_0 \leq_c \mu_n$ be irreducible and let $f^m : \R^{n+1} \to [0,\infty]$ be a sequence
of functions such that $f^m \to f$ pointwise. Moreover, let $(\phi^m,\psi^m,H^m) \in \D_{\mu_0,\mu_n}^n(f^m)$ be such that
$\sup_m \mu_0(\phi^m) + \mu_n(\psi^m) < \infty$. Then there exist $(\phi,\psi,H) \in \D_{\mu_0,\mu_n}^n(f)$ such that
\[ \mu_0(\phi) + \mu_n(\psi) \leq \liminf_{m \to \infty} \mu_0(\phi^m) + \mu_n(\psi^m).\]
\end{proposition}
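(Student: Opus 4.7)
My plan is to mirror the proof of Proposition~\ref{prop:dualclosed}, exploiting the fact that the present dual space has the same shape as the one-step dual of \cite{BeiglbockNutzTouzi.15}: only endpoint functions $(\phi, \psi)$ and a single concave moderator $\chi$ on $J$ appear, while the stochastic integral retains its $n$ components $H_1, \dots, H_n$. For each triple $(\phi^m, \psi^m, H^m)$ I would first construct a concave moderator $\chi^m : J \to \R$ as an infimum of affine functions,
\[\chi^m(y) := \inf \bigl\{\phi^m(x_0) + (H^m \cdot \bx)_n : (x_0,\dots,x_{n-1},y) \text{ lies in the effective domain of } \M^n(\mu_0, \mu_n)\bigr\}.\]
The quasi-sure dual inequality $\phi^m(X_0) + \psi^m(X_n) + (H^m \cdot X)_n \geq f^m \geq 0$ then forces $\psi^m + \chi^m \geq 0$ on $J$, while restricting the infimum to the constant path $(y, y, \dots, y, y)$ for $y \in I$, which lies in the component $I^n \times J$ of Definition~\ref{de:nStepComponent} and satisfies $(H^m \cdot \bx)_n = 0$, yields the companion bound $\phi^m - \chi^m \geq 0$ on $I$.

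I would then normalize each $\chi^m$ by a suitable affine function so that $\chi^m(a) = (\chi^m)'(a) = 0$ at the common barycenter $a$ of $\mu_0$ and $\mu_n$; the compensating adjustments of $\phi^m$ and $\psi^m$ do not change $\mu_0(\phi^m) + \mu_n(\psi^m)$ because $\mu_0$ and $\mu_n$ share mass and first moment. Proposition~\ref{prop:moderatorConvergence} then yields a subsequence along which $\chi^m \to \chi$ pointwise on $J$ with $(\mu_0 - \mu_n)(\chi) \leq \liminf_m (\mu_0 - \mu_n)(\chi^m)$, and Koml\'os' lemma applied to the nonnegative families $\phi^m - \chi^m$ and $\psi^m + \chi^m$ (passing to convex combinations and further subsequences if necessary) produces $\mu_0$- and $\mu_n$-a.s.\ pointwise limits $\phi, \psi$ satisfying $\phi \geq \chi$ on $I$ and $\psi + \chi \geq 0$ on $J$. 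The predictable process $H$ is constructed verbatim as at the end of the proof of Proposition~\ref{prop:dualclosed}: with $G_t^m(x_0, \dots, x_t) := \phi^m(x_0) + \sum_{s=1}^{t} H_s^m(x_0, \dots, x_{s-1})(x_s - x_{s-1})$ and $G_t := \liminf_m G_t^m$, one iteratively takes concave envelopes of $G_t$ in its last variable and reads off $H_t$ as the right derivative at $x_t = x_{t-1}$. Iterating downward from $t = n$, where $G_n \geq f - \psi$ on the effective domain, produces the superreplication, so $(\phi, \psi, H) \in \D_{\mu_0, \mu_n}^n(f)$.

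The value estimate then follows from the identity $\mu_0(\phi) + \mu_n(\psi) = \mu_0(\phi - \chi) + \mu_n(\psi + \chi) + (\mu_0 - \mu_n)(\chi)$ by Fatou on the two nonnegative terms and the lower semicontinuity statement from Proposition~\ref{prop:moderatorConvergence} on the third. The hardest step will be the companion bound $\phi^m - \chi^m \geq 0$ on $I$: in the proof of Lemma~\ref{lem:moderatorChoice} this came from freezing paths supplied by a diagonal component at each intermediate step, whereas here only a single global freezing is available. Its substitute is that the path $(y, y, \dots, y, y)$ belongs to the component $I^n \times J$ of the effective domain for every $y \in I$, which lets the analogous freezing argument go through at the cost of working with paths that evolve only at the terminal time.
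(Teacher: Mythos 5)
Your argument is essentially correct, but it takes a genuinely different route from the paper's. The paper does not re-run the closedness machinery in the free-marginal setting at all: it chooses auxiliary intermediate marginals $\mu_1,\dots,\mu_{n-1}$ (easily produced by interpolating potential functions) so that every step $\mu_{t-1}\leq_c\mu_t$ is irreducible, observes that $\bphi^m=(\phi^m,0,\dots,0,\psi^m)$ together with $H^m$ lies in the generalized dual space $\D^g_{\bmu}(f^m)$ of the fixed-marginal problem, applies Proposition~\ref{prop:dualclosed} as a black box, and notes that the construction there returns $\phi_t\equiv 0$ for $1\leq t\leq n-1$, so the limit is again of the form $(\phi,\psi,H)$. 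Your proposal instead reproves Proposition~\ref{prop:dualclosed} directly in the new setting: you rebuild the moderator as an infimum of affine functions over predecessor paths (now with the single main component $I^n\times J$), normalize at the common barycenter, and reuse Proposition~\ref{prop:moderatorConvergence}, Koml\'os and the concave-envelope construction of $H$. The paper's reduction buys brevity and automatic handling of the degenerate pieces (the endpoint components $I^t\times\{p\}^{n-t+1}$); your route buys a self-contained argument that needs no auxiliary marginals.

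Two remarks on the details. First, the step you single out as hardest---the companion bound $\phi^m-\chi^m\geq 0$ on $I$---is in fact immediate: the constant path belongs to the index set of the infimum, so $\chi^m(y)\leq\phi^m(y)$ holds by the definition of an infimum, with no appeal to the dual inequality. The step that genuinely needs care is elsewhere: the inequality defining $\D^n_{\mu_0,\mu_n}(f^m)$ is only required to hold $\M^n(\mu_0,\mu_n)$-quasi-surely, not pointwise on the effective domain, so the lower bound $\chi^m\geq-\psi^m$ (needed both for $\chi^m>-\infty$ and for $\psi^m+\chi^m\geq 0$) can fail on the exceptional set. By Theorem~\ref{thm:polarstructFree} that set is contained in $(N_0\times\R^n)\cup(\R^n\times N_n)$ for suitable nullsets, so you should restrict the infimum to prefixes with $x_0\notin N_0$ (which preserves concavity, the index set remaining independent of $y$) and work $\mu_n$-a.e.\ in $y$; the same caveat applies when feeding $G_{n,m}\geq f^m-\psi^m$ into the concave-envelope iteration. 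With these adjustments your proof goes through.
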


\begin{proof}
Let $\mu_{t}$, $1\leq t\leq n-1$ be such that  $\bmu = (\mu_0,\dots,\mu_n)$ is in convex order and
$\mu_{t-1} \leq_c \mu_t$ is irreducible for all $1 \leq t \leq n$; such $\mu_{t}$ are easily constructed by prescribing their potential functions.
Setting $\bphi^m = (\phi^m,0,\dots,0,\psi^m)$ we have
$(\bphi^m, H^m) \in \D_{\bmu}^g(f^m)$ and can thus apply Proposition~\ref{prop:dualclosed}
to obtain $(\bphi,H) \in \D_{\bmu}^g(f)$. The construction in the proof of that proposition yields
$\phi_t \equiv 0$ for $1\leq t\leq n-1$. Therefore,
$(\phi_0,\phi_n,H) \in \D_{\mu_0,\mu_n}^n(f)$ and
\begin{align*}
\mu_0(\phi_0) + \mu_n(\phi_n) &= \bmu(\bphi) \leq \liminf_{m \to \infty} \bmu(\bphi^m) = \liminf_{m \to \infty} \mu_0(\phi^m) + \mu_n(\psi^m).
\end{align*}
\end{proof}

\begin{proof}[Proof of Theorem~\ref{th:dualityFree}]
  On the strength of Proposition~\ref{pr:closednessFree}, the proof is analogous to the one of Theorem~\ref{thm:duality}.
\end{proof}

\subsection{Monotone Transport}

The analogue of our result on left-monotone transports is somewhat degenerate: with unconstrained intermediate marginals, the corresponding coupling is the identical transport in the first $n-1$ steps and the (one-step) left-monotone transport in the last step. The full result runs as follows.

\begin{theorem}
Let $P \in \M^n(\mu_0,\mu_n)$. The following are equivalent:
\begin{enumerate}[(i)]
\item $P$ is a simultaneous optimizer
for $\S_{\mu_0,\mu_n}^n(f(X_0,X_t))$ for all smooth second-order Spence--Mirrlees functions $f$ and $1 \leq t \leq n$.
\item $P$ is concentrated on a left-monotone set $\Gamma\subset \R^{n+1}$ such that
\[\Gamma^{n-1} = \{(x,\dots,x) : x \in \Gamma^0\}.\]
\item For $0 \leq t \leq n-1$, we have $P \circ (X_t)^{-1} = \mu_0$  and
$P \circ (X_t,X_n)^{-1}$ is the (one-step) left-monotone transport in $\M(\mu_0,\mu_n)$.
\end{enumerate}
There exists a unique $P \in \M^n(\mu_0,\mu_n)$ satisfying (i)--(iii).
\end{theorem}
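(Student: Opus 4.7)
The plan is to establish the loop (ii) $\Leftrightarrow$ (iii) $\Rightarrow$ (i) $\Rightarrow$ (ii), together with the existence and uniqueness of the unique $P$, by leveraging the duality Theorem~\ref{th:dualityFree} and the one-step results collected in Proposition~\ref{prop:leftMonotoneTransport}. The reduction (ii) $\Leftrightarrow$ (iii) is essentially geometric: the requirement $\Gamma^{n-1}=\{(x,\ldots,x):x\in\Gamma^0\}$ forces any $P\in\M^n(\mu_0,\mu_n)$ concentrated on $\Gamma$ to satisfy $X_0=\cdots=X_{n-1}$ a.s., so $P\circ X_t^{-1}=\mu_0$ for $t\leq n-1$; and the left-monotonicity of $\Gamma$ applied at $t=n$, where $\bx=(x_0,\ldots,x_0)$ and $\bx'=(x_0',\ldots,x_0')$ are necessarily constant vectors, collapses to the one-step no-crossing condition for $(X_0,X_n)$, forcing $P\circ(X_0,X_n)^{-1}$ to be the one-step left-monotone coupling by Proposition~\ref{prop:leftMonotoneTransport}. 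The converse is immediate by taking $\Gamma=\{(x,x,\ldots,x,y):(x,y)\in\bar\Gamma\}$ for a one-step left-monotone $\bar\Gamma$.

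For existence and uniqueness in (iii), I would construct $P=\mu_0\otimes\delta\otimes\cdots\otimes\delta\otimes\bar\kappa$, where $\bar\kappa$ is a disintegration kernel of the one-step left-monotone transport in $\M(\mu_0,\mu_n)$; this clearly belongs to $\M^n(\mu_0,\mu_n)$ and satisfies (iii). Uniqueness follows from a martingale rigidity argument: the conditions $P'\circ X_{t-1}^{-1}=P'\circ X_t^{-1}=\mu_0$ combined with the martingale property $\EE^{P'}[X_t\mid X_{t-1}]=X_{t-1}$ force $X_t=X_{t-1}$ $P'$-a.s.\ (by Jensen and the coincidence of potential functions), iterating up to $t=n-1$. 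Then $P'\circ(X_{n-1},X_n)^{-1}$ is the unique one-step left-monotone transport, determining $P'$ entirely.

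For (iii) $\Rightarrow$ (i), I would fix a smooth second-order Spence--Mirrlees $f$ and $1\le t\le n$, obtain a dual optimizer $(\phi,\psi,h)$ for $\S_{\mu_0,\mu_n}(f)$ from the one-step Proposition~\ref{prop:leftMonotoneTransport} and Theorem~\ref{thm:duality} applied in the one-step case (via Remark~\ref{rk:relaxLowerBound}), and then lift it to a dual triple $(\phi,\psi,H)\in\D^n_{\mu_0,\mu_n}$ by setting all trading activity to match the single jump structure, so that the resulting superhedge is tight on the support of $P$. Combining with Theorem~\ref{th:dualityFree} then shows $P$ attains $\S^n_{\mu_0,\mu_n}(f(X_0,X_t))$. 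The direction (i) $\Rightarrow$ (ii) proceeds via the free-intermediate analogue of the monotonicity principle (Theorem~\ref{thm:monotonicityPrinciple}), which yields a set $\Gamma$ carrying $P$ and any simultaneous optimizer; variational swap arguments in the spirit of Lemma~\ref{lem:variationalPrinciple} and Lemma~\ref{lem:SpenceMirrleesToLeftMonotone}, now exploiting the absence of intermediate constraints, force $\Gamma^{n-1}$ to collapse to the diagonal and $\Gamma^n$ to be left-monotone.

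The hard part will be (i) $\Rightarrow$ (ii), specifically showing that the degenerate diagonal structure of $\Gamma^{n-1}$ is forced rather than merely allowed: one must choose smooth Spence--Mirrlees functions whose corresponding dual witnesses distinguish between genuinely distinct intermediate atoms and the diagonal, combining this with Lemma~\ref{lem:SpenceMirrleesToLeftMonotone} applied at every $1\le t\le n$. The free-intermediate polar structure from Theorem~\ref{thm:polarstructFree} (in particular the $n$-step components of type $I_k^n\times J_k$ versus $I_k^t\times\{p\}^{n-t+1}$) will play an essential role in ruling out competitor measures that would otherwise spread mass through the intermediate times.
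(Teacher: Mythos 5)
Your treatment of (ii)$\Leftrightarrow$(iii), of existence, and of uniqueness is correct and coincides with the paper's argument (the paper likewise reduces everything to the facts that the identity is the only element of $\M(\mu_0,\mu_0)$ and that the one-step left-monotone coupling is unique). The two optimality implications, however, both have gaps. For (iii)$\Rightarrow$(i), your plan lifts a one-step dual optimizer for the pair $(\mu_0,\mu_n)$; that only speaks to the reward $f(X_0,X_n)$. For $1\le t\le n-1$ the content of the claim is entirely different: it is that the free law of $X_t$ under $P$ is the $\leq_c$-minimal admissible one, namely $\mu_0$ itself. A superhedge of the form $\phi(X_0)+\psi(X_n)+(H\cdot X)_n\geq f(X_0,X_t)$ that is tight on the diagonal $X_0=\dots=X_{n-1}$ would essentially require $y\mapsto f(x_0,y)$ to be concave, which the Spence--Mirrlees condition does not provide. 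The paper instead verifies optimality for the building blocks $-\1_{\{X_0\leq a\}}(X_t-b)^+$ --- for $t\leq n-1$ because $\mu_0|_{(-\infty,a]}$ is the least element of $\{\theta:\mu_0|_{(-\infty,a]}\leq_c\theta\leq_{pc}\mu_n\}$, and for $t=n$ by the shadow property --- and then passes to smooth Spence--Mirrlees functions by the mixture representation used in the proof of Theorem~\ref{th:leftMonotoneOptimalitySpenceMirrlees}.

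For (i)$\Rightarrow$(ii) you appeal to a free-marginal monotonicity principle that is not stated anywhere in the paper, and you yourself flag the collapse of $\Gamma^{n-1}$ to the diagonal as unresolved; as written, this direction is not proved. The paper avoids the issue entirely by closing the loop through (iii) rather than (ii): from (i), optimality for $f(X_0,X_n)$ and Proposition~\ref{prop:leftMonotoneTransport} give that $P_{0n}$ is the one-step left-monotone coupling, while optimality for the rewards $-\1_{\{X_0\leq a\}}|X_t-b|$ (limits of Spence--Mirrlees rewards) forces $P_{0t}$ to map $\mu_0|_{(-\infty,a]}$ to the $\leq_c$-least element of $\{\theta:\mu_0|_{(-\infty,a]}\leq_c\theta\leq_{pc}\mu_n\}$, i.e., to itself; hence $P_{0t}$ is the identity for $t\leq n-1$ and (iii) holds. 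Neither the free-marginal duality theorem, nor the polar structure of Theorem~\ref{thm:polarstructFree}, nor any variational swap argument is needed here; your proposal routes the argument through far heavier and partly unavailable machinery precisely at the step you identify as hard.
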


\begin{proof}
A transport $P$ as in~(iii) exists and is unique, because the
identical transport between equal marginals and the left-monotone transport in $\M(\mu_{0},\mu_{n})$ exist and are unique; cf.\ Proposition~\ref{prop:leftMonotoneTransport}.
The equivalence of~(ii) and~(iii) follows from the same proposition
and the fact that the only martingale transport from $\mu_{0}$ to $\mu_{0}$ is the identity.

Let $P\in \M^n(\mu_0,\mu_n)$ satisfy~(i). In particular, $P$ is then an optimizer for
$\S_{\mu_0,\mu_n}^n(f(X_0,X_n))$, which by Proposition~\ref{prop:leftMonotoneTransport} implies that $P_{0n}=P \circ (X_0,X_n)^{-1}$ is the (one-step) left-monotone transport in $\M(\mu_0,\mu_n)$.
For $t = 1,\dots,n-1$, $P$ is an optimizer for $\S_{\mu_0,\mu_n}^n(-\1_{\{X_0 \leq a\}}|X_t - b|)$, for
all $a,b \in \R$. This implies that $P_{0t}$ transports
$\mu_0|_{(-\infty,a]}$ to the minimal element of 
$\{\theta : \mu_0|_{(-\infty,a]} \leq_c \theta \leq_{pc} \mu_n\}$ in the sense of the convex order, which is $\theta =\mu_0|_{(-\infty,a]}$. Therefore, $P_{0t}$ must be the identical transport for $t = 1,\dots,n-1$ and all but the last marginal are equal to $\mu_0$.

Conversely, let $P\in \M^n(\mu_0,\mu_n)$ have the properties from~(iii). Then, $P$ is optimal for $\S_{\mu_0,\mu_n}^n(-\1_{\{X_0 \leq a\}}(X_t - b)^{+})$ for all $1\leq t\leq n$ and this can be extended to the optimality~(i) for smooth second-order Spence--Mirrlees functions as in the proof of Theorem~\ref{th:leftMonotoneOptimalitySpenceMirrlees}.
\end{proof}

\bibliography{stochfin}
\bibliographystyle{plain}

\end{document}